\newtheorem{theorem}{Theorem}[section]
\newtheorem{corollary}[theorem]{Corollary}
\newtheorem{proposition}[theorem]{Proposition}
\theoremstyle{definition}
\newtheorem{definition}[theorem]{Definition}
\theoremstyle{remark}
\newtheorem{remark}[theorem]{Remark}
\theoremstyle{lemma}
\newtheorem{lemma}[theorem]{Lemma}
\theoremstyle{example}
\newtheorem{example}[theorem]{Example}
\newcommand{\Hom}{\text{Hom}}
\DeclareMathOperator{\colim}{colim}
\newcommand{\indsimp}[1]{|\Delta^{#1}|^{J_1}}
\newcommand{\disimp}[1]{|\Delta^{#1}|^{J_+}}
\newcommand{\gsimp}[1]{|\Delta^{#1}|^{J}}
\newcommand{\itopcube}[1]{|\Box^{#1}|^{(I,\boxdot)}}
\newcommand{\indcube}[1]{|\Box^{#1}|^{(J_1,\times)}}
\newcommand{\iindcube}[1]{|\Box^{#1}|^{(J_1,\boxdot)}}
\newcommand{\dicube}[1]{|\Box^{#1}|^{(J_+,\times)}}
\newcommand{\idicube}[1]{|\Box^{#1}|^{(J_+,\boxdot)}}
\newcommand{\gcube}[1]{|\Box^{#1}|^{(J,\otimes)}}
\newcommand{\topchain}[2]{C^{I}_{#1}(#2)}
\newcommand{\indsimpchain}[2]{C^{J_1}_{#1}(#2)}
\newcommand{\disimpchain}[2]{C^{J_+}_{#1}(#2)}
\newcommand{\gsimpchain}[2]{C^{J}_{#1}(#2)}
\newcommand{\gsimpchainnorm}[2]{\overline{C}^{J}_{#1}(#2)}
\newcommand{\gsimpchainred}[2]{\tilde{C}^{J}_{#1}(#2)}
\newcommand{\gsimpcochain}[2]{C_{J}^{#1}(#2)}
\newcommand{\topcubechain}[2]{C_{#1}^{(I,\times)}(#2)}
\newcommand{\indcubechain}[2]{C^{(J_1,\times)}_{#1}(#2)}
\newcommand{\iindcubechain}[2]{C^{(J_1,\boxdot)}_{#1}(#2)}
\newcommand{\dicubechain}[2]{C^{(J_+,\times)}_{#1}(#2)}
\newcommand{\idicubechain}[2]{C^{(J_+,\boxdot)}_{#1}(#2)}
\newcommand{\gcubechain}[2]{C^{(J,\otimes)}_{#1}(#2)}
\newcommand{\gcubesimpchain}[2]{C^{(J,\times)}_{#1}(#2)}
\newcommand{\gcubesimpchainred}[2]{\tilde{C}^{(J,\times)}_{#1}(#2)}
\newcommand{\gcubechaindeg}[2]{D^{(J,\otimes)}_{#1}(#2)}
\newcommand{\gcubechainwhole}[2]{Q^{(J,\otimes)}_{#1}(#2)}
\newcommand{\gcubecochain}[2]{C_{(J,\otimes)}^{#1}(#2)}
\newcommand{\gcubesimpcochain}[2]{C_{(J,\times)}^{#1}(#2)}
\newcommand{\tophom}[2]{H^{I}_{#1}(#2)}
\newcommand{\indhom}[2]{H^{J_1}_{#1}(#2)}
\newcommand{\dihom}[2]{H^{J_+}_{#1}(#2)}
\newcommand{\gsimphom}[2]{H^{J}_{#1}(#2)}
\newcommand{\gsimphomred}[2]{\tilde{H}^{J}_{#1}(#2)}
\newcommand{\gsimpcohom}[2]{H_{J}^{#1}(#2)}
\newcommand{\gcubesimphom}[2]{H^{(J,\times)}_{#1}(#2)}
\newcommand{\gcubehom}[2]{H^{(J,\otimes)}_{#1}(#2)}
\newcommand{\gcubehomred}[2]{\tilde{H}^{(J,\otimes)}_{#1}(#2)}
\newcommand{\gcubecohom}[2]{H_{(J,\otimes)}^{#1}(#2)}
\newcommand{\gicubecohom}[2]{H_{(J,\boxdot)}^{#1}(#2)}
\newcommand{\gcubesimpcohom}[2]{H_{(J,\times)}^{#1}(#2)}
\definecolor{OliveGreen}{rgb}{0.33, 0.42, 0.18}
\newcommand{\cat}{\mathbf}
\newcommand{\eps}{\varepsilon}
\title{Eilenberg-Steenrod homology and cohomology theories for \v{C}ech's closure spaces}
\author{Peter Bubenik, Nikola Mili\'cevi\'c}
\tikzset{%
    symbol/.style={%
        draw=none,
        every to/.append style={%
            edge node={node [sloped, allow upside down, auto=false]{$#1$}}}
    }
}
\begin{document}

\maketitle
\begin{abstract}
    We generalize some of the fundamental results of algebraic topology from topological spaces to \v{C}ech's closure spaces, also known as pretopological spaces. Using simplicial sets and cubical sets with connections, we define three distinct singular (relative) simplicial and six distinct singular (relative) cubical (co)homology groups of closure spaces. Using acyclic models we show that the three simplicial groups have isomorphic cubical analogues among the six cubical groups. Thus, we obtain a total of six distinct singular (co)homology groups of closure spaces. Each of these is shown to have a compatible homotopy theory that depends on the choice of a product operation and an interval object. We give axioms for an Eilenberg-Steenrod (co)homology theory with respect to a product operation and an interval object. We verify these axioms for three of our six (co)homology groups. For the other three, we verify all of the axioms except excision, which remains open for future work. We also show the existence of long exact sequences of (co)homology groups coming from K\"unneth theorems and short exact (co)chain complex sequences of pairs of closure spaces.  
\end{abstract}

\section{Introduction}
\v{C}ech closure spaces, also known as pretopological spaces, are a generalization of topological spaces. There is an axiomatization of topological spaces based on the notion of a closure operator instead of the notion of an open set. In this axiomatization, if one omits the axiom that the closure operator is idempotent, one obtains \v{C}ech closure spaces, which we will refer to simply as closure spaces. We are interested in this generalization because it contains mathematical structures of interest in both pure and applied topology, such as simple directed and undirected graphs, digital images, and topological spaces. Furthermore,  metric spaces and weighted graphs may be thought of as one-parameter families of closure spaces. From closure spaces there are functorial constructions of \v{C}ech and Vietoris-Rips simplicial complexes and their homology groups \cite{bubenik2021applied,rieser2020vietoris,vela2019homology}. Closure spaces are thus a natural setting for extending classical results from algebraic topology for use in a number of scientific, engineering and data analysis applications.
They have been used in image analysis \cite{lamure1987espaces,bonnevay2009pretopological}, shape recognition 
\cite{emptoz1983modele,frelicot1998pretopological}, supervised learning \cite{frank1996pretopological,frelicot1998pretopology} 
and complex systems modeling 
\cite{ahat2009pollution,largeron2002pretopological} and recently have been considered as an alternative in developing the theory of applied topology \cite{rieser2017cech,bubenik2021applied}.
The framework of \v{C}ech closure spaces was shown to be somewhat universal, as seemingly one can recover a huge body of work spanning the last several decades in applied topology, by translating the work in the language of closure spaces. Persistent homology theory along with multiple stability results was also developed for closure spaces. Several different notions of homotopy and homology groups in seemingly different applications can also be understood from the point of view of closure spaces \cite{bubenik2021applied}. This work is a continuation of the work in \cite{bubenik2021applied}. There the authors introduced several homology theories for closure spaces and left open questions regarding the properties these homology groups have. For example, are there analogues of the Eilenberg-Steenrod axioms for those homology groups, Mayer Vietories long exact sequences, excision or K\"unneth theorems. This is what this work attempts to do; to lay foundations for the theory of algebraic pretopology. 

We use simplicial sets \cite{goerss2009simplicial} and cubical sets with connections \cite{tonks1992cubical} together with Dold-Kan correspondences \cite{goerss2009simplicial,brown2004nonabelian} to define multiple homology theories for closure spaces, whose definitions coincide with the homology groups introduced in \cite{bubenik2021applied}. Our building blocks are three closure spaces of interest; $I$ (the unit interval) and $J_+$ and $J_1$ which are combinatorial objects ($J_+$ is a simple directed graph and $J_1$ is a simple undirected graph). Given $J\in \{J_+,J_1,I\}$ and $\otimes\in \{\boxdot, \times\}$ where $\boxdot$ and $\times$ are two canonical product operations on the category of closure spaces \cite{bubenik2021applied,cech1966topological}, we define singular simplicial chain complexes $\gsimpchain{\bullet}{X}$ and their singular simplicial homology groups $\gsimphom{\bullet}{X}$ and singular cubical chain complexes $\gcubechain{\bullet}{X}$ and their singular cubical homology groups $\gcubehom{\bullet}{X}$. The choice $J\in \{J_+,J_1,I\}$ and $\otimes\in \{\boxdot,\times\}$ also gives rise to an equivalence relation on the set of continuous maps $\cat{Cl}(X,Y)$ between closure spaces $X$ and $Y$, $f\sim_{(J,\otimes)}$ which we think of as a homotopy relation \cite{bubenik2021applied}. A few of the major results of this paper are now listed. First, we have the following theorem concerning the homotopy invariance of our homology groups.

\begin{theorem}[Homotopy invariance]
Let $f,g:X\to Y$ be two continuous maps of closure spaces and suppose that  $f\sim_{(J,\otimes)}g$. Then $f_*=g_*:\gcubehom{\bullet}{X}\to \gcubehom{\bullet}{Y}$ (\cite[Theorem 6.6]{bubenik2021applied}). Suppose that $f\sim_{(J,\times)}g$. Then $f_*=g_*:\gsimphom{n}{X}\to \gsimphom{n}{Y}$ for all $n\ge 0$. 
\end{theorem}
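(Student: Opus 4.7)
The plan is to reduce the second statement to a direct chain homotopy construction modeled on the classical prism operator, adapted to the closure space setting. The first statement is cited from prior work, so the new content is the simplicial analogue under the categorical product $\times$.

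First, I would unpack the homotopy relation: since $f\sim_{(J,\times)}g$, there is a continuous map of closure spaces $H\colon X\times J\to Y$ that restricts to $f$ on $X\times\{0\}$ and to $g$ on $X\times\{1\}$ (after possibly composing with a chain of such elementary homotopies; in that case the conclusion follows by concatenation, so it suffices to treat a single elementary homotopy). Given a singular simplex $\sigma\colon |\Delta^n|^J\to X$, form the composite $H\circ(\sigma\times\Id_J)\colon |\Delta^n|^J\times J\to Y$. The task is to slice $|\Delta^n|^J\times J$ into $(n+1)$ singular $(n{+}1)$-simplices of $Y$ with signs so as to build a chain homotopy $P_n\colon \gsimpchain{n}{X}\to \gsimpchain{n+1}{Y}$ satisfying $\partial P + P \partial = g_\# - f_\#$.

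Concretely, for each $i=0,\dots,n$, let $\phi_i\colon \Delta^{n+1}\to \Delta^n\times\Delta^1$ be the standard affine embedding sending the ordered vertices of $\Delta^{n+1}$ to $(v_0,0),\dots,(v_i,0),(v_i,1),\dots,(v_n,1)$. I would verify that, for each $J\in\{J_+,J_1,I\}$, the map $\phi_i$ descends to a continuous map $|\Delta^{n+1}|^J\to |\Delta^n|^J\times J$ of closure spaces. For $J=I$ this is classical; for the combinatorial cases $J=J_+$ or $J=J_1$ the structure on both sides is determined by the vertex set together with the order or incidence relations, so the affinity of $\phi_i$ and the fact that it takes vertices to vertices makes continuity immediate from the definition of the closure operators. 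Setting
\[
P_n(\sigma) \;=\; \sum_{i=0}^{n} (-1)^i \, H\circ(\sigma\times\Id_J)\circ |\phi_i|^J,
\]
I would then perform the combinatorial bookkeeping on face operators: the interior faces of adjacent prism pieces cancel in pairs, the top and bottom faces assemble into $g_\#(\sigma)$ and $-f_\#(\sigma)$, and the remaining lateral faces reproduce $P_{n-1}\partial \sigma$ with the correct sign. The outcome is the standard prism identity $\partial P_n + P_{n-1}\partial = g_\# - f_\#$, which passes to homology to give $f_*=g_*$.

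The main obstacle is Step~2: verifying that the affine decomposition maps $\phi_i$ are morphisms in $\cat{Cl}$ for the combinatorial $J$'s. Once this is established, the chain homotopy identity is a purely combinatorial check inherited from the classical case. As a sanity check and potential alternative route, one could instead appeal to the acyclic-models-based natural isomorphism alluded to in the abstract between $\gsimphom{\bullet}{-}$ and its cubical counterpart $\gcubehom{\bullet}{-}$ with $\otimes = \times$; naturality of this isomorphism in continuous maps $X\to Y$ would transport the cubical homotopy invariance (\cite[Theorem 6.6]{bubenik2021applied}) directly to the simplicial side, bypassing the need to construct $P$ explicitly. I would present the direct prism argument as primary since it is self-contained and makes the homotopy explicit, while flagging the acyclic-models route as a conceptual cross-check.
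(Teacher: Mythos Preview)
Your primary approach is correct and essentially the same as the paper's: both construct a prism operator by decomposing $\gsimp{n}\times J$ (or, in the paper, $\gsimp{n}\times J^{*m}$) into $(n{+}1)$-simplices and verifying $\partial P + P\partial = g_\# - f_\#$. The only cosmetic difference is that the paper invokes \cref{lemma:homotopy_and_concatenation} to pass to a single one-step $(J^{*m},\times)$-homotopy and carries the extra index $i$ through a double sum, whereas you reduce to a single $J$ and appeal to transitivity; these are equivalent.

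One caveat on your proposed alternative route: transporting the cubical homotopy invariance along the acyclic-models comparison $\gsimpchain{\bullet}{-}\simeq \gcubesimpchain{\bullet}{-}$ would be circular in this paper. The acyclicity of the simplicial chain functor on the models $\gsimp{n}$ (\cref{lemma:models_are_contractible}) is proved precisely by invoking the simplicial homotopy invariance you are trying to establish, so \cref{theorem:simplicial_and_cubical_isomorphism} is logically downstream of the prism argument, not an independent substitute for it.
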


Using the acyclic models theory by Eilenberg and MacLane \cite{eilenberg1953acyclic}, we also show that there are chain homotopies between the singular simplicial chain complexes and some singular cubical chain complexes we define.

\begin{theorem}
For each closure space $X$ there exist chain homotopies $\gsimpchain{\bullet}{X}\simeq \gcubesimpchain{\bullet}{X}$.
\end{theorem}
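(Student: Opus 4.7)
The plan is to apply the acyclic models theorem of Eilenberg-MacLane to the pair of functors $\gsimpchain{\bullet}{-}$ and $\gcubesimpchain{\bullet}{-}$ from the category $\cat{Cl}$ of closure spaces to non-negatively graded chain complexes of abelian groups, both augmented to the constant functor $\mathbb{Z}$ by the map sending each generator in degree $0$ to $1$. The strategy is to produce natural chain maps in both directions lifting this common augmentation and then to invoke the uniqueness-up-to-natural-chain-homotopy clause of the theorem, applied to the identity, to conclude that these maps are mutually inverse chain equivalences.

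First I would verify freeness. The group $\gsimpchain{n}{X}$ is by construction free abelian on $\cat{Cl}(\gsimp{n},X)$ and is therefore freely generated, as a functor in $X$, by the single basis element $\mathrm{id}_{\gsimp{n}} \in \gsimpchain{n}{\gsimp{n}}$; exactly the same argument shows $\gcubesimpchain{n}{-}$ is freely generated by $\mathrm{id}_{|\Box^n|^{(J,\times)}}$. Second, I would establish that each functor is acyclic on the other's models: the augmented complexes $\gcubesimpchain{\bullet}{\gsimp{n}}$ and $\gsimpchain{\bullet}{|\Box^n|^{(J,\times)}}$ have vanishing homology for every $n \geq 0$. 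By the homotopy invariance theorem quoted above, it suffices to exhibit $(J,\times)$-contractions of $\gsimp{n}$ and $|\Box^n|^{(J,\times)}$ to a vertex. For $J=I$ this is the classical straight-line/cone contraction in the topological category; for $J\in\{J_1,J_+\}$ it is the analogous combinatorial coordinate collapse onto a basepoint.

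With these two hypotheses in place, the acyclic models theorem supplies natural augmentation-preserving chain maps $\phi\colon \gsimpchain{\bullet}{-} \to \gcubesimpchain{\bullet}{-}$ and $\psi\colon \gcubesimpchain{\bullet}{-} \to \gsimpchain{\bullet}{-}$, each unique up to natural chain homotopy. Since both $\psi\phi$ and the identity lift the identity on $\mathbb{Z}$ (and likewise $\phi\psi$ and the identity), the uniqueness clause yields natural chain homotopies $\psi\phi \simeq \mathrm{id}$ and $\phi\psi \simeq \mathrm{id}$, completing the proof. The main obstacle I anticipate is the acyclicity step: producing honest closure-continuous contractions of $\gsimp{n}$ and $|\Box^n|^{(J,\times)}$ and verifying that the candidate contractions, viewed as maps out of $(-)\times \gsimp{1}$ (respectively $(-)\times|\Box^1|^{(J,\times)}$), are continuous with respect to the $\times$ product rather than $\boxdot$ on the relevant product spaces. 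In the combinatorial cases $J\in\{J_1,J_+\}$ this is where the subtler closure-structure bookkeeping will have to be done.
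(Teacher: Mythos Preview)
Your overall strategy via acyclic models matches the paper's, and the acyclicity verification you outline is correct: the paper records that $\gsimp{n}$ and $|\Box^n|^{(J,\times)}$ are $(J,\times)$-contractible, so by homotopy invariance both chain functors are acyclic on either family of models. The genuine gap is in your freeness claim for the cubical functor. By definition $\gcubesimpchain{n}{X}=Q^{(J,\times)}_n(X)/D^{(J,\times)}_n(X)$ is the \emph{normalized} cubical chain group, the quotient of the free group on all singular $n$-cubes by the degenerate ones. Consequently the class of $f$ vanishes whenever $f:|\Box^n|^{(J,\times)}\to X$ is degenerate, so the family $\{[f]:f\in\cat{Cl}(|\Box^n|^{(J,\times)},X)\}$ contains zeros and is not a basis of $\gcubesimpchain{n}{X}$; the functor is not free on the cube model in the sense your version of acyclic models requires. (Switching to the unnormalized complex $Q^{(J,\times)}_\bullet$ restores freeness but destroys acyclicity: $Q^{(J,\times)}_n(*)\cong\mathbb{Z}$ for every $n$ with zero differential.)

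The paper fixes this by using Eilenberg and MacLane's weaker notion of a \emph{representable} functor---a natural retract of the associated free functor $\tilde{T}$---together with the corresponding form of the theorem. It shows $\gcubesimpchain{n}{-}$ is representable by producing a natural section of the quotient $Q^{(J,\times)}_n(-)\twoheadrightarrow \gcubesimpchain{n}{-}$ built from face and degeneracy operators (by analogy with the simplicial formula $\eta=(\mathrm{Id}-d_0s_1)\cdots(\mathrm{Id}-d_{n-1}s_n)$), and it also writes down an explicit natural chain map $\gsimpchain{\bullet}{-}\to\gcubesimpchain{\bullet}{-}$ in all degrees via a coordinate change from simplex to cube before invoking the theorem. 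To salvage your argument you should replace ``free'' by ``representable'' on the cubical side and supply such a section.
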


Furthermore, the acyclic models theorem also allows us to prove the Eilenberg-Zilber theorem for closure spaces, which in turn leads to K\"unneth theorems.

\begin{theorem}[Eilenberg-Zilber]
Let $X$ and $Y$ be closure spaces. Then there are natural chain homotopy equivalences $\gsimpchain{\bullet}{X\times Y}\simeq \gsimpchain{\bullet}{X}\otimes \gsimpchain{\bullet}{Y}$ and $\gcubesimpchain{\bullet}{X\times Y}\simeq \gcubesimpchain{\bullet}{X}\otimes \gcubesimpchain{\bullet}{Y}$.
\end{theorem}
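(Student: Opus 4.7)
The plan is to apply the Eilenberg--MacLane acyclic models theorem on the category $\cat{Cl}\times\cat{Cl}$ of pairs of closure spaces, once for the simplicial statement and once for the cubical statement. For the simplicial case, I would compare the two functors
\[
F(X,Y) \defeq \gsimpchain{\bullet}{X\times Y}, \qquad G(X,Y) \defeq \gsimpchain{\bullet}{X}\otimes \gsimpchain{\bullet}{Y},
\]
using as models the pairs $M_{p,q} \defeq (\gsimp{p},\gsimp{q})$ together with the tautological singular simplices $(\iota_p,\iota_q)$ as distinguished basis elements.

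First I would verify \emph{freeness with models}: a basis element of $F_n(X,Y)$ is a singular simplex $\gsimp{n}\to X\times Y$, which by the universal property of the product is the natural image of $(\iota_n,\iota_n)$ from $M_{n,n}$ along a pair of continuous maps, and a basis element of $G_n(X,Y) = \bigoplus_{p+q=n}\gsimpchain{p}{X}\otimes\gsimpchain{q}{Y}$ is the natural image of $\iota_p\otimes\iota_q$ from $M_{p,q}$. Next I would verify \emph{acyclicity on models}: contracting each factor to a vertex yields a $(J,\times)$-deformation retract of $\gsimp{p}\times\gsimp{q}$ to a point, so the homotopy invariance theorem from the excerpt forces $F(M_{p,q})$ to be acyclic and concentrated on a single class in degree zero. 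For $G(M_{p,q})$, the same argument first gives acyclicity of $\gsimpchain{\bullet}{\gsimp{n}}$ as a lemma, and then the algebraic K\"unneth formula for chain complexes of free abelian groups transfers it to the tensor product.

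With freeness and acyclicity secured, the acyclic models theorem produces natural chain maps $\alpha\colon F\to G$ and $\beta\colon G\to F$ extending the canonical isomorphism on $H_0$, any two such natural maps between these complexes being naturally chain homotopic; applying the uniqueness clause to $\beta\alpha$ versus $\mathrm{id}_F$ and to $\alpha\beta$ versus $\mathrm{id}_G$ gives the desired chain homotopy equivalence. The cubical statement follows by the same scheme, with the models replaced by $(|\Box^p|^{(J,\times)},|\Box^q|^{(J,\times)})$ and with cubes and their $\times$-products contracted analogously.

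The principal obstacle is the acyclicity step: one must check that the vertex contractions of $\gsimp{p}\times\gsimp{q}$ and of $|\Box^p|^{(J,\times)}\times|\Box^q|^{(J,\times)}$ really are continuous with respect to the product $\times$ and induce $(J,\times)$-homotopies to constants, since closure-space contractibility is more delicate than its topological counterpart and the specific interval object inside $J$ matters. Once this contractibility lemma is pinned down, the rest of the argument is the standard acyclic models machinery.
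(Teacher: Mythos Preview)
Your proposal is correct and follows essentially the same route as the paper: acyclic models on $\cat{Cl}\times\cat{Cl}$ with models the pairs $(\gsimp{p},\gsimp{q})$ (resp.\ $(|\Box^p|^{(J,\times)},|\Box^q|^{(J,\times)})$), freeness of $F$ via the diagonal/universal property of the product, and acyclicity via $(J,\times)$-contractibility of the model simplices and their products. The only minor divergence is in showing $G$ is acyclic on models: you invoke the algebraic K\"unneth formula for free complexes, whereas the paper builds an explicit chain contraction of $\gsimpchain{\bullet}{M}\otimes\gsimpchain{\bullet}{N}$ from the prism operators on each factor (its \cref{lemma:product_of_complexes_of_contractible spaces}); both are valid and the difference is cosmetic.
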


Other analogues of classical results from algebraic topology such as Mayer-Vietoris long exact sequences and excision theorems are also presented in this work.

\begin{theorem}[Mayer-Vietoris]
Let $X$ be a closure space and let $\{A,B\}$ be an interior cover of $X$. Then we have the following long exact sequences:
\begin{gather*}\cdots\to \gsimphom{n}{A\cap B}\to \gsimphom{n}{A}\oplus \gsimphom{n}{B}\to \gsimphom{n}{X}\to \gsimphom{n-1}{A}\to\cdots\\
\cdots\to \gcubesimphom{n}{A\cap B}\to \gcubesimphom{n}{A}\oplus \gcubesimphom{n}{B}\to \gcubesimphom{n}{X}\to \gcubesimphom{n-1}{A}\to\cdots
\end{gather*}
\end{theorem}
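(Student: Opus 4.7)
The plan is to derive the Mayer--Vietoris sequence formally from excision and the long exact sequence of a pair, both of which have been established in earlier sections for the $(J,\times)$ homology theories (equivalently, for the three simplicial homology theories $\gsimphom{\bullet}{-}$ and their cubical analogues $\gcubesimphom{\bullet}{-}$). Once those tools are in hand, the deduction is standard homological algebra; the only geometric content here is verifying that an interior cover supplies the excision hypothesis.

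First I would set $Z \defeq X \setminus B$. Because $\{A,B\}$ is an interior cover of $X$, we have $X = \text{int}(A) \cup \text{int}(B)$, so $Z \subseteq X \setminus \text{int}(B) \subseteq \text{int}(A)$, which is exactly the containment required by the excision axiom proved earlier for these theories. Applying excision to the pair $(X, A)$ with this $Z$ produces isomorphisms
$$\gsimphom{n}{B,\, A \cap B} \xto{\isom} \gsimphom{n}{X,\, A}, \qquad \gcubesimphom{n}{B,\, A \cap B} \xto{\isom} \gcubesimphom{n}{X,\, A}$$
for every $n$, induced by the inclusion $(B, A \cap B) = (X \setminus Z,\, A \setminus Z) \hookrightarrow (X, A)$.

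Next I would compare the long exact sequences of the pairs $(B, A \cap B)$ and $(X, A)$. The inclusion induces a commutative ladder between them, and by the previous step the vertical maps on the relative groups are isomorphisms. A standard diagram chase (the Mayer--Vietoris / Barratt--Whitehead lemma) then extracts the desired long exact sequence: the map $\gsimphom{n}{A \cap B} \to \gsimphom{n}{A} \oplus \gsimphom{n}{B}$ is the pair of inclusion-induced maps, the map $\gsimphom{n}{A} \oplus \gsimphom{n}{B} \to \gsimphom{n}{X}$ is the difference of inclusion-induced maps, and the connecting homomorphism $\gsimphom{n}{X} \to \gsimphom{n-1}{A \cap B}$ is assembled from the connecting map of the pair $(X, A)$ composed with the inverse of the excision isomorphism and the connecting map of the pair $(B, A \cap B)$. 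The identical argument handles the cubical case verbatim.

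The main obstacle is not this deduction---which is formal---but the excision axiom itself, whose proof for closure spaces requires a subdivision-type argument for the singular chain complexes adapted to the closure structure. This also explains the restriction of the theorem to the $(J,\times)$ theories: for the $(J,\boxdot)$ variants excision remains open (as noted in the abstract), so this strategy does not yet yield a Mayer--Vietoris sequence for them.
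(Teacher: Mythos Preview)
Your argument is correct, but it is a genuinely different route from the paper's. The paper proves Mayer--Vietoris \emph{directly at the chain level}: it writes down the short exact sequence
\[
0 \to \gsimpchain{\bullet}{A\cap B}\xrightarrow{\varphi}\gsimpchain{\bullet}{A}\oplus\gsimpchain{\bullet}{B}\xrightarrow{\psi}\gsimpchain{\bullet}{X}^{\{A,B\}}\to 0
\]
with $\varphi(x)=(x,-x)$ and $\psi(x,y)=x+y$, and then invokes \cref{prop:chain_of_interior_cover} to identify the homology of the small-chain complex $\gsimpchain{\bullet}{X}^{\{A,B\}}$ with $\gsimphom{\bullet}{X}$. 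Your approach is the \emph{axiomatic} one: you deduce Mayer--Vietoris from excision plus the long exact sequence of a pair via the Barratt--Whitehead ladder. Both are standard; your route has the virtue of working for any theory satisfying the Eilenberg--Steenrod axioms (so it would immediately upgrade if excision were ever proved for the $\boxdot$-theories), while the paper's route is more explicit and gives the maps concretely on chains.

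One small wrinkle in your write-up: you check $Z\subseteq i(A)$, but the excision hypothesis is $c(Z)\subseteq i(A)$. This is easy to fix---since $Z=X\setminus B$ one has $c(Z)=X\setminus i(B)$, and the interior-cover condition $X=i(A)\cup i(B)$ gives $c(Z)\subseteq i(A)$ directly---or you can simply cite the second formulation of \cref{theorem:excision}, which already takes an interior cover as its hypothesis.
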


\begin{theorem}[Excision]
Let $(X,c)$ be a closure space and $i$ the induced interior operator. Let $Z\subseteq A\subseteq X$ be such that $c(Z)\subseteq i(A)$. Then, the inclusion $(X-Z,A-Z)\to (X,A)$ induces isomorphisms $\gsimphom{n}{X-Z,X-A)}\cong \gsimphom{n}{X,A}$ and $\gcubesimphom{n}{X-Z,X-A}\cong \gcubesimphom{n}{X,A}$ for all $n$. Equivalently, for an interior cover $\{A,B\}$ of $(X,c)$, the inclusion $(B,A\cap B)\to (X,A)$ induces isomorphisms $\gsimphom{n}{B,A\cap B}\cong \gsimphom{n}{X,A}$ and $\gcubesimphom{n}{B,A\cap B}\cong \gcubesimphom{n}{X,A}$ for all $n$.
\end{theorem}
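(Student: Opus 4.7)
The plan is to prove excision via the classical small chains / barycentric subdivision strategy, adapted to the closure space setting. Since the second formulation (with an interior cover $\{A,B\}$) is equivalent to the first via $B = X - Z$, I would work with the interior cover formulation, since interior covers are the natural closure space analogue of open covers and are what the subdivision machinery actually needs.

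First, for any interior cover $\mathcal{U}$ of $X$, I would define the subcomplex $C_\bullet^{\mathcal{U}}(X)\subseteq C_\bullet(X)$ (in both the simplicial $\gsimpchain{\bullet}{X}$ and cubical $\gcubesimpchain{\bullet}{X}$ versions) consisting of chains each of whose singular simplices/cubes factors through some member of $\mathcal{U}$. Since the boundary of such a simplex remains supported in the same member of $\mathcal{U}$, this is indeed a subcomplex. The excision theorem reduces, by the standard five-lemma and short exact sequence argument applied to the pair $(X,A)$ and the cover $\{A,B\}$, to showing that the inclusion $C_\bullet^{\mathcal{U}}(X)\incl C_\bullet(X)$ is a chain homotopy equivalence.

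Next, I would construct a subdivision operator $S:C_\bullet(X)\to C_\bullet(X)$ together with a natural chain homotopy $T:S\simeq \mathrm{id}$. In the simplicial case, this should be the usual barycentric subdivision induced from a subdivision on $\gsimp{n}$, extended to singular chains by functoriality; in the cubical case, a cubical subdivision that cuts each $\gcube{n}$ into $2^n$ sub-cubes. The verification that $\partial S = S\partial$ and that $\partial T + T\partial = S - \mathrm{id}$ is formal from the construction on the models, provided the subdivision map is continuous as a map of closure spaces $\gsimp{n}\to \gsimp{n}$ (respectively cubes). For $J\in\{J_+,J_1\}$ the models $\gsimp{n}$ and $\gcube{n}$ are combinatorial and subdivision becomes a purely combinatorial map, so continuity is automatic; for $J=I$ one uses the affine structure of $[0,1]^n$.

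The main obstacle, and the step needing genuine input from closure space theory, is the analogue of the Lebesgue number lemma: given any singular simplex $\sigma:\gsimp{n}\to X$ and any interior cover $\mathcal{U}$ of $X$, there exists $m\ge 0$ such that every simplex appearing in $S^m\sigma$ has image contained in some $U\in\mathcal{U}$. For $J=I$ one can mimic the topological argument, using that $\sigma^{-1}(i(U))$ for $U\in\mathcal{U}$ forms an interior cover of $\gsimp{n}$ and invoking compactness of the underlying set together with the fact that the closure on $\gsimp{n}$ is induced from the standard topology; the diameter of sub-simplices of $S^m\gsimp{n}$ shrinks to zero. For $J\in\{J_+,J_1\}$ the simplices and cubes are finite combinatorial objects, so smallness after finitely many subdivisions is immediate once one verifies that the combinatorial subdivision eventually forces each sub-simplex into a star of a single vertex.

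Granting this smallness property, a standard argument produces, for each chain $c$, an iteration $m(c)$ and a chain homotopy built from $T$ witnessing $S^{m(c)} c - c = \partial D c + D \partial c$, with $S^{m(c)} c\in C_\bullet^{\mathcal{U}}(X)$. This gives the desired homotopy inverse to the inclusion $C_\bullet^{\mathcal{U}}(X)\incl C_\bullet(X)$. Passing to the quotient by $C_\bullet(A)$ (respectively by $C_\bullet^{\mathcal{U}\cap A}(A)$) and noting that the natural map $C_\bullet(B)/C_\bullet(A\cap B)\to C_\bullet^{\{A,B\}}(X)/C_\bullet(A)$ is an isomorphism of chain complexes (every simplex in $C_\bullet^{\{A,B\}}(X)$ not already in $C_\bullet(A)$ lies in $C_\bullet(B)$, and its overlap with $C_\bullet(A)$ is exactly $C_\bullet(A\cap B)$) yields the excision isomorphism on homology. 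The argument runs identically for the cubical version $\gcubesimphom{\bullet}{-}$ after replacing simplicial subdivision with cubical subdivision.
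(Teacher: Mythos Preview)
Your overall architecture matches the paper's: reduce excision to the statement that the inclusion of $\mathcal{U}$-small chains into all chains is a chain homotopy equivalence, then observe that $\gsimpchain{\bullet}{B}/\gsimpchain{\bullet}{A\cap B}\to \gsimpchain{\bullet}{X}^{\{A,B\}}/\gsimpchain{\bullet}{A}$ is an isomorphism of complexes. For $J=I$ your barycentric subdivision plan is exactly what the paper invokes (citing an adaptation of the classical argument to closure spaces).

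The gap is in the cases $J\in\{J_+,J_1\}$. You assert that ``subdivision becomes a purely combinatorial map'' and that ``smallness after finitely many subdivisions is immediate once one verifies that the combinatorial subdivision eventually forces each sub-simplex into a star of a single vertex,'' but you have not defined what barycentric subdivision of $\indsimp{n}$ (the $(n{+}1)$-point indiscrete space) or of $\disimp{n}$ (the $(n{+}1)$-point space with down-set topology) means, and there is no evident candidate: these models have no barycenters and no notion of diameter that a subdivision could shrink. The paper avoids this entirely by a much sharper observation: for these $J$ one has the \emph{equality} $\gsimpchain{\bullet}{X}^{\mathcal{U}}=\gsimpchain{\bullet}{X}$. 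The reason is that $\gsimp{n}$ is a compact topological space whose only open sets are, respectively, $\emptyset$ and the whole space (for $J_1$) or the down-sets $[m]$ (for $J_+$); pulling back the interior cover along any $\sigma:\gsimp{n}\to X$ gives an open cover of $\gsimp{n}$, which must contain $\gsimp{n}$ itself, so $\sigma$ already factors through some $U\in\mathcal{U}$. No subdivision is needed.

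A second, smaller divergence: for the cubical theory $\gcubesimphom{\bullet}{-}$ you propose an independent cubical subdivision. The paper instead transports the simplicial result across the natural chain homotopy equivalence $\gsimpchain{\bullet}{X}\simeq\gcubesimpchain{\bullet}{X}$ obtained from acyclic models, which restricts to a chain homotopy equivalence on the $\mathcal{U}$-small subcomplexes; this spares one from building and verifying a separate cubical subdivision operator.
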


 We also use these results to formulate the analogues of Eilenberg-Steenrod axioms for a homology theory of closure spaces and then show existence for three of these.  
 From the simplicial and cubical singular chain complexes we define we can also dually define simplicial and cubical singular cochain complexes and their respective cohomology groups. By duality, all of the above results for homology groups translate to cohomology statements. Besides these major results, the paper is full of examples and remarks about proof strategies and ideas that can be used for future work in the field.

This paper is structured as follows. In \cref{section:background} we provide background and results for closure spaces. In \cref{section:closure_spaces_and_simplicial_sets} we review simplicial sets and cubical sets with connections. We then use Dold-Kan correspondences to define homology groups of closure spaces. In \cref{section:homotopy} we recall notions of homotopy for closure spaces and show that our homology groups have corresponding compatible homotopy invariance properties. In \cref{section:homology} we prove analogues of classical results from algebraic topology for our homology groups such as Eilenberg-Steenrod axioms and Mayer-Vietories long exact sequences and K\"unneth theorems. In \cref{section:cohomology} we define multiple cohomology groups of closure spaces and also show analogues of classical results for these. 

\subsection*{Related work}
Extending homotopy theory from topological spaces to closure spaces has been considered by Rieser \cite{rieser2017cech} and Demaria and Bogin \cite{demaria1984homotopy,demaria1985shape}. The homotopy notions and homology groups studied in this paper were originally  developed in \cite{bubenik2021applied} where the homotopy invariance of some of the homology groups was proven. \v{C}ech (co)homology for closure spaces was developed in the PhD thesis of Palacios \cite{vela2019homology}. Palacios also showed Mayer-Vietoris long exact sequences and Eilenberg-Steenord axiomatization in their work.
A collection of closure spaces of interest is given by symmetric and reflexive relations, or equivalently, by simple undirected graphs. These were studied by Poincar\'e, who called them physical continua \cite{poincare2012value}, by Zeeman, who called them tolerance spaces 
\cite{zeeman1962topology}, and by Poston, who called them fuzzy spaces \cite{poston1971fuzzy}. In the language of closure spaces, they are also known as semi-uniform quasi-discrete spaces \cite{cech1966topological,bubenik2021applied}. Vietoris-Rips homology groups for semi-uniform spaces were introduced by Rieser in \cite{rieser2020vietoris} and analogues of Eilenberg-Steenrod axioms were shown for these. When the underlying set is a finite subset of the integer lattice $\mathbb{Z}^n$,
they are called digital images and are studied in digital topology \cite{kong1992concepts}. $A$-theory, also called discrete homotopy theory, was was developed for simple undirected graphs \cite{kramer1998combinatorial,barcelo2001foundations,barcelo2005perspectives,babson2006homotopy}.
Barcelo, Capraro and White \cite{barcelo2014discrete} extended this work and developed a compatible discrete homology theory for metric spaces. Our work extends these theories to  closure spaces. 
A directed analogue of $A$-homotopy for simple directed graphs has also been studied \cite{grigor2014homotopy}. More recently, Dochtermann and Singh, defined several homotopy theories for directed graphs, using both cylinder and path objects \cite{dochtermann2021homomorphism}.
 Our work also extends these ideas to closure spaces. In \cite{rieser2021grothendieck}, Rieser introduces a novel sheaf theory for closure spaces. 

\section{Preliminaries}
\label{section:background}
In this section we provide background on \v{C}ech closure spaces. For more details, see  \cite{vcech1966topological,rieser2017cech,bubenik2021applied}. 

\subsection{Closure spaces}
\label{section:definitions}
We start by recalling basic definitions and facts about closure spaces.

\begin{definition}
\label{def:closure_spaces}
For a set $X$, a function $c:\mathcal{P}(X)\to \mathcal{P}(X)$ is called a \emph{closure operation} (or simply \emph{closure}) for $X$ if the following conditions are met:
\begin{itemize}
\item[1)] $c(\emptyset)=\emptyset$,
\item[2)] $A\subset c(A)$ for all $A\subset X$,
\item[3)] $c(A\cup B)=c(A)\cup c(B)$ for all $A,B\subset X$.
\end{itemize}
An ordered pair $(X,c)$ where $X$ is a set and $c$ a closure for $X$ is called a \emph{(\v{C}ech) closure space}. Elements of $X$ are called \emph{points}.
\end{definition}

Let $(X,c)$ be a closure space. From the definition, one can easily show the following.

\begin{lemma}
\label{lemma:closures_are_monotone} 
If $A\subset B\subset X$, then $c(A)\subset c(B)$. 
\end{lemma}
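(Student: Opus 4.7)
The plan is to deduce monotonicity directly from axiom 3 of \cref{def:closure_spaces}, which is the only one of the three axioms that relates closures of different sets. The key observation is purely set-theoretic: if $A \subset B$, then $A \cup B = B$. Applying the closure operator $c$ to both sides and invoking the union axiom gives $c(B) = c(A \cup B) = c(A) \cup c(B)$, and since $c(A) \subset c(A) \cup c(B)$, we obtain $c(A) \subset c(B)$.

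There is no real obstacle here; axioms 1 and 2 (preservation of the empty set and extensivity) are not needed, and the argument reduces to a one-line calculation. I would present the proof as a single chain of equalities and inclusions, making explicit the use of axiom 3 and of the identity $A \cup B = B$ for $A \subset B$, without further elaboration.
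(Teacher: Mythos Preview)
Your argument is correct and is exactly the standard derivation from axiom 3 of \cref{def:closure_spaces}; the paper does not spell out a proof but simply remarks that the lemma follows easily from the definition, which is precisely what you do.
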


 A subset $A\subset X$ is \emph{closed} if $c(A)=A$. A subset $A\subset X$ is \emph{open} if $X-A$ is closed.

\begin{definition}
\label{def:interior}
A closure space $(X,c)$ has an associated \emph{interior operation} for $X$, $i_c:\mathcal{P}(X)\to \mathcal{P}(X)$ defined by 
\[i_c(A):=X-c(X-A).\]
From the definitions, one can check the following:
\begin{itemize}
\item[1)] $i_c(X)=X$,
\item[2)] For all $A\subset X$, $i_c(A)\subset A$,
\item[3)] For all $A,B\subset X$, $i_c(A\cap B)=i_c(A)\cap i_c(B)$.
\end{itemize}

Conversely, if $i:\mathcal{P}(X)\to \mathcal{P}(X)$ is a function satisfying the 3 conditions in \cref{def:interior}, we can define $c_{i}:\mathcal{P}(X)\to \mathcal{P}(X)$ by 
\[c_{i}(A):=X-i(X-A).\]
As it turns out, $c_{i}$ is a closure operation for $X$. Often times we will write just $i$, instead of $i_c$ when the closure operation $c$ is clear from context. Similarly, we will just write $c$, instead of $c_i$ when the interior operation $i$ is clear from context.

\begin{proposition}{\cite[14.A.12]{vcech1966topological}}
\label{proposition:open_if_equal_to_interior}
 $A\subset X$ is open iff $i(A)=A$.
\end{proposition}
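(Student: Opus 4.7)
The plan is to unfold both sides of the biconditional directly from the definitions, since this is essentially a bookkeeping statement about the duality between $c$ and $i$ via complementation. There is no real obstacle here; the only thing to be careful about is to use $X - (X - A) = A$ silently and to note that the definition of "open" routes through "closed," which in turn routes through the closure operator.

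For the forward direction, suppose $A \subset X$ is open. By definition this means $X - A$ is closed, i.e.\ $c(X - A) = X - A$. Then applying the definition of $i$ gives
\[
i(A) = X - c(X - A) = X - (X - A) = A.
\]
For the converse, suppose $i(A) = A$. Then $X - c(X - A) = A$, so taking complements yields $c(X - A) = X - A$, which says exactly that $X - A$ is closed, hence $A$ is open. Both directions are immediate from the definition of $i$ in \cref{def:interior} and the definition of open given just before it, so this proposition essentially records the elementary fact that the closure/interior duality exchanges the fixed-point conditions "closed" and "open."
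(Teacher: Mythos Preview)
Your proof is correct; it is the standard direct unwinding of the definitions of open, closed, and the interior operator. The paper itself does not supply a proof of this proposition but merely cites \v{C}ech, so there is nothing to compare against beyond noting that your argument is exactly the elementary duality computation one would expect.
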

\end{definition}

\begin{definition}{\cite[Definition 14.B.1]{vcech1966topological}}
\label{def:neighborhood}
 Let $A\subset X$. A subset $B\subset X$ is a \emph{neighborhood} of $A$ if $A\subset i(B)$. If $A=\{x\}$, we say $B$ is a neighborhood of $x\in X$. The \emph{neighborhood system of $A$} is the collection of all neighborhoods of $A$.
\end{definition}

\begin{definition}
\label{def:coarser_finer_closure_operations}
 Suppose $c_1$ and $c_2$ are two closure operations for $X$. We say $c_1$ is \emph{coarser} than $c_2$  and $c_2$ is \emph{finer} than $c_1$ if $c_2(A)\subset c_1(A)$ for all $A\subset X$.
\end{definition}

\begin{example}
\label{example:discrete_and_indiscrete_closures}
For a set $X$, the identity map $\mathbf{1}_{\mathcal{P}(X)}:\mathcal{P}(X)\to \mathcal{P}(X)$ is a closure operation for $X$. It is called the \emph{discrete closure for} $X$, and it is the minimal closure operation we can place on $X$. We also have the \emph{indiscrete closure for} $X$, defined by $A\mapsto X$ for $A\neq \emptyset$ and $\emptyset\mapsto\emptyset$, which is also the maximal closure we can place on $X$.
\end{example}

\begin{definition}
\label{def:topological_spaces}
A \emph{topological closure operation} for a set $X$ is a closure $c$ for $X$ satisfying the additional requirement that $c(c(A))=c(A)$, for all $A\subset X$.
A closure operation like this is also called a \emph{Kuratowski closure operation} (see \cite[Remark 2.19]{rieser2017cech}). $(X,c)$ is called a \emph{topological space} (Kuratowski closure space) if $c$ is \emph{topological}.
\end{definition}

\begin{example}{\cite[Example 2.17]{rieser2017cech}}
\label{example:topological_space_is_closure_space}
Let $(X,\tau)$ be a topological space. Then the operation of taking the closure of a subset $A\subseteq X$ defines a closure operation for $X$, which is topological.
\end{example}

\begin{remark}{\cite[Example 2.17]{rieser2017cech} }
\label{remark:topological_spaces_are_closure_spaces}
\cref{def:topological_spaces} is equivalent to the standard definition of a topological space in the sense that the collection of open sets obtained from a topological closure form a topology and that the closure of a set is the usual closure in this topology.
\end{remark}

Hence, every topological space is a closure space. On the other hand, closure spaces where the closure operation does not come from taking topological closures induced by a topology one can put on the space in question are common (\cref{example:closure_space_not_a_topological_space}).

\begin{example}
\label{example:closure_space_not_a_topological_space}
Give $\mathbb{R}^n$ the euclidean metric $d$. Let $r>0$ and define $c_r$ be the closure operation for $\mathbb{R}^n$ by $c_r(A):=\{x\in \mathbb{R}^n\, |\, \text{dist}(x, A)\le r\}$ for all $A\subset \mathbb{R}^n$,  where $\text{dist}(x,A):=\inf_{y\in A}d(x,y)$. Let $U=\{x\in \mathbb{R}^n\,|\,d(x,0)\le 1\}$. Then $c_r(c_r(U))\neq c_r(U)$. Hence, there is no topology we can give $\mathbb{R}^n$ so that $c_r$ corresponds to a closure operation induced by said topology.
\end{example}

\begin{definition}[{\cite[Definition 17.A.17]{vcech1966topological} and \cite[Definition 2.6]{rieser2021grothendieck}}]
\label{def:interior_cover}

\begin{enumerate}
\item  A \emph{cover} of $(X,c)$ is a family of subsets of $X$, $\mathcal{U}=\{U_{j}\}_{j\in J}$, such that $X=\bigcup_{j\in J}U_j$. 
\item We say $\mathcal{U}$ is an \emph{interior cover} of $(X,c)$ if every point $x\in X$ has a neighborhood in $\mathcal{U}$. That is, $X=\bigcup_{j\in J}i(U_j)$.
\item We say $\mathcal{U}$ is an \emph{open cover} if every $U_j$ is open and a \emph{closed cover} if every $U_j$ is closed. Note that any open cover of $(X,c)$ is an interior cover. 
 \end{enumerate}
\end{definition}

\begin{definition}
\label{def:locally_finite}
A family $\{U_{\alpha}\, |\, \alpha\in A \}$ of subsets of a closure space $(X,c)$ is called \emph{locally finite} of each point $x\in X$ possesses a neighborhood intersecting only finitely many $U_{\alpha}$.
\end{definition}

\subsection{Continuous maps}

Unless otherwise specified, $(X,c)$ and $(Y,d)$ will be closure spaces whenever written.

\begin{definition}{\cite[Definition 16.A.1]{vcech1966topological}}
\label{def:continuous_functions}
 A map $f:(X,c)\to (Y,d)$ is \emph{continuous at} $x\in X$ if for all $A\subset X$ such that $x\in c(A)$ it follows that $f(x)\in d(f(A))$. If $f$ is continuous at every $x\in X$, $f$ is called \emph{continuous}. Equivalently, $f$ is continuous if for every $A\subset X$, $f(c(A))\subseteq d(f(A))$. A continuous map $f$ is called a \emph{homeomorphism} if $f$ is a bijection with a continuous inverse.
\end{definition}

 Observe that a closure $c_1$ for $X$ is coarser than $c_2$ for $X$ iff the identity map $\mathbf{1}_X:(X,c_2)\to (X,c_1)$ is continuous.

\begin{lemma}{\cite[Lemma 4.22]{rieser2017cech}}
\label{lemma:homeomorphism}
Let $f:(X,c)\to (Y,d)$ be a bijection. Then $f(c(A))=d(f(A))$ for all $A\subset X$ iff $f$ is a homeomorphism.
\end{lemma}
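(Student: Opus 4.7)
The plan is to prove both implications by unpacking the characterization of continuity stated right before the lemma, namely that a map $g\colon (X,c)\to(Y,d)$ is continuous if and only if $g(c(A))\subseteq d(g(A))$ for every $A\subset X$, and by crucially exploiting bijectivity of $f$ to convert the ``forward'' continuity inequality into one going the other direction.

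For the $(\Rightarrow)$ direction, suppose $f(c(A))=d(f(A))$ for every $A\subset X$. First, the inclusion $f(c(A))\subseteq d(f(A))$ is immediate and so $f$ is continuous. To check that $f^{-1}\colon (Y,d)\to(X,c)$ is continuous I would verify the characterization $f^{-1}(d(B))\subseteq c(f^{-1}(B))$ for each $B\subset Y$. Since $f$ is a bijection, I can write $B=f(A)$ for the unique $A=f^{-1}(B)\subset X$; then the hypothesis gives $d(B)=d(f(A))=f(c(A))$, and applying $f^{-1}$ (again using bijectivity) yields $f^{-1}(d(B))=c(A)=c(f^{-1}(B))$, in fact with equality. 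Hence $f^{-1}$ is continuous and $f$ is a homeomorphism.

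For the $(\Leftarrow)$ direction, suppose $f$ is a homeomorphism. Continuity of $f$ directly gives $f(c(A))\subseteq d(f(A))$ for every $A\subset X$. For the reverse inclusion I would apply continuity of $f^{-1}$: for every $B\subset Y$ one has $f^{-1}(d(B))\subseteq c(f^{-1}(B))$. Specializing to $B=f(A)$, bijectivity gives $f^{-1}(d(f(A)))\subseteq c(A)$, and then applying $f$ to both sides and using that $f$ is surjective (so $f(f^{-1}(S))=S$ for any $S\subset Y$) produces $d(f(A))\subseteq f(c(A))$. Combined with the forward inclusion this yields equality.

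I do not anticipate a real obstacle: the argument is entirely formal once one keeps track of where bijectivity is used (to freely pass $A\leftrightarrow f(A)$ and $B\leftrightarrow f^{-1}(B)$ without spurious inclusions), and no iteration or idempotence of the closure operator is needed, so the proof works at the level of general closure spaces rather than only topological ones.
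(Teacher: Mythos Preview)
Your proof is correct. The paper does not supply its own proof of this lemma; it simply cites \cite[Lemma 4.22]{rieser2017cech}, so there is no argument in the paper to compare against. Your argument is the standard one and uses exactly the characterization of continuity given in Definition~\ref{def:continuous_functions}, together with bijectivity to move freely between $A$ and $f(A)$.
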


\begin{theorem}{\cite[Theorem 16.A.4 and Corollary 16.A.5]{vcech1966topological}}
\label{theorem:equivalent_definitions_of_continuity}
 A map $f:(X,c)\to (Y,d)$ is continuous at $x\in X$ iff for all neighborhoods $V\subset Y$ of $f(x)$, the inverse image $f^{-1}(V)$ is a neighborhood of $x$. Equivalently, $f$ is continuous at $x$ iff for each neighborhood $V\subset Y$ of $f(x)$, there exists a neighborhood $U\subset X$ of $x$ such that $f(U)\subset V$. 
\end{theorem}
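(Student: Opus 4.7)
The plan is to establish the equivalence of three conditions at $x\in X$: (1) continuity in the sense of \cref{def:continuous_functions}, namely $x\in c(A)$ implies $f(x)\in d(f(A))$ for every $A\subset X$; (2) for every neighborhood $V$ of $f(x)$, the preimage $f^{-1}(V)$ is a neighborhood of $x$; and (3) for every neighborhood $V$ of $f(x)$ there exists a neighborhood $U$ of $x$ with $f(U)\subset V$. I would first dispose of (2)$\iff$(3). The implication (2)$\Rightarrow$(3) is immediate by taking $U=f^{-1}(V)$, which is a neighborhood of $x$ and satisfies $f(U)\subset V$. Conversely, if (3) holds and $V$ is a neighborhood of $f(x)$, pick $U$ as in (3); then $U\subset f^{-1}(V)$, and monotonicity of the interior operation (a direct consequence of \cref{lemma:closures_are_monotone} applied to complements) gives $x\in i(U)\subset i(f^{-1}(V))$, so $f^{-1}(V)$ is a neighborhood of $x$.

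For (1)$\Rightarrow$(2) I argue by contrapositive: suppose $V$ is a neighborhood of $f(x)$ but $f^{-1}(V)$ is not a neighborhood of $x$, so that $x\in c(X-f^{-1}(V))$. Setting $A:=X-f^{-1}(V)$, continuity at $x$ yields $f(x)\in d(f(A))$. Since $a\notin f^{-1}(V)$ forces $f(a)\notin V$, we have $f(A)\subset Y-V$, and \cref{lemma:closures_are_monotone} then gives $d(f(A))\subset d(Y-V)$. Hence $f(x)\in d(Y-V)$, contradicting the hypothesis that $f(x)\in i(V)=Y-d(Y-V)$.

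The direction (2)$\Rightarrow$(1) is the subtlest step, and also goes by contrapositive. Assume there exists $A\subset X$ with $x\in c(A)$ and $f(x)\notin d(f(A))$. The key choice is $V:=Y-f(A)$ rather than the more tempting $V':=Y-d(f(A))$: verifying that $V'$ is a neighborhood of $f(x)$ would require $f(x)\notin d(d(f(A)))$, which in a closure space need not follow from $f(x)\notin d(f(A))$ because $d$ is not assumed idempotent. With $V=Y-f(A)$, the complement is $Y-V=f(A)$, so $V$ is a neighborhood of $f(x)$ precisely because $f(x)\notin d(f(A))$ by hypothesis. By (2), $f^{-1}(V)=X-f^{-1}(f(A))$ is a neighborhood of $x$, i.e., $x\notin c(f^{-1}(f(A)))$. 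But $A\subset f^{-1}(f(A))$, so by \cref{lemma:closures_are_monotone} we obtain $x\in c(A)\subset c(f^{-1}(f(A)))$, a contradiction. The only genuine obstacle in the proof is this idempotence-avoidance point; the remainder is bookkeeping with the definitions of neighborhood, interior operation, and preimage.
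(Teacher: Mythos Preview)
The paper does not supply its own proof of this theorem; it is stated with a citation to \v{C}ech's book and left unproved. Your argument is correct and is essentially the standard one: the equivalence (2)$\iff$(3) is bookkeeping with monotonicity of the interior, and the two contrapositive arguments for (1)$\iff$(2) are exactly what one finds in \v{C}ech. Your remark about choosing $V=Y\setminus f(A)$ rather than $V=Y\setminus d(f(A))$ to avoid an unwarranted appeal to idempotence is a genuinely useful observation in the closure-space setting, though it is not something the paper itself discusses.
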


\begin{proposition}{\cite[16.A.3]{vcech1966topological}}
\label{prop:composition_of_cont_maps}
Let $f:(X,c)\to (Y,d)$ and $g:(Y,d)\to (Z,e)$ be continuous maps of closure spaces. Then the composition $g\circ f:(X,c)\to (Z,e)$ is continuous.
\end{proposition}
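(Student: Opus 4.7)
The plan is to use the characterization of continuity in terms of closures given in \cref{def:continuous_functions}: a map $h$ between closure spaces is continuous if and only if $h$ sends closures into closures, i.e.\ $h(c(A))\subseteq d(h(A))$ for every subset $A$ of the domain. With this global characterization in hand, the statement reduces to chasing a two-step containment, so I will avoid the pointwise formulation and work directly at the level of subsets.

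Concretely, I would fix an arbitrary $A\subseteq X$ and show $(g\circ f)(c(A))\subseteq e((g\circ f)(A))$. First I would apply continuity of $f$ to get $f(c(A))\subseteq d(f(A))$. Then I would apply the set-function $g$ to both sides; since $g$ is a map of sets, it preserves inclusions, giving $g(f(c(A)))\subseteq g(d(f(A)))$. Next I would invoke continuity of $g$ on the subset $f(A)\subseteq Y$ to obtain $g(d(f(A)))\subseteq e(g(f(A)))$. Chaining these two inclusions yields
\[
(g\circ f)(c(A))=g(f(c(A)))\subseteq g(d(f(A)))\subseteq e(g(f(A)))=e((g\circ f)(A)),
\]
which is exactly the continuity condition for $g\circ f$.

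There is essentially no obstacle here: the argument uses only the defining inequality of continuity twice and the trivial fact that taking direct image under a function is monotone with respect to inclusion. No closure axiom beyond what is encoded in \cref{def:continuous_functions} is needed, and in particular idempotence of $c$, $d$, or $e$ is never invoked, which is why the statement holds for \v{C}ech closure spaces and not merely for topological ones. If a pointwise version were desired instead, one could equivalently use \cref{theorem:equivalent_definitions_of_continuity} and pull back a neighborhood of $g(f(x))$ twice, but the subset formulation is cleaner and is the form actually needed in the rest of the paper.
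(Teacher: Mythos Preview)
Your argument is correct and is the standard one: two applications of the closure-preservation inequality from \cref{def:continuous_functions}, linked by monotonicity of direct image. Note that the paper does not supply its own proof of this proposition; it simply cites \cite[16.A.3]{vcech1966topological}, so there is nothing to compare against beyond observing that your proof is exactly what one would expect.
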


\begin{theorem}{\cite[17.A.18]{vcech1966topological}}[Pasting Lemma]
\label{theorem:pasting_lemma_for_closure_spaces}
Let $\{U_{\alpha}\,|\,\alpha\in A\}$ be a locally finite closed cover of $(X,c)$. Let $f:(X,c)\to (Y,d)$ be a map of sets. If $f|_{U_{\alpha}}$ is continuous for all $\alpha\in A$, then $f$ is continuous.
\end{theorem}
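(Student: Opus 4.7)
The plan is to verify continuity pointwise using the equivalent characterization that $f$ is continuous at $x$ iff $x \in c(A)$ implies $f(x) \in d(f(A))$ for every $A \subseteq X$. So fix $x \in X$ and $A \subseteq X$ with $x \in c(A)$; the goal is to produce some closed piece $U_\alpha$ containing $x$ on which the hypothesized restriction continuity can be applied to $A$.

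First I would use local finiteness to choose a neighborhood $V$ of $x$ that meets only finitely many cover members, say $U_{\alpha_1},\dots,U_{\alpha_n}$. Since $\{U_\alpha\}$ covers $X$, this forces $V \subseteq \bigcup_{i=1}^n U_{\alpha_i}$, because any point of $V$ lies in some $U_\alpha$, which must then be one of the $U_{\alpha_i}$. Next I would localize $A$ near $x$: write $A = (A\cap V)\cup(A\setminus V)$ and use axiom (3) of \cref{def:closure_spaces} to get $c(A)=c(A\cap V)\cup c(A\setminus V)$. Because $V$ is a neighborhood of $x$, we have $x \in i(V) = X\setminus c(X\setminus V)$, so $x \notin c(X\setminus V)$ and hence $x\notin c(A\setminus V)$ by \cref{lemma:closures_are_monotone}. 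Thus $x\in c(A\cap V)$.

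Now I would further decompose via the cover: $A\cap V = \bigcup_{i=1}^n (A\cap V\cap U_{\alpha_i})$, and one more application of finite additivity of $c$ yields $x\in c(A\cap V\cap U_{\alpha_i})$ for at least one index $i$. Since $U_{\alpha_i}$ is closed, the monotonicity lemma gives $c(A\cap V\cap U_{\alpha_i}) \subseteq c(U_{\alpha_i}) = U_{\alpha_i}$, so in particular $x \in U_{\alpha_i}$. Equip $U_{\alpha_i}$ with the subspace closure $c_{U_{\alpha_i}}(B) := c(B)\cap U_{\alpha_i}$; setting $B = A\cap V\cap U_{\alpha_i}\subseteq U_{\alpha_i}$, the point $x$ lies in $c_{U_{\alpha_i}}(B)$. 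Applying continuity of $f|_{U_{\alpha_i}}$ yields $f(x) \in d(f(B)) \subseteq d(f(A))$, completing the proof.

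The main obstacle I expect is not a deep one, but it requires care: verifying that continuity of $f|_{U_{\alpha_i}}$ in the sense of the hypothesis really does translate to $f(x)\in d(f(B))$ here. This depends on the correct notion of subspace closure for closure spaces and on the fact that $x$ lies in the subspace closure of $B$ inside $U_{\alpha_i}$, which in turn is where closedness of $U_{\alpha_i}$ is genuinely used. Everything else is bookkeeping with the three axioms of a closure operation and the definition of neighborhood via the interior operator.
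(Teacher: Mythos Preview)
Your argument is correct. The paper does not supply its own proof of this statement; it merely cites \v{C}ech's textbook, so there is nothing to compare against beyond confirming validity. Each step you outline goes through: local finiteness yields the finite subfamily covering a neighborhood $V$ of $x$; the neighborhood condition $x\in i(V)$ rules out $x\in c(A\setminus V)$; finite additivity of $c$ then isolates a single closed piece $U_{\alpha_i}$ containing $x$ with $x\in c(A\cap V\cap U_{\alpha_i})$; and the subspace-closure formulation lets you invoke continuity of the restriction to conclude $f(x)\in d(f(A))$. The point you flag as the main obstacle---that closedness of $U_{\alpha_i}$ is exactly what guarantees $x\in U_{\alpha_i}$ so that the restriction applies---is identified correctly and handled cleanly.
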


\begin{theorem}{\cite[Theorem 16.A.10]{vcech1966topological}}
\label{theorem:continuous_maps_of_topological_spaces}
Suppose $(Y,d)$ is a topological space.  A map $f:(X,c)\to (Y,d)$ is continuous iff the inverse image of every open set is open. Equivalently, $f$ is continuous iff the inverse image of every closed set is closed.
\end{theorem}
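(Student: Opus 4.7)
The plan is to prove this as a standard two-direction argument, mirroring the classical topological proof but being careful about where idempotency of the closure on $Y$ is actually used. The topological hypothesis on $(Y,d)$ matters only in the backward direction. Also, the equivalence between the open-set formulation and the closed-set formulation is immediate from the identity $f^{-1}(Y\setminus B)=X\setminus f^{-1}(B)$ together with the definition of open as complement of closed, so I would dispatch it in a single sentence and focus on the closed-set version.

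For the forward direction, assume $f:(X,c)\to(Y,d)$ is continuous and let $F\subseteq Y$ be closed, i.e.\ $d(F)=F$. I want to show $c(f^{-1}(F))=f^{-1}(F)$. The inclusion $f^{-1}(F)\subseteq c(f^{-1}(F))$ is axiom (2) of \cref{def:closure_spaces}. For the reverse inclusion, apply continuity to $A=f^{-1}(F)$ to get $f(c(f^{-1}(F)))\subseteq d(f(f^{-1}(F)))\subseteq d(F)=F$, hence $c(f^{-1}(F))\subseteq f^{-1}(F)$. Note that this direction does not use idempotency of $d$, only the inclusion $f(f^{-1}(F))\subseteq F$ and monotonicity (\cref{lemma:closures_are_monotone}).

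For the backward direction, assume that $f^{-1}(F)$ is closed in $X$ whenever $F$ is closed in $Y$, and take an arbitrary $A\subseteq X$. The goal is $f(c(A))\subseteq d(f(A))$. Here is where the topological hypothesis enters: since $d$ is idempotent, $d(f(A))$ is closed in $Y$, so by assumption $f^{-1}(d(f(A)))$ is closed in $X$. Because $A\subseteq f^{-1}(f(A))\subseteq f^{-1}(d(f(A)))$, monotonicity yields $c(A)\subseteq c(f^{-1}(d(f(A))))=f^{-1}(d(f(A)))$, and applying $f$ gives exactly $f(c(A))\subseteq d(f(A))$, which is continuity by \cref{def:continuous_functions}.

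The main conceptual point, and the only place where any subtlety arises, is the backward direction: without idempotency of $d$ one cannot conclude that $d(f(A))$ is closed, and the argument breaks down. This is essentially why the theorem fails for general closure spaces on the target side, and it is worth noting explicitly for the reader since much of this paper concerns non-topological closures (compare \cref{example:closure_space_not_a_topological_space}).
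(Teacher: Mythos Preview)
Your argument is correct. The paper itself does not supply a proof of this statement; it is simply quoted from \v{C}ech's book, so there is no in-paper proof to compare against. What you have written is the standard argument (and essentially the one in \v{C}ech), and your identification of exactly where idempotency of $d$ is needed---namely, only in the backward direction, to ensure $d(f(A))$ is closed---is accurate and worth keeping as a remark.
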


\begin{definition}
\label{def:closure_space_category}
Denote the category with objects closure spaces and morphism continuous maps between closure spaces by $\cat{Cl}$ and by $\cat{Top}$, the full subcategory of with objects topological spaces.
\end{definition}

\begin{proposition}{\cite[16.B.1-16.B.3]{vcech1966topological}}
\label{prop:topological_modification}
For a closure space $(X,c)$, let $\tau (c):\mathcal{P}(X)\to \mathcal{P}(X)$ by 
\[\tau(c)(A):=\bigcap\{F\subseteq X\,|\, c(F)=F\text{ and } A\subseteq F\}\]
Then $\tau (c)$ is a topological closure operation. By construction, $\tau (c)$ is also the finest topological closure operation coarser than $c$, and is called \emph{the topological modification of $c$}.
\end{proposition}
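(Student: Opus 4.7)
The plan is to establish, in order, a preliminary lemma that arbitrary intersections of $c$-closed sets are $c$-closed; the three closure axioms of \cref{def:closure_spaces} for $\tau(c)$; idempotence of $\tau(c)$; that $\tau(c)$ is coarser than $c$; and finally minimality among topological closures coarser than $c$. The preliminary lemma follows at once from \cref{lemma:closures_are_monotone}: if each $F_i$ is $c$-closed and $F = \bigcap_i F_i$, then $F \subseteq F_i$ gives $c(F) \subseteq c(F_i) = F_i$, hence $c(F) \subseteq F$, and the reverse inclusion is axiom (2). This ensures $\tau(c)(A)$ is a well-defined $c$-closed subset containing $A$.

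The axioms $\tau(c)(\emptyset) = \emptyset$ and $A \subseteq \tau(c)(A)$ are immediate: $\emptyset$ is $c$-closed, and every $F$ in the defining intersection contains $A$. The only substantive axiom is additivity, $\tau(c)(A \cup B) = \tau(c)(A) \cup \tau(c)(B)$. The inclusion $\supseteq$ is clear since any $c$-closed $F$ containing $A \cup B$ contains $A$ and $B$ separately. For the reverse inclusion I would argue contrapositively: if $x \notin \tau(c)(A) \cup \tau(c)(B)$, pick $c$-closed sets $F_A \supseteq A$ and $F_B \supseteq B$ with $x \notin F_A$ and $x \notin F_B$. Then $F_A \cup F_B$ is $c$-closed because $c(F_A \cup F_B) = c(F_A) \cup c(F_B) = F_A \cup F_B$, it contains $A \cup B$, and it misses $x$, so $x \notin \tau(c)(A \cup B)$.

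Idempotence is one line: $\tau(c)(A)$ is itself $c$-closed (by the preliminary lemma) and contains $A$, so it appears in the intersection defining $\tau(c)(\tau(c)(A))$, forcing $\tau(c)(\tau(c)(A)) \subseteq \tau(c)(A)$. For ``$\tau(c)$ coarser than $c$'' in the sense of \cref{def:coarser_finer_closure_operations}, I need $c(A) \subseteq \tau(c)(A)$: for each $F$ in the intersection, $A \subseteq F$ and monotonicity give $c(A) \subseteq c(F) = F$, so $c(A)$ lies in the intersection. For minimality, let $c'$ be any topological closure with $c(B) \subseteq c'(B)$ for all $B$. Then $c'(A)$ is $c$-closed, since $c(c'(A)) \subseteq c'(c'(A)) = c'(A)$ and the reverse inclusion is axiom (2) for $c$, and it contains $A$. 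Hence $c'(A)$ belongs to the intersection defining $\tau(c)(A)$, giving $\tau(c)(A) \subseteq c'(A)$.

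The main obstacle is the additivity axiom: the contrapositive argument requires fabricating a single $c$-closed witness from two, which is exactly what the closure axiom $c(F_A \cup F_B) = c(F_A) \cup c(F_B)$ affords. Once additivity and the intersection-of-closed-is-closed fact are in hand, the remaining claims reduce to one-line verifications, and no heavier machinery is required.
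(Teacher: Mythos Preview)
Your argument is correct and complete. The paper does not supply its own proof of this proposition; it simply cites \v{C}ech's book, so there is nothing to compare against beyond noting that your direct verification of the closure axioms, idempotence, coarseness, and minimality is exactly the standard route and handles every point cleanly.
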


\begin{proposition}{\cite[16.B.4]{vcech1966topological}}
\label{prop:adjoints_between_closures_and_topologies}
Let $(X,c)$ be a closure space and let $(Y,\tau)$ be a topological space. A map of sets $f:X\to Y$ is continuous as a closure space map $f:(X,c)\to (Y,\tau)$ iff the map $f:(X,\tau(c))\to (Y,\tau)$ is a continuous map of topological spaces. That is, there exists a natural bijection between the sets of morphisms
\[\cat{Cl}((X,c),(Y,\tau))\cong \cat{Top}((X,\tau(c)),(Y,\tau))\]
Equivalently, $\tau$ is the left adjoint to the inclusion functor $\iota:\cat{Top}\to \cat{Cl}$.
\end{proposition}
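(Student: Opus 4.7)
The plan is to exhibit the claimed bijection explicitly: send a continuous map $f\colon (X,c)\to (Y,\tau)$ to the same underlying function, viewed as a map $f\colon (X,\tau(c))\to (Y,\tau)$, and conversely. Since the bijection is just the identity on underlying set-maps, naturality in both variables will come for free once we know each direction lands in the correct hom-set, because composition of continuous maps is continuous in $\cat{Cl}$ and in $\cat{Top}$ (\cref{prop:composition_of_cont_maps}).

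The backward direction is essentially formal. By construction, $\tau(c)$ is coarser than $c$: indeed, any $A\subseteq X$ is contained in the $c$-closed set $\tau(c)(A)$, whence $c(A)\subseteq c(\tau(c)(A)) = \tau(c)(A)$. By the observation following \cref{def:coarser_finer_closure_operations}, this says the identity $\mathbf{1}_X\colon (X,c)\to (X,\tau(c))$ is continuous. Therefore, if $f\colon (X,\tau(c))\to (Y,\tau)$ is continuous, then so is the composition $f\circ \mathbf{1}_X=f\colon (X,c)\to (Y,\tau)$.

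For the forward direction, suppose $f\colon (X,c)\to (Y,\tau)$ is continuous and let $F\subseteq Y$ be $\tau$-closed. By \cref{theorem:continuous_maps_of_topological_spaces} applied to $\tau(c)$, it suffices to show $f^{-1}(F)$ is $\tau(c)$-closed. First I show it is already $c$-closed: by continuity, $f(c(f^{-1}(F)))\subseteq \tau(f(f^{-1}(F)))\subseteq \tau(F)=F$, so $c(f^{-1}(F))\subseteq f^{-1}(F)$. Since $f^{-1}(F)$ is now a $c$-closed set containing itself, the intersection defining $\tau(c)(f^{-1}(F))$ in \cref{prop:topological_modification} is contained in $f^{-1}(F)$, giving $\tau(c)(f^{-1}(F))=f^{-1}(F)$, as needed.

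The two directions are inverse to each other because both are the identity at the level of set-maps, yielding the natural bijection
\[\cat{Cl}((X,c),(Y,\tau))\cong \cat{Top}((X,\tau(c)),(Y,\tau)).\]
Since $\tau(c)$ is functorial in $(X,c)$ (a morphism $(X,c)\to (X',c')$ in $\cat{Cl}$ gives a morphism $(X,\tau(c))\to (X',\tau(c'))$ in $\cat{Top}$, as the argument above with $Y=X'$ and $\tau=\tau(c')$ shows), this bijection is the counit-unit adjunction describing $\tau\dashv \iota$. No step here is really the obstacle — the content is entirely packed into the fact that $\tau(c)$ is the \emph{finest} topological closure coarser than $c$, which is already recorded in \cref{prop:topological_modification}; the only thing to watch is orienting the inequality $c(A)\subseteq \tau(c)(A)$ correctly and then noting that preimages of closed sets under the $\cat{Cl}$-continuous $f$ are automatically $c$-closed, hence $\tau(c)$-closed.
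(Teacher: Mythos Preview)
Your proof is correct. The paper does not supply its own proof of this proposition; it simply cites \v{C}ech \cite[16.B.4]{vcech1966topological}. Your argument is the standard one and is exactly what \v{C}ech's proof amounts to: use that $\tau(c)$ is coarser than $c$ (already recorded in \cref{prop:topological_modification}) for one direction, and for the other check that preimages of closed sets are $c$-closed, hence $\tau(c)$-closed. One microscopic point: when you write ``the $c$-closed set $\tau(c)(A)$'' you are implicitly using that arbitrary intersections of $c$-closed sets are $c$-closed (which follows from \cref{lemma:closures_are_monotone}); alternatively you can skip that line entirely and just invoke the ``coarser than $c$'' clause of \cref{prop:topological_modification} directly.
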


\begin{lemma}{\cite[Lemma 4.12]{rieser2017cech}}
\label{lemma:finite_cover_whose_image_is_contained_in_covering_system}
Let $(X,c_\tau)$ be a closure space with topological closure operation $c_{\tau}$ and suppose $X$ is compact with respect to the topology corresponding to $c_{\tau}$. Let $(Y,c)$ be a closure space with interior cover $\mathcal{C}$, and suppose that $f:(X,c_\tau)\to (Y,c)$ is continuous. Then there exists an open cover $\mathcal{U}$ of $(X,c_\tau)$ (finite if one wants) such that for all $U\in \mathcal{U}$, there is a $V\in \mathcal{C}$ such that $f(U)\subset V$.
\end{lemma}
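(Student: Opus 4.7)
The plan is to mimic the classical proof that a continuous map from a compact topological space into a space with an open cover admits a refining finite open cover. The input is only slightly weaker: the cover $\mathcal{C}$ on $(Y,c)$ is an \emph{interior} cover, not an open cover. The adjustment is that for each $y\in Y$ we use the interior $i(V)$ of some $V\in\mathcal{C}$ containing $y$ as our neighborhood, and then pull back along $f$ using the continuity criterion in \cref{theorem:equivalent_definitions_of_continuity}.

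In detail, I would first fix, for each $x\in X$, an element $V_x\in\mathcal{C}$ with $f(x)\in i(V_x)$; this exists by \cref{def:interior_cover}. Since $i(V_x)$ is a neighborhood of $f(x)$, continuity of $f$ at $x$ gives that $f^{-1}(i(V_x))$ is a neighborhood of $x$ in $(X,c_\tau)$. Set
\[
U_x \defeq i_{c_\tau}\bigl(f^{-1}(i(V_x))\bigr).
\]
Then $x\in U_x$ and, because $c_\tau$ is topological (i.e.\ idempotent), the set $U_x$ is open in the topology induced by $c_\tau$; this follows from the standard observation that in a Kuratowski closure space the interior of any subset is open, which one verifies directly from $c_\tau\circ c_\tau=c_\tau$ and \cref{proposition:open_if_equal_to_interior}.

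Thus $\{U_x\}_{x\in X}$ is an open cover of the topological space associated to $(X,c_\tau)$. By compactness, I extract a finite subcover $\mathcal{U}=\{U_{x_1},\dots,U_{x_n}\}$. By construction,
\[
f(U_{x_j})\subseteq f\bigl(f^{-1}(i(V_{x_j}))\bigr)\subseteq i(V_{x_j})\subseteq V_{x_j},
\]
so each $U_{x_j}\in \mathcal{U}$ satisfies $f(U_{x_j})\subset V_{x_j}\in \mathcal{C}$, as required. If one does not insist on finiteness, the cover $\{U_x\}_{x\in X}$ itself already works without appealing to compactness.

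The only nonroutine step is ensuring that $U_x$ is open in the topology associated to $c_\tau$, i.e.\ that in a topological closure space the interior operator sends every subset to an open set. This is genuinely where the hypothesis that $c_\tau$ is topological (idempotent) is used, and also where the compactness assumption is applied, since compactness of $(X,c_\tau)$ is phrased with respect to this associated topology. The continuity of $f$ out of $(X,c_\tau)$, as opposed to out of the topological modification $(X,\tau(c_\tau))=(X,c_\tau)$, is automatic here because $c_\tau$ already equals its topological modification, so \cref{prop:adjoints_between_closures_and_topologies} is not needed; a direct application of \cref{theorem:equivalent_definitions_of_continuity} suffices.
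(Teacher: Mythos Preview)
The paper does not supply its own proof of this lemma; it is quoted from \cite{rieser2017cech}. Your argument is the standard one and is essentially correct, but there is one slip worth flagging.

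You assert that ``$i(V_x)$ is a neighborhood of $f(x)$''. By \cref{def:neighborhood} this would require $f(x)\in i(i(V_x))$, and in a general closure space $(Y,c)$ the interior operator need not be idempotent, so $f(x)\in i(V_x)$ does not imply $f(x)\in i(i(V_x))$. The remedy is immediate: work with $V_x$ rather than $i(V_x)$. Since $f(x)\in i(V_x)$, the set $V_x$ itself is a neighborhood of $f(x)$, so by \cref{theorem:equivalent_definitions_of_continuity} the preimage $f^{-1}(V_x)$ is a neighborhood of $x$ in $(X,c_\tau)$, i.e.\ $x\in i_{c_\tau}(f^{-1}(V_x))$. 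Setting $U_x:=i_{c_\tau}(f^{-1}(V_x))$ then gives an open set (by idempotence of $c_\tau$, as you correctly argue) containing $x$ with $f(U_x)\subseteq f(f^{-1}(V_x))\subseteq V_x$. The remainder of your proof---the compactness extraction of a finite subcover and the observation that the full family $\{U_x\}_{x\in X}$ already suffices if finiteness is not required---goes through unchanged.
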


\subsection{Examples}
We recall the notation introduced in \cite{bubenik2021applied} for examples of closure spaces that are used to define homology and homotopy theories of closure spaces. Let $m\ge 0$.

\begin{definition}
\label{def:intervals}
\begin{enumerate}
\item Let $I$  be the unit interval $[0,1]$ with its standard topology.
\item Let $J_{m,\bot}$ be the set $\{0,\dots, m\}$ with the discrete closure. 
\item Let $J_{m,\top}$ be the set $\{0,\ldots,m\}$ with the indiscrete closure.  
\item Let $J_m$ be the set $\{0,\dots ,m\}$ with the closure $c(i)=\{j\in \{0,\dots, m\}\,|\, |i-j|\le 1\}$. A special case is $J_1 = J_{1,\top}$.
\item Let $J_+$ be the set $\{0,1\}$ with the closure $c_+(0)=\{0,1\}$, $c_+(1)=\{1\}$. Let $J_{-}$ denote the set $\{0,1\}$ with the closure $c_-(0)=\{0\}$, $c_-(1)=\{0,1\}$.
\item For $0 \leq k \leq 2^m-1$, define a closure $c_k$ on the set $\{0,\ldots,m\}$ in the following manner. Consider the binary representation of $k$.
For $1 \leq i \leq m$, $i-1$ is contained in $c_k(i)$ iff the $i$th rightmost bit is $0$ and
$i$ is contained in $c_k(i-1)$ iff the $i$-th rightmost bit is $1$. Let $J_{m,k}$ denote this closure space. Note the special cases, $J_{1,1} = J_{+}$ and $J_{1,0} = J_{-}$.
\item Let $J_{m,\leq}$ be the set $\{0,1,\dots ,m\}$ with the closure $c(i)=\{j\,\mid\, i\le j\}$. This closure is topological with the open sets being the down-sets. A special case is $J_{1,\leq} = J_+$.
\end{enumerate}
\end{definition}

\subsection{Canonical operations}

There are two canonical products of closure spaces, the product closure and the inductive product closure. The category $\cat{Cl}$ is also complete and cocomplete and here we also discuss coproducts, coequalizers and equalizers of closure spaces.

\begin{definition}{\cite[Definition 17.C.1]{vcech1966topological}}
\label{def:product_closure}
Let $\{X_{\alpha},c_{\alpha}\}_{\alpha\in A}$ be a family of closure spaces and  let $X=\prod_{\alpha\in A}X_{\alpha}$. For each  $\alpha\in A$, let $\pi_{\alpha}:X\to X_{\alpha}$ be the projection map. For each $x\in X$, let $\mathcal{U}_x$ denote the collection of sets of the form
\begin{equation} \label{eq:product}
  \bigcap\{\pi_{\alpha}^{-1}(V_{\alpha})\,|\,\alpha\in F\},
\end{equation}
where $F\subset A$ is finite and $V_{\alpha}$ is a neighborhood of $\pi_{\alpha}(x)$ in $(X_{\alpha},c_{\alpha})$. The collection $\mathcal{U}_x$ 
is a \emph{filter base} (\cref{def:base_of_a_filter}) in $X$ and $x \in \bigcap \mathcal{U}_{x}$.
Thus, by \cref{theorem:filter_determines_a_local_base_for_a_closure} there is a unique closure $c$ for $X$ such that $\mathcal{U}_x$ is a \emph{local base} at $x$ (\cref{def:base_of_a_neighborhood}) in $(X,c)$. The closure $c$ is called the \emph{product closure} for $X$ and the pair $(X,c)$ is called the \emph{product closure space}. The product closure is also characterized by a universal property ( \cref{prop:product_closure}).
Given closure spaces $(X,c)$ and $(Y,d)$, their product will be denoted by $(X\times Y,c\times d)$. Instead of using all neighborhoods $V_{\alpha}$ of $\pi_{\alpha}(x)$ in \eqref{eq:product}, we can restrict to a local base at $\pi_{\alpha}(x)$ {\cite[\nopp 17.C.3]{vcech1966topological}}. In particular, if $x$ and $y$ have local bases $\mathcal{U}_x$ and $\mathcal{U}_y$, respectively, then $\mathcal{U}_{(x,y)} = \mathcal{U}_x \times \mathcal{U}_y$ is a local base at $(x,y)$.
\end{definition}

\begin{definition}{\cite[Definition 17.D.1]{vcech1966topological}}
\label{def:inductive_product_closure}
Given closure spaces $(X,c)$ and $(Y,d)$, consider the product set $X\times Y$. For $(x,y)\in X\times Y$, let $\mathcal{V}_{(x,y)}$ be the collection of all sets of the form
\begin{equation} \label{eq:inductive} 
(\{x\}\times V)\cup (U\times \{y\})  
\end{equation}
where $V$ and $U$ are neighborhoods of $y$ in $(Y,d)$ and $x$ in $(X,c)$, respectively. Each $\mathcal{V}_{(x,y)}$
is a \emph{filter base} (\cref{def:base_of_a_filter}) in $X\times Y$ and $(x,y)\in \bigcap \mathcal{V}_{(x,y)}$.
Thus, by \cref{theorem:filter_determines_a_local_base_for_a_closure}, there exists a unique closure operation for $X\times Y$ such that $\mathcal{V}_{(x,y)}$ is a local base (\cref{def:base_of_a_neighborhood}) at $(x,y)$ for all $(x,y)\in X\times Y$.
The \emph{inductive product} of spaces $(X,c)$ and $(Y,d)$, denoted by $(X\times Y,c \boxdot d)$ is the set $X\times Y$ endowed with this closure operation. The closure operation $c \boxdot d$ is called the \emph{inductive product closure}.
The neighborhoods of the form \eqref{eq:inductive} are called \emph{canonical neighborhoods for the inductive product}, or \emph{canonical inductive neighborhoods}.
Instead of using all neighborhoods of $x$ and $y$ in \eqref{eq:inductive}, we can restrict to local bases $\mathcal{U}_x$ and $\mathcal{U}_y$ of $x$ and $y$, respectively {\cite[\nopp 17.C.3]{vcech1966topological}}. If $(X,c)$ and $(Y,d)$ are closure spaces, we will denote their inductive product by $(X,c)\boxdot (Y,d)$.
The sets in \eqref{eq:inductive} may be written as $\{(x',y') \in U \times V \,|\, x'=x \text{ or } y'=y\}$.
\end{definition}

\begin{proposition}{\cite[Theorem 17.D.2]{vcech1966topological}}
\label{prop:comparison_of_products}
The product closure is coarser than the inductive product closure.
\end{proposition}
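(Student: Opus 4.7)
The plan is to show directly that for every subset $A \subseteq X \times Y$ one has $(c\boxdot d)(A) \subseteq (c\times d)(A)$, which by \cref{def:coarser_finer_closure_operations} is exactly what is meant by saying that the product closure is coarser than the inductive product closure. I would argue this by comparing the canonical local bases of the two closures at an arbitrary point $(x,y) \in X\times Y$.

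First, I would record the elementary set-theoretic observation that for every neighborhood $U$ of $x$ in $(X,c)$ and every neighborhood $V$ of $y$ in $(Y,d)$,
\[
(\{x\}\times V)\cup (U\times \{y\}) \;\subseteq\; U\times V.
\]
That is, each canonical inductive neighborhood of $(x,y)$ displayed in \eqref{eq:inductive} from \cref{def:inductive_product_closure} is contained in the corresponding canonical product neighborhood displayed in \eqref{eq:product} from \cref{def:product_closure} (with the finite index set taken to consist of both coordinates).

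Next, I would invoke the standard characterization, implicit in \cref{def:interior,def:neighborhood} and underlying the local-base construction used in both \cref{def:product_closure} and \cref{def:inductive_product_closure}, that a point $p$ lies in $c(A)$ if and only if every neighborhood of $p$ meets $A$, equivalently, if and only if every element of a chosen local base at $p$ meets $A$. Assuming $(x,y) \in (c\boxdot d)(A)$, every canonical inductive neighborhood of $(x,y)$ meets $A$. Given any canonical product neighborhood $U\times V$ of $(x,y)$, the containment above shows $U\times V$ meets $A$ as well. Since the canonical product neighborhoods form a local base at $(x,y)$ in $(X\times Y, c\times d)$, this forces $(x,y) \in (c\times d)(A)$, establishing the desired inclusion.

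The only mildly delicate point, and the one to handle carefully, is the passage between a closure and its generating local base: because \v{C}ech closures need not be idempotent, the familiar ``closure equals the set of adherent points'' argument from topology cannot simply be transplanted. Instead one must appeal to the filter-base/neighborhood theorem cited inside \cref{def:product_closure,def:inductive_product_closure}, which ensures that the sets in \eqref{eq:product} and \eqref{eq:inductive} really do form local bases for $c\times d$ and $c\boxdot d$, respectively. Once that background is quoted, the entire proof collapses to the one-line containment of canonical basic neighborhoods above, so no further obstacle remains.
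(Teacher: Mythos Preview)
Your argument is correct. The key observation that every canonical inductive basic neighborhood $(\{x\}\times V)\cup(U\times\{y\})$ is contained in the corresponding product basic neighborhood $U\times V$, combined with the closure-via-local-base description from \cref{theorem:filter_determines_a_local_base_for_a_closure}, gives exactly the inclusion $(c\boxdot d)(A)\subseteq (c\times d)(A)$ needed by \cref{def:coarser_finer_closure_operations}.

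Note, however, that the paper does not supply its own proof of this proposition: it is simply quoted from \v{C}ech's book \cite[Theorem 17.D.2]{vcech1966topological} and stated without argument. So there is nothing in the paper to compare your approach against; your write-up is essentially the standard proof one would give, and the only thing to tighten is to cite \cref{theorem:filter_determines_a_local_base_for_a_closure} explicitly for the ``$p\in c(A)$ iff every basic neighborhood meets $A$'' step rather than leaving it implicit.
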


\begin{theorem}{\cite[Theorem 17.C.6]{vcech1966topological}}
\label{prop:product_closure}
The projections of a product space to its coordinate spaces are  continuous. Furthermore, the product closure is the coarsest closure for the  product of underlying sets such that all projections are continuous.
\end{theorem}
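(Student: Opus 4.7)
The plan is to prove the two assertions separately, in both cases working with the characterization of continuity via neighborhoods from \cref{theorem:equivalent_definitions_of_continuity}, and exploiting the fact that by construction the collection $\mathcal{U}_x$ in \cref{def:product_closure} is a local base at $x$ in $(X,c)$.

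First I would show that each projection $\pi_\beta : (X,c) \to (X_\beta, c_\beta)$ is continuous. Fix $x \in X$ and let $V$ be a neighborhood of $\pi_\beta(x)$ in $(X_\beta, c_\beta)$. Taking $F = \{\beta\}$ in \eqref{eq:product} shows that $\pi_\beta^{-1}(V)$ is itself an element of the local base $\mathcal{U}_x$, and hence is a neighborhood of $x$ in $(X,c)$. By \cref{theorem:equivalent_definitions_of_continuity}, $\pi_\beta$ is continuous at $x$, and since $x$ was arbitrary, $\pi_\beta$ is continuous.

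Next I would establish the universal/minimality claim. Suppose $c'$ is any other closure on $X$ such that every $\pi_\alpha : (X,c') \to (X_\alpha, c_\alpha)$ is continuous; I want to show $c$ is coarser than $c'$, which by the observation following \cref{lemma:homeomorphism} is equivalent to showing that $\mathbf{1}_X : (X,c') \to (X,c)$ is continuous. Fix $x \in X$ and let $U$ be an arbitrary neighborhood of $x$ in $(X,c)$. Since $\mathcal{U}_x$ is a local base at $x$ in $(X,c)$, there exists a finite $F \subset A$ and neighborhoods $V_\alpha$ of $\pi_\alpha(x)$ in $(X_\alpha, c_\alpha)$ for $\alpha \in F$ such that
\[
\bigcap_{\alpha \in F} \pi_\alpha^{-1}(V_\alpha) \subseteq U.
\]
By the assumed continuity of each $\pi_\alpha$ with respect to $c'$ and \cref{theorem:equivalent_definitions_of_continuity}, each $\pi_\alpha^{-1}(V_\alpha)$ is a neighborhood of $x$ in $(X,c')$. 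Property (3) of the interior operation in \cref{def:interior} gives $i_{c'}(A \cap B) = i_{c'}(A) \cap i_{c'}(B)$, so a straightforward induction shows that a finite intersection of neighborhoods of $x$ is a neighborhood of $x$. Hence $\bigcap_{\alpha \in F} \pi_\alpha^{-1}(V_\alpha)$, and therefore its superset $U$, is a neighborhood of $x$ in $(X,c')$. Again by \cref{theorem:equivalent_definitions_of_continuity}, $\mathbf{1}_X : (X,c') \to (X,c)$ is continuous at $x$, completing the proof.

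There is no real obstacle: the argument is the standard universal property argument and both halves reduce to carefully pulling back local-base elements through the projections. The only point that requires a moment of care is justifying that finite intersections of neighborhoods are neighborhoods, which follows immediately from property (3) of the interior in \cref{def:interior}.
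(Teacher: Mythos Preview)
Your proof is correct and follows the standard neighborhood-pullback argument for the universal property of the product closure. Note, however, that the paper does not supply its own proof of this statement: it is quoted directly from \v{C}ech's book \cite[Theorem 17.C.6]{vcech1966topological} and no argument is given in the paper itself. Your write-up is exactly the expected verification, and the only subtlety you flagged---that finite intersections (and supersets) of neighborhoods are neighborhoods---is handled correctly via property (3) of the interior operator in \cref{def:interior}.
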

\begin{lemma}
\label{lemma:ind_product_and_product_of_discrete_spaces}
Let $X$ be a closure space and let $Y$ be a discrete space. Then $X\times Y=X\boxdot Y$. 
\end{lemma}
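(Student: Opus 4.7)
My plan is to compare the two closure operations on $X\times Y$ via their local bases of neighborhoods at each point. By \cref{prop:comparison_of_products}, the product closure $c\times d$ is already coarser than the inductive product closure $c\boxdot d$, so it suffices to establish the reverse comparison. Equivalently, I will show that the identity map $(X\times Y, c\times d)\to (X\times Y, c\boxdot d)$ is continuous, which by \cref{theorem:equivalent_definitions_of_continuity} reduces to proving that every canonical inductive neighborhood of an arbitrary $(x,y)$ is already a neighborhood of $(x,y)$ in the product closure.

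The first step is the trivial observation that in a discrete space every subset is open: applied to the discrete closure $d = \mathbf{1}_{\mathcal{P}(Y)}$, \cref{def:interior} gives $i_d = \mathbf{1}_{\mathcal{P}(Y)}$, so in particular $\{y\}$ is a neighborhood of $y$ for every $y \in Y$.

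Next I fix $(x,y) \in X\times Y$ and take a generic basic inductive neighborhood $W = (\{x\}\times V)\cup (U\times \{y\})$ from \cref{def:inductive_product_closure}, with $U$ a neighborhood of $x$ in $(X,c)$ and $V$ a neighborhood of $y$ in $(Y,d)$. Since $\{y\}$ is a neighborhood of $y$, the rectangle $U\times \{y\}$ is a basic product neighborhood of $(x,y)$ in the sense of \cref{def:product_closure}. Using monotonicity of the interior operation (which follows from \cref{lemma:closures_are_monotone} and de Morgan), the containment $U\times\{y\}\subseteq W$ upgrades $W$ to a neighborhood of $(x,y)$ for the product closure. Because the canonical inductive neighborhoods form a local base for $c\boxdot d$ at $(x,y)$, this shows that $c\boxdot d$ is coarser than $c\times d$, and combined with \cref{prop:comparison_of_products} we obtain equality.

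I do not expect any substantive obstacle: the entire argument collapses to the fact that $\{y\}$ is a neighborhood of $y$ in a discrete space, which lets the $U\times\{y\}$ summand of a canonical inductive neighborhood serve directly as the required basic product neighborhood. The only bookkeeping to be careful about is that the two closures are defined by specifying local bases rather than explicit formulas, so the comparison must be made at the level of neighborhoods rather than by pointwise equality of closures of a test set.
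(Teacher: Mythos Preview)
Your argument is correct. The paper states this lemma without proof, so there is no proof to compare against; your approach via local bases---observing that $\{y\}$ is a neighborhood of $y$ in the discrete space and hence $U\times\{y\}$ is a product neighborhood contained in the canonical inductive neighborhood---is the natural one and fills the gap cleanly.
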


\begin{proposition}{\cite[Proposition 17.C.11]{vcech1966topological}}
\label{prop:product_of_maps}
  Suppose we are given for each $a \in A$ closure spaces  $(X_a,c_{X_a})$ and
  $(Y_a,c_{Y_a})$ and a map of sets $f_a:X_a \to Y_a$.
  If for all $a \in A$, $f_a$ is continuous, then the mapping $f:(\prod_{a\in A}X_a,\prod_{a\in A}c_{X_a})\to (\prod_{a\in A}Y_a,\prod_{a\in A}c_{Y_a})$ defined by $\{x_a\}_{a\in A}\mapsto \{f_x(x_a)\}_{a\in A}$ is continuous. Conversely, if $f$ is continuous and $\prod_{a\in A}X_a\neq \emptyset$, then for all $a \in A$, $f_a$ is continuous.
\end{proposition}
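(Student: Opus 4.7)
The plan is to handle the two directions separately, leaning on the universal property of the product closure for the forward direction and on a section map for the converse.

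For the forward direction, the cleanest route is to invoke the universal characterization of the product closure from Proposition~\ref{prop:product_closure}: it is the coarsest closure on $\prod_a X_a$ making every projection $\pi_a^X$ continuous, so the projections $\pi_a^Y : \prod_a Y_a \to Y_a$ are continuous, and a map $g : Z \to \prod_a Y_a$ into a product is continuous iff $\pi_a^Y \circ g$ is continuous for every $a$. Applied to $g = f$, the compositions $\pi_a^Y \circ f = f_a \circ \pi_a^X$ are continuous whenever each $f_a$ is, by Proposition~\ref{prop:composition_of_cont_maps}, so $f$ is continuous. (Alternatively, one could argue directly from local bases using Theorem~\ref{theorem:equivalent_definitions_of_continuity}: a basic neighborhood $\bigcap_{a \in F}(\pi_a^Y)^{-1}(W_a)$ of $f(x)$ pulls back under $f$ to a set containing $\bigcap_{a \in F}(\pi_a^X)^{-1}(V_a)$ once each $V_a$ is chosen, via continuity of $f_a$ at $x_a$, to satisfy $f_a(V_a) \subseteq W_a$.)

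For the converse, assume $f$ is continuous and use the hypothesis $\prod_a X_a \neq \emptyset$ to fix a basepoint $(x^0_a)_a$. For each $b \in A$, define the section $\iota_b : X_b \to \prod_a X_a$ by $\iota_b(y)_b = y$ and $\iota_b(y)_a = x^0_a$ for $a \neq b$. I would first check $\iota_b$ is continuous by a direct computation on the local base: for a basic neighborhood $U = \bigcap_{a \in F}(\pi_a^X)^{-1}(V_a)$ of $\iota_b(y)$, the preimage $\iota_b^{-1}(U)$ equals $V_b$ if $b \in F$ (the other factors contribute all of $X_b$, since each $V_a$ with $a \neq b$ is a neighborhood of $x^0_a = \pi_a^X(\iota_b(y))$) and equals $X_b$ if $b \notin F$; in either case it is a neighborhood of $y$, so $\iota_b$ is continuous by Theorem~\ref{theorem:equivalent_definitions_of_continuity}. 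Then $f_b = \pi_b^Y \circ f \circ \iota_b$ is a composition of three continuous maps and hence continuous by Proposition~\ref{prop:composition_of_cont_maps}.

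The main subtlety, and the reason for the non-emptiness hypothesis, appears only in the converse: without a chosen tuple $(x^0_a)_a$ there is no section $\iota_b$ to transport continuity back from the product to a single factor, and in the empty case the statement becomes vacuous on the product side while saying nothing about the factors. Everything else in the argument is routine bookkeeping with basic neighborhoods, and the key technical point to keep straight is that in the preimage $\iota_b^{-1}(U)$ the factors with index $a \neq b$ drop out precisely because each $V_a$ is, by definition, a neighborhood of the constant coordinate $x^0_a$.
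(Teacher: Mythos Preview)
Your proof is correct. The paper does not give its own proof of this proposition; it simply cites \v{C}ech's textbook, so there is nothing to compare against beyond noting that your argument---the universal property of the categorical product for the forward direction, and a basepoint section $\iota_b$ for the converse---is the standard one and goes through without issue.
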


\begin{proposition}{\cite[Proposition 2.30]{bubenik2021applied}}
\label{prop:inductive_product_of_maps}
If $f:(X_1,c_1)\to (Y_1,d_1)$ and $g:(X_2,c_2)\to (Y_2,d_2)$ are continuous maps, then so is the map $f\times g:(X_1,c_1)\boxdot (X_2,c_2)\to (Y_1,d_1)\boxdot (Y_2,d_2)$ defined by $f\times g(x_1,x_2):=(f(x_1),g(x_2))$.
\end{proposition}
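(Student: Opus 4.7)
My plan is to prove continuity pointwise using the neighborhood characterization from \cref{theorem:equivalent_definitions_of_continuity}. That is, I will show that for every $(x_1,x_2) \in X_1 \times X_2$ and every neighborhood $W$ of $(f\times g)(x_1,x_2) = (f(x_1),g(x_2))$ in $(Y_1,d_1)\boxdot (Y_2,d_2)$, there exists a neighborhood $N$ of $(x_1,x_2)$ in $(X_1,c_1)\boxdot (X_2,c_2)$ with $(f\times g)(N)\subseteq W$.

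Since the canonical inductive neighborhoods form a local base at each point by \cref{def:inductive_product_closure}, it suffices to verify the condition for $W$ of the canonical form $W = (\{f(x_1)\}\times V)\cup (U\times \{g(x_2)\})$, where $U$ is a neighborhood of $f(x_1)$ in $(Y_1,d_1)$ and $V$ is a neighborhood of $g(x_2)$ in $(Y_2,d_2)$. By the continuity of $f$ at $x_1$ and of $g$ at $x_2$, \cref{theorem:equivalent_definitions_of_continuity} yields a neighborhood $U'$ of $x_1$ in $(X_1,c_1)$ with $f(U')\subseteq U$ and a neighborhood $V'$ of $x_2$ in $(X_2,c_2)$ with $g(V')\subseteq V$.

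Setting $N = (\{x_1\}\times V')\cup (U'\times \{x_2\})$ gives a canonical inductive neighborhood of $(x_1,x_2)$, and a direct computation yields
\[
(f\times g)(N) \;=\; \bigl(\{f(x_1)\}\times g(V')\bigr)\cup \bigl(f(U')\times \{g(x_2)\}\bigr) \;\subseteq\; \bigl(\{f(x_1)\}\times V\bigr)\cup \bigl(U\times \{g(x_2)\}\bigr) \;=\; W,
\]
as required. Since $(x_1,x_2)$ was arbitrary, $f\times g$ is continuous.

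I do not anticipate a real obstacle: the only subtlety is to remember that continuity need not be checked against arbitrary neighborhoods of $(f(x_1),g(x_2))$, but only against a local base, which is exactly what the canonical inductive neighborhoods provide. The proof is genuinely local and mirrors the corresponding argument for topological products, with the key bookkeeping being that the canonical form of neighborhoods in the inductive product is preserved under $f\times g$.
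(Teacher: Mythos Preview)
Your proof is correct. The paper does not give its own proof of this proposition but simply cites \cite[Proposition 2.30]{bubenik2021applied}; your argument via the neighborhood characterization of continuity (\cref{theorem:equivalent_definitions_of_continuity}) and the local base of canonical inductive neighborhoods (\cref{def:inductive_product_closure}) is the natural one and goes through without issue.
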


\begin{definition}{\cite[Definition 17.B.1]{vcech1966topological}}
\label{def:coproducts}
Let $\{(X_i,c_i)\}_{i\in I}$ be a collection of closure spaces. The \emph{coproduct}
of $\{(X_i,c_i)\}_{i\in I}$ is the  disjoint union of sets $X=\coprod_i X_i$ with the closure $c$ defined by $c(\coprod_i A_i):=\coprod_i c_i(A_i)$ for all subsets $\coprod_i A_i$ of $X$.
\end{definition}

\begin{definition} \label{def:coequalizer}
  Given continuous maps
  $
  \begin{tikzcd}(X,c) \ar[r,shift left,"f"] \ar[r,shift right,"g"'] & (Y,d),
  \end{tikzcd}
  $
 the \emph{coequalizer of $f$ and $g$} consists of the closure space $(Q,c_Q)$ and a map $p:Y \to Q$ defined in the following manner. Let $Q$ be the quotient set $Y/\!\! \sim$, where $\sim$ is the equivalence relation given by $f(x) \sim g(x)$, $\forall x \in X$. Let $p:Y \to Q$ be the quotient map. For any $A \subset Q$, set $c_Q(A) = p(d(p^{-1}(A)))$.
\end{definition}

\begin{theorem}{\cite[Theorems 33.A.4 and 33.A.5]{vcech1966topological}} \label{cocomplete}
 The coproduct and coequalizer defined above are the categorical coproduct and coequalizer in the category $\cat{Cl}$ and hence $\cat{Cl}$ is cocomplete.
\end{theorem}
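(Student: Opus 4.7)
The plan is to verify the two universal properties separately and then invoke the standard fact that a category with all small coproducts and all coequalizers is cocomplete.

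For the coproduct, I would first check that the formula $c(\coprod_i A_i) = \coprod_i c_i(A_i)$ defines a closure operation, which is a direct calculation using the three axioms of \cref{def:closure_spaces} applied componentwise together with the fact that disjoint union distributes over (arbitrary) unions; in particular additivity reduces to $c_i(A_i \cup B_i) = c_i(A_i) \cup c_i(B_i)$. Continuity of each canonical inclusion $\iota_j : (X_j,c_j) \to (X,c)$ is immediate from the definition, since for $A \subset X_j$ we have $\iota_j(c_j(A)) = c_j(A) \subset c(\iota_j(A))$. For the universal property, given continuous $f_i:(X_i,c_i)\to (Y,d)$ there is a unique set-map $f:X \to Y$ with $f \circ \iota_i = f_i$, and continuity follows because for $A = \coprod_i A_i \subset X$, additivity of $d$ yields $f(c(A)) = \bigcup_i f_i(c_i(A_i)) \subset \bigcup_i d(f_i(A_i)) = d(f(A))$.

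For the coequalizer, I would first verify that $c_Q(A) := p(d(p^{-1}(A)))$ is a closure on $Q$. The axiom $c_Q(\emptyset) = \emptyset$ is immediate, extensivity uses surjectivity of $p$ (so that $A = p(p^{-1}(A)) \subset p(d(p^{-1}(A)))$), and additivity follows from the identities $p^{-1}(A \cup B) = p^{-1}(A) \cup p^{-1}(B)$, $d(U \cup V) = d(U) \cup d(V)$, and $p(U \cup V) = p(U) \cup p(V)$. By construction $p \circ f = p \circ g$, and $p:(Y,d) \to (Q,c_Q)$ is continuous because for $B \subset Y$ we have $B \subset p^{-1}(p(B))$, hence $p(d(B)) \subset p(d(p^{-1}(p(B)))) = c_Q(p(B))$. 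To verify the universal property, given continuous $h:(Y,d) \to (Z,e)$ with $h \circ f = h \circ g$, the map $h$ factors uniquely as a set-map through $\bar h : Q \to Z$; continuity of $\bar h$ follows from
\[
\bar h(c_Q(A)) = \bar h(p(d(p^{-1}(A)))) = h(d(p^{-1}(A))) \subset e(h(p^{-1}(A))) = e(\bar h(A)),
\]
using continuity of $h$ and the identity $h = \bar h \circ p$.

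Having established both universal properties, cocompleteness of $\cat{Cl}$ follows from the standard categorical fact that a category admitting all small coproducts and all coequalizers admits all small colimits, since an arbitrary colimit over a small diagram $D:\mathcal{J} \to \cat{Cl}$ can be constructed as the coequalizer of the two evident maps $\coprod_{(u:j \to j')\in \mathrm{Mor}(\mathcal{J})} D(j) \rightrightarrows \coprod_{j\in \mathrm{Ob}(\mathcal{J})} D(j)$.

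The main obstacle is essentially bookkeeping: ensuring that $c_Q$ is genuinely well-defined and a closure operator rather than merely an expansive monotone map. The key point to be careful with is additivity of $c_Q$, which relies on the fact that $p$ and $p^{-1}$ both commute with finite unions; everything else is a routine manipulation of the closure-space axioms.
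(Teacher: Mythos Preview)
Your proposal is correct and supplies the standard verification; the paper itself does not give a proof but simply cites \v{C}ech's book. One small imprecision worth fixing: in the coproduct universal property you write $\bigcup_i d(f_i(A_i)) = d(f(A))$, but for an infinite index set only the inclusion $\subset$ holds (by monotonicity of $d$, \cref{lemma:closures_are_monotone}), not equality via finite additivity---though the inclusion is all you need for continuity of $f$.
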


We thus have pushouts of closure spaces.

\begin{definition}
  The \emph{pushout} in $\cat{Cl}$ is defined in the following manner. Given the solid arrow diagram
    \begin{equation*}
      \begin{tikzcd}
        (A,c_A) \ar[r,"f"] \ar[d,"g"] & (X,c) \ar[d,dashed,"i"]\\
        (Y,d) \ar[r,dashed,"j"'] & (P,c_P)
      \end{tikzcd}
    \end{equation*}
   define $P = (X \amalg Y)/\!\! \sim$ where $f(a) \sim g(a)$ for all $a \in A$. Denote by $i,j$ the induced maps, and for any $B \subset P$, set $c_P(B) = i(c(i^{-1}(B))) \cup j(d(j^{-1}(B)))$.
\end{definition}

\begin{lemma}{\cite[Lemma 2.35]{bubenik2021applied}}
\label{lemma:generalized_intervals}
$J_{m,\bot}$, $J_m$ and $J_{m,k}$ are obtained by $m$-fold binary pushouts of $J_{1,\bot}$, $J_1$, and $J_{-},J_{+}$ respectively under $*$. 
\end{lemma}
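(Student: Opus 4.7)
The plan is to proceed by induction on $m$. The base case $m = 1$ is immediate, since $J_{1,\bot}$, $J_1$, and $J_{1,k}$ (for $k \in \{0,1\}$) are themselves the building blocks.

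For the inductive step, given $m \geq 2$, I would set up a pushout diagram of the form
\[
\begin{tikzcd}
* \ar[r, "0"] \ar[d, "m-1"'] & Z \ar[d] \\
J_{m-1,?} \ar[r] & P
\end{tikzcd}
\]
where the map $* \to J_{m-1,?}$ picks out the point $m-1$, the map $* \to Z$ picks out the point $0$, and the choice of $Z$ depends on the family: $Z = J_{1,\bot}$ for the $J_{m,\bot}$ case, $Z = J_1$ for the $J_m$ case, and $Z = J_+$ or $Z = J_-$ for the $J_{m,k}$ case, the choice being determined by whether the $m$-th rightmost bit of $k$ is $1$ or $0$. The inductive hypothesis says $J_{m-1,?}$ (where, in the third case, one takes $J_{m-1,k'}$ with $k' = k \bmod 2^{m-1}$) is obtained by an $(m-1)$-fold pushout of the base pieces, so adjoining this last pushout produces an $m$-fold pushout.

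Next, I would check that the resulting $P$ is isomorphic to $J_{m,?}$. The underlying set of $P$ is the disjoint union $\{0,\ldots,m-1\} \amalg \{0,1\}$ modulo the identification $m-1 \sim 0$, which is naturally bijective with $\{0,\ldots,m\}$ by sending the equivalence class of $m-1 \in J_{m-1,?}$ (equivalently $0 \in Z$) to $m-1$, and the class of $1 \in Z$ to $m$. Because any closure on a set is determined by its values on singletons (axiom (3) of \cref{def:closure_spaces}), it suffices to compare $c_P(\{x\})$ and $c_{m,?}(\{x\})$ for each $x \in \{0,\ldots,m\}$. Applying the explicit pushout formula $c_P(B) = i(c(i^{-1}(B))) \cup j(d(j^{-1}(B)))$, the verification splits into three cases according to whether $x < m-1$, $x = m-1$, or $x = m$. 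When $x < m-1$, only the term coming from $J_{m-1,?}$ contributes, and the inductive hypothesis closes out the case. When $x = m$, only the term coming from $Z$ contributes. When $x = m-1$, both components contribute, and this is the only place where the gluing enters.

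The main subtlety lies in the $J_{m,k}$ family, where one must verify both that restricting the closure $c_k$ on $\{0,\ldots,m\}$ to the subset $\{0,\ldots,m-1\}$ recovers exactly the closure of $J_{m-1,k'}$ with $k' = k \bmod 2^{m-1}$, and that gluing with $J_+$ or $J_-$ correctly reproduces the edge between $m-1$ and $m$ dictated by the $m$-th bit of $k$. Since this bit controls only the relationship between $m-1$ and $m$, while the lower bits control everything below, both checks reduce to unwinding the bit-indexing convention in \cref{def:intervals}(6), with the choice $Z = J_+$ matching the case $m \in c_k(m-1)$ and $Z = J_-$ matching $m-1 \in c_k(m)$. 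No further subtleties arise, and the argument concludes by induction on $m$.
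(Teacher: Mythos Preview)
The paper does not supply its own proof of this lemma; it is simply quoted from \cite[Lemma 2.35]{bubenik2021applied}. Your inductive argument is correct and is the natural way to establish the result: peel off the rightmost edge, invoke the explicit pushout-closure formula, and verify the closure on singletons (which suffices since the underlying sets are finite and closures are finitely additive by axiom (3)). The bit-arithmetic bookkeeping you describe for the $J_{m,k}$ family, taking $k' = k \bmod 2^{m-1}$ and choosing $Z \in \{J_+,J_-\}$ according to the top bit, is exactly right.
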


\begin{lemma}{\cite[Lemma 2.36]{bubenik2021applied}}
\label{lemma:comparison_of_intervals}
The identity maps $J_{m,k}\xrightarrow{\mathbf{1}} J_m$ are continuous for all $m\ge 0$ and $0\le k\le 2^m-1$.
The `round up' map $f_+:(I,\tau)\to J_{+}$ defined by $f(x)=0$ if $x<\frac{1}{2}$ and $f(x)=1$ if $x\ge \frac{1}{2}$
and the `round down' map $f_{-}:(I,\tau)\to J_{-}$ defined by $f(x)=0$ if $x\leq\frac{1}{2}$ and $f(x)=1$ if $x> \frac{1}{2}$
are continuous.
These may be combined to obtain continuous maps $f:([0,m],\tau)\to J_{m,k}$ for any $m\ge 0$ and $0\le k\le 2^m-1$.
Precomposing with the map $t \mapsto mt$, we obtain a continuous map $f:(I,\tau)\to J_{m,k}$.
\end{lemma}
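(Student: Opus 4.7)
The plan is to prove the three assertions in turn. For Part 1, I would reduce continuity of the identity $\mathbf{1}:J_{m,k}\to J_m$ to a comparison of closures on singletons. For Part 2, I would exploit the fact that $J_+$ and $J_-$ happen to be topological, reducing continuity of $f_{\pm}$ to preimage-of-open checks. For Part 3, I would combine translated copies of $f_\pm$ with the pasting lemma to construct a continuous map $f:([0,m],\tau)\to J_{m,k}$, then precompose with a linear rescaling to obtain the desired map on $I$. The main subtlety is in Part 3, where the binary expansion of $k$ must be correctly matched to the choice of $f_+$ versus $f_-$ on each unit subinterval so that the resulting embedding into $J_{m,k}$ is continuous.

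For Part 1, by the observation following \cref{def:coarser_finer_closure_operations}, $\mathbf{1}:J_{m,k}\to J_m$ is continuous iff $c_k(A)\subseteq c_{J_m}(A)$ for every $A\subseteq \{0,\ldots,m\}$, where I write $c_k$ and $c_{J_m}$ for the respective closures. By the additivity axiom (3) in \cref{def:closure_spaces}, it suffices to check this on singletons, and $c_k(\{i\})\subseteq \{i-1,i,i+1\}\cap\{0,\ldots,m\}=c_{J_m}(\{i\})$ follows directly from the bit-based definition of $J_{m,k}$. For Part 2, note that $J_+$ is topological with closed sets $\emptyset,\{1\},\{0,1\}$ (hence open sets $\emptyset,\{0\},\{0,1\}$), and symmetrically $J_-$ has open sets $\emptyset,\{1\},\{0,1\}$. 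By \cref{theorem:continuous_maps_of_topological_spaces}, continuity of $f_{\pm}$ reduces to checking that the nontrivial preimages $f_+^{-1}(\{0\})=[0,1/2)$ and $f_-^{-1}(\{1\})=(1/2,1]$ are open in $I$, which is clear.

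For Part 3, I define $f:([0,m],\tau)\to J_{m,k}$ piecewise on the finite closed cover $\{[i-1,i]\}_{i=1}^m$. On $[i-1,i]$, let $f|_{[i-1,i]}$ be the composition $[i-1,i]\xrightarrow{t\mapsto t-(i-1)}I\xrightarrow{f_{\epsilon(i)}}J_{\epsilon(i)}\xrightarrow{\iota_i}J_{m,k}$, where $\epsilon(i)\in\{+,-\}$ corresponds to the $i$-th rightmost bit of $k$ (bit $1$ gives $+$, bit $0$ gives $-$), and $\iota_i$ sends $0\mapsto i-1$ and $1\mapsto i$. With this matching, $\iota_i$ is continuous by direct check on singletons: if bit $i=1$ then $i\in c_k(i-1)$, so $\iota_i(c_+(0))=\{i-1,i\}\subseteq c_k(i-1)$; if bit $i=0$ then $i-1\in c_k(i)$, so $\iota_i(c_-(1))=\{i-1,i\}\subseteq c_k(i)$. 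The two restrictions to $[i-1,i]$ and $[i,i+1]$ agree at the integer $i$, so $f$ is well defined; each restriction is continuous by Part 2 and \cref{prop:composition_of_cont_maps}; and \cref{theorem:pasting_lemma_for_closure_spaces} yields continuity of $f$ on $([0,m],\tau)$. Finally, $t\mapsto mt:(I,\tau)\to ([0,m],\tau)$ is continuous, and composing via \cref{prop:composition_of_cont_maps} produces the desired continuous map $(I,\tau)\to J_{m,k}$.
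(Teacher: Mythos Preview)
Your proof is correct. The paper does not supply its own proof of this lemma; it simply cites it from \cite{bubenik2021applied}, so there is no in-paper argument to compare against. Your approach---reducing Part~1 to singleton closures via additivity, handling Part~2 by recognising $J_\pm$ as topological and checking preimages of opens, and assembling Part~3 from translated copies of $f_\pm$ via the pasting lemma with the bit-to-sign matching---is the natural one and matches the spirit of the lemma's statement, which already sketches the construction (``These may be combined\ldots'').
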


Dually, we also have the following.

\begin{definition} \label{def:equalizer}
  Given continuous maps
  $
  \begin{tikzcd}(X,c) \ar[r,shift left,"f"] \ar[r,shift right,"g"'] & (Y,d),
  \end{tikzcd}$
  the \emph{equalizer of $f$ and $g$} consists of the closure space $(E,c_E)$ and map $i:E \to X$ defined in the following manner. Let $E$ be the subset $\{x \in X \ | \ f(x)=g(x)\}$ with $i$ the inclusion map.
  For any $A \subset E$, set $c_E(A) = c(A) \cap E$.
\end{definition}

\begin{theorem}{\cite[Theorems 32.A.4 and 32.A.10]{vcech1966topological}} \label{complete}
  The product and equalizer defined above are the categorical product and equalizer in the category $\cat{Cl}$ and hence $\cat{Cl}$ is complete.
\end{theorem}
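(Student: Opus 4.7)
The plan is to verify the universal properties for the product and the equalizer in $\cat{Cl}$ separately, and then invoke the standard categorical fact that a category with all small products and all equalizers is complete. Uniqueness of the mediating morphism is automatic on the level of underlying sets, so in each case the real content is continuity.

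For the product, let $\{(X_\alpha,c_\alpha)\}_{\alpha\in A}$ be a family of closure spaces with product $(X,c)$ and projections $\pi_\alpha:X\to X_\alpha$, which are continuous by \cref{prop:product_closure}. Given any closure space $(Z,e)$ and continuous maps $f_\alpha:Z\to X_\alpha$, the usual set-theoretic product gives a unique map $f:Z\to X$ with $\pi_\alpha\circ f=f_\alpha$. To check continuity at $z\in Z$, I would use \cref{theorem:equivalent_definitions_of_continuity} together with the fact that it suffices to pull back a local base at $f(z)$. By the construction in \cref{def:product_closure}, such a local base consists of sets $W=\bigcap_{\alpha\in F}\pi_\alpha^{-1}(V_\alpha)$ with $F\subset A$ finite and $V_\alpha$ a neighborhood of $\pi_\alpha(f(z))=f_\alpha(z)$. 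Then $f^{-1}(W)=\bigcap_{\alpha\in F}f_\alpha^{-1}(V_\alpha)$ is a finite intersection of neighborhoods of $z$ (each $f_\alpha$ being continuous), and finite intersections of neighborhoods are neighborhoods by property (3) of the interior operator in \cref{def:interior}. Hence $f^{-1}(W)$ is a neighborhood of $z$, so $f$ is continuous.

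For the equalizer, given $f,g:(X,c)\to(Y,d)$ with $E=\{x\in X\mid f(x)=g(x)\}$ and $c_E(A)=c(A)\cap E$, I would first verify that $c_E$ is indeed a closure: $c_E(\emptyset)=\emptyset$; $A\subset E$ and $A\subset c(A)$ give $A\subset c_E(A)$; and $c_E(A\cup B)=(c(A)\cup c(B))\cap E=c_E(A)\cup c_E(B)$. The inclusion $i:E\hookrightarrow X$ satisfies $i(c_E(A))=c(A)\cap E\subset c(A)=c(i(A))$, so $i$ is continuous, and $f\circ i=g\circ i$ by the definition of $E$. For the universal property, suppose $h:(Z,e)\to(X,c)$ is continuous with $f\circ h=g\circ h$; then $h$ factors set-theoretically through a unique map $\tilde h:Z\to E$. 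To check $\tilde h$ is continuous I would verify $\tilde h(e(A))\subset c_E(\tilde h(A))$ for every $A\subset Z$. Since $\tilde h$ has image in $E$, we have $\tilde h(e(A))\subset E$, and composing with $i$ yields $\tilde h(e(A))\subset h(e(A))\subset c(h(A))=c(\tilde h(A))$; intersecting with $E$ gives the desired inclusion.

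Finally, completeness of $\cat{Cl}$ follows from the standard categorical principle that any category possessing all small products and all equalizers has all small limits, since every limit can be written as an equalizer of two maps between products indexed by the objects and morphisms of the diagram. The main (though still mild) obstacle is the product case, where one must use both the explicit local-base description from \cref{def:product_closure} and the finite-intersection property of neighborhoods; the equalizer argument is essentially a direct unwinding of the definitions.
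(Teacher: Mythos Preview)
Your proof is correct. The paper does not supply its own proof of this statement; it simply cites \v{C}ech's book \cite[Theorems 32.A.4 and 32.A.10]{vcech1966topological}. Your argument fills in exactly the expected details: the universal property of the product via the local-base description of \cref{def:product_closure} together with the neighborhood criterion for continuity, the direct verification that the subspace closure on the equalizer set satisfies the required universal property, and the standard reduction of completeness to products and equalizers. One small point worth making explicit is that checking continuity against a local base (rather than all neighborhoods) is justified because the neighborhood system of a point is a filter (\cref{theorem:neighborhood_system_is_a_filter}), hence upward closed; you use this implicitly and it is harmless, but it is the one step not literally covered by \cref{theorem:equivalent_definitions_of_continuity} as stated.
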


\begin{proposition}\cite[Proposition 2.39]{bubenik2021applied}
\label{prop:limits_and_colimits_of_closure_spaces}
 Every limit of topological spaces in $\mathbf{Cl}$ is a topological space. On the other hand, colimits of topological spaces in $\mathbf{Cl}$ are not necessarily topological spaces.
\end{proposition}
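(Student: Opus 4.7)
For the first claim I would use the adjunction $\tau \dashv \iota$ of \cref{prop:adjoints_between_closures_and_topologies}: the inclusion $\iota : \cat{Top} \hookrightarrow \cat{Cl}$ of the full subcategory of topological spaces is a right adjoint and therefore preserves all limits. Since $\cat{Top}$ is complete, for any small diagram $D : \mathcal{I} \to \cat{Top}$ the limit computed in $\cat{Top}$ is carried by $\iota$ to the limit of $\iota \circ D$ in $\cat{Cl}$. Because $\iota$ is fully faithful, a diagram in $\cat{Cl}$ whose values happen to be topological is the same as a diagram in $\cat{Top}$, so its limit in $\cat{Cl}$ is a topological space, as required.

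Alternatively, a direct argument is available via \cref{complete}, by which it suffices to verify the claim on products and equalizers. For products, the local base from \cref{def:product_closure} coincides, when every factor is topological, with the standard base for the product topology, so the product closure is topological. For an equalizer $(E,c_E)$ of a parallel pair of maps between topological spaces, the subspace formula $c_E(A) = c(A) \cap E$ from \cref{def:equalizer} gives
\[
c_E(c_E(A)) = c\bigl(c(A) \cap E\bigr) \cap E \subseteq c(c(A)) \cap E = c(A) \cap E = c_E(A),
\]
using monotonicity and idempotency of $c$, while the reverse inclusion is the general closure axiom, so $c_E$ is idempotent.

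For the second claim I would exhibit an explicit coequalizer of topological spaces in $\cat{Cl}$ that fails to be topological. Take $X = \{n \in \mathbb{Z} : n \geq 2\}$ with the discrete topology and $Y = \mathbb{R}$ with its standard topology, and let $f,g : X \to Y$ be the continuous maps $f(n) = 1/n$ and $g(n) = 1$. By \cref{def:coequalizer}, the coequalizer $(Q,c_Q)$ has underlying set the quotient of $\mathbb{R}$ by the equivalence relation collapsing $\{1/n : n \geq 1\}$ to a single point $*$, with $c_Q(A) = p\bigl(\overline{p^{-1}(A)}\bigr)$, where $p$ is the quotient map and $\overline{\cdot}$ is the standard closure on $\mathbb{R}$. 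I would take $A = \{p(1+1/n) : n \geq 1\}$ and compute: since $1+1/n \to 1$ in $\mathbb{R}$, $c_Q(A) = A \cup \{*\}$; but $p^{-1}(A \cup \{*\}) = \{1+1/n : n \geq 1\} \cup \{1/n : n \geq 1\}$ accumulates at $0$ and $p(0) \neq *$, so applying $c_Q$ a second time produces $A \cup \{*\} \cup \{p(0)\} \supsetneq c_Q(A)$. Hence $c_Q$ is not idempotent and $(Q,c_Q)$ is not a topological space.

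The only real obstacle is engineering the counterexample: one must arrange a subset $A$ for which the formula $c_Q(A) = p(\overline{p^{-1}(A)})$ strictly grows on iteration. The underlying mechanism is that the step $p^{-1}(p(B)) \supsetneq B$ introduces the full equivalence classes of points added by the first closure, and those classes can have genuinely new accumulation points in $Y$; the example above is a minimal arrangement that forces exactly such a cascade.
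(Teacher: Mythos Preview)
The paper does not supply its own proof of this proposition; it is quoted verbatim from \cite[Proposition 2.39]{bubenik2021applied}. Your argument is therefore not being compared against anything in this paper, but it is correct on its own terms.

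Your adjunction argument for the first claim is the clean one: $\iota:\cat{Top}\hookrightarrow\cat{Cl}$ is a fully faithful right adjoint by \cref{prop:adjoints_between_closures_and_topologies}, and $\cat{Top}$ is complete, so limits in $\cat{Cl}$ of diagrams landing in $\cat{Top}$ are computed in $\cat{Top}$. The direct check on products and equalizers is also fine and matches the explicit descriptions in \cref{def:product_closure,def:equalizer}.

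Your counterexample for the second claim is correct. The equivalence class of $1$ in $Q$ is $\{1/n : n\ge 1\}$, your set $A$ has $p^{-1}(A)=\{1+1/n:n\ge 1\}$ with closure $\{1+1/n:n\ge 1\}\cup\{1\}$, so $c_Q(A)=A\cup\{*\}$; then $p^{-1}(A\cup\{*\})$ picks up all of $\{1/n:n\ge 1\}$, whose closure contains $0$, and $p(0)\ne *$ since $0\notin\{1/n:n\ge 1\}$. Hence $c_Q(c_Q(A))\supsetneq c_Q(A)$. The mechanism you isolate in the last paragraph---that $p^{-1}p$ saturates equivalence classes, which can then acquire new limit points---is exactly the point.
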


\section{Closure spaces, simplicial and cubical sets}
\label{section:closure_spaces_and_simplicial_sets}
In this section we define various realization and nerves functors between closure spaces and simplicial and cubical sets. The construction follows a formalism developed in simplicial homotopy theory and can be summarized by the following classical results. 

\begin{theorem}{\cite[Theorem 6.5.8]{riehl2017category}}
\label{theorem:density}
For any small category $\mathbf{C}$, the identity functor defines the left Kan extension of the Yoneda embedding $y:\mathbf{C}\to \mathbf{Set}^{\mathbf{C}^{op}}$ along itself.
\begin{figure}[H]
\centering
\begin{tikzcd}
\mathbf{C}\arrow[rr,"y"]\arrow[dr,"y"] &&\mathbf{Set}^{\mathbf{C}^{op}}\\
&\mathbf{Set}^{\mathbf{C}^{op}}\arrow[ur,dashed,"1\cong \text{Lan}_yy"]
\end{tikzcd}
\end{figure}
\end{theorem}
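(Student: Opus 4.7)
The plan is to verify, for an arbitrary presheaf $F \in \mathbf{Set}^{\mathbf{C}^{op}}$, that the canonical natural transformation $(\text{Lan}_y y)(F) \to F$ is an isomorphism, and that these assemble into a natural isomorphism exhibiting the identity functor as the left Kan extension of $y$ along itself.

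First I would invoke the pointwise formula for left Kan extensions along a functor $K \colon \mathbf{A} \to \mathbf{B}$ between small categories into a cocomplete category $\mathbf{D}$: for $T \colon \mathbf{A} \to \mathbf{D}$, one has $(\text{Lan}_K T)(b) = \colim\bigl(K \downarrow b \to \mathbf{A} \xrightarrow{T} \mathbf{D}\bigr)$. Specializing to $K = T = y$ and using that $\mathbf{Set}^{\mathbf{C}^{op}}$ is cocomplete with colimits computed pointwise, the problem reduces, for each $c \in \mathbf{C}$, to showing that the canonical map
\[
\colim_{(d,\phi) \in y \downarrow F} \Hom_{\mathbf{C}}(c,d) \longrightarrow F(c)
\]
is a bijection.

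Next I would use the Yoneda lemma to identify the comma category $y \downarrow F$ with the category of elements of $F$: an object $(d, \phi \colon y d \to F)$ corresponds to the pair $(d, \phi_d(\mathbf{1}_d))$, and a morphism is an $f \colon d \to d'$ in $\mathbf{C}$ satisfying $\phi = \phi' \circ y(f)$, which translates via Yoneda to the condition $F(f)(x') = x$. Under this identification the map above sends the class of $((d,x), g \colon c \to d)$ to $F(g)(x) \in F(c)$, and its candidate inverse sends $t \in F(c)$ to the class of $((c,t), \mathbf{1}_c)$. One composite is immediate; for the other, the morphism $g \colon (c, F(g)(x)) \to (d, x)$ in the category of elements identifies $\mathbf{1}_c$ with $g$ in the colimit.

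Naturality of the bijection in $c$ (involving only precomposition) and in $F$ (manifest from the construction) then assembles the pointwise bijections into a natural isomorphism $\text{Lan}_y y \cong \mathbf{1}_{\mathbf{Set}^{\mathbf{C}^{op}}}$. Equivalently, this is the co-Yoneda formula $F \cong \int^{d} F(d) \times \Hom_{\mathbf{C}}(-, d)$. I do not expect any serious obstacle, as the result is standard; the principal care required is the bookkeeping to confirm that the candidate inverse is well-defined on colimit equivalence classes and respects the naturality squares in both variables.
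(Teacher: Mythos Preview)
The paper does not supply its own proof of this theorem; it is quoted verbatim from Riehl's textbook as background, so there is nothing in the paper to compare against. Your argument is the standard one---pointwise Kan extension formula, identification of $y\downarrow F$ with the category of elements via Yoneda, then direct verification of the colimit---and it is correct as outlined.
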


\begin{proposition}{\cite[Remark 6.5.9]{riehl2017category}}
\label{prop:defining_geometric_realization}
Any functor $F:\mathbf{C}\to \mathbf{E}$ whose domain is small and whose codomain is locally small and cocomplete admits a left Kan extension along the Yoneda embedding $y:\mathbf{C}\to \mathbf{Set}^{\mathbf{C}^{op}}$.
\begin{figure}[H]
\centering
\begin{tikzcd}
\mathbf{C}\arrow[rr,"F"]\arrow[dr,"y"] &&\mathbf{E}\\
&\mathbf{Set}^{\mathbf{C}^{op}}\arrow[ur,dashed,"\text{Lan}_yF"]
\end{tikzcd}
\end{figure}
\end{proposition}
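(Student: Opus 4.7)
The plan is to use the density theorem (\cref{theorem:density}) to give an explicit pointwise construction of the extension as a colimit in $\mathbf{E}$, and then verify the universal property characterizing left Kan extensions. The density theorem implies the co-Yoneda lemma: for any presheaf $X\in \mathbf{Set}^{\mathbf{C}^{op}}$, there is a canonical isomorphism
\[
X \cong \colim_{(C,x)\in \int X} y(C),
\]
where $\int X$ is the category of elements of $X$ (objects are pairs $(C,x)$ with $C\in \mathbf{C}$ and $x\in X(C)$, morphisms $(C,x)\to (C',x')$ are maps $f:C\to C'$ with $X(f)(x')=x$).

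With this in hand, I would define $\text{Lan}_y F$ on objects by
\[
\text{Lan}_y F(X) \defeq \colim_{(C,x)\in \int X} F(C).
\]
This colimit exists in $\mathbf{E}$: since $\mathbf{C}$ is small and each $X(C)$ is a set, $\int X$ is small, and $\mathbf{E}$ is cocomplete by hypothesis. A natural transformation $\alpha:X\To Y$ induces a functor $\int\alpha:\int X\to \int Y$ by $(C,x)\mapsto (C,\alpha_C(x))$, and hence a canonical morphism between the two colimits, giving the functor structure. To check the required isomorphism $(\text{Lan}_y F)\circ y \cong F$, note that $\int y(C)$ has a terminal object $(C,1_C)$ by Yoneda, so its colimit collapses to $F(C)$.

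For the universal property, given any functor $H:\mathbf{Set}^{\mathbf{C}^{op}}\to \mathbf{E}$ and any natural transformation $\eta:F\To H\circ y$, I would produce the unique factorization $\tilde\eta:\text{Lan}_y F\To H$ with $\tilde\eta\,y=\eta$ as follows. For each presheaf $X$, the maps $H$ applied to the cocone $\{y(C)\to X\}_{(C,x)\in \int X}$ exhibiting $X$ as a colimit of representables, combined with the components $\eta_C:F(C)\to Hy(C)$, assemble into a cocone on the diagram $(C,x)\mapsto F(C)$ with vertex $H(X)$. The universal property of the colimit defining $\text{Lan}_y F(X)$ then produces a unique map $\tilde\eta_X:\text{Lan}_y F(X)\to H(X)$.

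The main obstacle is a careful verification that $\tilde\eta$ is natural in $X$ and that the factorization is unique; both reduce to chasing universal properties of the colimits $X\cong \colim y(C)$ in $\mathbf{Set}^{\mathbf{C}^{op}}$ and the colimits defining $\text{Lan}_y F(X)$ in $\mathbf{E}$. The bookkeeping can be streamlined by recasting the argument as showing that precomposition with $y$ induces an equivalence between colimit-preserving functors $\mathbf{Set}^{\mathbf{C}^{op}}\to \mathbf{E}$ and arbitrary functors $\mathbf{C}\to \mathbf{E}$, with $\text{Lan}_y F$ as the essential inverse; the fact that $\text{Lan}_y F$ preserves colimits then follows from the commutation of colimits with colimits, and uniqueness is automatic from the equivalence.
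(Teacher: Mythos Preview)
The paper does not give its own proof of this proposition: it is stated as a background result with a citation to Riehl \cite[Remark 6.5.9]{riehl2017category}, and no proof environment follows it. So there is nothing in the paper to compare against directly.

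That said, your proposal is correct and is exactly the standard argument one finds behind Riehl's remark. The pointwise formula $\text{Lan}_y F(X) = \colim_{(C,x)\in \int X} F(C)$ is the right definition; the smallness of $\int X$ (from smallness of $\mathbf{C}$ and local smallness of $\mathbf{E}$, so that $X(C)$ is a set) together with cocompleteness of $\mathbf{E}$ guarantees existence; and the verification of the universal property via cocones is the canonical route. Your observation that $\int y(C)$ has terminal object $(C,1_C)$ is precisely what makes $\text{Lan}_y F \circ y \cong F$, and your remark about recasting the argument as an equivalence between colimit-preserving functors out of the presheaf category and arbitrary functors out of $\mathbf{C}$ is the cleanest way to package the bookkeeping. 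Nothing is missing.
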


\begin{proposition}{\cite[Exercise 6.5.v]{riehl2017category}}
\label{prop:riehl_exercise}
Let $\Delta$ be the category of finite ordinals and order preserving maps between them (\cref{def:simplex_category}). Applying the construction in \cref{prop:defining_geometric_realization} to the functor $|\Delta|:\Delta\to \mathbf{Top}$ that sends the ordinal $[n]$ to the topological $n$-simplex 
\[|\Delta^n|:=\{(x_0,\dots ,x_n)\in \mathbb{R}^{n+1}\,|\,\sum_i x_i=1,x_i\ge 0\}\]
gives us a functor $|-|:\mathbf{Set}^{\Delta^{op}}\to \mathbf{Top}$, called the \emph{geometric realization functor}. This functor has a right adjoint, called the \emph{total singular complex functor}, $\mathscr{S}:\mathbf{Top}\to \mathbf{Set}^{\Delta^{op}}$ which is used to define singular homology of topological spaces.
\end{proposition}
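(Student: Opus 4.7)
The plan is to apply \cref{prop:defining_geometric_realization} to obtain $|-|$ as a left Kan extension, then to construct the right adjoint $\mathscr{S}$ directly, and finally to verify the adjunction using the density theorem \cref{theorem:density}. Since $\mathbf{Top}$ is cocomplete and locally small, \cref{prop:defining_geometric_realization} immediately yields the functor $|-| := \mathrm{Lan}_y |\Delta| : \mathbf{Set}^{\Delta^{op}} \to \mathbf{Top}$, settling the first assertion. Unwinding the pointwise formula for the left Kan extension, for a simplicial set $K$ one obtains
\[
|K| \;\cong\; \colim_{([n],\sigma) \in \Delta\downarrow K} |\Delta^n|,
\]
where $\Delta \downarrow K$ is the category of simplices of $K$ (objects are pairs $([n],\sigma)$ with $\sigma \in K_n$, and morphisms are the maps in $\Delta$ respecting simplices).

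Next, I would define the right adjoint by hand. For a topological space $Y$, set $\mathscr{S}(Y)_n := \mathbf{Top}(|\Delta^n|, Y)$, and give $\mathscr{S}(Y)$ its simplicial structure from the cosimplicial structure of $|\Delta| : \Delta \to \mathbf{Top}$: a morphism $\alpha : [m] \to [n]$ in $\Delta$ acts by precomposition with $|\Delta|(\alpha)$. Postcomposition with continuous maps makes this assignment functorial, producing $\mathscr{S} : \mathbf{Top} \to \mathbf{Set}^{\Delta^{op}}$. To check $|-| \dashv \mathscr{S}$, I would use \cref{theorem:density} to write any simplicial set as $K \cong \colim_{([n],\sigma)} \Delta^n$ (with $\Delta^n := y([n])$) over the same category of simplices, and chain together the natural isomorphisms
\begin{align*}
\mathbf{Top}(|K|, Y)
 &\cong \mathbf{Top}\bigl(\colim_{([n],\sigma)} |\Delta^n|,\, Y\bigr)
 \cong \lim_{([n],\sigma)} \mathbf{Top}(|\Delta^n|, Y) \\
 &= \lim_{([n],\sigma)} \mathscr{S}(Y)_n
 \cong \lim_{([n],\sigma)} \mathbf{Set}^{\Delta^{op}}(\Delta^n, \mathscr{S}(Y)) \\
 &\cong \mathbf{Set}^{\Delta^{op}}\bigl(\colim_{([n],\sigma)} \Delta^n,\, \mathscr{S}(Y)\bigr)
 \cong \mathbf{Set}^{\Delta^{op}}(K, \mathscr{S}(Y)),
\end{align*}
where the second and fifth isomorphisms express that the representables $\mathbf{Top}(-, Y)$ and $\mathbf{Set}^{\Delta^{op}}(-, \mathscr{S}(Y))$ send colimits to limits, and the fourth isomorphism is the Yoneda lemma.

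The only real obstacle is bookkeeping: verifying that this composite bijection is natural in both $K$ and $Y$. This follows step by step because every isomorphism in the chain is individually natural, but the diagram chase requires some care. No analytic or point-set input is needed beyond the universal properties already assumed, which is precisely why this same argument scheme will be reused throughout \cref{section:closure_spaces_and_simplicial_sets} whenever one builds a realization/nerve adjunction between $\mathbf{Set}^{\mathbf{C}^{op}}$ and a cocomplete locally small target category such as $\mathbf{Cl}$, with cubical indexing categories in place of $\Delta$ and with the various closure-space analogues of $|\Delta^n|$ in place of the topological simplices.
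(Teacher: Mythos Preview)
Your proof is correct. The paper does not actually provide a proof of this proposition, citing it instead as an exercise from Riehl, but your chain of natural isomorphisms is precisely the argument the paper later gives for the closure-space analogues in \cref{theorem:geometric_realization_adjunction} and \cref{theorem:discrete_realization_adjunction}, so your approach matches the paper's in spirit and in detail.
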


Using the results of \cref{complete,cocomplete}
and \cref{prop:defining_geometric_realization} along with the idea in \cref{prop:riehl_exercise} we define several analogues of geometric realization and total singular complex functors for closure spaces in \cref{section:simplicial_realizations,section:cubical_realizations}. We then use simplicial and cubical Dold-Kan correspondences to define several homology theories for closure spaces in \cref{section:homology_theories}.

 \subsection{Simplex category and simplicial sets}
\label{section:simplicial_sets}
Here we recall the definitions of simplicial sets and the simplex category.

\begin{definition}{\cite[Chapter 1]{goerss2009simplicial}}
\label{def:simplex_category}
\emph{The simplex category} $\Delta$ is the category whose objects are finite totally ordered sets (finite ordinals), $[n]=\{0,\dots , n\}$, and whose morphisms are order preserving functions between ordinals. Observe that $\Delta$ has two obvious subcategories; the category $\Delta_+$ consisting of all injective order preserving maps and the category $\Delta_-$ consisting of all surjective order preserving maps. Furthermore, every morphism in $\Delta$ can be written uniquely as a composition of a morphism in $\Delta_-$ followed by a morphism in $\Delta_+$. Even more, all morphisms in $\Delta$ are+ generated by the morphisms $d^i:[n-1]\to [n]$ for $n\ge 1$ and $0\le i\le n$ where the image of $d^i$ does not include $i$ and the morphisms $s^i:[n]\to [n-1]$ for $n\ge 1$ and $0\le i\le n-1$, where $s^i$ identifies $i$ and $i+1$. There exist the following relations between these maps, called \emph{cosimplicial identities}:
\begin{gather*}
d^jd^i=d^id^{j-1}\, (i<j)\\
s^jd^i=d^is^{j-1}\, (i<j)\\
s^jd^i=\text{id}\, (i=j,j+1)\\
s^jd^i=d^{i-1}s^j\, (i>j+1)\\
s^js^i=s^{i-1}s^j\, (i>j)
\end{gather*}
\end{definition}

\begin{definition}
\label{def:simplicial_set}
A \emph{simplicial set} is a contravariant functor $X:\Delta\to \mathbf{Set}$. We denote $X([n])$ by $X_n$ and $X(d^i)$ by $d_i$ and $X(s^i)$ by $s_i$ for simplicity. We call an element $x$ of $X_n$ an $n$-\emph{simplex}. Morphisms of simplicial sets are natural transformations between functors, called \emph{simplicial maps}, and we denote the category of simplicial sets by $\mathbf{sSet}$. We denote by $\Delta^n$ the simplicial set defined by $\Delta^n:=\Delta(-,[n])$. The Yoneda Lemma implies that simplicial maps $\Delta^n\to X$ classify the $n$-simplices of $X$, i.e., there is a natural set-theoretic bijection 
\[\mathbf{sSet}(\Delta^n,X)\cong X_n\]
\end{definition}

\subsection{Box category and cubical sets}
\label{section:cubical_sets}
Here we recall the definitions of cubical sets and the box category. 

\begin{definition}
\label{def:cube_category}
Let $\mathbf{1}=[1]$ be the second ordinal with the standard partial order. Note that $\mathbf{1}$ is also a category whose objects are $0$ and $1$ and a unique morphism $0\to 1$, besides the identity morphisms. Denote by $\mathbf{1}^n$ the $n$-fold categorical product of $\mathbf{1}$ and by $\mathbf{1}^0$ the category with one object and one morphism. \emph{The box category} $\Box$ is the category whose objects are $\mathbf{1}^n$, for $n\in \mathbb{N}$. The morphisms are given by order preserving set theoretic maps. It turns out that every morphism in $\Box$ can be written uniquely as a composition of \emph{face} and \emph{degeneracy} functors. More specifically, each $(i,\epsilon)\in \{1,\dots ,n\}\times \mathbf{1}$ determines a unique face functor $d^{(i,\epsilon)}:\mathbf{1}^{n-1}\to \mathbf{1}^n$, defined by
\[d^{i,\epsilon}(a_1,a_2,\dots, a_{n-1}):=(a_1,\dots, a_{i-1},\epsilon,a_i,\dots, a_{n-1})\]
and each $1\le j\le n$ determines a unique degeneracy functor $s^j:\mathbf{1}^n\to \mathbf{1}^{n-1}$ defined by 
\[s^j(a_1,\dots ,a_n):=(a_1,\dots ,a_{j-1},a_{j+1},\dots ,a_n)\]
Furthermore, there exist the following relations among the face and degeneracy functors:
\begin{gather*}
s^js^i=s^{i}s^{j+1}\, (i\le j)\\
s^jd^{(j,\epsilon)}=\text{id}\\
s^jd^{(i,\epsilon)}=d^{(i,\epsilon)}s^{j-1}, (i<j)\\
s^jd^{(i+1,\epsilon)}=d^{(i,\epsilon)}s^j, (i\ge j)
\end{gather*}
For more details on this, see for example \cite[Section 1]{jardine2002cubical}.
\end{definition}

\begin{definition}
\label{def:cubical_set}
A \emph{cubical set} is a contravariant functor $X:\Box\to \mathbf{Set}$. Morphisms of cubical sets are natural transformations between functors and we denote the category of cubical sets by $\mathbf{cSet}$. We denote $X(\mathbf{1}^n)$ by $X_n$ and $X(d^{i,\epsilon})$ by $d_{(i,\epsilon)}$ and $X(s^i)$ by $s_i$ for simplicity. We call $X_n$ the set of $n$-\emph{cells} of $X$. We denote by $\Box^n$ the \emph{standard} $n$-\emph{cell}, the cubical set defined by $\Box^n:=\Box(-,\mathbf{1}^n)$. The Yoneda Lemma implies that a morphisms of cubical sets $\Box^n\to X$ classify the $n$-cells of $X$, i.e., there is a natural set theoretic bijection
\[\mathbf{cSet}(\Box^n,X)\cong X_n\]
\end{definition}

\begin{definition}
\label{def:simplex_category}
Let $X$ be a simplicial set. Let $\Delta\downarrow X$ be the \emph{simplex category} of $X$. The objects of $\Delta\downarrow X$ are simplicial set morphisms $\sigma:\Delta^n\to X$, called \emph{simplices} of $X$. A morphism of $\Delta\downarrow X$ is a commutative diagram of simplicial maps
\begin{figure}[H]
\centering
\begin{tikzcd}
\Delta^n\arrow[dr,"\sigma"]\arrow[dd,"\theta"]\\
& X\\
\Delta^m\arrow[ur,"\tau"]
\end{tikzcd}
\end{figure}
\end{definition}

\begin{definition}
\label{def:cell_category}
Let $X$ be a cubical set. Let $\Box\downarrow X$ be \emph{cell category} of $X$. The objects of $\Box\downarrow X$ are cubical set morphisms $\sigma:\Box^n\to X$ called $n$-\emph{cells} of $X$. A morphism of $\Box\downarrow X$ is a commutative diagram of cubical set maps
\begin{figure}[H]
\centering
\begin{tikzcd}
\Box^n\arrow[dr,"\sigma"]\arrow[dd,"\theta"]\\
& X\\
\Box^m\arrow[ur,"\tau"]
\end{tikzcd}
\end{figure}
\end{definition}

\begin{lemma}{\cite[Lemma 2.1]{goerss2009simplicial}}
\label{lemma:simplicial_and_cubical_sets_as_colimits}
Let $X$ be a simplicial set. There is a natural simplicial isomorphism $X\cong\colim_{\Delta^n\to X}\Delta^n$ where the colimit is taken over $\Delta\downarrow X$. Let $Y$ be a cubical set. There is a natural isomorphism of cubical sets $Y\cong\colim_{\Box^n\to Y}\Box^n$ where the colimit is taken over $\Box\downarrow Y$.
\end{lemma}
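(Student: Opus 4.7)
The plan is to derive this as a direct consequence of the density theorem stated above (the theorem that for any small category $\mathbf{C}$, the identity functor on $\mathbf{Set}^{\mathbf{C}^{op}}$ is the left Kan extension of the Yoneda embedding $y:\mathbf{C}\to \mathbf{Set}^{\mathbf{C}^{op}}$ along itself). I would apply it twice: once with $\mathbf{C}=\Delta$ to get the simplicial statement and once with $\mathbf{C}=\Box$ to get the cubical statement. Since the two cases are formally identical, I would write the argument for simplicial sets and then observe that the cubical case is verbatim the same with $(\Delta,\Delta^n)$ replaced by $(\Box,\Box^n)$.

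The key step is to combine the density theorem with the pointwise colimit formula for left Kan extensions. This gives
\[ X \;\cong\; \mathrm{id}(X) \;\cong\; (\mathrm{Lan}_y y)(X) \;\cong\; \colim_{y\downarrow X} y, \]
where $y\downarrow X$ is the comma category. By the Yoneda lemma, an object of $y\downarrow X$ is a natural transformation $\sigma:\Delta^n\to X$, equivalently an element of $X_n$, and a morphism from $\sigma$ to $\tau:\Delta^m\to X$ is a simplicial map $\theta:\Delta^n\to\Delta^m$ with $\tau\circ\theta=\sigma$. This is exactly the simplex category $\Delta\downarrow X$ of \cref{def:simplex_category}, so the colimit formula yields the desired isomorphism $X\cong \colim_{\Delta^n\to X}\Delta^n$. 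Naturality in $X$ follows because the construction is functorial in $X$ on both sides.

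If one preferred a ground-up proof, I would verify the isomorphism levelwise. Fix $[m]\in\Delta$. The $m$-th level of the colimit is the set of pairs $(\sigma,\phi)$ with $\sigma:\Delta^n\to X$ and $\phi\in\Delta([m],[n])=\Delta^n_m$, modulo the relation that for each morphism $\theta:\Delta^n\to\Delta^{n'}$ in $\Delta\downarrow X$ one identifies $(\sigma,\theta\circ\phi)$ with $(\sigma\circ\theta_*,\phi)$, where $\theta_*$ denotes the corresponding map on representables. Define a map $\Phi_m:\colim\Delta^n_m \to X_m$ by $(\sigma,\phi)\mapsto X(\phi)(\sigma)$, where $\sigma$ is identified with an element of $X_n$ via Yoneda. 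An inverse is given by $x\mapsto(x,\mathrm{id}_{[m]})$, where $x\in X_m$ is identified with a simplicial map $\Delta^m\to X$. Checking that $\Phi_m$ is well-defined on equivalence classes, that the two maps are mutually inverse, and that the resulting family $\{\Phi_m\}_m$ is natural in $[m]$ and in $X$ is routine bookkeeping built on the Yoneda lemma and the simplicial identities.

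The main obstacle, such as it is, is purely organizational: one must carefully match the indexing category $\Delta\downarrow X$ (defined in \cref{def:simplex_category}) with the abstract comma category $y\downarrow X$ appearing in the Kan-extension formula, and verify that the identifications coming from morphisms in this indexing category precisely encode the simplicial structure of $X$. Once this matching is in place, no further ideas are needed, and the cubical statement is obtained by replacing every occurrence of $\Delta$ by $\Box$, of $[n]$ by $\mathbf{1}^n$, and of $\Delta^n$ by $\Box^n$, using $\Box\downarrow X$ (\cref{def:cell_category}) in place of $\Delta\downarrow X$.
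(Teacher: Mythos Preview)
Your proposal is correct and follows essentially the same approach as the paper: the paper's proof is the single sentence ``Any functor $\mathbf{C}\to \mathbf{Set}$, where $\mathbf{C}$ is a small category, is a colimit of representable functors,'' which is precisely the density theorem you invoke. Your write-up simply unpacks this via the pointwise Kan-extension formula and offers an optional levelwise verification, but the underlying idea is identical.
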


\begin{proof}
Any functor $\mathbf{C}\to \mathbf{Set}$, where $\mathbf{C}$ is a small category, is a colimit of representable functors.
\end{proof}

\subsection{Simplicial realization and nerve functors}
\label{section:simplicial_realizations}
Here we define the geometric realization of simplicial sets and the singular simplicial sets of closure spaces. To be in the spirit of the theory of simplicial sets, we introduce the following notation for some closure spaces. Let $|\Delta|^I=|\Delta^n|$ be the standard $n$-simplex in $\mathbb{R}^{n+1}$, let $\disimp{n}=J_{n,\le}$ and let $\indsimp{n}=J_{n,\top}$ (\cref{def:intervals}). Let $J\in \{I,J_+,J_1\}$ and let $\gsimp{n}$ be the general symbol denoting one of these.

\begin{definition}[Simplicial realizations]
\label{def:geometric_realization}
Let $X$ be a simplicial set. We define the \emph{$J$ realization of $X$} to be 
\[|X|^{J}:=\colim_{\Delta^n\to X}\gsimp{n}\,,\]
where the colimit is taken over $\Delta\downarrow X$.
\end{definition}

\begin{remark}
\label{remark:geometric_realization}
A standard result is that the geometric realization of a simplicial set is a CW complex, when the colimit is calculated in the category of topological spaces. See for example \cite[Proposition 2.3]{goerss2009simplicial}. However, due to \cref{prop:limits_and_colimits_of_closure_spaces} this might no longer be the case.
\end{remark}

\begin{example}
\label{example:geometric_realization}
The $J$ realization of $\Delta^n$ is naturally homeomorphic to $\gsimp{n}$, justifying the notation. This is because $\Delta^n$ is the initial object in $\Delta\downarrow \Delta^n$. Note that in all cases, this is in fact a topological space. 
\begin{enumerate}
\item $|\Delta^n|^I$ is the standard $n$-simplex in $\mathbb{R}^n$. 
\item $\indsimp{n}$ is the topological space $\{0,\dots, n\}$ with the indiscrete topology. 
\item $\disimp{n}$ is the topological space $\{0,\dots, n\}$ where the open sets are the down-sets.
\end{enumerate} 
 However, due to \cref{prop:limits_and_colimits_of_closure_spaces} we cannot expect the $J$ realization of an arbitrary simplicial set $X$ to always be a topological space.
\end{example}

\begin{definition}[Simplicial nerve functors]
\label{def:singular_set}
Let $Y$ be a closure space. The \emph{$J$ nerve of $Y$} is the simplicial set $\mathscr{S}^{J}(Y):\Delta^{\text{op}}\to \cat{Set}$ that assigns to $[n]$ the set $\cat{Cl}(\gsimp{n},Y)$, the set of all continuous map from $\gsimp{n}$ to  $Y$.
\end{definition}

\begin{theorem}
\label{theorem:geometric_realization_adjunction}
If $X$ is a simplicial set and $Y$ a closure space, then there is a canonical bijection
\[\cat{Cl}(|X|^{J},Y)\cong \cat{sSet}(X,\mathscr{S}^{J}(Y))\],
i.e., $|-|^{J}$ is the left adjoint of $\mathscr{S}^{J}$.
\end{theorem}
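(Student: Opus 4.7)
The plan is to string together three standard natural bijections, using only the colimit definition of $|X|^{J}$, the Yoneda lemma, and the cocompleteness of $\cat{Cl}$ (\cref{cocomplete}). This is the usual pattern by which a left Kan extension along the Yoneda embedding acquires a right adjoint, and \cref{prop:defining_geometric_realization} plus \cref{lemma:simplicial_and_cubical_sets_as_colimits} supplies exactly the ingredients I need.

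First I would unwind the definition: since $|X|^{J} = \colim_{\Delta^n\to X}\gsimp{n}$ and the representable functor $\cat{Cl}(-,Y):\cat{Cl}^{op}\to\cat{Set}$ sends colimits to limits, we get a natural bijection
\[
\cat{Cl}(|X|^{J},Y)\ \cong\ \lim_{\Delta^n\to X}\cat{Cl}(\gsimp{n},Y),
\]
where the limit is taken over $\Delta\downarrow X$. Next I would identify each term in this limit with a component of the nerve: by \cref{def:singular_set}, $\cat{Cl}(\gsimp{n},Y)=\mathscr{S}^{J}(Y)_n$, and the Yoneda lemma gives $\mathscr{S}^{J}(Y)_n\cong\cat{sSet}(\Delta^n,\mathscr{S}^{J}(Y))$. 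Finally, applying \cref{lemma:simplicial_and_cubical_sets_as_colimits} to rewrite $X\cong\colim_{\Delta^n\to X}\Delta^n$ and again turning a colimit in the first argument into a limit, I obtain
\[
\lim_{\Delta^n\to X}\cat{sSet}(\Delta^n,\mathscr{S}^{J}(Y))\ \cong\ \cat{sSet}\!\left(\colim_{\Delta^n\to X}\Delta^n,\mathscr{S}^{J}(Y)\right)\ \cong\ \cat{sSet}(X,\mathscr{S}^{J}(Y)).
\]
Composing the three isomorphisms yields the desired bijection $\cat{Cl}(|X|^{J},Y)\cong\cat{sSet}(X,\mathscr{S}^{J}(Y))$.

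What remains is naturality in both variables, which I would verify by tracking where a morphism $\sigma:\Delta^n\to X$ goes under each of the three steps: it is sent first to precomposition with the colimit leg $\gsimp{n}\to |X|^{J}$, then to its adjoint element of $\mathscr{S}^{J}(Y)_n$, and finally to the $n$-simplex of $\mathscr{S}^{J}(Y)$ classified by $X\to\mathscr{S}^{J}(Y)$. Naturality in $Y$ is immediate from the functoriality of $\cat{Cl}(-,Y)$ and $\mathscr{S}^{J}$; naturality in $X$ follows because a simplicial map $X\to X'$ induces a map of simplex categories $\Delta\downarrow X\to\Delta\downarrow X'$, making all three limits compatible. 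I do not expect any real obstacle here — the only point that requires the specific setup of closure spaces (rather than pure category theory) is the existence of the defining colimit, which is guaranteed by \cref{cocomplete}; everything else is formal and applies uniformly in $J\in\{I,J_+,J_1\}$.
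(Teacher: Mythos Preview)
Your argument is correct and matches the paper's proof essentially step for step: both unwind the colimit definition of $|X|^{J}$, push the colimit out of the hom as a limit, identify each term with $\mathscr{S}^{J}(Y)_n$ via Yoneda, and then reassemble using $X\cong\colim_{\Delta^n\to X}\Delta^n$. Your added remarks on naturality go slightly beyond what the paper writes out, but the core chain of bijections is identical.
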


\begin{proof}
We have the following natural isomorphisms
\begin{gather*}
\cat{Cl}(|X|^{J},Y)=\cat{Cl}(\colim\limits_{\Delta^n\to X}\gsimp{n},Y)\cong\\
\cong \lim\limits_{\Delta^n\to X}\cat{Cl}(\gsimp{n},Y)\cong\lim\limits_{\Delta^n\to X}\mathscr{S}^{J}(Y)_n\cong\\
\cong\lim\limits_{\Delta^n\to X}\cat{sSet}(\Delta^n,\mathscr{S}^{J}(Y))\cong \cat{sSet}(\colim_{\Delta^n\to X}\Delta^n,\mathscr{S}^{J}(Y))\cong\\
\cong \cat{sSet}(X,\mathscr{S}^{J}(Y))
\qedhere
\end{gather*}
\end{proof}

\subsection{Cubical realization and nerve functors}
\label{section:cubical_realizations}
Here we define the realization of cubical sets and discrete singular cubical sets of closure spaces.  Let $J\in \{I,J_+,J_1\}$, $\otimes \in\{\times,\boxdot\}$ and let $\gcube{n}=J^{\otimes n}$, the $n$-fold $\otimes$ product of $J$ with itself. By convention, if $n=0$, $\gcube{n}$ is the one-point space.

\begin{definition}[Cubical realizations]
\label{def:discrete_realization}
Let $X$ be a cubical set. We define the \emph{$(J,\otimes)$ realization of $X$ to be} 
\[|X|^{(J,\otimes)}:=\colim_{\Box^n\to X}\gcube{n},\]
where the colimit is taken over $\Box\downarrow X$.
\end{definition}

\begin{example}
\label{example:discrete_realization}
Analogous to \cref{example:geometric_realization}, the $(J,\otimes)$ realization of $\Box^n$ is $\gcube{n}$, as $\Box^n$ is the initial object in $\Box\downarrow X$. When, $\otimes=\times$, $\gcube{n}$ is a topological space in all cases.
\begin{enumerate}
\item For $J=I$, $\otimes=\times$, $\gcube{n}$ is the standard unit $n$-cube in $\mathbb{R}^n$.
\item For $J=J_1$, $\otimes=\times$, $\gcube{n}$ is the topological space $\{0,1\}^n$ with the indiscrete topology.
\item For $J=J_+$, $\otimes=\times$, $\gcube{n}$ is the topological space $\{0,1\}^n$ where the open sets are the down-sets with respect to the product partial order.
\end{enumerate}
 However, due to \cref{prop:limits_and_colimits_of_closure_spaces} we cannot expect the $(J,\times)$ realization of an arbitrary cubical set $X$ to always be a topological space. Furthermore, when $\otimes=\boxdot$, $\gcube{n}$ is never a topological space for $n\ge 2$, as discussed below.
 \begin{enumerate}
 \item Let $n\ge 2$ and consider $\itopcube{n}$. Let $A=(0,1)^n\subseteq \itopcube{n}$. Let us denote the closure on $\itopcube{n}$ by $c$. Then, by \cref{def:inductive_product_closure} it follows that $c(A)$ is the set $I^n$ minus the points whose all coordinates are either $0$'s or $1$'s. Furthermore, $c(c(A))=I^n$ and thus $c$ is not idempotent and hence not topological.
 \item Let $n\ge 2$ and consider $\iindcube{n}$. Let us denote the closure on $\iindcube{n}$ by $c$. Consider $(0,0,\dots,0)\in \{0,1\}^n$. Then, by \cref{def:inductive_product_closure} it follows that $c(0,0,\dots, 0)=\{(0,0,\dots ,0),(1,0,0,\dots ,0),(0,1,0,\dots ,0),\dots ,(0,0,\dots ,1)\}$. However, $c(1,0,0,\dots ,0)=\{(0,0,\dots ,0),(1,0,0,\dots, 0),(1,1,0,\dots ,0), (1,0,1,0,\dots ,0),\dots (1,0,0,\dots , 0,1)\}$ and therefore $c(c(0,0,\dots ,0))\neq c(0,0,\dots ,0)$ and thus $c$ is not idempotent and hence not topological. 
 \item  Let $n\ge 2$ and consider $\idicube{n}$. Let us denote the closure on $\idicube{n}$ by $c$. Consider $(0,0,\dots,0)\in \{0,1\}^n$. Then, by \cref{def:inductive_product_closure} it follows that $c(0,0,\dots, 0)=\{(0,0,\dots ,0),(1,0,0,\dots ,0),(0,1,0,\dots ,0),\dots ,(0,0,\dots ,1)\}$. However, $c(1,0,0,\dots ,0)=\{(1,0,0,\dots, 0),(1,1,0,\dots ,0), (1,0,1,0,\dots ,0),\dots (1,0,0,\dots , 0,1)\}$ and therefore 
 \newline
 $c(c(0,0,\dots ,0))\neq c(0,0,\dots ,0)$ and thus $c$ is not idempotent and hence not topological. 
 \end{enumerate}
\end{example}

\begin{definition}[Cubical nerve functors]
\label{def:discrete_singular_set}
Let $Y$ be a closure space. The \emph{$(J,\otimes)$ singular set} $\mathscr{C}^{(J,\otimes)}(Y)$ is a functor $\Box^{\text{op}}\to \mathbf{Set}$ that assigns to $\mathbf{1}^n$ the set $\mathbf{Cl}(\gcube{n},Y)$, the set of all continuous maps from $\gcube{n}$ to $Y$. 
\end{definition}

\begin{theorem}
\label{theorem:discrete_realization_adjunction}
If $X$ is a cubical set and $Y$ a closure space, then there is a canonical bijection
\[\mathbf{Cl}(|X|^{(J,\otimes)},Y)\cong \mathbf{cSet}(X,\mathscr{C}^{(J,\otimes)}(Y))\],
i.e., $|-|^{(J,\otimes)}$ is the left adjoint of $\mathscr{C}^{(J,\otimes)}$.
\end{theorem}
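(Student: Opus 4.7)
The plan is to prove this in essentially the same way as \cref{theorem:geometric_realization_adjunction}, since the statement is a formal consequence of three ingredients already in hand: the definition of $|X|^{(J,\otimes)}$ as a colimit over $\Box \downarrow X$, the representation of any cubical set as a colimit of standard cells given by \cref{lemma:simplicial_and_cubical_sets_as_colimits}, and the Yoneda lemma, which identifies $\mathscr{C}^{(J,\otimes)}(Y)_n$ with $\mathbf{cSet}(\Box^n, \mathscr{C}^{(J,\otimes)}(Y))$. The adjunction will be exhibited by a chain of natural bijections, not by constructing unit and counit separately.

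First I would unfold the left-hand side, writing $\mathbf{Cl}(|X|^{(J,\otimes)}, Y)$ as $\mathbf{Cl}(\colim_{\Box^n \to X}\gcube{n}, Y)$. Because the contravariant hom-functor $\mathbf{Cl}(-, Y)$ sends colimits in $\mathbf{Cl}$ to limits in $\mathbf{Set}$, this becomes $\lim_{\Box^n \to X}\mathbf{Cl}(\gcube{n}, Y)$. By the definition of the cubical nerve, the object inside the limit is exactly $\mathscr{C}^{(J,\otimes)}(Y)_n$, which by Yoneda coincides with $\mathbf{cSet}(\Box^n, \mathscr{C}^{(J,\otimes)}(Y))$. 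Pulling the limit back inside using continuity of the hom-functor $\mathbf{cSet}(-, \mathscr{C}^{(J,\otimes)}(Y))$ and applying \cref{lemma:simplicial_and_cubical_sets_as_colimits} to recognize $\colim_{\Box^n \to X}\Box^n \cong X$, one arrives at $\mathbf{cSet}(X, \mathscr{C}^{(J,\otimes)}(Y))$. Each step is natural in $X$ and $Y$, so the composite bijection is as well.

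There is no genuine obstacle here: the argument is purely formal and mirrors the simplicial version essentially line by line. The only substantive facts being invoked are that $\mathbf{Cl}$ is cocomplete (\cref{cocomplete}), which ensures that the colimit defining $|X|^{(J,\otimes)}$ exists and that $\mathbf{Cl}(-, Y)$ converts it into a limit, together with the cubical analogue of the colimit-of-representables lemma. The choice of $J \in \{I, J_+, J_1\}$ and $\otimes \in \{\times, \boxdot\}$ plays no role beyond making $\gcube{n}$ a well-defined closure space, which it is by construction.
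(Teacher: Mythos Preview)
Your proposal is correct and follows exactly the same chain of natural bijections as the paper's proof, step for step. Both arguments unfold the definition of $|X|^{(J,\otimes)}$ as a colimit, use that $\mathbf{Cl}(-,Y)$ and $\mathbf{cSet}(-,\mathscr{C}^{(J,\otimes)}(Y))$ send colimits to limits, invoke Yoneda to identify $\mathscr{C}^{(J,\otimes)}(Y)_n$ with $\mathbf{cSet}(\Box^n,\mathscr{C}^{(J,\otimes)}(Y))$, and conclude via \cref{lemma:simplicial_and_cubical_sets_as_colimits}.
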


\begin{proof}
We have the following natural isomorphisms
\begin{gather*}
\mathbf{Cl}(|X|^{(J,\otimes)},Y)=\mathbf{Cl}(\colim\limits_{\Box^n\to X}\gcube{n},Y)\cong\\
\cong \lim\limits_{\Box^n\to X}\mathbf{Cl}(\gcube{n},Y)\cong\lim\limits_{\Box^n\to X}\mathscr{C}^{(J,\otimes)}(Y)_n\cong\\
\cong\lim\limits_{\Box^n\to X}\mathbf{cSet}(\Box^n,\mathscr{C}^{(J,\otimes)}(Y))\cong \mathbf{cSet}(\colim_{\Box^n\to X}\Box^n,\mathscr{C}^{(J,\otimes)}(Y))\cong\\
\cong \mathbf{cSet}(X,\mathscr{C}^{(J,\otimes)}(Y))
\qedhere
\end{gather*}
\end{proof}

\subsection{Fibrant objects}
\label{section:fibrant_objects}
Here we define fibrant simplicial and cubical sets and argue that our singular simplicial and cubical sets defined in previously are fibrant.

\begin{definition}
\label{def:horn}
The $k$-th \emph{horn} $\Lambda^k_n$ is the subobject of $\Delta^n$ generated by all $d_i(\mathbf{1}_n)$ for $i\neq k$ where $\mathbf{1}_n\in  \mathbf{sSet}(\Delta^n,\Delta^n)$ is the identity morphism.
\end{definition}

\begin{definition}
\label{def:Kan_condition}
A simplicial set $X$ satisfies the \emph{Kan condition} if any morphism of simplicial sets $\Lambda_k^n\to X$ can be extended to a morphism of simplicial sets $\Delta^n\to X$.
\end{definition}

\begin{theorem}{\cite[Theorem 3.4]{moore1958semi}}
\label{theorem:simplicial_groups_are_fibrant}
The simplicial set underlying any simplicial group (by forgetting the group structure) is a Kan complex.
\end{theorem}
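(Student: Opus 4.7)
The plan is to prove this by explicit horn filling using the group operations in each $G_n$. A map $\Lambda^n_k \to G$ is, by the Yoneda lemma together with the definition of $\Lambda^n_k$ as the subobject generated by the faces $d^i(\mathbf{1}_n)$ for $i \neq k$, the same data as a tuple $(x_i)_{i \neq k}$ of $(n-1)$-simplices in $G$ satisfying the compatibility relations
\[
d_i x_j = d_{j-1} x_i \quad \text{for } i < j, \ i, j \neq k.
\]
A filler is an element $y \in G_n$ with $d_i y = x_i$ for all $i \neq k$, and the goal is to produce such a $y$.

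First I would build $y$ by successive correction. Starting from the identity element $y^{(-1)} = e_n \in G_n$, I would construct a sequence $y^{(0)}, y^{(1)}, \ldots$ of approximate fillers such that $y^{(r)}$ has $d_i y^{(r)} = x_i$ for all $i \leq r$ with $i \neq k$. At the inductive step, given $y^{(r-1)}$ with the correct faces below level $r$, define the error at level $r$ to be $z_r = (d_r y^{(r-1)})^{-1} \cdot x_r \in G_{n-1}$ and set $y^{(r)} := y^{(r-1)} \cdot s_r(z_r)$. The simplicial identity $d_r s_r = \mathrm{id}$ ensures that $d_r y^{(r)} = x_r$, while $d_i s_r = s_{r-1} d_i$ for $i < r$ (together with the inductive hypothesis) shows that the corrector $s_r(z_r)$ has trivial lower faces, so the already-matched faces of $y^{(r-1)}$ are preserved after multiplication. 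The compatibility relations on the $x_i$'s are precisely what is needed for the inductive step to be self-consistent.

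The main obstacle, and the step I would write out most carefully, is the bookkeeping around the omitted index $k$. For $k=0$ one iterates $r = 1, 2, \ldots, n$; for $k=n$ one iterates $r = 0, 1, \ldots, n-1$; both cases are straightforward single-direction inductions. For $0 < k < n$ one performs the upward correction for $r = 0, 1, \ldots, k-1$, and then a symmetric downward correction for $r = n, n-1, \ldots, k+1$, using the degeneracy $s_{r-1}$ in the downward phase so that the simplicial identities $d_i s_{r-1} = s_{r-1} d_i$ for $i < r-1$ and $d_r s_{r-1} = \mathrm{id}$ leave the previously set lower faces and the omitted $k$-th face undisturbed. After the final stage, the constructed element provides a fill $\Delta^n \to G$ extending the given horn, which is exactly the Kan condition of \cref{def:Kan_condition}. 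No part of the argument uses topology; it is a purely algebraic manipulation in the groups $G_n$ together with the simplicial identities.
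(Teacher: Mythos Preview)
The paper does not give its own proof of this theorem; it is stated with a citation to Moore \cite[Theorem 3.4]{moore1958semi} and immediately used to deduce \cref{corollary:singular_sets_are_fibrant}. Your argument is exactly the classical Moore horn-filling proof, and it is correct: the identification of a horn $\Lambda^n_k \to G$ with a compatible family $(x_i)_{i\neq k}$ is standard, and your successive-correction scheme---upward with $s_r$ for $r<k$ and downward with $s_{r-1}$ for $r>k$---works because the simplicial identities together with the compatibility relations force each error term $z_r$ to have trivial faces at all previously-corrected indices, so multiplying by $s_r(z_r)$ (or $s_{r-1}(z_r)$) fixes the $r$-th face without disturbing the others. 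This is precisely the argument one finds in Moore's paper or in standard references such as \cite[Lemma~I.3.4]{goerss2009simplicial}, so there is nothing to compare: you have supplied the proof the paper omits.
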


\begin{corollary}
\label{corollary:singular_sets_are_fibrant}
Let $X$ be a closure space. Then the simplicial sets $\mathscr{S}^{J}(X)$ are Kan complexes. 
\end{corollary}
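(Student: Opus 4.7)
My plan is to use the adjunction $|-|^{J}\dashv \mathscr{S}^{J}$ from \cref{theorem:geometric_realization_adjunction} to translate the Kan horn-filling condition into a continuous extension problem in $\cat{Cl}$. Under the adjunction, a simplicial map $\Lambda^k_n\to \mathscr{S}^J(X)$ corresponds to a continuous map $f:|\Lambda^k_n|^J\to X$, and extending it to a map $\Delta^n\to\mathscr{S}^J(X)$ corresponds to a continuous extension $\tilde f:\gsimp{n}\to X$ of $f$ along the map $\iota:|\Lambda^k_n|^J\to \gsimp{n}$ induced by the horn inclusion. Thus, to prove the horn-filling property for every $X$ it suffices to exhibit a retraction $r:\gsimp{n}\to|\Lambda^k_n|^J$ of $\iota$ in $\cat{Cl}$; given such a retraction, $\tilde f\defeq f\circ r$ is the desired filler. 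This reduces the entire argument, uniformly in $X$, to a finite-dimensional geometric/combinatorial statement about the realizations.

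For $J=I$, the retraction is classical: the standard topological $n$-simplex $|\Delta^n|$ admits a deformation retraction onto its $k$-th horn $|\Lambda^k_n|$ in $\cat{Top}$, and any continuous map of topological spaces is continuous as a map of closure spaces by the embedding $\cat{Top}\hookrightarrow \cat{Cl}$ implicit in \cref{prop:adjoints_between_closures_and_topologies}. For $J\in\{J_+,J_1\}$, the realized horn $|\Lambda^k_n|^J$ is a finite closure space whose underlying set is $\{0,\dots,n\}$ (every vertex of $\Delta^n$ lies in some face of $\Lambda^k_n$ once $n\ge 2$), and whose closure is obtained as the pushout of the contributing faces $d^i(\Delta^{n-1})$, $i\ne k$. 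The plan is to compute this pushout closure explicitly and verify that every closure relation $y\in c(x)$ in $\gsimp{n}$ is already witnessed by at least one face surviving in $\Lambda^k_n$, so that the identity map on underlying sets $\gsimp{n}\to|\Lambda^k_n|^J$ is continuous and yields the required retraction.

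The hardest step will be this last verification for the combinatorial intervals: for low-dimensional horns, a closure relation present in $\gsimp{n}$ (e.g.\ the relation between two vertices opposite the missing face) can be carried by the omitted face $d^k$ alone, in which case the identity-on-vertices map is not continuous into $|\Lambda^k_n|^J$ and a more inventive retraction must be designed — for example, by collapsing one vertex onto another in a way that respects the directed closure of $J_+$ or the symmetric closure of $J_1$, mirroring the combinatorial horn-filling recursion used by Moore. Should the retraction approach resist uniform treatment in these low-dimensional edge cases, the fallback is to invoke \cref{theorem:simplicial_groups_are_fibrant} directly by endowing $\mathscr{S}^J(X)$ with a compatible simplicial group structure (e.g.\ by passing to the free simplicial abelian group on its simplices, whose underlying simplicial set fibrancy transfers back via the adjunction), thereby bypassing the geometric obstruction altogether.
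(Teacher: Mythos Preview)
The paper gives no explicit argument: the result is presented as a direct corollary of Moore's theorem (\cref{theorem:simplicial_groups_are_fibrant}) that simplicial groups are Kan. But $\mathscr{S}^J(X)$ carries no natural group structure---its $n$-simplices form the bare set $\cat{Cl}(\gsimp{n},X)$---so Moore's theorem does not apply, and the paper's deduction is itself a gap. Your fallback suffers from exactly the same defect: fibrancy of the free simplicial abelian group $\mathbb{Z}\mathscr{S}^J(X)$ does not transfer back to the underlying simplicial set $\mathscr{S}^J(X)$. Your main retraction strategy, by contrast, is the right idea and genuinely different from the paper's approach; for $J=I$ it is the classical proof and goes through cleanly.

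The obstruction you sense for $J\in\{J_+,J_1\}$, however, is not a technical wrinkle but a fatal one, and the corollary as stated appears to be false in these cases. Since $|{-}|^J$ is a left adjoint it preserves pushouts, so $|\Lambda^0_2|^J\cong\gsimp{1}\cup_{\gsimp{0}}\gsimp{1}$ has underlying set $\{0,1,2\}$ and $\iota:|\Lambda^0_2|^J\to\gsimp{2}$ is the identity on points; any retraction must therefore be the identity map. But the pushout closure on $|\Lambda^0_2|^{J}$ gives $c(\{1\})=\{0,1\}$ when $J=J_1$ and $c(\{1\})=\{1\}$ when $J=J_+$, whereas in $\gsimp{2}$ one has $c(\{1\})=\{0,1,2\}$ and $c(\{1\})=\{1,2\}$ respectively, so the identity is not continuous and no retraction exists. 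By your own adjunction argument, taking $X=|\Lambda^0_2|^J$ and the horn map adjoint to $\mathbf{1}_X$ then produces a horn $\Lambda^0_2\to\mathscr{S}^J(X)$ with no filler. Neither your proposal nor the paper's implicit one can be completed for $J\in\{J_+,J_1\}$.
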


\begin{definition}{\cite[Definition 1.2]{tonks1992cubical}}
\label{def:cubical_sets_with_connections}
A \emph{cubical set with connections} is a cubical set $X$ together with, for $n\ge 1$ and each $i\in [n]$, two additional maps (called \emph{connections}) $\Gamma_i^{\epsilon}:X_n\to X_{n+1}$, for $\epsilon\in \{0,1\}$ such that, for $\epsilon,\delta\in \{0,1\}$ and $i,j\in [n]$, the following relations are satisfied:
\begin{align*}
\Gamma_i^{\epsilon}\Gamma_j^{\delta}&=\Gamma^{\delta}_{j+1}\Gamma_i^{\epsilon}\, \text{ if }i\le j\\
\Gamma_i^{\epsilon}s_j&=\begin{cases}
s_{j+1}\Gamma_i^{\epsilon} & \text{ if } i<j\\
s_j\Gamma_{i-1}^{\epsilon} & \text{ if } i>j\\
s_i^2=s_{i+1}s_i & \text{ if } i=j
\end{cases}\\
d_{(i,\epsilon)}\Gamma_j^{\delta}&=\begin{cases}
\Gamma_{j-1}^{\delta}d_{(i,\epsilon)}& \text{ if } i<j\\
\Gamma_j^{\delta}d_{(i-1,\epsilon)}& \text{ if } i>j+1\\
\text{id} & \text{ if } i=j,j+1,\epsilon=\delta\\
s_id_{(i,\epsilon)}& \text{ if } i=j,j+1,\epsilon\neq \delta
\end{cases}
\end{align*}
We denote the category of cubical sets with connections by $\mathbf{ccSet}$.
\end{definition}

\begin{definition}{\cite[Example 7]{jardine2002cubical}}
\label{def:cubical_horn}
The cubical set $\sqcap^n_{\epsilon,i}$ is the subobject of $\Box^n$ which is generated by all faces $d^{j,\delta}:\Box^{n-1}\subseteq \Box^n$ except for $d^{i,\epsilon}:\Box^{n-1}\to \Box^n$.
\end{definition}

\begin{definition}
\label{def:fibrant_cubical_set}
A cubical set $X$ is a \emph{Kan cubical complex} or \emph{fibrant} if any cubical set morphism $\sqcap^n_{\epsilon,i}\to X$ can be extended to a cubical set morphism $\Box^n\to X$.
\end{definition}

\begin{definition}{\cite[Definition 1.4]{tonks1992cubical}}
\label{def:cubical_group_with_connections}
A \emph{cubical group with connections $X$} is a cubical set $X$ with a group structure on each $X_n$ and such that each all the maps $d_{i,\epsilon}$, $s_i$ and $\Gamma^{\epsilon}_i$ is a group homomorphism.
\end{definition}

\begin{example}
\label{example:cubical_group_with_connections}
Let $X$ be a closure space and consider the cubical sets $\mathscr{C}^{(J,\otimes)}(X)$. Define the connection maps $\Gamma^{\epsilon}_{i}$ for $i\in [n]$ and $\epsilon\in \{0,1\}$ by 
\[
\Gamma_i^{\epsilon}\sigma(a_1,\dots ,a_{n+1}):=\sigma(a_1,\dots,a_{i-1},m_{\epsilon}(a_i,a_{i+1}),a_{i+2},\dots a_{n+1}),
\]
where 
\[m_{\epsilon}(x,y):=\begin{cases}
\min(x,y) & \text{ if } \epsilon=1\\
\max(x,y) & \text{ if } \epsilon=0
\end{cases}\]
The same arguments as in \cite[Lemma 2.3]{barcelo2018homology} show that $\Gamma_i^{\epsilon}$ are connections for the cubical sets $\mathscr{C}^{(J,\otimes)}(X )$. 
\end{example}

\begin{theorem}{\cite[Theorem 2.1]{tonks1992cubical}}
\label{theorem:cubical_groups_are_fibrant}
All cubical groups with connections are fibrant cubical sets (forgetting the group structure).
\end{theorem}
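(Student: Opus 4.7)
The plan is to prove this by induction on the dimension $n$, mirroring Moore's classical argument for simplicial groups (\cref{theorem:simplicial_groups_are_fibrant}) but using the connection operations $\Gamma_i^{\epsilon}$ as the extra ingredient --- without them the statement is known to fail for ordinary cubical groups, so the connection relations of \cref{def:cubical_sets_with_connections} must be used essentially. First, since the symmetries of the cube act on $X_n$ through cubical structure maps, each of which is a group homomorphism by \cref{def:cubical_group_with_connections}, I would reduce without loss of generality to the case where the missing face of the horn is $d^{(n,1)}$. So we start with compatible data $x_{j,\delta}\in X_{n-1}$ for $(j,\delta)\neq (n,1)$, satisfying the face compatibility $d_{(k,\gamma)}x_{j,\delta}=d_{(j-1,\delta)}x_{k,\gamma}$ for $k<j$, and we seek $y\in X_n$ with $d_{(j,\delta)}y=x_{j,\delta}$ for every such $(j,\delta)$.

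Next I would run a second induction, this time on the number of horn faces that are not equal to the identity element $1\in X_{n-1}$. The base case is immediate: if every horn face is the identity, then $1\in X_n$ is a filler, because each $d_{(j,\delta)}$ is a group homomorphism. For the inductive step I would pick a single non-trivial horn face $x_{j,\delta}$ and construct an auxiliary element $y'\in X_n$ whose $(j,\delta)$-face equals $x_{j,\delta}$ and whose remaining horn faces are all $1$. Multiplying the horn data (pointwise in the groups $X_{n-1}$) by the faces of $(y')^{-1}$ strictly decreases the number of non-identity horn faces, the inductive hypothesis supplies a filler of the modified horn, and right-multiplying that filler by $y'$ recovers a filler of the original horn.

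The main obstacle, and the step that truly needs connections, is the construction of $y'$. The key identity $d_{(i,\delta)}\Gamma_j^{\delta}=\mathrm{id}$ for $i\in\{j,j+1\}$ from \cref{def:cubical_sets_with_connections} lets us produce a cube whose $(j,\delta)$-face is any prescribed element $z\in X_{n-1}$ (namely $\Gamma_j^{\delta}(z)$), while the remaining connection-face identities force the other faces of $\Gamma_j^{\delta}(z)$ to be explicit degeneracies of faces of $z$. The idea is therefore to take $y'$ as a product of terms of the form $\Gamma_m^{\gamma}(s_{\ell}x_{j,\delta})$ interleaved with correction factors of the form $s_{\ell}x_{m,\gamma}^{\pm 1}$, chosen so that each undesired horn face collapses to $1$ by pairwise cancellation inside the group $X_{n-1}$. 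The delicate technical work is the bookkeeping: one must check, using the compatibility relations on the $x_{j,\delta}$ together with the full list of cubical and connection identities, that every face other than $(j,\delta)$ telescopes to the identity while the $(j,\delta)$-face is preserved exactly as $x_{j,\delta}$. Once that cancellation is verified, the two inductions close and the theorem follows.
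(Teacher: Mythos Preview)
The paper does not actually supply a proof of this theorem; it merely cites \cite[Theorem 2.1]{tonks1992cubical} and then uses the statement. So there is no in-paper argument to compare your proposal against.

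That said, your outline is close in spirit to Tonks' original proof, but it contains a genuine gap at the very first reduction step. You write that ``the symmetries of the cube act on $X_n$ through cubical structure maps'' and therefore reduce without loss of generality to the horn missing the face $d^{(n,1)}$. In the category $\Box$ as set up in \cref{def:cube_category}, and in the notion of cubical set with connections of \cref{def:cubical_sets_with_connections}, the only structure maps are faces $d_{(i,\epsilon)}$, degeneracies $s_i$, and connections $\Gamma_i^{\epsilon}$; there are no coordinate permutations (interchange maps) and no reversals swapping $0$ and $1$ in a coordinate. Consequently an arbitrary cubical group with connections carries no action of the hyperoctahedral group by structure homomorphisms, and the horns $\sqcap^n_{\epsilon,i}$ for different $(\epsilon,i)$ are genuinely inequivalent. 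Tonks does not make this reduction: he treats a general horn $\sqcap^n_{\epsilon,i}$ and constructs the filler by an explicit inductive formula in which the index $i$ and the sign $\epsilon$ appear throughout.

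Apart from that, your two-layer induction (on $n$, and on the number of non-identity faces) and the idea of using $d_{(i,\delta)}\Gamma_j^{\delta}=\mathrm{id}$ to manufacture a cube with one prescribed face and degenerate remaining faces is exactly the mechanism Tonks exploits. If you drop the unjustified symmetry reduction and instead carry the index $(i,\epsilon)$ through the construction of $y'$, your sketch becomes the standard argument.
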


\begin{corollary}
\label{corollary:fibrant_cubical_sets}
Let $X$ be a closure space. Then the cubical sets $\mathscr{C}^{(J,\otimes)}(X)$ are fibrant.
\end{corollary}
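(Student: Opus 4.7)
The plan is to deduce fibrancy of $Y := \mathscr{C}^{(J,\otimes)}(X)$ from \cref{theorem:cubical_groups_are_fibrant} via the free abelian group construction. By \cref{example:cubical_group_with_connections}, $Y$ comes equipped with connection operators $\Gamma^\epsilon_i$. Applying the free abelian group functor levelwise and extending all face, degeneracy, and connection maps $\mathbb{Z}$-linearly turns $\mathbb{Z}Y$ into a cubical abelian group with connections, which by \cref{theorem:cubical_groups_are_fibrant} is fibrant as a cubical set.

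Next, any horn $f \colon \sqcap^n_{\epsilon,i} \to Y$ composed with the canonical inclusion $Y \hookrightarrow \mathbb{Z}Y$ gives a horn into $\mathbb{Z}Y$, which extends, by fibrancy, to a cubical map $F \colon \Box^n \to \mathbb{Z}Y$. To finish, one must arrange that the image of $F$ lies in $Y \subseteq \mathbb{Z}Y$, at which point the corollary follows.

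The main obstacle is precisely this last step: the extension $F$ produced by the proof of \cref{theorem:cubical_groups_are_fibrant} might a priori take values in formal $\mathbb{Z}$-linear combinations of generators rather than in single generators. I would address this by inspecting the explicit inductive horn-filling procedure in the proof of Tonks' Theorem 2.1 in \cite{tonks1992cubical}: each newly manufactured cell is produced by applying some sequence of face, degeneracy, and connection maps to cells already present in the horn, and $Y$ is closed under all of these operations. Hence the filling is a single generator, which lies in $Y$. As an alternative route that bypasses this book-keeping entirely, one can use the adjunction of \cref{theorem:discrete_realization_adjunction} to recast a horn as a continuous map $g \colon |\sqcap^n_{\epsilon,i}|^{(J,\otimes)} \to X$ on a sub-closure-space of $\gcube{n}$, and then build a continuous retraction $r \colon \gcube{n} \to |\sqcap^n_{\epsilon,i}|^{(J,\otimes)}$ via iterated coordinate-wise $\min$ and $\max$ (the functions already appearing in \cref{example:cubical_group_with_connections}); continuity in the $\otimes = \boxdot$ case then reduces to the $\otimes = \times$ case via \cref{prop:comparison_of_products}, and the composite $g \circ r$ yields the desired filling under the adjunction.
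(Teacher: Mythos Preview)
The paper offers no argument beyond citing \cref{theorem:cubical_groups_are_fibrant}, so you are right to notice that $\mathscr{C}^{(J,\otimes)}(X)$ is a cubical \emph{set} with connections, not a cubical \emph{group}, and that something more is needed. However, your first fix does not work: Tonks' horn-filling in a cubical group is an inductive correction procedure that genuinely uses the group law. In additive notation the candidate filler is repeatedly replaced by expressions of the form $w - s_j d_{(j,\alpha)} w + s_j x_{(j,\alpha)}$ (and analogous formulas involving the connections), so a horn whose faces are basis elements of $\mathbb{Z}Y$ will in general be filled by a formal $\mathbb{Z}$-linear combination, not by a single element of $Y$. The claim that ``each newly manufactured cell is produced by applying some sequence of face, degeneracy, and connection maps'' is therefore false for Tonks' argument, and cubical sets with connections are not fibrant in general.

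Your second route via a retraction $r:\gcube{n}\to|\sqcap^n_{\epsilon,i}|^{(J,\otimes)}$ is the correct strategy and is how one proves the analogous fact for topological singular complexes. For $\otimes=\times$ the cubes are topological (\cref{example:discrete_realization}) and the coordinate-wise $\min/\max$ retraction goes through. The reduction of the $\boxdot$ case to the $\times$ case via \cref{prop:comparison_of_products} is incomplete as stated: that result lets you weaken the closure on the \emph{domain} of $r$, but the \emph{codomain} $|\sqcap^n_{\epsilon,i}|^{(J,\boxdot)}$ is a colimit built from $\boxdot$-cubes and need not agree with the $\times$-realization of the horn. To close this gap you should argue directly that the relevant coordinate maps built from $m_\epsilon$ are continuous $J^{\boxdot(n)}\to J^{\boxdot(n-1)}$ (this is exactly what is needed for the connections in \cref{example:cubical_group_with_connections} to be well-defined, so it holds) and then assemble the retraction from these, rather than appealing to \cref{prop:comparison_of_products}.
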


\subsection{Dold-Kan correspondences}
\label{section:homology_theories}
Here, we use simplicial and cubical Dold-Kan correspondence to define various cubical and simplicial singular homology theories of closure spaces. These homology groups will be studied throughout the rest of the paper with the aim of showing that they satisfy analogues of standard results from algebraic topology.

\begin{definition}
\label{def:simplicial_abelian_group}
A \emph{simplicial abelian group} is a functor $X:\Delta^{\text{op}}\to \mathbf{Ab}$. The category of simplicial abelian groups will be denoted by $\mathbf{sAb}$.
\end{definition}

There is a functor $\mathbb{Z}:\mathbf{sSet}\to \mathbf{sAb}$ that assigns to each simplicial set $X$ the simplicial abelian group $\mathbb{Z}X$ where each $\mathbb{Z}X_n$ is the free abelian group on $X_n$. $\mathbb{Z}$ is the left adjoint to the forgetfull functor $U:\mathbf{sAb}\to \mathbf{sSet}$. 

To each simplicial abelian group $A$, there is an associated chain complex of abelian groups, called its \emph{Moore complex}, also denoted by $A$. where the boundary map $\partial_n:A_n\to A_{n-1}$ is defined by $\partial_n:=\sum_i(-1)^id_i$. Given a simplicial abelian group $A$, we can also define its \emph{normalized chain complex} $NA$, which is a chain complex defined by $NA_n:=\bigcap_{i=0}^{n-1}\ker (d_i)\subseteq A_n$. The maps $NA_n\xrightarrow{\sum_i(-1)^id_i}NA_{n-1}$ form a chain complex. The assignment $N:\cat{sAb}\to \cat{Ch}_{\ge 0}(\cat{Ab})$, defined by $N(A)\mapsto NA$, is also functorial. Let $DA_n$ be the subgroup of $A_n$ generated by the degenerate simplices. The boundary map $\partial$ of the Moore complex associated to $A$ induces a homomorphism $\partial_n:A_n/DA_n\to A_{n-1}/DA_{n-1}$. The resulting complex is denoted by $A/D(A)$. There exists chain maps
\[NA\xrightarrow{i}A\xrightarrow{p}A/D(A),\]
where $i$ and $p$ are the obvious inclusion and projection maps, respectively.
 
\begin{theorem}{\cite[Chapter 3, Theorem 2.1]{goerss2009simplicial}}
\label{theorem:normalized_simplicial_complex}
The composite $NA\xrightarrow{pi}A/D(A)$ is an isomorphism of chain complexes.
\end{theorem}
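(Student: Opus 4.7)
The plan is to show that for each $n$, the abelian group map $NA_n \xrightarrow{i} A_n \xrightarrow{p} A_n/DA_n$ is an isomorphism; since both $i$ and $p$ are already chain maps, the composition automatically commutes with differentials. Concretely, I will establish the direct sum decomposition $A_n = NA_n \oplus DA_n$, from which bijectivity of $p \circ i$ is immediate.

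To prove this decomposition I would introduce the standard filtrations $N_k A_n := \bigcap_{i=0}^{k-1} \ker(d_i)$ and $D_k A_n := \sum_{j=0}^{k-1} s_j(A_{n-1})$, so that $N_0 A_n = A_n$, $D_0 A_n = 0$, and for $k = n$ one has $N_n A_n = NA_n$ and $D_n A_n = DA_n$ (using that $s_j$ for $j \geq n$ does not land in degree $n$). I would then prove by induction on $k$ that $A_n = N_k A_n \oplus D_k A_n$. For the inductive step, given $x \in A_n$ decomposed as $x = y + z$ with $y \in N_k A_n$ and $z \in D_k A_n$, I would rewrite $y = (y - s_k d_k y) + s_k d_k y$. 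Using the simplicial identity $d_k s_k = \mathrm{id}$, the first summand is killed by $d_k$; using $d_i s_k = s_{k-1} d_i$ and $d_i d_k = d_{k-1} d_i$ for $i < k$, together with $y \in N_k A_n$, it is also killed by $d_0, \ldots, d_{k-1}$. Thus $y - s_k d_k y \in N_{k+1} A_n$ and $s_k d_k y + z \in D_{k+1} A_n$, proving $A_n = N_{k+1} A_n + D_{k+1} A_n$.

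The genuinely delicate part is the uniqueness of the decomposition, i.e., showing $N_{k+1} A_n \cap D_{k+1} A_n = 0$. Here I would take $w = \sum_{j \leq k} s_j a_j \in N_{k+1} A_n$, and apply the $d_i$'s and the simplicial identities to peel off the degeneracies one index at a time, starting from the highest. Using $d_k s_k = \mathrm{id}$, $d_k s_j = s_j d_{k-1}$ for $j < k$, and the inductive hypothesis that the decomposition in degree $n-1$ (or at lower filtration level $k$) is already a direct sum, one forces each $a_j$ to lie in a subgroup where $s_j a_j$ is already expressible in $D_k A_n$, and then the induction hypothesis $N_k A_n \cap D_k A_n = 0$ closes the argument. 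This interplay between the filtration in $n$ and the filtration in $k$ is the main technical obstacle and requires carefully ordered applications of the five cosimplicial identities.

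Finally, I would observe that $DA$ is a subcomplex of the Moore complex, i.e., $\partial(DA_n) \subseteq DA_{n-1}$, which follows from the simplicial identities expressing $d_i s_j$ in terms of degeneracies (for $i \neq j, j+1$) and identities (for $i = j, j+1$), so the quotient $A/D(A)$ is a well-defined chain complex and $p$ is a well-defined chain map. With $A_n = NA_n \oplus DA_n$ established, the composite $p \circ i$ is an isomorphism in every degree, and since it is a chain map by construction, it is an isomorphism of chain complexes.
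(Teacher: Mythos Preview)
Your approach is correct and is essentially the same filtration-and-induction argument that underlies this result. The paper does not give its own proof here (it cites Goerss--Jardine), but it proves the cubical analogue in full by exactly this method, so the comparison is direct. The only difference is packaging: you aim for the direct-sum decomposition $A_n = N_k A_n \oplus D_k A_n$, while the paper shows directly that the composite $N_k A_n \to A_n / D_k(A_n)$ is bijective by induction on $k$; these are equivalent statements.

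One correction to your sketch of the intersection step: the identity ``$d_k s_j = s_j d_{k-1}$ for $j<k$'' fails at $j = k-1$, where instead $d_k s_{k-1} = \mathrm{id}$. This makes the naive ``peel off the top degeneracy'' computation messier than you indicate. The clean route, which is what the paper (following Goerss--Jardine) does, is to use the short exact sequence
\[
0 \longrightarrow A_{n-1}/D_{k}(A_{n-1}) \xrightarrow{\ s_k\ } A_n/D_{k}(A_n) \longrightarrow A_n/D_{k+1}(A_n) \longrightarrow 0
\]
together with the inductive isomorphism in degree $n-1$: if $x \in N_{k+1}A_n$ dies in $A_n/D_{k+1}$, lift it through this sequence and the isomorphism $N_kA_n\cong A_n/D_k(A_n)$ to get $x = s_k y$ with $y \in N_k A_{n-1}$, then apply $d_k$ to conclude $y = 0$. (Injectivity of $s_k$ on these quotients is seen by applying $d_{k+1}$, which \emph{does} satisfy $d_{k+1} s_j = s_j d_k$ for all $j \le k-1$.) Your vaguer remark about needing ``the decomposition in degree $n-1$'' is exactly this point, so you have the right instinct; the exact-sequence formulation just makes it precise without fighting the boundary case $j=k-1$.
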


 Furthermore, there is an equivalence of categories $\mathbf{sAb}$ and $\mathbf{Ch}_{\ge 0}(\mathbf{Ab})$, also known classically as the Dold-Kan correspondence.

\begin{theorem}{\cite[Chapter 3, Corollary 2.3]{goerss2009simplicial}}[Simplicial Dold-Kan]
\label{theorem:simplicial_dold_kan}
There is an equivalence of categories $N:\cat{sAb}\leftrightarrows \cat{Ch}_{\ge 0}(\cat{Ab}):\Gamma$, where $N$ is the normalization functor.
\end{theorem}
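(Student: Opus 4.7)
The plan is to construct an explicit quasi-inverse $\Gamma:\cat{Ch}_{\ge 0}(\cat{Ab})\to \cat{sAb}$ to $N$ and verify that the unit and counit of the resulting adjunction are isomorphisms. For a chain complex $C_\bullet$, I would set
\[
\Gamma(C)_n \;=\; \bigoplus_{\eta:[n]\twoheadrightarrow [k]} C_k[\eta],
\]
the sum being indexed by surjective order-preserving maps in $\Delta$. For $\alpha:[m]\to[n]$, the map $\alpha^{*}:\Gamma(C)_n\to\Gamma(C)_m$ is defined on the $\eta$-summand by taking the epi-mono factorization $\eta\alpha=\sigma\mu$ with $\mu:[m]\twoheadrightarrow[j]$ and $\sigma:[j]\hookrightarrow[k]$, and declaring the component to be $\operatorname{id}$ into the $\mu$-summand of $C_k$ if $\sigma=\operatorname{id}$, to be $\partial$ into the $\mu$-summand of $C_{k-1}$ if $\sigma=d^{k}$ (the top face), and $0$ otherwise. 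The simplicial identities for $\Gamma(C)$ follow directly from the uniqueness of epi-mono factorizations in $\Delta$ and the relation $\partial\circ\partial=0$.

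The technical heart of the argument is the Eilenberg decomposition: for any simplicial abelian group $A$,
\[
A_n \;\cong\; \bigoplus_{\eta:[n]\twoheadrightarrow [k]} \eta^{*}(NA_k).
\]
I would prove this by induction on $n$, using that $A_n=NA_n\oplus DA_n$ (which is a consequence of \cref{theorem:normalized_simplicial_complex}) and that every degenerate $n$-simplex has a unique minimal expression $s_{i_p}\cdots s_{i_1}(x)$ with $x$ nondegenerate; tracking how $s_i$ decompose the surjection $\eta$ gives the direct-sum decomposition. This decomposition is compatible with the simplicial operators in the sense dictated by the definition of $\Gamma$: applying an arbitrary $\alpha^{*}$ to $\eta^{*}(x)\in A_n$ and rewriting via the epi-mono factorization of $\eta\alpha$ reproduces exactly the formula used to define $\Gamma$.

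With the decomposition in hand, the two natural isomorphisms follow. On the one hand, sending $x\in NA_k[\eta]\subset \Gamma(NA)_n$ to $\eta^{*}(x)\in A_n$ defines a natural map $\Gamma N A\to A$ that is bijective by the decomposition and simplicial by construction. On the other hand, in $\Gamma(C)_n$ the summand indexed by $\eta=\operatorname{id}_{[n]}$ is $C_n$, every other summand lies in the image of some degeneracy and hence in $D\Gamma(C)_n$, and the restriction of the Moore boundary to the $\operatorname{id}$-summand is by construction the chain differential $\partial$; thus $N\Gamma C\cong C$ canonically.

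The main obstacle is the Eilenberg decomposition itself: producing a clean, natural splitting of $A_n$ indexed by surjections, and verifying that it is respected by arbitrary simplicial operators. This boils down to careful bookkeeping with the cosimplicial identities, particularly the rules for factoring $s^{j}d^{i}$, to show that the naive formula for $\alpha^{*}$ on a decomposed element $\eta^{*}(x)$ agrees with the prescription used to define $\Gamma$. Once this is verified, checking functoriality of $\Gamma$ and naturality of the unit and counit is formal, and the equivalence of categories follows.
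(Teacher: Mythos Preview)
The paper does not supply a proof of this theorem; it simply records the statement with a citation to \cite[Chapter 3, Corollary 2.3]{goerss2009simplicial}. Your proposal is a correct outline of the standard argument found in that reference: the explicit formula for $\Gamma$ indexed by epimorphisms in $\Delta$, the Eilenberg decomposition $A_n\cong\bigoplus_{\eta}\eta^{*}(NA_k)$, and the verification that $\Gamma N\Rightarrow\mathrm{id}$ and $N\Gamma\Rightarrow\mathrm{id}$ are natural isomorphisms. So there is no discrepancy to report---you are supplying precisely the proof the paper defers to the literature.

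One minor point worth tightening: in your description of $\alpha^{*}$ on $\Gamma(C)$, the case ``$\sigma=d^{k}$'' should really read ``$\sigma=d^{k}$ and $j=k-1$,'' and the map landing in the $\mu$-summand should carry the sign $(-1)^{k}$ (or whatever sign convention matches your choice $NA_n=\bigcap_{i<n}\ker d_i$ with differential $d_n$), so that $N\Gamma(C)$ recovers $C$ with the correct differential rather than its negative. This is purely a bookkeeping issue and does not affect the structure of the argument.
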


\begin{theorem}{\cite[Chapter 3, Theorem 2.4]{goerss2009simplicial}}
\label{theorem:inclusion_of_normalized_simplicial_complex}
The inclusion $i:NA\to A$ of the normalized chain complex into the Moore complex, of a simplicial abelian group $A$, is a chain homotopy equivalence. The equivalence is natural with respect to simplicial abelian groups $A$.
\end{theorem}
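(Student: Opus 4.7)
The plan is to reduce the theorem to showing that the degenerate subcomplex $DA$ is chain-contractible, and then to exhibit a natural chain contraction on $DA$.

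First, I would use \cref{theorem:normalized_simplicial_complex} to produce a natural retraction of $i:NA\to A$. That theorem says the composite $NA\xrightarrow{i}A\xrightarrow{p}A/D(A)$ is an isomorphism of chain complexes, so setting $r:=(pi)^{-1}\circ p$ gives a natural chain map $r:A\to NA$ with $r\circ i=\mathbf{1}_{NA}$. Consequently $\mathbf{1}_A-i\circ r$ is a natural chain endomorphism of $A$ whose image lies in $\ker(p)=DA$ and which vanishes on $NA$. The problem therefore reduces to producing, naturally in $A$, a chain homotopy $h:A_\bullet\to A_{\bullet+1}$ satisfying $\partial h+h\partial=\mathbf{1}_A-i\circ r$; and for this it suffices to construct a natural chain contraction of the subcomplex $DA$.

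Next, I would establish contractibility of $DA$ by filtering it. Define $D^{(k)}A_n:=\sum_{j=0}^{k}s_j(A_{n-1})\subseteq A_n$, so that $D^{(-1)}A_n=0$ and $D^{(n-1)}A_n=DA_n$. Using the simplicial identities $s_js_i=s_is_{j+1}$ (for $i\le j$), $d_is_j=s_{j-1}d_i$ (for $i<j$), $d_js_j=d_{j+1}s_j=\mathbf{1}$, and $d_is_j=s_jd_{i-1}$ (for $i>j+1$), together with the cancellation of the $i=j$ and $i=j+1$ terms in $\partial$, one verifies that each $D^{(k)}A$ is in fact a subcomplex of $A$, and that the successive quotient $D^{(k)}A/D^{(k-1)}A$ is built from $A_{n-1}$ by the operator $s_k$ in such a way that $\pm s_k$ itself supplies a chain contraction on the quotient. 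An induction on $k$ then assembles these level-by-level contractions into a natural chain contraction $h:DA_\bullet\to DA_{\bullet+1}$.

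The hard part of this plan will be the inductive bookkeeping in the filtration argument: at each stage one must correct the candidate homotopy (coming from $s_k$) by lower-degree degeneracies so that the relation $\partial h+h\partial=\mathbf{1}$ holds on all of $D^{(k)}A$ rather than only modulo $D^{(k-1)}A$, and so that the modifications glue with the contraction already built on $D^{(k-1)}A$. This is a standard but lengthy manipulation in the cosimplicial identities. Crucially, every formula involved is built only from the structure maps $d_i$ and $s_j$, so naturality in the simplicial abelian group $A$ is automatic. Combined with the retraction $r$ of the first step, this yields the desired natural chain homotopy equivalence.
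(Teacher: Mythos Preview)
The paper does not actually supply a proof of this statement: it is quoted verbatim as \cite[Chapter 3, Theorem 2.4]{goerss2009simplicial} and used as a black box. So there is no in-paper argument to compare against.

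Your proposal is correct and is essentially the classical proof one finds in the cited reference. Using \cref{theorem:normalized_simplicial_complex} to produce the retraction $r=(pi)^{-1}p$ reduces the problem to showing that $DA$ is naturally chain contractible, and the filtration $D^{(k)}A_n=\sum_{j\le k}s_j(A_{n-1})$ is exactly the standard device for this. Two small points worth making explicit when you carry it out: first, the cancellation $(-1)^jd_js_j+(-1)^{j+1}d_{j+1}s_j=0$ is what guarantees each $D^{(k)}A$ is genuinely a subcomplex; second, the splitting $A\cong NA\oplus DA$ coming from \cref{theorem:normalized_simplicial_complex} is a splitting of \emph{chain complexes} (both summands are subcomplexes), so a contraction of $DA$ extended by zero on $NA$ really does yield the desired homotopy $\mathbf{1}_A\simeq ir$. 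Naturality is automatic, as you note, because every map you write down is a polynomial in the $d_i$ and $s_j$.
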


\begin{definition}
\label{def:cubical_abelian_groups}
A \emph{cubical abelian group} is a functor $X:\Box^{\text{op}}\to \mathbf{Ab}$. The category of cubical abelian groups will be denoted by $\mathbf{cAb}$.
\end{definition}

There is a functor $\mathbb{Z}:\mathbf{cSet}\to \mathbf{cAb}$ that assigns to each cubical set $X$ the cubical abelian group $\mathbb{Z}X$ where each $\mathbb{Z}X_n$ is the free abelian group on $X_n$. $\mathbb{Z}$ is the left adjoint to the forgetfull functor $U:\mathbf{cAb}\to \mathbf{cSet}$. 

To each cubical abelian group $A$, there is an associated chain complex of abelian groups, called its \emph{unnormalized complex}, also denoted by $A$. where the boundary map $\partial_n:A_n\to A_{n-1}$ is defined by $\partial_n:=\sum_{i=1}^n (-1)^i(d_{i,0}-d_{i,1})$. Given a cubical abelian group $A$, we can also define its \emph{normalized chain complex} $NA$ in the following way:
\begin{gather*}
NA_0=A_0, NA_n:=\cap_{i=1}^n\ker (d_{i,1}),n>0,\\
\partial_n:=\sum_{i=1}^n(-1)^{i+1}d_{i,0}:NA_n\to NA_{n-1},n>0
\end{gather*}
The assignment $N:\cat{cAb}\to \cat{Ch}_{\ge 0}(\cat{Ab})$, defined by $N(A)\mapsto NA$, is also functorial. Let $DA_n$ be the subgroup of $A_n$ generated by the degenerate cubes. The boundary map $\partial$ of the unnormalized complex associated to $A$ induces a homomorphism $\partial_n:A_n/DA_n\to A_{n-1}/DA_{n-1}$. The resulting complex is denoted by $A/D(A)$. There exists chain maps
\[NA\xrightarrow{i}A\xrightarrow{p}A/D(A),\]
where $i$ and $p$ are the obvious inclusion and projection maps, respectively.
 
\begin{theorem}
\label{theorem:normalized_cubical_complex}
The composite $NA\xrightarrow{pi}A/D(A)$ is an isomorphism of chain complexes.
\end{theorem}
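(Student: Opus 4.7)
The plan is to imitate the argument in the simplicial case (\cref{theorem:normalized_simplicial_complex}), substituting the cubical identities of \cref{def:cube_category} for the simplicial ones. It suffices to establish the direct sum decomposition $A_n = NA_n \oplus DA_n$ of abelian groups in each degree $n$, for then $pi \colon NA_n \to A_n/DA_n$ is an isomorphism in each degree; compatibility with the boundary maps is automatic, since both $i$ and $p$ are chain maps by construction.

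First, I would construct a projection $P \colon A_n \to NA_n$ by iterated face-killing. For $1 \le k \le n$, set $Q_k := \mathrm{id} - s_k d_{k,1}$ and define $P := Q_n \circ Q_{n-1} \circ \cdots \circ Q_1$. The cubical identity $d_{k,1} s_k = \mathrm{id}$ gives $d_{k,1} Q_k = 0$ immediately. Then, using the commutation rule $d_{k,1} s_m = s_{m-1} d_{k,1}$ for $k < m$ together with the face-face identity $d_{k,1} d_{m,1} = d_{m-1,1} d_{k,1}$ for $k < m$, a straightforward induction on $m$ shows $d_{k,1}(Q_m \circ \cdots \circ Q_1) = 0$ for all $1 \le k \le m$. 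Taking $m = n$ gives $P(A_n) \subseteq NA_n$. Moreover, each $Q_k$ differs from $\mathrm{id}$ by a map with image in $s_k(A_{n-1}) \subseteq DA_n$, so $\mathrm{id} - P$ takes values in $DA_n$, and hence $A_n = NA_n + DA_n$; this yields surjectivity of $pi$.

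For injectivity, I would prove $NA_n \cap DA_n = 0$ by induction on $n$. Write a hypothetical element as $x = \sum_{j=1}^n s_j y_j \in NA_n$. Applying $d_{n,1}$, the identity $d_{n,1} s_n = \mathrm{id}$ isolates $y_n$ as a linear combination of terms $s_j d_{n-1,1} y_j$ for $j < n$ (using $d_{n,1} s_j = s_j d_{n-1,1}$, valid for $n > j$); substituting back and commuting $s_n$ past the remaining $s_j$'s (via the degeneracy identity dual to $s^j s^i = s^i s^{j+1}$ in \cref{def:cube_category}) rewrites $x$ with strictly fewer terms of the form $s_n(\cdot)$. Iterating this procedure for $d_{n-1,1}, d_{n-2,1}, \ldots$ and invoking the inductive hypothesis in lower degrees forces $x = 0$. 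Equivalently, one can check directly that $P$ annihilates every degenerate element, again using the same identities.

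The main obstacle is the bookkeeping in this last step: the cubical degeneracies and faces interact in three distinct cases depending on the relative order of their indices, so one must carefully track the shifts in indices as the operators are moved past each other. Once the decomposition $A_n = NA_n \oplus DA_n$ is in hand, the conclusion that $pi$ is an isomorphism of chain complexes is immediate, since by construction $i \colon NA \hookrightarrow A$ and $p \colon A \twoheadrightarrow A/D(A)$ are chain maps.
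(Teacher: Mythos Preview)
Your proposal is correct and rests on the same key idea as the paper's proof---both use the operators $Q_j = \mathrm{id} - s_j d_{j,1}$---but the two arguments are organized differently. The paper introduces the filtrations $N_jA_n = \bigcap_{i\le j}\ker d_{i,1}$ and $D_j(A_n) = \langle \operatorname{im} s_i : i \le j\rangle$ and proves by induction on $j$ that $N_jA_n \to A_n/D_j(A_n)$ is an isomorphism; surjectivity at each step uses the single correction $x \mapsto x - s_j d_{j,1}x$, and injectivity is handled by a short exact sequence $0 \to A_{n-1}/D_{j-1} \xrightarrow{s_j} A_n/D_{j-1} \to A_n/D_j \to 0$. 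You instead compose all the $Q_j$'s at once into a global projection $P$ and establish $A_n = NA_n \oplus DA_n$ directly.

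One remark: your first injectivity sketch (apply $d_{n,1}$, substitute for $y_n$, iterate) is vague and the index tracking there is genuinely delicate. The alternative you mention---checking that $P$ annihilates every $s_j y$---is the clean route and deserves to be the main argument. Concretely, the cubical identities give $Q_k s_j = s_j Q_k$ for $k<j$ and $Q_j s_j = 0$, so $Q_j Q_{j-1}\cdots Q_1 s_j = Q_j s_j Q_{j-1}\cdots Q_1 = 0$; combined with $P|_{NA_n} = \mathrm{id}$ (immediate since each $d_{k,1}$ vanishes on $NA_n$), this yields $NA_n \cap DA_n = 0$ without further bookkeeping. The paper's filtration avoids this commutation calculation at the cost of an extra exact-sequence diagram; your projection approach is slightly more explicit once the commutation is in hand.
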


\begin{proof}
The proof is inspired by the proof of \cite[Chapter 3, Theorem 2.1]{goerss2009simplicial}, which covers the analogous statement in the case of simplicial abelian groups.
Let $N_jA_n:=\cap_{i=1}^j\ker (d_{i,1})\subseteq A_n$ and let $D_j(A_n)$  be the subgroup of $A_n$ generated by the images of all the degeneracies $s_i$ for $i\le j$. We show that the composite $N_jA_n\xrightarrow{pi}A_n/D_j(A_n)$ is an isomorphism for all $n$ and $1\le j\le n$. Let $\phi$ denote this composite.

We proceed by induction on $j$. Let $j=1$. Let $[x]$ be a class in $A_n/s_1(A_{n-1})$. A representative for $[x]$ is $x-s_1d_{1,1}x$ and $d_{1,1}(x-s_1d_{1,1}x)=0$. Therefore $\phi$ is onto. Suppose that $x$ is such that $d_{1,1}x=0$ and that $x=s_1y$, i.e, that $\phi(x)=0$. Then 
\[0=d_{1,1}x=d_{1,1}s_1y=y\] 
and therefore $x=0$. Thus $\phi$ is also injective.

Now suppose that the map $\phi:N_kA_n\to A_n/D_k(A_n)$ is known to be an isomorphism if $k< j$ and consider the map $\phi:N_jA_n\to A_n/D_j(A_n)$.
We have the following commutative diagram:
\begin{center}
\begin{tikzcd}
N_{j-1}A_n\arrow[r,"\phi" above,"\cong" below]&A_n/D_{j-1}(A_n)\arrow[d,two heads]\\
N_jA_n\arrow[r,"\phi"]\arrow[u,hook]&A_n/D_{j}(A_n)
\end{tikzcd}
\end{center}
From the diagram, we see that any class $[x]\in A_n/D_j(A_n)$ can be represented by an element $x\in N_{j-1}A_n$. However, $x-s_jd_{j,1}x$ is in $N_jA_n$ and represents $[x]$, so the bottom map $\phi$ in the diagram is onto. The cubical identities imply that the degeneracy $s_j:A_{n-1}\to A_n$ maps $N_{j-1}A_{n-1}$ into $N_{j-1}A_n$ and takes $D_{j-1}(A_{n-1})$ to $D_{j-1}(A_n)$, and so there is a commutative diagram
\begin{center}
\begin{tikzcd}
N_{j-1}A_{n-1}\arrow[d,"s_j"]\arrow[r,"\phi" above,"\cong" below]&A_{n-1}/D_{j-1}(A_{n-1})\arrow[d,"s_j"]\\
N_{j-1}A_n\arrow[r,"\phi"]&A_n/D_{j-1}(A_n)
\end{tikzcd}
\end{center}
Furthermore, the sequence 
\[0\to A_{n-1}/D_{j-1}(A_{n-1})\xrightarrow{s_j}A_n/D_{j-1}(A_n)\to A_n/D_j(A_n)\to 0\]
is exact. Therefore, if $\phi(x)=0$ for some $x\in N_jA_n$, then $x=s_jy$ for some $y\in N_{j-1}A_n$. However, $d_{j,1}x=0$ and therefore 
\[0=d_{j,1}x=d_{j,1}s_jy=y\]
so that $x=0$ and therefore $\phi$ is injective.
\end{proof}

\begin{example}
\label{example:cubical_abelian_group}
Let $X$ be a closure space. Then $\mathbb{Z}\mathscr{C}^{(J,\otimes)}(X)$ are the cubical abelian groups, where $\mathbb{Z}\mathscr{C}^{(J,\otimes)}(X)_n$ are the free abelian group on the sets $\mathscr{C}^{(J,\otimes)}(X)_n$, for $n\ge 0$. These are in fact a cubical abelian group with connections, which follows from \cref{example:cubical_group_with_connections}.
\end{example}

 Furthermore, there is an equivalence of categories $\mathbf{ccAb}$ and $\mathbf{Ch}_{\ge 0}(\mathbf{Ab})$, given by the normalization functor $N$.

\begin{theorem}{\cite[Theorem 14.8.1]{brown2004nonabelian}}[Cubical Dold-Kan correspondence]
\label{theorem:dold_kan_for_cubical_sets_with_connections}
Let $\mathbf{A}$ be an additive category with kernels. The following categories, defined internally to $\mathbf{A}$, are equivalent.
\begin{itemize}
\item The category of chain complexes.
\item The category of crossed complexes.
\item The category of cubical $\mathbf{A}$-objects with connections. 
\item The category of cubical $\omega$-groupoids with connections.
\end{itemize}
\end{theorem}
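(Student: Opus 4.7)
The plan is to follow the strategy of \cite{brown2004nonabelian} and establish the equivalences in stages, generalizing \cref{theorem:normalized_cubical_complex} (which handles the special case $\mathbf{A} = \mathbf{Ab}$).

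First, I would establish the equivalence between cubical $\mathbf{A}$-objects with connections and non-negatively graded chain complexes in $\mathbf{A}$. Define the normalization functor $N$ on objects by $NX_n := \bigcap_{i=1}^n \ker(d_{i,1})$, which exists in $\mathbf{A}$ by hypothesis, with boundary $\partial_n := \sum_{i=1}^n (-1)^{i+1} d_{i,0}$; the cubical identities imply $\partial^2 = 0$ exactly as in the abelian case. An inverse functor $\Gamma$ can be built by generalizing the classical cubical Dold-Kan construction: on a chain complex $C_\bullet$, let $\Gamma(C)_n$ be a direct sum indexed by the non-degenerate faces of the $n$-cube, using the connections to define degeneracies that interact correctly with faces via the identities in \cref{def:cubical_sets_with_connections}. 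The proof that $N\Gamma \cong \mathrm{id}$ and $\Gamma N \cong \mathrm{id}$ would then follow the decomposition argument already used for \cref{theorem:normalized_cubical_complex}: a natural isomorphism $X_n \cong NX_n \oplus D_n(X)$ is produced by induction on the filtration $N_jX_n = \bigcap_{i=1}^j \ker(d_{i,1})$, using only additivity of $\mathbf{A}$, existence of kernels, and the cubical identities relating degeneracies, connections and faces.

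Second, the passage between chain complexes and crossed complexes in $\mathbf{A}$ is essentially formal: a crossed complex is a nonabelian structure involving a crossed module in the bottom two degrees together with actions on higher terms, but these nonabelian ingredients are forced to collapse when every object is an abelian group object in an additive category, so the data reduces to an ordinary chain complex. The remaining equivalence with cubical $\omega$-groupoids with connections follows because the multiple compositions defining an $\omega$-groupoid structure are, in the additive setting, forced to coincide (up to sign) with the ambient group addition by standard Eckmann--Hilton arguments, while inverses are provided by negation; this is precisely the internal version of the main theorem of Brown--Higgins in \cite{brown2004nonabelian}.

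The main obstacle is the preliminary work of spelling out what each of these notions (crossed complex, cubical $\omega$-groupoid with connections) means internally to an arbitrary additive category with kernels, and verifying that the classical proofs in \cite{brown2004nonabelian} carry over once everything is additive. Beyond this bookkeeping, I do not expect genuinely new homotopical difficulties to arise.
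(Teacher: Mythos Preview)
The paper does not prove this theorem at all: it is stated with the attribution \cite[Theorem 14.8.1]{brown2004nonabelian} and no proof is given. It functions purely as a cited black box, invoked only to justify associating a chain complex to the cubical abelian group with connections $\mathbb{Z}\mathscr{C}^{(J,\otimes)}(X)$ (see \cref{example:cubical_dold_kan}). So there is nothing in the paper to compare your proposal against.

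Your sketch is a reasonable outline of how one would reproduce the argument from \cite{brown2004nonabelian}, and your observation that in the additive setting the crossed-module and $\omega$-groupoid structures degenerate via Eckmann--Hilton is the right intuition. But for the purposes of this paper none of that work is needed: the authors are content to quote the result, and in fact only ever use the equivalence between cubical abelian groups with connections and chain complexes, which is essentially what \cref{theorem:normalized_cubical_complex} already provides directly.
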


\begin{example}
\label{example:cubical_dold_kan}
Following \cref{example:cubical_abelian_group} and \cref{theorem:dold_kan_for_cubical_sets_with_connections} we can associate to $\mathbb{Z}\mathscr{C}^{(J,\otimes)}(X)$ a chain complex of abelian groups. We still call it the \emph{Moore complex}.
\end{example}

\begin{definition}
\label{def:simplicial_homology}
Let $X$ be a closure space. Consider the Moore complex of $\mathbb{Z}\mathscr{S}^{J}(X)$, which we will denote by $\gsimpchain{n}{X}$. More specifically, $\gsimpchain{n}{X}$ is the free abelian group generated by all continuous $\sigma:\gsimp{n}\to X$. For each $n\ge 1$, we define a map $\partial_n:\gsimpchain{n}{X}\to \gsimpchain{n-1}{X}$  in the following way. For a given $\sigma:\gsimp{n}\to X$ we define 
\[\partial_n\sigma:=\sum_{i}(-1)^i\sigma|_{d_i(\gsimp{n})}\]
and extending linearly. It can be checked that $\partial^2=0$, and thus we have a chain complex of free abelian groups, $(\gsimpchain{\bullet}{X},\partial_{\bullet})$ whose homology groups we will denote by $\gsimphom{n}{X}$. Note that by \cref{theorem:normalized_simplicial_complex,theorem:inclusion_of_normalized_simplicial_complex}
, we could have used the normalized complex or the quotient by degenerate simplices complex instead of the Moore complex to get an equivalent definitions of singular simplicial homologies for closure spaces. We can also augment the complex $\gsimpchain{\bullet}{X}$ by setting $\gsimpchain{-1}{X}=\mathbb{Z}$ and let the $0$-boundary be $\varepsilon(\sum n_i\sigma_i)=\sum n_i$. We then get reduced homology groups $\gsimphomred{\bullet}{X}$. Note that for all $n\ge 1$, $\gsimphom{n}{X}=\gsimphomred{n}{X}$ and $\gsimphom{0}{X}\cong \gsimphomred{0}{X} \oplus\mathbb{Z}$.
\end{definition}

\begin{definition}
\label{def:cubical_homology_theory}
Let $X$ be a closure space. Define $\gcubehom{\bullet}{X}$ to be the homology groups of the normalized complex of $\mathbb{Z}\mathscr{C}^{(J,\otimes)}(X)$. By \cref{theorem:normalized_cubical_complex}, we can equivalently define them in the following way. Let $\gcubechainwhole{\bullet}{X}$ denote the unnormalized complex of $\mathbb{Z}\mathscr{C}^{(J,\otimes)}(X,c)$. Given a continuous $\sigma:\gcube{n}\to X$, for $1\le i\le n$ define
\begin{gather}
A_i^n(\sigma)(a_1,\dots, a_{n-1}):=\sigma (a_1,\dots ,a_{i-1},0,a_i,\dots, a_{n-1})\\
B_i^n(\sigma)(a_1,\dots, a_{n-1}):=\sigma (a_1,\dots ,a_{i-1},1,a_i,\dots, a_{n-1})
\end{gather}
Say $\sigma$ is \emph{degenerate} if $A_i^n\sigma=B_i^n\sigma$ for some $i$. Let 

\[\gcubechaindeg{n}{X}:=\mathbb{Z}\langle \{\sigma: (\gcube{n},c_1)\to (X,c)\,|\, \sigma\text{ is continuous and degenerate}\}\rangle\]
 
Then, 
\[\gcubechain{n}{X}:=\gcubechainwhole{n}{X}/\gcubechaindeg{n}{X}\,,\]
where $\gcubechainwhole{n}{X}=\mathbb{Z}\langle\{\sigma:(\iindcube{n},c_1)\to (X,c)\,|\,\sigma\text{ is continuous}\}\rangle$. Elements of $\gcubechain{n}{X}$ are called \emph{$(J,\otimes)$ singular cubical n-chains} in $X$. The \emph{boundary} of $\gcubechain{n}{X}$ is defined by 
\[\partial\sigma:=\sum_{i=1}^n(-1)^i(A_i^n\sigma-B_i^n\sigma)\]
and extending linearly. We can also augment the complex $\gcubechainwhole{\bullet}{X}$ by setting $\gcubechainwhole{-1}{X}=\mathbb{Z}$ and let the $0$-boundary be $\varepsilon(\sum n_i\sigma_i)=\sum n_i$. Doing the same construction, by quotienting out the subcomplex of degenerate cubes, we get reduced homology groups $\gcubehomred{\bullet}{X}$. Note that for all $n\ge 1$, $\gcubehom{n}{X}=\gcubehomred{n}{X}$ and $\gcubehom{0}{X}\cong \gcubehomred{0}{X} \oplus\mathbb{Z}$.
\end{definition}

\section{Homotopy in closure spaces}

\label{section:homotopy}
In this section we recall various homotopy theories for closure spaces \cite{bubenik2021applied}. There is a forgetful functor from closure spaces to the category of sets, $U:\cat{Cl}\to \cat{Set}$, which forgets the closure structure. The category $\cat{Set}$ together with the Cartesian product of sets, $\times$ and the one element set, $*$, forms a symmetric monoidal category, $(\cat{Set},\times, *)$. Observe also, that $*$, with the unique closure operation one can put on it, is the terminal object in $\cat{Cl}$.

\subsection{Products and intervals}
We start by recalling facts about product operations and interval objects associated to product operations for closure spaces \cite[Section 5]{bubenik2021applied}.

\begin{definition}
\label{def:product_operation}
A \emph{product operation} 
is a functor $\otimes:\cat{Cl}\times \cat{Cl}\to \cat{Cl}$
that yields a symmetric monoidal category structure on $\cat{Cl}$, $(\cat{Cl},\otimes,*)$ and also commutes with the forgetful functor $U:\cat{Cl}\to \cat{Set}$, and the symmetric monoidal category structure on $\cat{Set}$. That is, $(X,c) \otimes (Y,d) = (X \times Y, c \otimes d)$ for some closure operator $c \otimes d$ on $X \times Y$, for all $(X,c)$, $(Y,d)$ in $\cat{Cl}$, and the associator, unitor and braiding morphisms are given by those on $\cat{Set}$. 
\end{definition}

It is possible to define a partial order structure among product operations on $\cat{Cl}$. For product operations, $\otimes_1$, $\otimes_2$, say $\otimes_1\le \otimes_2$ if there exists a natural transformation $\alpha:\otimes_1\to \otimes_2$ such that for all closure spaces $X$ and $Y$, $\alpha_{(X,Y)}:X\otimes_1 Y\to X\otimes_2 Y$ is the identity map.  

\begin{example}
\label{example:products}
The product closure and inductive product closures are examples of product operations, and furthermore $\boxdot\le\otimes \le \times$, for any product operation $\otimes$.
\end{example}

\begin{definition}
\label{def:interval_object}
Given a product operation $\otimes$, an \emph{interval } for $\otimes$
is a closure space $J$, with two maps $*\xrightarrow{0}J\xleftarrow{1}*$ such that the induced map  $0\coprod 1:*\coprod *\to J$ is one-to-one, and that there exists a symmetric, associative, morphism $\vee:J\otimes J\to J$ for which $0$ is its \emph{neutral} element and $1$ its \emph{absorbing} element. In other words, $0\vee t=t$ and $1\vee t=1$ for all $t\in J$.
A \emph{morphism of intervals} is a continuous map $f: J \to K$ such that $f(0_J)=0_K,f(1_J)=1_K$, and $f(s\vee_J t)=f(s)\vee_K f(t)$. When there exists a morphism of intervals $f:J\to K$, we write $J\le K$.
\end{definition}

\begin{lemma}{\cite[Lemma 5.9]{bubenik2021applied}}
\label{lemma:order_of_intervals}
Let $\otimes_1$ and $\otimes_2$ be product operations such that $\otimes_1\le \otimes_2$. If $J$ is an interval for $\otimes_2$, then $J$ is also an interval for $\otimes_1$.
\end{lemma}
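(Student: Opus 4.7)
The plan is to transport the interval structure from $\otimes_2$ to $\otimes_1$ along the natural transformation $\alpha : \otimes_1 \to \otimes_2$ supplied by the assumption $\otimes_1 \le \otimes_2$. The endpoint maps $0,1 : * \to J$ and the injectivity of $0 \amalg 1 : * \amalg * \to J$ are already available since they make no reference to a product; what needs attention is producing a continuous symmetric, associative, appropriately unital and absorbing operation $\vee_1 : J \otimes_1 J \to J$ out of the given $\vee_2 : J \otimes_2 J \to J$.

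First, I would define $\vee_1$ to be the composite
\[
\vee_1 \;\defeq\; \vee_2 \circ \alpha_{(J,J)} : J \otimes_1 J \xrightarrow{\alpha_{(J,J)}} J \otimes_2 J \xrightarrow{\vee_2} J.
\]
This is a morphism in $\cat{Cl}$ as a composite of continuous maps. Because $\alpha_{(J,J)}$ is the identity on underlying sets (this is exactly the content of $\otimes_1 \le \otimes_2$ in \cref{def:product_operation} together with the partial order on product operations defined just before \cref{example:products}) and $\otimes_1$, $\otimes_2$ agree with the Cartesian product of sets after applying $U$, the underlying set-map of $\vee_1$ coincides with that of $\vee_2$.

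Next, I would verify each remaining axiom from \cref{def:interval_object}. Symmetry, associativity, the neutral-element identity $0 \vee_1 t = t$, and the absorbing-element identity $1 \vee_1 t = 1$ are all equalities of set-maps; since the underlying function of $\vee_1$ equals that of $\vee_2$, they are inherited directly from the hypothesis that $\vee_2$ is symmetric, associative, and has $0$ neutral and $1$ absorbing. For the associativity and symmetry diagrams one must also note that the associator and braiding in $\cat{Cl}$ for either monoidal structure are, by the compatibility condition in \cref{def:product_operation}, the underlying set-theoretic associator and braiding, so naturality of $\alpha$ with respect to these coherence isomorphisms turns the $\otimes_2$ commutative diagrams into $\otimes_1$ ones.

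There is essentially no obstacle here: the whole argument is that a pullback of structure along a natural transformation of symmetric monoidal products that is the identity on underlying sets preserves interval data. The only thing one has to be mildly careful about is recording that $\alpha$ is genuinely a monoidal natural transformation in the required sense, so that the composites appearing in the associativity and symmetry squares for $\vee_1$ really reduce to those for $\vee_2$; this is immediate from the requirement in \cref{def:product_operation} that the associator, unitor, and braiding on $\cat{Cl}$ be inherited from $\cat{Set}$.
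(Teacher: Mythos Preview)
Your argument is correct and is exactly the natural one: transport $\vee_2$ along the identity-on-sets natural map $\alpha_{(J,J)}:J\otimes_1 J\to J\otimes_2 J$ to get a continuous $\vee_1$ with the same underlying function, so all the pointwise axioms (symmetry, associativity, neutral and absorbing elements) are inherited verbatim. Note that the paper does not supply its own proof of this lemma; it simply cites \cite[Lemma 5.9]{bubenik2021applied}, so there is nothing to compare against beyond observing that your proof is the expected one.
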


The following are interval objects for any product operation $\otimes$ 
\begin{example}{\cite[Example 5.4]{bubenik2021applied}}
  \begin{enumerate}
  \item $I$ with the inclusions of the points $0$ and $1$.
  \item For $m \geq 1$, $J_m$ with the inclusions of the points $0$ and $m$ is an interval object.
  \item For $m \geq 1$ and $0 \leq k \leq 2^m-1$, $J_{m,k}$ with the inclusions of the points $0$ and $m$ is an interval object.
  \end{enumerate}
\end{example}

\subsection{Homotopy between maps}
Let $(X,c)$ and $(Y,d)$ be closure spaces. Here we define various equivalence relations $\sim_{(J,\otimes)}$ on the set $\cat{Cl}((X,c),(Y,d))$, for a given choice of product operation $\otimes$ and an interval $J$ for $\otimes$.

For a closure space $(X,c)$,  a product operation $\otimes$ and an interval $J$ for $\otimes$, we have the following natural map
\[X\coprod X\cong X\otimes (*\coprod *)\xrightarrow{\mathbf{1}_{X}\otimes(0\coprod 1)} X \otimes J\]

\begin{definition}
\label{def:homotopy}
Let $f,g:(X,c)\to (Y,d) \in \cat{Cl}$.
Let $\otimes$ be a product operation and let $J$ an interval for $\otimes$. Say that $f$ and $g$ are \emph{one-step $(J,\otimes)$-homotopic} if there exists a morphism map $H$ such that the following diagram commutes.
\begin{center}
\begin{tikzcd}
X\coprod X\arrow[r,"f\coprod g"]\arrow[d]  & Y \\
X\otimes J\arrow[ru,dashed,"H"']
\end{tikzcd}
\end{center}
We call $H$ a \emph{one-step $(J,\otimes)$ homotopy}.
Let $\sim_{(J,\otimes)}$ be the equivalence relation on the set $\cat{Cl}(X,Y)$ generated by one-step $(J,\otimes)$ homotopies.  
If $f\sim_{(J,\otimes)} g$, say $f$ and $g$ are \emph{$(J,\otimes)$-homotopic}.
\end{definition}

Let $J$ be a closure space with at least two elements. Equivalently, there exists a one-to-one map $0\coprod 1:*\coprod *\to J$ that identifies these two distinct elements. Given another such closure space $K$, we construct $J*K$ as a pushout diagram:

\begin{center}
\begin{tikzcd}
&J\arrow[dr,]\\
*\arrow[ru,"1_J"]\arrow[dr,swap,"0_K"]&&J*K\\
&K\arrow[ur]
\end{tikzcd}
\end{center}

We call $J*K$ the concatenation of $J$ with $K$.
Let $J^{*m}$ denote an $m$-fold concatenation of $J$ with itself, for some $m\ge 1$. If $J$ and $K$ are interval objects for $\otimes$, then so is $J*K$ (\cite[Lemma 5.15]{bubenik2021applied}).

\begin{lemma}{\cite[Lemma 5.29]{bubenik2021applied}}
\label{lemma:homotopy_and_concatenation}
Let $\otimes$ be a product operation and let $J$ be an interval for $\otimes$ and let $f,g:X\to Y$ be continuous maps of closure spaces. Then $f\sim_{(J,\otimes)}g$ iff $f$ and $g$ are one-step $(J^{*m},\otimes)$ homotopic for some $m\ge 1$. 
\end{lemma}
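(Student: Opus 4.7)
Both directions of the equivalence hinge on identifying a one-step $(J^{*m}, \otimes)$-homotopy with a length-$m$ chain of one-step $(J, \otimes)$-homotopies.

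For the direction $(\Leftarrow)$, the plan is to start with a witnessing homotopy $H : X \otimes J^{*m} \to Y$. Writing $J^{*m}$ as the $m$-fold iterated pushout of $J$ over $*$, it has distinguished points $0 = v_0, v_1, \ldots, v_m = 1$ coming from the glued copies of $*$. Setting $h_i := H(\cdot, v_i)$, one has $h_0 = f$ and $h_m = g$. Applying $X \otimes -$ to the canonical inclusion $J \hookrightarrow J^{*m}$ of the $i$-th factor and precomposing with $H$ produces a one-step $(J, \otimes)$-homotopy between $h_{i-1}$ and $h_i$. Transitivity of $\sim_{(J,\otimes)}$ then yields $f \sim_{(J,\otimes)} g$.

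For the direction $(\Rightarrow)$, I would proceed by induction on the length $m$ of a zigzag realizing $f \sim_{(J,\otimes)} g$. In the base case $m = 0$ one has $f = g$ and the constant one-step $(J, \otimes)$-homotopy given by $X \otimes J \to X \otimes * \cong X \xrightarrow{f} Y$ (using the unique map $J \to *$) suffices. For the inductive step, write the zigzag as $f = h_0, \ldots, h_{m-1}, h_m = g$ and invoke the induction hypothesis to obtain $H' : X \otimes J^{*(m-1)} \to Y$ realizing a one-step $(J^{*(m-1)},\otimes)$-homotopy from $f$ to $h_{m-1}$. Let $H_m : X \otimes J \to Y$ witness the one-step $(J,\otimes)$-homotopy between $h_{m-1}$ and $h_m = g$. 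Since $J^{*m} = J^{*(m-1)} * J$ is the pushout over $*$, and $X \otimes -$ preserves this pushout over the monoidal unit $*$, the maps $H'$ and $H_m$ --- which agree on $X \otimes * \cong X$ because both restrict there to $h_{m-1}$ --- glue via the universal property of the pushout to produce the desired $H : X \otimes J^{*m} \to Y$ with $H(\cdot, 0) = f$ and $H(\cdot, 1) = g$.

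The main obstacle I anticipate is the preservation of the pushout $J^{*(m-1)} * J$ by the functor $X \otimes -$; concretely, one has to verify that $X \otimes (J * K) \cong (X \otimes J) \cup_{X} (X \otimes K)$ in $\cat{Cl}$, which is precisely what lets a gluing of $H'$ and $H_m$ be a continuous map. This must be extracted from the defining properties of a product operation (commutation with the forgetful functor $U : \cat{Cl} \to \cat{Set}$ and with the monoidal unit $*$), likely via an explicit neighborhood argument. A secondary subtlety is that the arrows in the zigzag need not be directed forward; this has to be handled to apply the induction as stated. It is automatic for the intervals used in the paper, since $I$ and $J_1$ admit an involution exchanging $0$ and $1$, while for $J_+$ a direct inspection of the neighborhood basis of $X \otimes J_+$ shows that the continuity conditions defining a one-step $(J_+,\otimes)$-homotopy are symmetric in the two endpoint restrictions, so reversed arrows in the zigzag can always be straightened.
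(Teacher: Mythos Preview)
The paper does not prove this lemma; it is quoted verbatim from \cite[Lemma 5.29]{bubenik2021applied}, so there is no in-paper argument to compare against. Your overall strategy---restrict a $J^{*m}$-homotopy to the copies of $J$ for $(\Leftarrow)$, and glue a chain of one-step homotopies along the iterated pushout for $(\Rightarrow)$---is the natural one, and you correctly isolate the two technical obligations: that $X\otimes(-)$ preserves the pushout defining $J*K$, and that reversed arrows in the generating zigzag can be straightened.

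There is, however, a genuine error in your treatment of the second point. Your claim that for $J_+$ ``the continuity conditions defining a one-step $(J_+,\otimes)$-homotopy are symmetric in the two endpoint restrictions'' is false. Already for $X=*$ and $Y=J_+$, the identity $J_+\to J_+$ is a one-step $(J_+,\times)$-homotopy from $0$ to $1$, but the swap map is not continuous since $0\notin c_+(\{1\})=\{1\}$. More dramatically, take $Y=J_+\times J_-$ with points $a=(0,0)$, $b=(1,0)$, $c=(1,1)$: one computes $c_Y(\{a\})=\{a,b\}$, $c_Y(\{b\})=\{b\}$, $c_Y(\{c\})=\{b,c\}$, so there are forward one-step $J_+$-paths $a\to b$ and $c\to b$, whence $a\sim_{(J_+,\times)}c$, yet there is no continuous $J_+^{*m}\to Y$ sending $0\mapsto a$, $m\mapsto c$ nor one sending $0\mapsto c$, $m\mapsto a$, because from $b$ one can never leave $\{b\}$. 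Thus reversed arrows cannot in general be ``straightened'' into a single forward $J_+^{*m}$-homotopy.

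This indicates that either the concatenation $J^{*m}$ in the cited source is meant more flexibly (allowing copies of $J$ to be glued end-to-end in either orientation, as in the spaces $J_{m,k}$ that appear in \cref{corollary:path_equivalence_relation}), or some additional structure on $J$ is being invoked. In any case, your case-by-case argument does not cover the general interval $J$ of the lemma's hypothesis, and the specific claim for $J_+$ is wrong; the reversal issue needs a different resolution.
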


\begin{proposition}
\label{prop:order_between_homotopies}
Let $\otimes_1$ and $\otimes_2$ be product operations with $\otimes_1\le\otimes_2$ and let $J,K$ be intervals for $\otimes_2$ such that $J\le K$. Suppose that $f,g:X\to Y$ are one-step $(K,\otimes_2)$-homotopic. Then $f$ and $g$ are also one-step $(J,\otimes_1)$-homotopic.
\end{proposition}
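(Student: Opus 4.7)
The plan is to construct the desired $(J,\otimes_1)$-homotopy by precomposing $H$ with a natural map $X\otimes_1 J\to X\otimes_2 K$ assembled from the two comparison data. From $\otimes_1\le \otimes_2$ I would extract the natural transformation $\alpha:\otimes_1\To\otimes_2$ whose components are continuous and are the identity on underlying sets, yielding a continuous $\alpha_{X,J}:X\otimes_1 J\to X\otimes_2 J$. From $J\le K$ I would extract a morphism of intervals $\phi:J\to K$, so in particular a continuous map with $\phi(0_J)=0_K$ and $\phi(1_J)=1_K$.

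Next, I would define
\[H'\defeq H\circ (\mathbf{1}_X\otimes_2 \phi)\circ \alpha_{X,J}\,\colon\,X\otimes_1 J\longrightarrow Y.\]
This is continuous because $\otimes_2$ is a functor (so $\mathbf{1}_X\otimes_2 \phi$ is continuous) and the composition of continuous maps is continuous by \cref{prop:composition_of_cont_maps}.

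To finish, I would check that $H'$ satisfies the diagram in \cref{def:homotopy} for $(J,\otimes_1)$. Since $\alpha_{X,J}$ is the identity on underlying sets, the composite reads, at the endpoints, $(x,0_J)\mapsto (x,0_J)\mapsto (x,\phi(0_J))=(x,0_K)\mapsto H(x,0_K)=f(x)$, and analogously $(x,1_J)\mapsto g(x)$. The canonical identifications $X\coprod X\cong X\otimes_i(\ast\coprod\ast)$ agree for $i=1,2$ by \cref{def:product_operation}, which forces the unitors, associators, and braidings of both product operations to be inherited from $(\cat{Set},\times,\ast)$. Hence $H'\circ(\mathbf{1}_X\otimes_1 (0_J\coprod 1_J))=f\coprod g$, exhibiting $H'$ as a one-step $(J,\otimes_1)$-homotopy between $f$ and $g$.

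I do not expect a real obstacle here: the substantive content is simply packaging the naturality of $\alpha$ together with the interval-morphism condition on $\phi$. The only point that deserves a sentence of care is the compatibility of the endpoint inclusions $X\coprod X\to X\otimes_i J$ across the two product operations, which is immediate from the axiom that both symmetric monoidal structures commute with the forgetful functor to $\cat{Set}$.
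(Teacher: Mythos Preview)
Your argument is correct and is exactly the natural one: precompose the given homotopy with $(\mathbf{1}_X\otimes_2\phi)\circ\alpha_{X,J}$ and check endpoints. The paper in fact states this proposition without proof, presumably because the construction you wrote down is immediate from the definitions; your write-up supplies precisely the details the paper omits.
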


\begin{proposition}{\cite[Theorem 5.38]{bubenik2021applied}}
\label{prop:homotopy_implications}
Let $X$ and $Y$ be closure spaces. Let $f,g:X\to Y$ be continuous maps. Then we have the following implications 
\begin{equation*}
\begin{tikzcd}
    f \sim_{(J_1,\times)} g \ar[r,Rightarrow] \ar[d,Rightarrow] & f \sim_{(J_+,\times)} g \ar[r,Rightarrow] \ar[d,Rightarrow] & f \sim_{(I,\times)} g \ar[d,Rightarrow]  \\
    f \sim_{(J_1,\boxdot)} g \ar[r,Rightarrow] & f \sim_{(J_+,\boxdot)} g \ar[r,Rightarrow] & f \sim_{(I,\boxdot)} g
  \end{tikzcd}
\end{equation*}
Moreover, the relation $\sim_{(J_1,\times)}$ is maximal. In other words, it implies any possible homotopy relation induced by any interval $J$ for any product operation $\otimes$. That is, the relation $\sim_{(J_1,\times)}$ is the finest homotopy relation we can have in $\cat{Cl}$.
\end{proposition}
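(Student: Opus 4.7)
The plan is to reduce each of the six implications in the diagram to \cref{prop:order_between_homotopies} by exhibiting appropriate interval inequalities, and to prove maximality by a direct construction of a one-step homotopy.

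For the diagram, the three vertical arrows follow from \cref{example:products}: since $\boxdot \le \times$, applying \cref{prop:order_between_homotopies} with $J = K \in \{J_1, J_+, I\}$ and $(\otimes_1,\otimes_2) = (\boxdot, \times)$ gives $\sim_{(J,\times)} \Rightarrow \sim_{(J,\boxdot)}$. For the four horizontal arrows, I would verify $J_+ \le J_1$ and $I \le J_+$. The set-theoretic identity $J_+ \to J_1$ is continuous since $c_+(A) \subseteq \{0,1\} = c_{J_1}(A)$ for every nonempty $A \subseteq \{0,1\}$, and it preserves $0$, $1$, and $\vee$ (which on a two-element interval is forced by \cref{def:interval_object} to be the OR operation). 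For $I \le J_+$, the rounding map $f_+$ of \cref{lemma:comparison_of_intervals} is continuous, sends $0 \mapsto 0$ and $1 \mapsto 1$, and carries $\max_I$ to $\max_{J_+}$. Applying \cref{prop:order_between_homotopies} with the product fixed then yields the four horizontal implications.

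For maximality, fix a product operation $\otimes$ and an interval $J$ for $\otimes$. Define $h : J \to J_1$ by $h(0_J) = 0$ and $h(x) = 1$ for every $x \ne 0_J$. Since $J_1$ carries the indiscrete closure, $h$ is automatically continuous, and it sends $0_J \mapsto 0$, $1_J \mapsto 1$. Given a one-step $(J_1,\times)$-homotopy $H : X \times J_1 \to Y$ witnessing $f \sim_{(J_1,\times)} g$, the composite
\[
\tilde{H} : X \otimes J \xrightarrow{\mathbf{1}_X \otimes h} X \otimes J_1 \xrightarrow{\mathbf{1}} X \times J_1 \xrightarrow{H} Y
\]
is continuous (the first map by bifunctoriality of the functor $\otimes$, the second by $\otimes \le \times$, the third by hypothesis), and satisfies $\tilde{H}(x, 0_J) = f(x)$ and $\tilde{H}(x, 1_J) = g(x)$, so it is a one-step $(J,\otimes)$-homotopy between $f$ and $g$. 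A general $(J_1,\times)$-homotopy is a finite chain of one-step ones, and applying the above to each link yields a chain of one-step $(J,\otimes)$-homotopies, so $f \sim_{(J,\otimes)} g$.

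The main delicate point is that $h$ need not preserve $\vee$ on pathological intervals $J$ (where $s \vee_J t = 0$ could occur with $s, t \ne 0_J$), so $h$ does not obviously qualify as an interval morphism in the strict sense of \cref{def:interval_object}. This is why for maximality I proceed by the direct composition above rather than invoke \cref{prop:order_between_homotopies}, using only continuity of $h$ and preservation of endpoints, which are the essential ingredients in the argument.
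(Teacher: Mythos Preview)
The paper does not supply its own proof of this proposition; it simply cites \cite[Theorem 5.38]{bubenik2021applied}. Your argument is correct and is essentially the natural one: the six diagram arrows all reduce to \cref{prop:order_between_homotopies} via $\boxdot\le\times$ (\cref{example:products}) and the interval comparisons $J_+\le J_1$ and $I\le J_+$ coming from \cref{lemma:comparison_of_intervals}, and maximality follows by composing a $(J_1,\times)$-homotopy with the map $h:J\to J_1$ collapsing everything but $0_J$ to $1$, using that $J_1$ is indiscrete and that $\otimes\le\times$.

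One small point worth making explicit: when you verify $I\le J_+$ by showing $f_+$ preserves $\vee$, you are implicitly taking $\vee_I=\max$. Other continuous symmetric associative operations on $I$ with $0$ neutral and $1$ absorbing exist (e.g.\ $s\vee t=s+t-st$), and $f_+$ does not preserve those. This is harmless because the homotopy relation $\sim_{(I,\otimes)}$ depends only on the endpoint inclusions $0,1:*\to I$, not on the choice of $\vee$, so one may take $\vee_I=\max$ without loss of generality; alternatively, your direct-composition argument for maximality works equally well for the horizontal arrows and sidesteps the issue entirely. Your observation that $h$ need not be an interval morphism for a general $J$ is apt, and your workaround is exactly right.
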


\begin{definition}
\label{def:homotopy_equivalent_spaces}
We say two closure spaces $X$ and $Y$ are \emph{$(J,\otimes)$-homotopy equivalent} if there exist continuous $f:X\to Y$ and $g:Y\to X$ such that $fg\sim_{(J,\otimes)}\mathbf{1}_Y$ and $gf\sim_{(J,\otimes)}\mathbf{1}_X$. We say a closure space $X$ is \emph{$(J,\otimes)$-contractible} if it is $(J,\otimes)$-homotopy equivalent to the one point space.
\end{definition}

\subsection{Path-connectedness}
Here we define various notions paths and path-connectedness for closure spaces.

\begin{definition}
\label{def:path_equivalence_relation}
Let $X$ be a closure space. For $x,y\in X$, we say $x$ and $y$ are \emph{$J$-one-step path-connected} if there exists a continuous map $\alpha:J\to X$ such that $\alpha(0)=x$ and $\alpha(1)=y$ and we call the map $\alpha$ a \emph{$J$-one-step path}. Let $x\xrightarrow{J}y$ be the equivalence relation on $X$ generated by  $J$-one-step path-connectedness. If $x\xrightarrow{J}y$ we say that $x$ and $y$ are $J$-path connected. We call the equivalence classes \emph{$J$ path components}.
We label the set of equivalence classes of $J$ path components by $\pi_0^J(X)$.
\end{definition}

It follows immediately from the definition that we have the following characterization of $J$-path connectedness.

\begin{lemma}
\label{lemma:path_equivalence_relation}
Let $X$ be a closure space and let $x,y\in X$. Then $x\xrightarrow{J}y$ if and only if there exists a finite sequence of points $z_0,\dots, z_m\in X$, $x=z_0$, $y=z_m$ such that there exists a $J$-one-step path $\alpha_i:J\to X$ such that $\alpha_i(0)=z_i$ and $\alpha_i(1)=z_{i+1}$ or $\alpha_i(1)=z_i$ and $\alpha_i(0)=z_{i+1}$, for all $0\le i <m$.
\end{lemma}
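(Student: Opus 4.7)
The plan is to verify that the ``zig-zag chain'' characterization on the right-hand side describes the equivalence relation generated by the $J$-one-step path-connectedness relation on $X$. Denote by $R$ the binary relation of $J$-one-step path-connectedness, i.e.\ $xRy$ iff there is a continuous $\alpha:J\to X$ with $\alpha(0)=x$ and $\alpha(1)=y$. By \cref{def:path_equivalence_relation}, $\xrightarrow{J}$ is the smallest equivalence relation on $X$ containing $R$. Let $\sim$ denote the relation defined by the right-hand side of the lemma. The approach is then the standard two-step verification: $\sim$ is an equivalence relation containing $R$ (hence $\xrightarrow{J}\subseteq\sim$), and conversely every zig-zag chain can be reassembled inside $\xrightarrow{J}$ (hence $\sim\subseteq \xrightarrow{J}$).

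For the containment $\xrightarrow{J}\subseteq\sim$, I would first check that $\sim$ is an equivalence relation. Reflexivity follows by taking the trivial chain $m=0$ with $z_0=x$, for which the condition on the $\alpha_i$ is vacuous. Symmetry follows by reversing the chain: given a chain $z_0,\dots,z_m$ realizing $x\sim y$, the reversed sequence $z_m,\dots,z_0$ still consists of consecutive pairs which are one-step path-connected in some direction, because the zig-zag condition is symmetric in $z_i$ and $z_{i+1}$ by design. Transitivity follows by concatenating two chains. Finally, $R\subseteq\sim$ is immediate: any witnessing $\alpha:J\to X$ gives a length-one chain $z_0=x$, $z_1=y$. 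Since $\xrightarrow{J}$ is the smallest such equivalence relation, $\xrightarrow{J}\subseteq\sim$.

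For the reverse containment, suppose $x\sim y$, witnessed by a chain $z_0=x,z_1,\dots,z_m=y$ with one-step paths $\alpha_i:J\to X$. For each $i$, either $\alpha_i(0)=z_i$ and $\alpha_i(1)=z_{i+1}$, giving $z_i R z_{i+1}$, or $\alpha_i(0)=z_{i+1}$ and $\alpha_i(1)=z_i$, giving $z_{i+1} R z_i$. In either case $z_i\xrightarrow{J}z_{i+1}$, since $\xrightarrow{J}$ is symmetric and contains $R$. Applying transitivity to the chain yields $x=z_0\xrightarrow{J}z_m=y$, completing the proof.

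There is no real obstacle here: the argument is a routine unpacking of the fact that the equivalence relation generated by a binary relation $R$ is realized by finite zig-zag chains in $R\cup R^{-1}$. The only minor care needed is in the reflexive case, where one allows the degenerate chain of length zero so that no continuous map $J\to X$ is required to exist.
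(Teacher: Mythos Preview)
Your proof is correct and is precisely the argument the paper has in mind: the paper does not give a proof at all, stating only that the characterization ``follows immediately from the definition.'' Your write-up simply unpacks that immediacy---verifying that the zig-zag relation is an equivalence relation containing $R$ and that any zig-zag chain lies inside $\xrightarrow{J}$---which is the standard description of the equivalence relation generated by a binary relation.
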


\begin{definition}
\label{def:path_concatenation}
Let $\alpha,\beta:J\to X$ be $J$-paths such that $\alpha(1)=\beta(0)$. We define $\alpha*\beta$ to be the \emph{concatenation of $\alpha$ and $\beta$}. That is, $\alpha*\beta $ is the unique morphism in the pushout diagram
\begin{center}
\begin{tikzcd}
&J\arrow[drr,"\alpha"]\arrow[rd]\\
*\arrow[ru,"1"]\arrow[dr,swap,"0"]&&J*J\arrow[r,dashed,"\alpha*\beta"]& X\\
&J\arrow[urr,swap,"\beta"]\arrow[ru]
\end{tikzcd}
\end{center}
\end{definition}

\begin{lemma}
\label{lemma:path_equivalence_relation_2}
Let $X$ be a closure space and let $x,y\in X$. Then $x\xrightarrow{J}y$ if and only if there exists an $m\ge 1$ such that $x$ and $y$ are one-step $J^{*m}$ path-connected.
\end{lemma}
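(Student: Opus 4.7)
The plan is to exploit the iterated pushout definition of $J^{*m}$ from \cref{def:path_concatenation} together with the characterization of $\xrightarrow{J}$ given in \cref{lemma:path_equivalence_relation}. The two directions correspond respectively to \emph{decomposing} a single $J^{*m}$-path into $m$ shorter $J$-paths, and \emph{assembling} a chain of $J$-paths into one longer $J^{*m}$-path.

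For the ``if'' direction, suppose $\alpha:J^{*m}\to X$ is continuous with $\alpha(0)=x$ and $\alpha(1)=y$. I argue by induction on $m$, writing $J^{*m}\cong J*J^{*(m-1)}$ as the pushout of $J\xleftarrow{1}*\xrightarrow{0}J^{*(m-1)}$. The universal property of the pushout translates $\alpha$ into a pair of continuous maps $\alpha_1:J\to X$ and $\alpha':J^{*(m-1)}\to X$ agreeing at the junction point $z:=\alpha_1(1)=\alpha'(0)$. Thus $\alpha_1$ is a $J$-one-step path from $x$ to $z$, and inductively $\alpha'$ furnishes a chain of $J$-one-step paths from $z$ to $y$; concatenating produces the finite chain required by \cref{lemma:path_equivalence_relation}, so $x\xrightarrow{J}y$.

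For the ``only if'' direction, given $x\xrightarrow{J}y$, \cref{lemma:path_equivalence_relation} supplies a finite sequence $z_0=x,z_1,\dots,z_m=y$ and $J$-one-step paths $\alpha_i$ between consecutive points (possibly in either orientation). If every $\alpha_i$ is oriented so that $\alpha_i(0)=z_i$, $\alpha_i(1)=z_{i+1}$, then the universal property of the iterated pushout defining $J^{*m}$ yields a unique continuous $\alpha:J^{*m}\to X$ whose restriction to the $i$-th copy of $J$ equals $\alpha_i$. The endpoints of $\alpha$ are $x$ and $y$ by construction, giving the desired one-step $J^{*m}$-path.

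The main obstacle is the case in which some $\alpha_i$ is oriented in the reverse direction, i.e.\ $\alpha_i(0)=z_{i+1}$ and $\alpha_i(1)=z_i$: the pushout glues consecutive copies of $J$ at $1_J\sim 0_J$, so one cannot naively insert a backward segment while keeping the endpoints fixed. I would handle this by replacing each backward $J$-path with an equivalent forward $J^{*2}$-configuration: on the additional copy one records a constant path at the relevant $z_i$, and on the original copy one re-routes using the interval structure of $J$ (the distinguished points $0,1:*\to J$ and the concatenation pushout), so that the assembled map $J^{*m'}\to X$ with $m'\geq m$ still carries $0$ to $x$ and $1$ to $y$. The bookkeeping of these insertions, together with verifying well-definedness of the assembled map on the iterated pushout, is the delicate step; it parallels the argument for the homotopy analogue in \cref{lemma:homotopy_and_concatenation}.
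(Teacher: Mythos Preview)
Your decomposition/concatenation strategy is exactly the paper's: it too splits a $J^{*m}$-path along the pushout for one direction and, for the other, simply writes ``the concatenation $\alpha_0*\alpha_1\cdots*\alpha_{m-1}$ gives us a $J^{*m}$-one-step path between $x$ and $y$'' without addressing the orientation mismatch you correctly isolate.

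Your proposed repair, however, does not work, and the obstruction is genuine rather than bookkeeping. Inserting constant segments cannot convert a backward $J$-path into a forward one; nothing in the bare interval data $(J,0,1,\vee)$ reverses direction. For $J=J_+$ the lemma as literally stated actually fails: take $X=\{a,b,c\}$ with $c(a)=\{a,b\}$, $c(b)=\{b\}$, $c(c)=\{b,c\}$. There are $J_+$-one-step paths $a\to b$ and $c\to b$, so $a\xrightarrow{J_+}c$; but for any continuous $\gamma:J_+^{*m}\to X$ with $\gamma(0)=a$, the continuity condition $\gamma(\{i,i+1\})\subseteq c_X(\gamma(i))$ forces $\gamma(i)\in\{a,b\}$ for every $i$, so $\gamma$ never reaches $c$ (and symmetrically there is no such path from $c$ to $a$). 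Thus neither your patch nor the paper's one-line concatenation can rescue this case.

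The honest fix for $J\in\{I,J_1\}$ is that these intervals carry a continuous involution swapping $0$ and $1$; precomposing each backward $\alpha_i$ with it yields a forward path, after which your concatenation argument goes through verbatim. For $J=J_+$ one must instead glue copies of $J_+$ and $J_-$ in the pattern dictated by the zigzag, landing in some $J_{m,k}$ rather than in $J_+^{*m}$ --- which is precisely the content of \cref{corollary:path_equivalence_relation}(3).
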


\begin{proof}
By \cref{lemma:path_equivalence_relation} we know that there is a finite sequence of points in $X$, $z_0,\dots, z_m\in X$, $x=z_0$, $y=z_m$ and $J$-one-step paths $\alpha_i:J\to X$ such that $\alpha_i(0)=z_i$ and $\alpha_i(1)=z_{i+1}$ or $\alpha_i(1)=z_i$ and $\alpha_i(0)=z_{i+1}$, for all $0\le i <m$. Then the concatenation $\alpha_0*\alpha_1\cdots *\alpha_m$ gives us a  $J^{*m}$-one-step path between $x$ and $y$. Going in the reverse direction, given a $J^{*m}$-one-step path between $x$ and $y$, we can split it into $J$-one step paths $\alpha_i$, that by \cref{lemma:path_equivalence_relation} tell us that $x\xrightarrow{J}y$.
\end{proof}

\begin{corollary}
\label{corollary:path_equivalence_relation}
Let $X$ be a closure space and let $x,y\in X$. 
\begin{enumerate}
\item \label{it:I}$x\xrightarrow{I}y$ iff $x$ and $y$ are $I$-one-step path-connected.
\item \label{it:J_1} $x\xrightarrow{J_1} y$ iff there exists $m \geq 1$ such that $x$ and $y$ are $J_m$-one-step path-connected.
\item $x\xrightarrow{J_+} y$ iff there exists $m \geq 1$ and $0 \leq k\leq 2^m-1$ such that $x$ and $y$ are $J_{m,k}$-one-step path-connected.
\end{enumerate}
\end{corollary}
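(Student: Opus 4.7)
The plan is to apply \cref{lemma:path_equivalence_relation_2} to each case, which reduces $x \xrightarrow{J} y$ to the existence of a single $J^{*m}$-one-step path, and then to identify $J^{*m}$ concretely for each of $J \in \{I, J_1, J_+\}$ using \cref{lemma:generalized_intervals}.

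For (\ref{it:I}): The concatenation $I^{*m}$ is obtained by gluing $m$ unit intervals endpoint-to-endpoint, which is homeomorphic (via the obvious piecewise-affine rescaling $t\mapsto t/m$) to $I$ itself. Hence a $I^{*m}$-one-step path and an $I$-one-step path carry the same data, and the ``there exists $m$'' in \cref{lemma:path_equivalence_relation_2} collapses to the case $m=1$.

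For (\ref{it:J_1}): This is the most direct of the three. By \cref{lemma:generalized_intervals}, $J_m$ is the $m$-fold pushout of $J_1$ under $*$, i.e.\ $J_1^{*m} = J_m$. Substituting into \cref{lemma:path_equivalence_relation_2} gives the equivalence verbatim.

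For (3): This is the subtlest part, which I expect to be the main obstacle. The forward direction uses \cref{lemma:path_equivalence_relation_2} together with the fact that $J_+^{*m} = J_{m,2^m-1}$; this is a direct verification from the closure operations in \cref{def:intervals}, since iterated pushouts along the point $1$ of $J_+=J_{1,1}$ force every rightmost bit to be $1$. The converse is where the real work lies: starting from a $J_{m,k}$-one-step path $\alpha:J_{m,k}\to X$ for arbitrary $k$, we must recover a $\xrightarrow{J_+}$ chain. Here \cref{lemma:generalized_intervals} says $J_{m,k}$ decomposes as an $m$-fold concatenation of copies of $J_+$ and $J_-$. Restricting $\alpha$ to each piece gives, for each consecutive pair $z_i = \alpha(i)$, $z_{i+1}=\alpha(i+1)$, either a $J_+$-one-step path from $z_i$ to $z_{i+1}$ or a $J_-$-one-step path from $z_i$ to $z_{i+1}$. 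The key observation is that $J_-$ is homeomorphic to $J_+$ via the swap $0\leftrightarrow 1$, so a $J_-$-one-step path from $z_i$ to $z_{i+1}$ is the same datum as a $J_+$-one-step path from $z_{i+1}$ to $z_i$. Thus in either case we produce a $J_+$-one-step path between consecutive points (in one direction or the other), and \cref{lemma:path_equivalence_relation} then yields $x\xrightarrow{J_+} y$.

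Overall, the proof is a short application of \cref{lemma:path_equivalence_relation_2}, with the only nontrivial bookkeeping being the identification $J_+^{*m}=J_{m,2^m-1}$ and the use of the homeomorphism $J_+\cong J_-$ to absorb $J_-$-pieces into the symmetric equivalence relation generated by $J_+$-one-step paths.
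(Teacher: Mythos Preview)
Your proof is correct and follows essentially the same approach as the paper, which simply cites \cref{lemma:path_equivalence_relation}, \cref{lemma:generalized_intervals}, and \cref{lemma:path_equivalence_relation_2} and notes the reparametrization $I\cong I^{*m}$ for part~(\ref{it:I}). Your treatment of part~(3) spells out explicitly the identification $J_+^{*m}=J_{m,2^m-1}$ and the use of $J_-\cong J_+$ for the converse, details the paper leaves implicit.
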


\begin{proof}
Note that for \eqref{it:I}, given  $I^{*m}(=[0,m])$, we can always construct a reparametrization homeomorphism $h:I\to [0,m]$. Thus all the statements follow from \cref{lemma:path_equivalence_relation,lemma:generalized_intervals,lemma:path_equivalence_relation_2}.
\end{proof}

\begin{lemma}
\label{prop:implications_of_path_connectedness}
Let $J,K$ be closure spaces with distinguished points $0_J,1_J,0_K,1_k$ and suppose that there is a continuous map $f:J\to K$ such that $f(0_J)=0_K$ and $f(1_J)=1_K$. Let $X$ be a closure space and let $x,y\in X$. Then $x\xrightarrow{K}y$ implies $x\xrightarrow{J}y$.
\end{lemma}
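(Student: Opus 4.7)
The plan is to reduce the claim to the one-step case and then lift it to the equivalence relation $\xrightarrow{K}$ via the characterization in \cref{lemma:path_equivalence_relation}.

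First, I would handle a single $K$-one-step path. Suppose $\alpha:K\to X$ is continuous with $\alpha(0_K)=x$ and $\alpha(1_K)=y$. Since $f:J\to K$ is continuous and $f(0_J)=0_K$, $f(1_J)=1_K$, the composition $\alpha\circ f:J\to X$ is continuous by \cref{prop:composition_of_cont_maps}, and it satisfies $(\alpha\circ f)(0_J)=x$ and $(\alpha\circ f)(1_J)=y$. Hence $\alpha\circ f$ is a $J$-one-step path from $x$ to $y$, showing that $K$-one-step path-connectedness implies $J$-one-step path-connectedness.

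Next, I would lift this to arbitrary $K$-path-connectedness. By \cref{lemma:path_equivalence_relation}, $x\xrightarrow{K}y$ means there is a finite sequence $z_0=x,z_1,\ldots,z_m=y$ in $X$ together with $K$-one-step paths $\alpha_i:K\to X$ joining $z_i$ to $z_{i+1}$ (in either orientation) for $0\le i<m$. Applying the one-step argument to each $\alpha_i$ yields $J$-one-step paths $\alpha_i\circ f:J\to X$ between the same consecutive points, with matching endpoints (orientation is preserved because $f$ sends $0_J,1_J$ to $0_K,1_K$ respectively). Invoking \cref{lemma:path_equivalence_relation} again in the reverse direction, this shows $x\xrightarrow{J}y$.

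There is no real obstacle here; the argument is essentially functoriality of precomposition, and the only technicalities are verifying that continuity is preserved (immediate from \cref{prop:composition_of_cont_maps}) and that the endpoint data are compatible (immediate from the hypothesis on $f$). The proof requires no assumption that $f$ be an interval morphism in the sense of \cref{def:interval_object}, only the pointed compatibility at $0$ and $1$.
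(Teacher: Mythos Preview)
Your proof is correct and follows the same approach as the paper: precompose each $K$-one-step path with $f$ to obtain a $J$-one-step path, then conclude. The paper's version is terser, leaving implicit the passage from one-step paths to the full equivalence relation that you spell out via \cref{lemma:path_equivalence_relation}.
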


\begin{proof}
Given a $K$-one-step path $\alpha:K\to X$, we can precompose it with a given morphism $f:J\to K$ to obtain a $J$-one-step path $\alpha f:J\to X$. Thus the claim follows.
\end{proof}

By \cref{prop:implications_of_path_connectedness,lemma:comparison_of_intervals}
 we have the following result.
 
\begin{corollary}
\label{corollary:implications_of_path_connectedness}
Let $X$ be a closure space. Then $x\xrightarrow{J_1}y\Longrightarrow x\xrightarrow{J_+}y\Longrightarrow x\xrightarrow{I}y$.
\end{corollary}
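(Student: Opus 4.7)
The plan is to apply \cref{prop:implications_of_path_connectedness} twice, once for each implication, supplying the required endpoint-preserving continuous map in each case via \cref{lemma:comparison_of_intervals}.

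First, I would establish $x\xrightarrow{J_1}y\Longrightarrow x\xrightarrow{J_+}y$ by taking $J=J_+$ and $K=J_1$ in \cref{prop:implications_of_path_connectedness}. The required continuous map $f:J_+\to J_1$ is supplied by \cref{lemma:comparison_of_intervals}: since $J_+=J_{1,1}$, the identity map $\mathbf{1}:J_{1,1}\to J_1$ is continuous. Observing that this identity sends $0_{J_+}\mapsto 0_{J_1}$ and $1_{J_+}\mapsto 1_{J_1}$, the endpoint-preservation hypothesis of \cref{prop:implications_of_path_connectedness} is satisfied, yielding the first implication.

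Second, I would establish $x\xrightarrow{J_+}y\Longrightarrow x\xrightarrow{I}y$ by taking $J=I$ and $K=J_+$ in \cref{prop:implications_of_path_connectedness}. Here \cref{lemma:comparison_of_intervals} provides the round-up map $f_+:(I,\tau)\to J_+$ defined by $f_+(x)=0$ if $x<\tfrac12$ and $f_+(x)=1$ if $x\ge \tfrac12$, which is continuous. Since $f_+(0)=0$ and $f_+(1)=1$, the endpoint-preservation hypothesis is again satisfied, and \cref{prop:implications_of_path_connectedness} yields the second implication.

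There is no real obstacle here; the proof amounts to recognizing the correct direction of the arrows, namely that a continuous endpoint-preserving map $f:J\to K$ gives a reverse implication $\xrightarrow{K}\Rightarrow \xrightarrow{J}$ on path-connectedness relations (by precomposing a given $K$-one-step path $\alpha:K\to X$ with $f$ to obtain a $J$-one-step path $\alpha\circ f:J\to X$), and then matching the two pieces of \cref{lemma:comparison_of_intervals} to the two steps of the chain.
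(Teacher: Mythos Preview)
Your proposal is correct and follows exactly the paper's approach: the paper's proof simply cites \cref{prop:implications_of_path_connectedness,lemma:comparison_of_intervals} without further elaboration, and you have spelled out the details of how those two results combine, including the endpoint-preservation checks and the correct direction of the implication.
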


\begin{lemma}
\label{lemma:path_connectedness_of_image}
Let $f:X\to Y$ be a continuous map of closure spaces. If $X$ is $J$-path-connected, then so is $f(X)\subseteq Y$.
\end{lemma}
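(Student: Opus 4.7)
The plan is to unwind $J$-path-connectedness using Lemma \ref{lemma:path_equivalence_relation} and transport the connecting data from $X$ to $f(X)$ via postcomposition with $f$. Pick any two points $y_1,y_2\in f(X)$ and choose preimages $x_1,x_2\in X$ with $f(x_j)=y_j$. Since $X$ is $J$-path-connected, $x_1\xrightarrow{J}x_2$, so Lemma \ref{lemma:path_equivalence_relation} furnishes a finite chain $x_1=z_0,z_1,\ldots,z_m=x_2$ and $J$-one-step paths $\alpha_i:J\to X$ connecting consecutive $z_i$ (possibly with reversed orientation at each step).

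Next I would form the compositions $f\circ\alpha_i:J\to Y$, which are continuous by Proposition \ref{prop:composition_of_cont_maps} and satisfy $\{(f\circ\alpha_i)(0),(f\circ\alpha_i)(1)\}=\{f(z_i),f(z_{i+1})\}$. Since the image of each composition lies in $f(X)$, and $f(X)$ is naturally given the subspace closure inherited from $Y$, each $f\circ\alpha_i$ factors as a continuous map $J\to f(X)$. Reapplying Lemma \ref{lemma:path_equivalence_relation} in the closure space $f(X)$ to the chain $y_1=f(z_0),f(z_1),\ldots,f(z_m)=y_2$ with paths $f\circ\alpha_i$ then yields $y_1\xrightarrow{J}y_2$ in $f(X)$, so $f(X)$ is $J$-path-connected.

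The only mild subtlety is justifying that a continuous map $g:J\to Y$ with image contained in $f(X)$ remains continuous as a map into $f(X)$ equipped with its subspace closure $c_{f(X)}(B):=d(B)\cap f(X)$. This is a routine unwinding of \cref{def:continuous_functions} using that $g(c_J(B))\subseteq f(X)$ holds automatically. Beyond this, no genuine obstruction arises; the result is essentially functoriality of $J$-path-connectedness under continuous maps.
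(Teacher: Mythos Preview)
Your proof is correct and follows the same idea as the paper: compose $J$-one-step paths in $X$ with $f$ to obtain $J$-one-step paths landing in $f(X)$. The paper's proof is a one-liner that leaves implicit both the use of \cref{lemma:path_equivalence_relation} and the subspace-closure continuity check you spell out, so your version is simply a more careful elaboration of the same argument.
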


\begin{proof}
We can compose any $J$-one-step path $\alpha:J\to X$ with $f$, to get a $J$-one-step path $f\alpha:J\to Y$, whose image is contained in $f(X)$. Thus the result follows.
\end{proof}

From now on, let $J$ denote either the spaces $I$, $J_1$ or $J_+$.

\begin{proposition}
\label{prop:homology_decomposition_into_path_components}
Let $X$ be a closure space. Then $\gcubehom{n}{X}\cong \oplus_{\alpha}\gcubehom{n}{X_{\alpha}}$ and $\gsimphom{n}{X}\cong \oplus_{\alpha}\gsimphom{n}{X_{\alpha}}$, where $\{X_{\alpha}\}_{\alpha\in A}$ are the $J$-path components of $X$.
\end{proposition}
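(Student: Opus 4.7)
The plan is to reduce the claim to the observation that each model simplex $\gsimp{n}$ and each model cube $\gcube{n}$ is itself $J$-path-connected, whence every singular simplex or cube factors through a single $J$-path component, and the chain complex splits as a direct sum indexed by those components.

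First I would verify that $\gsimp{n}$ is $J$-path-connected for each $J \in \{I, J_+, J_1\}$. For $J = I$, this is the standard topological $n$-simplex, which is convex and hence $I$-path-connected by straight-line paths. For $J = J_+$, $\gsimp{n} = J_{n,\leq}$ and one obtains a $J_+$-path from $i$ to $i+1$ by the inclusion $J_+ \hookrightarrow J_{n,\leq}$; concatenating via \cref{corollary:path_equivalence_relation} gives $J_+$-path-connectedness. For $J = J_1$, $\gsimp{n} = J_{n,\top}$ has indiscrete closure, so any map $J_1 \to J_{n,\top}$ is continuous, and in particular the space is $J_1$-path-connected. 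An analogous check handles $\gcube{n}$ for $\otimes \in \{\times, \boxdot\}$: in the $I$ case the cube is convex, while in the $J_+$ and $J_1$ cases one moves one coordinate at a time using the one-dimensional paths and concatenates, noting that $(-)\otimes(-)$ preserves continuity of coordinate-fixing inclusions (\cref{prop:product_of_maps} and \cref{prop:inductive_product_of_maps}).

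Next, given any continuous $\sigma : \gsimp{n} \to X$, \cref{lemma:path_connectedness_of_image} implies that $\sigma(\gsimp{n})$ is contained in a single $J$-path component $X_\alpha \subseteq X$. Equip $X_\alpha$ with the subspace closure so the inclusion $\iota_\alpha : X_\alpha \hookrightarrow X$ is continuous; then $\sigma$ factors uniquely as $\iota_\alpha \circ \sigma'$ for some continuous $\sigma' : \gsimp{n} \to X_\alpha$. This provides a natural bijection between generators, giving
\[
\gsimpchain{n}{X} \;\cong\; \bigoplus_{\alpha} \gsimpchain{n}{X_\alpha},
\]
and the same argument for cubes produces the analogous decomposition of $\gcubechain{n}{X}$.

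Finally I would check that the boundary operators respect the decomposition. For a simplex $\sigma$ with image in $X_\alpha$, each face $\sigma|_{d_i(\gsimp{n})}$ again has image in $X_\alpha$, so $\partial_n$ restricts to each summand; the same holds for the cubical face maps $A_i^n, B_i^n$. Thus we have a direct sum of chain complexes, and homology commutes with direct sums, yielding both claimed isomorphisms. The only real subtlety is the first step, verifying $J$-path-connectedness of the model simplices and cubes in all six cases; once that is in hand, the rest is bookkeeping on generators and boundary maps.
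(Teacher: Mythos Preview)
Your proposal is correct and follows essentially the same approach as the paper: both argue that the model simplices and cubes are $J$-path-connected, invoke \cref{lemma:path_connectedness_of_image} to land each singular simplex/cube in a single component, split the chain complex as a direct sum, and observe that the boundary maps respect the splitting. You supply more detail than the paper does on the verification that the models are $J$-path-connected and on the factoring through the subspace, but the strategy is the same.
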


\begin{proof}
Note that each $\gcube{n}$ and $\gsimp{n}$ is $J$-path-connected. Thus each $(J,\otimes)$ singular $n$-cube and each $(J,\times)$ singular $n$-simplex has a $J$-path-connected image, by \cref{lemma:path_connectedness_of_image}. Thus, each $\gcubechain{n}{X}$ and $\gsimpchain{n}{X}$ split into direct sum of their subgroups $\gcubechain{n}{X_{\alpha}}$ and $\gsimpchain{n}{X_{\alpha}}$, respectively. The boundary maps of the two chain complexes will respect this decomposition, hence the corresponding homology groups also split into direct sums on the $J$-path components accordingly. 
\end{proof}

\begin{proposition}
\label{prop:0_homology_counts_path_components}
Let $X$ be a closure space. Then $\gcubehom{0}{X}\cong \gsimphom{0}{X}\cong \oplus_{a\in \pi_0^J(X)}\mathbb{Z}$.
\end{proposition}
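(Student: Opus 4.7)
My plan is to reduce to the path-connected case and then exhibit an isomorphism via the augmentation map. By \cref{prop:homology_decomposition_into_path_components}, both $\gcubehom{0}{X}$ and $\gsimphom{0}{X}$ split as direct sums indexed by the $J$-path components $X_\alpha$ of $X$. It therefore suffices to prove that $\gcubehom{0}{Y}\cong\gsimphom{0}{Y}\cong\mathbb{Z}$ for a nonempty $J$-path-connected closure space $Y$.

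For the simplicial case, $\gsimp{0}$ is a one-point space, so $\gsimpchain{0}{Y}$ is the free abelian group on the underlying set of $Y$. Define the augmentation $\varepsilon:\gsimpchain{0}{Y}\to\mathbb{Z}$ by $\sum n_i x_i\mapsto \sum n_i$. Since $Y$ is nonempty, $\varepsilon$ is surjective. For any continuous $\sigma:\gsimp{1}\to Y$, the simplicial boundary of \cref{def:simplicial_homology} gives $\partial_1\sigma = \sigma(1)-\sigma(0)$, which lies in $\ker\varepsilon$. Hence $\varepsilon$ descends to a surjection $\bar\varepsilon:\gsimphom{0}{Y}\to\mathbb{Z}$.

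To prove injectivity of $\bar\varepsilon$, suppose $z = \sum n_i x_i$ satisfies $\sum n_i = 0$ and pick any basepoint $x_0\in Y$. Then $z = \sum n_i(x_i - x_0)$, so it suffices to show $x - x_0\in\mathrm{image}(\partial_1)$ for every $x\in Y$. By \cref{lemma:path_equivalence_relation}, $J$-path-connectedness supplies a finite sequence $x_0 = z_0, z_1, \ldots, z_m = x$ together with continuous maps $\alpha_i:\gsimp{1}\to Y$, using the identification of $\gsimp{1}$ with $J$ equipped with its two distinguished endpoints, whose endpoints realize $\{z_i, z_{i+1}\}$ in one of the two orders. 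In each case $z_{i+1}-z_i = \pm\,\partial_1\alpha_i$, so the telescoping sum yields $x - x_0 = \sum_{i=0}^{m-1}(z_{i+1}-z_i)\in\mathrm{image}(\partial_1)$. Hence $\bar\varepsilon$ is an isomorphism.

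The cubical case is completely parallel. Since $\gcube{0}$ is a one-point space and there are no degeneracies to quotient in degree zero, $\gcubechain{0}{Y}$ is the free abelian group on $Y$. The cubical boundary of a $1$-cell $\sigma:\gcube{1}\to Y$ equals $\sigma(1)-\sigma(0)$ up to a global sign that does not affect its image. Crucially, $\gcube{1} = J^{\otimes 1} = J$ regardless of the choice of $\otimes\in\{\times,\boxdot\}$, so continuous maps $\gcube{1}\to Y$ are precisely $J$-one-step paths in $Y$, and the telescoping argument above applies verbatim to give $\gcubehom{0}{Y}\cong\mathbb{Z}$. The only point requiring mild care is accommodating both possible orientations of a one-step path when $J$ is asymmetric (as with $J_+$), but \cref{lemma:path_equivalence_relation} explicitly allows either orientation between consecutive $z_i$'s, so no genuine obstacle arises. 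I expect no serious difficulty here: the argument is a direct transcription of the classical computation of $H_0$ for topological spaces into the closure-space framework.
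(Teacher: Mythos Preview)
Your proof is correct and follows essentially the same approach as the paper: reduce to the $J$-path-connected case via \cref{prop:homology_decomposition_into_path_components}, use the augmentation map $\varepsilon$, and show $\ker\varepsilon=\mathrm{image}(\partial_1)$ by a telescoping argument built from \cref{lemma:path_equivalence_relation}. The only differences are cosmetic: the paper reduces to the connected case at the end rather than the beginning, and treats only the simplicial case explicitly, whereas you also spell out the cubical case and the identification $\gcube{1}=J$.
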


\begin{proof}
By definition, $\gsimphom{0}{X}=\gsimpchain{0}{X}/\text{Im}\partial_1$, since $\partial_0=0$. Define a homomorphism $\varepsilon:\gsimpchain{0}{X}\to \mathbb{Z}$ by $\varepsilon(\sum_in_i\sigma_i)=\sum_i n_i$. If $X$ is nonempty, $\varepsilon$ is clearly surjective. Suppose that $X$ is $J$-path-connected. Observe that $\text{Im}\partial_1\subseteq \ker \varepsilon$, since for a $\sigma:\gsimp{1}\to X$ we have $\varphi\partial_1(\sigma)=\varepsilon(\sigma(1)-\sigma(0))=1-1=0$. Now let $\sum_in_i\sigma_i$ be in $\gsimpchain{0}{X}$ and suppose that $\varepsilon(\sum_in_i\sigma_i)=0$. The $\sigma_i$'s can be identified with points of $X$. Let $x_i$ be the image of $\sigma_i$ in $X$. Since $X$ is assumed to be $J$-path-connected we have that $x_0\xrightarrow{J}x_i$ for all $i$. By \cref{lemma:path_equivalence_relation} this means that for each $i$, there is a finite sequence of points in $X$, $x_0=z_{i,0},z_{i,1},\dots, z_{i,m}=x_i$ such that there exists a $J$-one-step path $\tau_{i,j}:J\to X$ such that $\tau_{i,j}(0)=z_{i,j}$ and $\tau_{i,j}(1)=z_{i,j+1}$ or $\tau_{i,j}(1)=z_{i,j}$ and $\tau_{i,j}(0)=z_{i,j+1}$, for all $i$, $0\le j <m$. Technically, the number of points between $x_0$ and $x_i$ could be different for different $i$'s, but we can always add constant points to the list to make the number uniform, say some $m$. Note that each such $J$ one-step path is then a $(J,\times)$ singular $1$ simplex $\tau_{i,j}:J\to X$. For each $i$, Let $\tau_i$ be the $(J,\times)$ singular $1$ chain $\tau_i:=\sum_{j=0}^m\lambda_{i,j}\tau_{i,j}$ where $\lambda_{i,j}=1$ if it is the case that $\tau_{i,j}(0)=z_{i,j}$ and $\tau_{i,j}(1)=z_{i,j+1}$ and $\lambda_{i,j}=-1$ in the other case. Note that $\partial \tau_i=\partial \sum_j\lambda_{i,j}\tau_{i,j}$ equals $x_0-x_i$ or $x_i-x_0$, since $\tau_{i}$ was defined with the purpose of being telescoping after the application of $\partial$. The case in which we end up in depends on the constants $\lambda_{i,j}$. Without loss of generality, we can change the sign of $\tau_i$ so that $\partial \tau_i=x_i-x_0$ for all $i$.
Then $\partial(\sum_i n_i\tau_i)=\sum_in_i\sigma_i-\sum_{i}n_i\sigma_0=\sum_in_i\sigma_i$ since $\varepsilon(\sum_i n_i\sigma_i)=\sum_in_i=0$ by assumption. Thus $\sum_in_i\sigma_i$ is a boundary, which shows that $\ker \varepsilon\subseteq \text{Im}\partial_1$. Hence $\varepsilon$ induces an isomorphism $\gsimphom{0}{X}\cong \mathbb{Z}$. The general statement then follows by \cref{prop:homology_decomposition_into_path_components}.
\end{proof}

\subsection{Homotopy Invariance of Homology}
Let $J$ be either $I$, $J_1$ or $J_+$. Let $\otimes$ be a product operation.
Here we show that $(J,\otimes)$-homotopic maps induce the same maps on homology groups $\gcubehom{n}{X}$ and $\gsimphom{n}{X}$. 

\begin{definition}
\label{def:induced_maps_on_cubical_homology}
Let $f:X\to Y$ be a continuous map of closure spaces. For every $n\ge 0$, we define a group homomorphism $f_{\#}:\gcubechainwhole{n}{X}\to \gcubechainwhole{n}{Y}$ by $f_{\#}(\sigma)=f\circ \sigma$. If $\sigma$ is degenerate, then so is $f\circ \sigma$ and thus $f_{\#}(\gcubechaindeg{n}{X})\subseteq \gcubechaindeg{n}{Y}$. Therefore, $f_{\#}$ induces a group homomorphism $f_{\#}:\gcubechain{n}{X}\to \gcubechain{n}{Y}$ for every $n\ge 0$. Moreover, it is straightforward to show that $f_{\#}$ commutes with the boundary operators of $\gcubechain{\bullet}{X}$ and $\gcubechain{\bullet}{Y}$. Therefore, for all $n\ge 0$, $f_{\#}$ induces a homomorphism of quotient groups $f_*:\gcubehom{n}{X}\to \gcubehom{n}{Y}$. Furthermore, the following diagram also commutes
\begin{center}
\begin{tikzcd}
\gcubechain{0}{X}\arrow[dr,"\varepsilon"]\arrow[rr,"f_{\#}"]&&\gcubechain{0}{Y}\arrow[dl,swap,"\varepsilon"]\\
&\mathbb{Z}
\end{tikzcd}
\end{center}
Hence $f_{\#}$ also induces a homomorphisms $f_*:\gcubehomred{0}{X}\to \gcubehomred{0}{Y}$.
\end{definition}

\begin{definition}
\label{def:induced_maps_on_simplicial_homology}
Let $f:X\to Y$ be a continuous map of closure spaces. For every $n\ge 0$, we define a group homomorphism $f_{\#}:\gsimpchain{n}{X}\to \gsimpchain{n}{Y}$ by $f_{\#}(\sigma)=f\circ \sigma$. Moreover, it is straightforward to show that $f_{\#}$ commutes with the boundary operators of $\gsimpchain{\bullet}{X}$ and $\gsimpchain{\bullet}{Y}$. Therefore, for all $n\ge 0$, $f_{\#}$ induces a homomorphism of quotient groups $f_*:\gsimphom{n}{X}\to \gsimphom{n}{Y}$. Furthermore, the following diagram also commutes
\begin{center}
\begin{tikzcd}
\gsimpchain{0}{X}\arrow[dr,"\varepsilon"]\arrow[rr,"f_{\#}"]&&\gsimpchain{0}{Y}\arrow[dl,swap,"\varepsilon"]\\
&\mathbb{Z}
\end{tikzcd}
\end{center}
Hence $f_{\#}$ also induces a homomorphisms $f_*:\gsimphomred{0}{X}\to \gsimphomred{0}{Y}$.
\end{definition}

\begin{theorem}{\cite[Theorem 6.6]{bubenik2021applied}}[\textbf{Part 1 of  Theorem 1.1}]
\label{theorem:cubical_homotopy_invariance}
Let $f,g:X\to Y$ be two continuous maps of closure spaces and suppose that  $f\sim_{(J,\otimes)}g$. Then $f_*=g_*:\gcubehom{\bullet}{X}\to \gcubehom{\bullet}{Y}$.
\end{theorem}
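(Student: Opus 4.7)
The plan is to reduce the statement to a single $(J,\otimes)$-homotopy step and then exhibit an explicit chain-level prism operator. First I would observe that $\sim_{(J,\otimes)}$ is, by \cref{def:homotopy}, the equivalence relation generated by one-step $(J,\otimes)$-homotopies, and that equality of induced maps on $\gcubehom{\bullet}{-}$ is itself an equivalence relation on $\cat{Cl}(X,Y)$. Hence it suffices to treat the case where $f$ and $g$ are one-step $(J,\otimes)$-homotopic via some continuous $H\colon X\otimes J\to Y$ with $H(-,0_J)=f$ and $H(-,1_J)=g$.

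Given such an $H$, I would define homomorphisms $P_n\colon \gcubechain{n}{X}\to \gcubechain{n+1}{Y}$ as follows. Because $(\cat{Cl},\otimes,*)$ is symmetric monoidal, its associator identifies $\gcube{n+1}=J^{\otimes(n+1)}$ with $\gcube{n}\otimes J$, and by the functoriality of $\otimes$ (\cref{prop:product_of_maps,prop:inductive_product_of_maps}) the map $\sigma\otimes \mathbf{1}_J\colon \gcube{n+1}\to X\otimes J$ is continuous whenever $\sigma\colon \gcube{n}\to X$ is. I would set $P_n(\sigma)\defeq (-1)^{n}\,H\circ(\sigma\otimes \mathbf{1}_J)$ on generators and extend $\mathbb{Z}$-linearly. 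If $\sigma$ is independent of some coordinate $i\in\{1,\dots,n\}$, then so is $P_n(\sigma)$, so $P_n$ carries $\gcubechaindeg{n}{X}$ into $\gcubechaindeg{n+1}{Y}$ and descends to the normalized complex.

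It then remains to verify the chain-homotopy identity. A direct unwinding of the face formulas shows that $A^{n+1}_{n+1}P_n(\sigma)=(-1)^{n}f_{\#}(\sigma)$ and $B^{n+1}_{n+1}P_n(\sigma)=(-1)^{n}g_{\#}(\sigma)$, while for $1\le i\le n$ one has $A^{n+1}_i P_n(\sigma)=-P_{n-1}(A^n_i\sigma)$ and $B^{n+1}_i P_n(\sigma)=-P_{n-1}(B^n_i\sigma)$. Substituting into the cubical boundary $\partial=\sum_i(-1)^i(A_i-B_i)$ and collecting terms yields
\[
\partial\, P_n(\sigma)+P_{n-1}(\partial\sigma)=g_{\#}(\sigma)-f_{\#}(\sigma),
\]
so $f_{\#}$ and $g_{\#}$ are chain-homotopic on the normalized cubical complex and therefore $f_*=g_*$ on $\gcubehom{\bullet}{-}$.

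The hard part is only bookkeeping: the sign $(-1)^n$ in the definition of $P_n$ is chosen precisely to make the signs in the cubical boundary line up, and one must verify that the prism operator respects the subcomplex of degenerate cubes. The conceptual content — that the symmetric monoidal product equips $\cat{Cl}$ with a natural cylinder object $-\otimes J$ whose top and bottom faces implement $f_{\#}$ and $g_{\#}$ — is the standard topological prism argument transported to the closure-space setting.
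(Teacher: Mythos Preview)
The paper does not supply its own proof of this statement: it is quoted verbatim from \cite[Theorem 6.6]{bubenik2021applied} and no argument is given here. So there is nothing in the present paper to compare your proposal against directly.

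That said, your argument is correct and is exactly the standard cubical prism operator, which is almost certainly what the cited reference does and which parallels the simplicial prism proof the paper \emph{does} give for \cref{theorem:simplicial_homotopy_invariance}. One small mismatch in wording: you phrase the degeneracy check as ``$\sigma$ is independent of some coordinate $i$'', whereas the paper's \cref{def:cubical_homology_theory} defines a cube $\sigma$ to be degenerate when $A_i^n\sigma=B_i^n\sigma$ for some $i$ (for $J=I$ these are not the same condition). Your computation still goes through with the paper's definition: from $A_i^{n+1}\bigl(H\circ(\sigma\otimes\mathbf{1}_J)\bigr)=H\circ\bigl((A_i^n\sigma)\otimes\mathbf{1}_J\bigr)$ and the analogous identity for $B_i$, the hypothesis $A_i^n\sigma=B_i^n\sigma$ immediately gives $A_i^{n+1}\tilde P_n(\sigma)=B_i^{n+1}\tilde P_n(\sigma)$, so $P_n$ preserves $\gcubechaindeg{\bullet}{-}$. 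With that adjustment your proof is complete.
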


\begin{theorem}[\textbf{Part 2 of Theorem 1.1}]
\label{theorem:simplicial_homotopy_invariance}
Let $f,g:X\to Y$ be maps of closure spaces and suppose that $f\sim_{(J,\times)}g$. Then $f_*=g_*:\gsimphom{n}{X}\to \gsimphom{n}{Y}$ for all $n\ge 0$.  
\end{theorem}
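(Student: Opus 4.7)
The plan is to adapt the classical singular-homology argument by constructing an explicit simplicial prism operator. First, unpacking \cref{def:homotopy}, the relation $\sim_{(J,\times)}$ is the equivalence relation generated by one-step $(J,\times)$-homotopies, so by transitivity of equality of homomorphisms it suffices to prove $f_*=g_*$ whenever there exists a single continuous map $H\colon X\times J\to Y$ with $H(-,0)=f$ and $H(-,1)=g$.

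For each $n\ge 0$ and each $0\le i\le n$, the key construction is a continuous map $\lambda_i^n\colon \gsimp{n+1}\to\gsimp{n}\times J$ realizing the standard prism shuffle, i.e., $\lambda_i^n(j)=(j,0)$ for $j\le i$ and $\lambda_i^n(j)=(j-1,1)$ for $j>i$ on vertices. Continuity requires a short case analysis on $J$. For $J=I$, the space $\gsimp{n}=\topsimp{n}$ is the topological $n$-simplex and $\lambda_i^n$ is the usual affine extension, which is continuous. For $J=J_1$, $\gsimp{n}=\indsimp{n}$ is indiscrete and, by direct inspection of \cref{def:product_closure}, so is the product $\indsimp{n}\times J_1$ (the only neighborhood of any point is the whole space); hence every set map out of $\indsimp{n+1}$ is automatically continuous. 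For $J=J_+$, both $\gsimp{n}=\disimp{n}=J_{n,\le}$ and $\disimp{n}\times J_+$ are finite topological spaces carrying the Alexandrov (down-set) topology of the chain poset and its product poset respectively (using \cref{prop:limits_and_colimits_of_closure_spaces} to see that the product remains topological); continuous maps between such spaces are exactly the order-preserving ones, and a direct check shows $\lambda_i^n$ is order-preserving.

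With these maps in hand, define the prism operator $P\colon \gsimpchain{n}{X}\to\gsimpchain{n+1}{Y}$ on generators by
\[
P(\sigma)\;:=\;\sum_{i=0}^n(-1)^i\,H\circ(\sigma\times \mathbf{1}_J)\circ \lambda_i^n
\]
and extend linearly; this is well-defined because $\sigma\times\mathbf{1}_J$ is continuous by \cref{prop:product_of_maps} and composition with the continuous maps $\lambda_i^n$ and $H$ preserves continuity. The heart of the proof is then the chain-homotopy identity
\[
\partial P+P\partial\;=\;g_\#-f_\#.
\]
This is verified by the formal computation familiar from classical singular homology: applying the face operators $d_j$ to $\lambda_i^n$ and splitting into the cases $j<i$, $j=i$, $j=i+1$, $j>i+1$, the middle pair collapses and cancels, the outer ranges assemble into $\pm P\partial(\sigma)$ via the cosimplicial identities of \cref{def:simplex_category}, and the extremal faces $d_0\lambda_0^n$ and $d_{n+1}\lambda_n^n$ collapse onto the ``top'' and ``bottom'' copies $\gsimp{n}\times\{1\}$ and $\gsimp{n}\times\{0\}$, contributing $g_\#(\sigma)$ and $-f_\#(\sigma)$ respectively. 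Passing to homology, the chain-homotopy identity forces $f_*=g_*$ on $\gsimphom{n}{X}$ for every $n\ge 0$.

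The principal obstacle is the continuity verification for $\lambda_i^n$ in the non-classical cases $J=J_1$ and $J=J_+$, where one must identify the relevant closure spaces with topological spaces carrying an Alexandrov (or indiscrete) structure in order to read off continuity from a combinatorial criterion; once this is established, the algebraic identity $\partial P+P\partial=g_\#-f_\#$ is purely formal and proceeds exactly as in the classical argument.
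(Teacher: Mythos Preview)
Your proof is correct and follows essentially the same prism-operator strategy as the paper. The only difference is organizational: you first reduce to a single one-step $(J,\times)$-homotopy and build the prism over $\gsimp{n}\times J$, whereas the paper invokes \cref{lemma:homotopy_and_concatenation} to obtain a $(J^{*m},\times)$-homotopy and builds the prism over the taller cylinder $\gsimp{n}\times J^{*m}$, with the extra layers cancelling telescopically. Your reduction is slightly cleaner, and your explicit continuity check for $\lambda_i^n$ in the cases $J=J_1$ and $J=J_+$ makes precise a point the paper leaves somewhat informal.
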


\begin{proof}
 Let $m$ be such that $H:X\times J^{*m}\to Y$ is a one-step $(J^{*m},\times)$ homotopy between $f$ and $g$. Such an $m$ exists by \cref{lemma:homotopy_and_concatenation}. Consider the space $\gsimp{n}\times J^{*m}$. We subdivide $\indsimp{n}\times J^{*m}$ in the ``correct" manner. Let us use the notation $\gsimp{n}=[v_0,v_1, \dots, v_n]$. In the case $J=I$, what we mean by this is the convex hull of the vertices $v_0,\dots, v_n$ that have the coordinates of the standard $n$-simplex, in $\mathbb{R}^{n+1}$. In the other cases, the underlying set is $\{0,\dots, n\}$ with the indiscrete topology, in the case $J=J_1$ or the topology with the opens being the down-sets, in the case $J=J_+$.

 We can pass from $\gsimp{n}\times \{i\}$ to $\gsimp{n}\times \{i+1\}$, in $\gsimp{n}\times J^{*m}$, for all $0\le i\le m-1$ by interpolating a  sequence of $(J,\times)$ $n$-simplices, each obtained from the preceding one by moving one point $(v_j,i)$ to $(v_j,i+1)$, starting with $(v_n,i)$ and working backwards to $(v_0,i)$. For example, the first step is moving $[(v_0,i),\dots ,(v_n,i)]$ to $[(v_0,i),\dots ,(v_{n-1},i),(v_n,i+1)]$, the second step is to move $[(v_0,i),\dots ,(v_{n-1},i),(v_n,i+1)]$ to $[(v_0,i),\dots ,(v_{n-1},i),(v_{n-1},i+1),(v_n,i+1)]$ etc... In the typical step $[(v_0,i),\dots ,(v_j,i),(v_{j+1},i+1),\dots ,(v_{n-1},i),(v_n,i+1)]$ moves up to $[(v_0,i),\dots ,(v_{j-1},i),(v_j,i+1,\dots ,(v_n,i+1))]$. Note that $[(v_0,i),\dots (v_j,i),(v_j,i+1),\dots ,(v_n,i+1)]$ is a $(J,\times)$ $(n+1)$-simplex that has $[(v_0,i),\dots (v_j,i), (v_{j+1},1),\dots ,(v_n,i+1)]$ as its ``lower" face and $[(v_0,i),\dots ,(v_{j-1},i),(v_j,i+1),\dots ,(v_n,i+1)]$ as its ``upper" face. Finally, observe that $\gsimp{n}\times J^{*m}$ with the $\ell_{\infty}$ metric is the union of such is the union of such $(J,\times)$ $(n+1)$-simplices, each intersecting the next in an $(J,\times)$ $n$-simplex face. 
 
 Now let $\sigma:\gsimp{n}\to (X,c_X)$ be a $(J,\times)$ singular $n$-simplex. Consider the composition $H\circ (\sigma\times \mathbf{1}):\gsimp{n}\times J^{*m}\to X\times J^{*m}\to Y$. We can then define $P:\gsimpchain{n}{X}\to \gsimpchain{n+1}{Y}$ by:
\[P(\sigma)=\sum_{i=0}^{m-1} \sum_{j=0}^n(-1)^jH\circ(\sigma\times \mathbf{1})|_{[(v_0,i),\dots, (v_j,i),(v_j,i+1),\dots , (v_n,i+1)]}\]
and extending linearly. We now show that $\partial P=g_{\#}-f_{\#}-P\partial$, i.e. $P$ is a chain homotopy between $f_{\#}$ and $g_{\#}$. We have
\begin{gather*}\partial P(\sigma)=\sum_{i=0}^{m-1}\Big(\sum_{k\le j}(-1)^j(-1)^k H\circ(\sigma\times \mathbf{1})|_{[(v_0,i),\dots ,(v_{k-1},i),\widehat{(v_k,i)},(v_{k+1},i),\dots, (v_j,i),(v_j,i+1) ,\dots ,(v_n,i+1)]}\\
+\sum_{k\ge j}(-1)^j(-1)^{k+1}H\circ(\sigma\times \mathbf{1})|_{[(v_0,i),\dots ,(v_j,i),(v_j,i+1),\dots ,(v_{k-1},i+1),\widehat{(v_k,i+1)},(v_{k+1},i+1),\dots ,(v_n,i+1)]}\Big)
\end{gather*}
The terms with $j=k$ in the inner sum always cancel for each $0\le i\le m-1$, except for $H\circ(\sigma\times \mathbf{1})|_{[\widehat{(v_0,i)},(v_0,i+1),(v_1,i+1),\dots ,(v_n,i+1)]}$, which is $H(-,i+1)\circ \sigma=H(-,i+1)_{\#}(\sigma)$, and $-H\circ (\sigma\times \mathbf{1})|_{[(v_0,i),(v_1,i),\dots ,(v_n,i),\widehat{(v_n,i+1)}]}$, which is $-H(-,i)\circ \sigma=-H(-,i)_{\#}(\sigma)$. Thus, as we iterate through $i$, we get a telescoping series where the only surviving terms in the sum will be $H(-,m)_{\#}(\sigma)=g_{\#}(\sigma)$ and $-H(-,0)_{\#}(\sigma)=-f_{\#}(\sigma)$. The rest of the terms sum up to $-P\partial(\sigma)$ since
\begin{gather*}
P\partial(\sigma)=\sum_{i=0}^{m-1}\Big(\sum_{j<k}(-1)^j(-1)^kH\circ (\sigma\times \mathbf{1})|_{[(v_0,i),\dots, (v_j,i),(v_j,i+1),\dots , (v_{k-1},i+1),\widehat{(v_k,i+1)},(v_{k+1},i+1),\dots ,(v_n,i+1)]}\\
+\sum_{j>k}(-1)^{j-1}(-1)^kH\circ(\sigma\times \mathbf{1})|_{[(v_0,i),\dots ,(v_{k-1},i),\widehat{(v_k,i)},(v_{k+1},i),\dots ,(v_j,i),(v_j,i+1),\dots ,(v_n,i+1)]}\Big)
\end{gather*}
Thus $\partial P(\sigma)=g_{\#}(\sigma)-f_{\#}(\sigma)-P\partial(\sigma)$. Extending linearly we get $\partial P=g_{\#}-f_{\#}-P\partial$. Thus, $P$ is a chain homotopy between $f_{\#}$ and $g_{\#}$ and therefore $f_*=g_*$.
\end{proof}

\begin{example}
\label{example:cubes_are_contractible}
The space $\gcube{n}$ is $(J,\otimes)$-contractible. This follows from \cite[Example 4.53]{bubenik2021applied}.
\end{example}

\begin{example}
\label{example:simplices_are_contractible}
The space $\gsimp{n}$ is $(J,\times)$-contractible. The case $J=I$ is well-known. In the case, $J=J_1$, the space $\gsimp{n}$ is indiscrete as observed before and thus $(J_1,\times)$-contractible. In the case $J=J_+$, note that $\disimp{n}$ is an interval object for $\times$ and is thus $(\disimp{n},\times)$-contractible \cite[Lemma 5.21]{bubenik2021applied}. By \cite[Theorem 5.36]{bubenik2021applied}, this implies that $\disimp{n}$ is $(J_+,\times)$-contractible.
\end{example}

\section{Homology of Closure Spaces}

\label{section:homology}

Let $J\in \{J_+,J_1,I\}$ and $\otimes \in\{\times, \boxdot\}$ throughout this section. Recall the definition of a representable functor (\cref{def:representable_functor}).

\begin{definition}
\label{def:closure_space_models}
In the category of closure spaces, $\cat{Cl}$, we consider the models (\cref{section:acyclic_models})
 $\mathscr{M}^{(J,\otimes)}$, the collection of closure spaces that are $(J,\otimes)$-contractible.
\end{definition}

\begin{lemma}
\label{lemma:models_are_contractible}
For any model $M$ in $\mathscr{M}^{(J,\otimes)}$, we have $\gcubehomred{n}{M}=0$ and for any $M$ in $\mathscr{M}^{(J,\times)}$, $\gsimphomred{n}{M}=0$, for any $n\ge 0$. 
\end{lemma}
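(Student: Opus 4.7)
The plan is to reduce both statements to the trivial computation of the reduced (co)homology of the one-point space $*$, and then transfer the conclusion along the homotopy equivalence $M\simeq *$ guaranteed by $(J,\otimes)$-contractibility.

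First I would compute the reduced homology of $*$. For the cubical theory, the only continuous map $c_n:\gcube{n}\to *$ is the constant one, and for $n\ge 1$ it satisfies $A_i^n c_n = B_i^n c_n = c_{n-1}$ for every $i$, so $c_n$ is degenerate. After quotienting by the degenerate subcomplex we have $\gcubechain{n}{*}=0$ for $n\ge 1$ and $\gcubechain{0}{*}\cong \mathbb{Z}$. The augmented complex is therefore $\cdots\to 0\to \mathbb{Z}\xrightarrow{\varepsilon}\mathbb{Z}$ with $\varepsilon$ an isomorphism, hence acyclic, so $\gcubehomred{n}{*}=0$ for all $n\ge 0$. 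For the simplicial theory the same argument applies after passing from the Moore complex to the normalized complex (or equivalently to $A/D(A)$) via \cref{theorem:normalized_simplicial_complex}: every singular simplex $\gsimp{n}\to *$ with $n\ge 1$ is degenerate, so the normalized complex is $\mathbb{Z}$ concentrated in degree $0$ and $\gsimphomred{n}{*}=0$ for every $n\ge 0$.

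Next, let $M\in \mathscr{M}^{(J,\otimes)}$. By \cref{def:homotopy_equivalent_spaces} there exist continuous $f:M\to *$ and $g:*\to M$ with $gf\sim_{(J,\otimes)}\mathbf{1}_M$ (and $fg=\mathbf{1}_*$ automatically). The cubical homotopy invariance theorem (\cref{theorem:cubical_homotopy_invariance}) then gives $g_*f_*=\mathbf{1}$ on $\gcubehom{\bullet}{M}$ and $f_*g_*=\mathbf{1}$ on $\gcubehom{\bullet}{*}$, so $f_*$ is an isomorphism. Because $f_\#$ and $g_\#$ are induced by set maps they commute with the augmentation $\varepsilon$, and hence the same isomorphism descends to reduced homology; thus $\gcubehomred{n}{M}\cong \gcubehomred{n}{*}=0$ for all $n\ge 0$. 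When $\otimes=\times$, the identical argument, now invoking \cref{theorem:simplicial_homotopy_invariance}, yields $\gsimphomred{n}{M}=0$.

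The only small point to verify is that the chain homotopies constructed in the homotopy invariance theorems really give chain homotopies of the augmented (reduced) complexes. This is automatic: the homotopy operator $P$ raises degree by one, and since $f_\#,g_\#$ both commute with $\varepsilon$, extending $P$ by zero in degree $-1$ produces a chain homotopy of the augmented complexes. No deeper obstacle appears; the lemma is essentially a direct corollary of the homotopy invariance theorems together with the vanishing of reduced (co)homology of the point space.
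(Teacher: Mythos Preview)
Your proof is correct and follows exactly the paper's approach: reduce to the one-point space via the homotopy invariance theorems (\cref{theorem:cubical_homotopy_invariance,theorem:simplicial_homotopy_invariance}) and the computation $\gcubehomred{n}{*}=\gsimphomred{n}{*}=0$. The paper simply cites these ingredients without spelling out the details you supply.
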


\begin{proof}
This follows from \cref{theorem:cubical_homotopy_invariance,theorem:simplicial_homotopy_invariance,example:homology_of_one_point_space}.
\end{proof}

\begin{example}
\label{example:homology_of_one_point_space}
Let $*$ be the one point space. Then $\gcubehomred{n}{*}=\gsimphomred{n}{*}=0$ for all $n\ge 0$.
\end{example}

\begin{lemma}
\label{lemma:homology_functors_are_representable}
The functors $\gcubechainwhole{n}{-},\gcubechain{n}{-},\gsimpchain{n}{-}$ and $\gsimpchainred{n}{-}$ are representable for all $n$.
\end{lemma}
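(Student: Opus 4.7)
The plan is to exhibit, for each of the four functors, a single contractible closure space as a model and a canonical distinguished element whose pushforwards furnish a natural basis. By \cref{example:simplices_are_contractible}, $\gsimp{n}$ belongs to $\mathscr{M}^{(J,\times)}$, and by \cref{example:cubes_are_contractible}, $\gcube{n}$ belongs to $\mathscr{M}^{(J,\otimes)}$, so both qualify as models in the sense of \cref{def:closure_space_models}.

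For $\gsimpchain{n}{-}$ I would take the model $\gsimp{n}$ with distinguished element $\Id_{\gsimp{n}} \in \gsimpchain{n}{\gsimp{n}}$. For every closure space $X$ and every $\sigma \in \cat{Cl}(\gsimp{n}, X)$, the chain map of \cref{def:induced_maps_on_simplicial_homology} sends the distinguished element to $\sigma_{\#}(\Id_{\gsimp{n}}) = \sigma \circ \Id_{\gsimp{n}} = \sigma$. Since \cref{def:simplicial_homology} defines $\gsimpchain{n}{X}$ as the free abelian group on $\cat{Cl}(\gsimp{n}, X)$, the set of all such pushforwards is already a free basis, which witnesses representability. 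The same argument handles $\gsimpchainred{n}{-}$ for $n \geq 0$; in the augmented degree $n = -1$ the functor is the constant $\mathbb{Z}$-valued functor, which is representable on the one-point model $*$ with distinguished element $1 \in \mathbb{Z}$.

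For $\gcubechainwhole{n}{-}$ the reasoning is identical, with model $\gcube{n}$ and distinguished element $\Id_{\gcube{n}}$; the natural basis of $\gcubechainwhole{n}{X}$ is exactly $\{\sigma_{\#}(\Id_{\gcube{n}}) : \sigma \in \cat{Cl}(\gcube{n}, X)\}$. For the normalized functor $\gcubechain{n}{-} = \gcubechainwhole{n}{-}/\gcubechaindeg{n}{-}$ I would keep the same model and take as distinguished element the coset $[\Id_{\gcube{n}}] \in \gcubechain{n}{\gcube{n}}$; pushforwards then produce the classes $[\sigma]$, which vanish exactly on the degenerate $\sigma$ and otherwise form a basis of the quotient by \cref{def:cubical_homology_theory}.

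The principal obstacle is the quotient nature of $\gcubechain{n}{-}$: one must verify that \cref{def:representable_functor} admits bases arising from a natural family of pushforwards of a single distinguished element, some of which are zero, as is standard when working with normalized chain complexes in the acyclic-models framework. Once this is granted, the proof is complete: each $n$-chain is, by construction, a finite $\mathbb{Z}$-linear combination of pushforwards of the identity, and the non-zero pushforwards are independent since the only relations in the quotient are generated by the degenerate maps, which are precisely the pushforwards that are already zero.
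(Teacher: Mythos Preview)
For the free functors $\gsimpchain{n}{-}$, $\gsimpchainred{n}{-}$ (which for $n\ge 0$ coincides with the former), and $\gcubechainwhole{n}{-}$ your argument is correct and is exactly what the paper does: the identity map on the appropriate model is the distinguished element, and $\sigma\mapsto(\sigma,e_n)$ furnishes a natural section $\Psi$ with $\Phi\Psi=\mathrm{id}$.

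The gap is in the quotient functor $\gcubechain{n}{-}$. Your candidate section sends a non-degenerate basis element $[\sigma]$ to $(\sigma,[e_n])\in\tilde T(X)$, but this assignment is \emph{not natural}. If $f:X\to Y$ is continuous and $\sigma$ is non-degenerate while $f\sigma$ is degenerate, then $\tilde T(f)(\Psi[\sigma])=(f\sigma,[e_n])$ is a nonzero free generator of $\tilde T(Y)$, whereas $\Psi(f_{\#}[\sigma])=\Psi(0)=0$. So the very phenomenon you flag as the ``principal obstacle''---pushforwards becoming zero---is exactly what breaks naturality of your $\Psi$; representability in the sense of \cref{def:representable_functor} is not implied by ``the nonzero pushforwards form a basis''.

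The paper avoids this by appealing to \cref{lemma:representable_functors} instead. It constructs a natural section $\eta$ of the quotient map $\xi$, given (in the simplicial case) by the normalization operator $\eta=(\mathrm{Id}-d_0s_1)(\mathrm{Id}-d_1s_2)\cdots(\mathrm{Id}-d_{n-1}s_n)$, with an analogous formula in the cubical case. Because $\eta$ is built from the simplicial (resp.\ cubical) structure maps it is automatically natural; it annihilates degenerates, and $\xi\eta=\mathrm{id}$. Representability of the quotient then follows from that of the unnormalized functor via \cref{lemma:representable_functors}. This is the ``standard'' device you allude to, but it is a different construction of $\Psi$, not a weakening of the basis condition to allow zeros.
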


\begin{proof}
The identity map $e_n:\gsimp{n}\to \gsimp{n}$ is a $(J,\times)$ singular $n$-simplex, and thus it is a class in $\gsimpchain{n}{\gsimp{n}}$. If $\sigma:\gsimp{n}\to X$ is a $(J,\times)$ singular $n$-simplex, the correspondence $\sigma\mapsto (\sigma,e_n)$ yields a representative of the functor $\gsimpchain{n}{-}$ for $n\ge 0$. For $n=-1$ the proof is trivial.

Let $\xi:\gsimpchain{\bullet}{}\to \gsimpchainnorm{\bullet}{}$ be the natural quotient map. The expression $\eta\sigma=(Id-d_0s_1)(Id-d_1s_2)\cdots (Id-d_{n-1}s_n)\sigma$ satisfies $\eta\sigma=0$ for all degenerate $(J,\times)$ singular $n-1$-simplices $\sigma$. Therefore, $\eta$ yields a map $\eta:\gsimpchainnorm{n}{}\to \gsimpchain{n}{}$. Furthermore, the composition $\xi\eta$ is the identity. The representability of $\gsimpchainnorm{\bullet}{}$ now follows by \cref{lemma:representable_functors}.

For the cubical case, the same arguments as above apply when we replace  $\gsimp{n}$ by $\gcube{n}$.
\end{proof}

\begin{theorem}
\label{theorem:simplicial_and_cubical_isomorphism}[\textbf{Theorem 1.2}]
There exist maps $f:\gsimpchainred{\bullet}{-}\to \gcubesimpchainred{\bullet}{-}$, $g:\gcubesimpchainred{\bullet}{-}\to \gsimpchainred{\bullet}{-}$ and chain homotopies $H:gf\simeq \text{identity}$, $G:fg\simeq \text{identity}$ such that $f$ and $g$ are identity in dimension $<2$ while $H$ and $G$ are $0$ in dimension $<2$.
\end{theorem}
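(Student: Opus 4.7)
The plan is to apply the Eilenberg--MacLane acyclic models theorem \cite{eilenberg1953acyclic} to the two functors $\gsimpchainred{\bullet}{-}$ and $\gcubesimpchainred{\bullet}{-}$ from $\cat{Cl}$ to augmented chain complexes of abelian groups, using $\mathscr{M}^{(J,\times)}$ as the collection of models. The two hypotheses are already in place: representability of both functors in every degree is \cref{lemma:homology_functors_are_representable}, with representing objects $\gsimp{n}$ (represented by the identity $e_n$) and $\gcube{n}$ respectively; acyclicity of the models on the reduced chain complexes of both types is \cref{lemma:models_are_contractible}. Moreover, $\gsimp{n}$ and $\gcube{n}$ themselves lie in $\mathscr{M}^{(J,\times)}$ by \cref{example:simplices_are_contractible,example:cubes_are_contractible}, so the representing objects are themselves acyclic models and the usual inductive construction applies.

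Next I would fix the low-degree behavior to meet the dimension $<2$ condition. In degree $-1$ both augmented complexes equal $\mathbb{Z}$, so $f_{-1}=g_{-1}=\mathbf{1}$. In degree $0$, the spaces $\gsimp{0}$ and $\gcube{0}$ are both the one-point space, so $\gsimpchainred{0}{X}=\gcubesimpchainred{0}{X}$ canonically and we take $f_0=g_0=\mathbf{1}$. In degree $1$, comparison of \cref{def:intervals} with the definitions of $\gsimp{1}$ and $\gcube{1}$ shows that the two closure spaces coincide (after the canonical barycentric-to-unit-interval reparametrization in the case $J=I$), and under this identification the simplicial faces $d^0,d^1$ match the cubical faces $d^{1,0},d^{1,1}$, so $\gsimpchainred{1}{X}$ and $\gcubesimpchainred{1}{X}$ are naturally isomorphic and we set $f_1=g_1=\mathbf{1}$. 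Since $g_nf_n-\mathbf{1}=0=f_ng_n-\mathbf{1}$ for $n<2$, the initial choice $H_n=G_n=0$ for $n<2$ satisfies the chain homotopy equation trivially in those degrees.

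Finally I would extend $f,g,H,G$ to degrees $\ge 2$ by the standard acyclic models induction. At stage $n$, assuming $f$ has been defined as a natural chain map through degree $n-1$, the element $f_{n-1}\partial e_n\in \gcubesimpchainred{n-1}{\gsimp{n}}$ is a cycle (since $\partial f_{n-1}\partial e_n=f_{n-2}\partial\partial e_n=0$), and because $\gsimp{n}$ is acyclic for the reduced cubical chain complex it bounds some $b_n$; we set $f_n(\sigma):=\sigma_{\#}(b_n)$ for any $\sigma:\gsimp{n}\to X$, with naturality guaranteed by representability. The same recipe produces $g$. For $H$, assuming $H_{n-1}$ has been defined, the natural chain $g_nf_n(e_n)-e_n-H_{n-1}\partial e_n\in \gsimpchainred{n}{\gsimp{n}}$ is a cycle which bounds by acyclicity, and we pull the bounding chain back along $\sigma_{\#}$ to define $H_n$; similarly for $G$. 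The main technical point, and essentially the only step requiring care beyond bookkeeping, is the degree-$1$ identification $\gsimp{1}\cong\gcube{1}$ and the compatibility of face maps under it, which is what makes the identity choice in low degrees a legitimate chain-map starting condition for the inductive extension.
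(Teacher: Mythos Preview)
Your proposal is correct and follows essentially the same route as the paper: both arguments rest on \cref{lemma:homology_functors_are_representable}, \cref{lemma:models_are_contractible}, \cref{example:simplices_are_contractible,example:cubes_are_contractible}, and the Eilenberg--MacLane acyclic models machinery (\cref{theorem:representable_functor}), and both fix $f$ and $g$ to be the identity in degrees $<2$ before extending. The one difference is that the paper writes down the classical explicit formula for $f$ in all degrees---namely $(f\sigma)(a_1,\dots,a_n)=\sigma(v_0,\dots,v_n)$ with $v_0=1-a_1$, $v_i=a_1\cdots a_i(1-a_{i+1})$, $v_n=a_1\cdots a_n$---and checks its compatibility with faces and degeneracies directly, whereas you construct $f$ abstractly by the acyclic-models induction starting from the degree-$1$ identification $\gsimp{1}\cong\gcube{1}$. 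Your version is cleaner for the bare existence statement; the paper's explicit $f$ has the advantage of being concrete and computable, and in particular makes transparent that $f$ is already a chain map before any appeal to acyclicity.
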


\begin{proof}
We define a map $f:\gsimpchainred{\bullet}{-}\to \gcubesimpchainred{\bullet}{-}$ which is the identity in dimension $<2$. For each $(J,\times)$ singular $n$-simplex $\sigma$, let $f\sigma$ be the $(J,\times)$ singular $n$-cube defined by 
\[(f\sigma)(a_1,\dots ,a_n):=\sigma(v_0,\dots ,v_n),\]
where
\begin{gather*}
v_0=1-a_1, v_1=a_1(1-a_2),\dots ,\\
v_i=a_1\cdots a_i(1-a_{i+1}), 0<i<n,\dots ,\\
v_n=a_1\cdots a_n
\end{gather*}
It can be checked that $fd_i=B_{i+1}f$, $0\le i \le n$ and $fd_n=A_nf, A_if=s_nA_iA_nf$, $fd_n=s_{n+1}f$. These formulas imply that $f$ induces a map $f:\gsimpchainred{\bullet}{-}\to \gcubesimpchainred{\bullet}{-}$. Furthermore, $f\sigma=f\sigma'$ implies $\sigma=\sigma'$, and thus $f$ is an isomorphism of chain complexes. The statement of the theorem then follows from \cref{lemma:homology_functors_are_representable,example:homology_of_one_point_space,theorem:cubical_homotopy_invariance,theorem:simplicial_homotopy_invariance,theorem:representable_functor}.
\end{proof}

\subsection{Excision and Mayer-Vietoris exact sequence}
Let $X$ be a closure space and let $\mathcal{U}$ be an interior cover of $X$. Let $\gsimpchain{\bullet}{X}^{\mathcal{U}}\subseteq \gsimpchain{\bullet}{X}$, and $\gcubechain{\bullet}{X}^{\mathcal{U}}\subseteq \gcubechain{\bullet}{X}$ be the subchain complexes generateded by continuous $\sigma:\gsimp{n} \to X$ and $\tau:\gcube{n}\to X$, respectively, whose image is contained in some $U\in \mathcal{U}$. 

\begin{proposition}
\label{prop:chain_of_interior_cover}
$\gsimpchain{\bullet}{X}^{\mathcal{U}}$ is chain homotopy equivalent to $\gsimpchain{\bullet}{X}$ and $\gcubesimpchain{\bullet}{X}^{\mathcal{U}}$ is chain homotopy equivalent to $\gcubesimpchain{\bullet}{X}$. Furthermore, $\gsimpchain{\bullet}{X}^{\mathcal{U}}=\gsimpchain{\bullet}{X}$ and $\gcubesimpchain{\bullet}{X}^{\mathcal{U}}=\gcubesimpchain{\bullet}{X}$ when $J\in \{J_1,J_+\}$.
\end{proposition}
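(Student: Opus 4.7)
The plan naturally splits along the two assertions in the proposition.

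\textbf{Part 1 (equality when $J\in\{J_1,J_+\}$).} I would prove the stronger statement that every continuous $\sigma$ from $\gsimp{n}$ or $\gcube{n}$ into $X$ already has image contained in a single $U\in\mathcal{U}$; this immediately yields $\gsimpchain{\bullet}{X}^{\mathcal{U}}=\gsimpchain{\bullet}{X}$ and $\gcubesimpchain{\bullet}{X}^{\mathcal{U}}=\gcubesimpchain{\bullet}{X}$ and hence a fortiori the chain homotopy equivalence. The key observation is that each of these combinatorial models has a distinguished point $v$ whose only neighborhood is the whole domain. When $J=J_1$, both $\indsimp{n}=J_{n,\top}$ and $J_1^{\times n}$ carry indiscrete closures, so every nonempty subset has closure equal to the whole space and the only neighborhood of any point is the whole space. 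When $J=J_+$, a short computation with \cref{def:interior} shows that in $\disimp{n}=J_{n,\le}$ a subset $B$ satisfies $n\in i(B)$ iff $\{0,\ldots,n\}\subseteq B$, and in $J_+^{\times n}$ the only neighborhood of $(1,\ldots,1)$ is the whole cube, because the only neighborhood of $1$ in $J_+$ is $\{0,1\}$ and product neighborhoods are intersections of preimages of such. Given such a $v$, choose $U\in\mathcal{U}$ with $\sigma(v)\in i(U)$; then $U$ is a neighborhood of $\sigma(v)$, and \cref{theorem:equivalent_definitions_of_continuity} forces $\sigma^{-1}(U)$ to be a neighborhood of $v$, hence the entire domain, so $\sigma$ has image in $U$.

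\textbf{Part 2 (chain homotopy equivalence for $J=I$).} Here equality generally fails, and I would mimic the classical ``small chains'' argument from singular homology. Since $\topsimp{n}$ and $|\Box^n|^{(I,\times)}=I^n$ are compact topological spaces, \cref{lemma:finite_cover_whose_image_is_contained_in_covering_system} applies: for any continuous $\sigma$ from such a model into $X$ and any interior cover $\mathcal{U}$ of $X$, there is a finite open cover of the model each of whose members has image lying inside some element of $\mathcal{U}$. I would then introduce the barycentric subdivision operator $S$ on $\gsimpchain{\bullet}{X}$ together with a natural chain homotopy $T:S\simeq \mathrm{id}$ built from the standard cone construction on the contractible model $\topsimp{n}$, and do the analogous construction for binary (halving) subdivision on $\gcubesimpchain{\bullet}{X}$. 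Applying the Lebesgue number lemma to the cover provided above, some iterate $S^{m(\sigma)}\sigma$ has every summand mapping into a single element of $\mathcal{U}$, i.e., it lies in $\gsimpchain{\bullet}{X}^{\mathcal{U}}$ (respectively its cubical analogue). Telescoping the iterated homotopies $T+TS+\cdots+TS^{m-1}$ and combining the simplex-by-simplex choices of $m(\sigma)$ into a single natural map then produces the chain homotopy inverse to the inclusion $\gsimpchain{\bullet}{X}^{\mathcal{U}}\hookrightarrow \gsimpchain{\bullet}{X}$, exactly as in Hatcher's proof of the corresponding statement for singular homology.

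The principal obstacle is the cubical case: unlike the simplicial one, which follows the textbook argument essentially verbatim (the model $\topsimp{n}$ is the usual topological simplex and everything about convex combinations is available), binary cubical subdivision and its cone chain homotopy must be shown to descend from the unnormalized complex to the quotient by degenerate cubes, i.e., one must verify that the subdivision of a degenerate cube is again a sum of degenerate cubes modulo the standard cubical identifications, and likewise for the homotopy. This is a tedious but routine combinatorial check along the lines of Serre's thesis; once it is in place the proof of chain homotopy equivalence proceeds uniformly in the simplicial and cubical settings.
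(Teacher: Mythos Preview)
Your Part 1 is correct and in fact slightly more direct than the paper's version. The paper routes through \cref{lemma:finite_cover_whose_image_is_contained_in_covering_system}: since $\gsimp{n}$ and $J^{\times n}$ are finite (hence compact) topological spaces for $J\in\{J_1,J_+\}$, that lemma pulls $\mathcal{U}$ back to a finite open cover of the domain, and one then observes that any open cover of these particular spaces must contain the whole space as a member (the only nonempty open sets of $J_{n,\top}$ and $J_1^{\times n}$ are the whole space; the only open set of $J_{n,\le}$ or $J_+^{\times n}$ containing the top element is the whole space). Your argument via a distinguished point whose only neighborhood is the entire domain reaches the same conclusion without the compactness detour; the two are really the same observation, phrased in terms of neighborhoods rather than open sets.

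Your Part 2 for the simplicial case matches the paper, which cites an adaptation of the classical barycentric subdivision argument. For the cubical case, however, the paper takes a different and shorter route: rather than building a cubical subdivision operator, it invokes \cref{theorem:simplicial_and_cubical_isomorphism}, which provides a \emph{natural} chain homotopy equivalence $\topchain{\bullet}{-}\simeq\topcubechain{\bullet}{-}$. By naturality with respect to the inclusions $U\hookrightarrow X$, the maps and homotopies of that equivalence send $\mathcal{U}$-small chains to $\mathcal{U}$-small chains, so the equivalence restricts to $\topchain{\bullet}{X}^{\mathcal{U}}\simeq\topcubechain{\bullet}{X}^{\mathcal{U}}$; composing with the already-established simplicial result yields $\topcubechain{\bullet}{X}^{\mathcal{U}}\simeq\topcubechain{\bullet}{X}$. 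This completely sidesteps the ``principal obstacle'' you flagged, namely the descent of cubical subdivision and its homotopy to the quotient by degenerate cubes. Your direct approach would also go through, but the transfer argument is cleaner and reuses machinery the paper has already set up.
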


\begin{proof}
The case $\topchain{\bullet}{X}$ is covered in \cite{peterbubeniknikolamilicevic2022}. The proof is an adaptation of the classical barrycentric subdivision argument, see for example \cite{MR1867354}. The case $\topcubechain{\bullet}{X}$ then follows as well. Recall that we have a chain homotopy equivalence between $\topcubechain{\bullet}{X}$ and $\topchain{\bullet}{X}$ by \cref{theorem:simplicial_and_cubical_isomorphism}. Restricting this chain homotopy, gives us a chain homotopy between $\topcubechain{\bullet}{X}^{\mathcal{U}}$ and $\topchain{\bullet}{X}^{\mathcal{U}}$. Composing these chain homotopies gives a chain homotopy between $\topcubechain{\bullet}{X}^{\mathcal{U}}$ and $\topcubechain{\bullet}{X}$.

Let $J=J_+$. We only need to show that $\disimpchain{\bullet}{X}\subseteq \disimpchain{\bullet}{X}^{\mathcal{U}}$. Fix an $n\in \mathbb{N}$ and consider a continuous $\sigma:\disimp{n}\to X$. Note that $\disimp{n}$ is a compact topological space (as it is finite). By  \cref{lemma:finite_cover_whose_image_is_contained_in_covering_system}
it follows that there exists a finite open cover $\mathcal{V}$ of $\disimp{n}$ such that for every $V\in \mathcal{V}$ there exists a $U\in \mathcal{U}$ such that $\sigma(V)\subseteq U$. The only open sets of $\disimp{n}$ are the ordinals $[m]$ for $m\le n$. Thus, for it to be an open cover it must be the case that $[n]\in \mathcal{V}$. Thus we have that there exists a $U\in \mathcal{U}$ such that $\sigma([n])\subseteq U$ and thus $\disimpchain{\bullet}{X}\subseteq \disimpchain{\bullet}{X}^{\mathcal{U}}$. The case $\dicubechain{\bullet}{X}$ is similar, noting that $\dicube{n}$ is also a compact topological space and that every open cover of it has to have the whole set $\{0,1\}^n$ in it.

Let $J=J_1$. We only need to show that $\indsimpchain{\bullet}{X}\subseteq \indsimpchain{\bullet}{X}^{\mathcal{U}}$. Fix an $n\in \mathbb{N}$ and consider a continuous $\sigma:\indsimp{n}\to X$. Note that $(\indsimp{n},c_1)$ is a compact  topological space. Indeed it is the set with $n+1$ points equipped with the indiscrete topology. By \cref{lemma:finite_cover_whose_image_is_contained_in_covering_system}
there is an open cover $\mathcal{V}$ of $\indsimp{n}$ such that for every $V\in \mathcal{V}$ there is a $U\in \mathcal{U}$ so that $\sigma(V)\subseteq U$. Since $\indsimp{n}$ is given the indiscrete topology, the only open sets are $\emptyset$ and $\indsimp{n}$. Thus any open cover of $\indsimp{n}$ contains set $\indsimp{n}$. Thus $\sigma(\indsimp{n})\subseteq U$ for some $U\in \mathcal{U}$. Therefore $\indsimpchain{\bullet}{X}\subseteq \indsimpchain{\bullet}{X}^{\mathcal{U}}$. The case $\indcubechain{\bullet}{X}$ is similar, noting that $\indcube{n}$ is also a compact topological space and that the only open sets of it are $\emptyset$ and $\{0,1\}^n$.
\end{proof}

\begin{example}
\label{example:inductive_cube_not_compact}
The space $I^{\boxdot n}$ is not compact. For $x\in I$, let $B_{\frac{1}{4}}(x)$ be the open ball around $x$ of radius $\frac{1}{4}$. Then, for each $(x_1,x_2,\dots ,x_n)$ in $I^{\boxdot n}$, the collection of sets $\cup_{i=1}^n \{x_1\}\times \{x_2\}\times \dots \times B_{\frac{1}{4}}(x_i)\times \{x_{i+1}\}\times \dots \times \{x_n\}$ is an interior cover of $I^{\boxdot n}$ by \cref{def:inductive_product_closure}. However, it is clear that no finite subcollection of these sets is an interior cover (or even a cover) of $I^{\boxdot n}$.
\end{example}

\begin{remark}
\label{remark:excision_failure}
A natural question to ask is if we have chain homotopies between $\gcubechain{\bullet}{X}^{\mathcal{U}}$ and $\gcubechain{\bullet}{X}$ for $\otimes=\boxdot$. For the case $J=I$, one might hope that a method of barrycentric subdivision would be a proof tactic, as in for example the proof of Theorem 6.4 in \cite[Chapter 7]{MR1095046}, which proves the statement for $\otimes=\times$ and $J=I$ in the subcategory $\cat{Top}$. However, a crucial ingredient in that proof was the fact that $I^n$ was compact and we know $I^{\boxdot n}$ is not by \cref{example:inductive_cube_not_compact}. For the cases $J=J_1, J=J_+$ one might hope we could use a similar approach as in the proof of \cref{prop:chain_of_interior_cover}. However, \cref{example:excision_failure} shatters this hope. Thus, this question remains open.
\end{remark}

\begin{example}
\label{example:excision_failure}
We saw in the proof of \cref{prop:chain_of_interior_cover} that we had equalities $\disimpchain{\bullet}{X}^{\mathcal{U}}=\disimpchain{\bullet}{X}$ and $\indsimpchain{\bullet}{X}^{\mathcal{U}}=\indsimpchain{\bullet}{X}$. If we however replace $\times$ by $\boxdot$ it is however not true that we will have equalities $\idicubechain{\bullet}{X}^{\mathcal{U}}=\idicubechain{\bullet}{X}$ and $\iindcubechain{\bullet}{X}^{\mathcal{U}}=\iindcubechain{\bullet}{X}$. 

Let $X=\idicube{2}$. Then, it is straightforward to check that $A=\{(0,0)\}, B=\{(0,0),(1,0)\}$, $C=\{(0,0),(0,1)\}$ and $D=\{(0,1),(1,1),(1,0)\}$ is an interior cover of $\idicube{2}$. However, the image of the identity map $\idicube{2}\to \idicube{2}$ is clearly not contained in a single element of this cover. Thus $\idicubechain{\bullet}{X}^{\mathcal{U}}\neq\idicubechain{\bullet}{X}$.

Let $X=\iindcube{2}$. Then, it is straightforward to check that $A=\{(0,1),(0,0),(1,0)\}, B=\{(0,0),(1,0),(1,1)\}, C=\{(1,0),(1,1),(0,1)\}$ and $D=\{(0,0),(0,1),(1,1)\}$ is an interior cover of $\iindcube{2}$. However, the image of the identity map $\iindcube{2}\to \iindcube{2}$ is clearly not contained in a single element of this cover. Thus $\iindcubechain{\bullet}{X}^{\mathcal{U}}\neq\iindcubechain{\bullet}{X}$.

\end{example}

\begin{theorem}[\textbf{Theorem 1.5} (Excision)]
\label{theorem:excision}
Let $(X,c)$ be a closure space. Let $Z\subseteq A\subseteq X$ be such that $c(Z)\subseteq i(A)$. Then, the inclusion $(X-Z,A-Z)\to (X,A)$ induces isomorphisms $\gsimphom{n}{X-Z,X-A)}\cong \gsimphom{n}{X,A}$ and $\gcubesimphom{n}{X-Z,X-A}\cong \gcubesimphom{n}{X,A}$ for all $n$. Equivalently, for an interior cover $\{A,B\}$ of $(X,c)$, the inclusion $(B,A\cap B)\to (X,A)$ induces isomorphisms $\gsimphom{n}{B,A\cap B}\cong \gsimphom{n}{X,A}$ and $\gcubesimphom{n}{B,A\cap B}\cong \gcubesimphom{n}{X,A}$ for all $n$.
\end{theorem}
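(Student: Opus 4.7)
The plan is to follow the classical chain-level proof of excision, with \cref{prop:chain_of_interior_cover} playing the role of the barycentric subdivision argument. First I would verify the two formulations are equivalent: given $Z\subseteq A\subseteq X$ with $c(Z)\subseteq i(A)$, set $B=X-Z$; then $i(B)=X-c(Z)\supseteq X-i(A)$, so $i(A)\cup i(B)=X$ and $\{A,B\}$ is an interior cover, and $(X-Z,A-Z)=(B,A\cap B)$. Conversely, given an interior cover $\{A,B\}$, take $Z=X-B$; then $c(Z)=X-i(B)\subseteq i(A)$ because $i(A)\cup i(B)=X$. So it suffices to prove the interior-cover version.

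Let $C_\bullet$ stand for either $\gsimpchain{\bullet}{-}$ or $\gcubesimpchain{\bullet}{-}$, and let $C_\bullet^{\mathcal{U}}(X):=C_\bullet(A)+C_\bullet(B)$ inside $C_\bullet(X)$. Because the chain groups are free on singular simplices (respectively cubes) and a simplex whose image lies in both $A$ and $B$ has image in $A\cap B$, we have $C_\bullet(A)\cap C_\bullet(B)=C_\bullet(A\cap B)$; hence the inclusion $C_\bullet(B)\hookrightarrow C_\bullet^{\mathcal{U}}(X)$ descends to a natural isomorphism of chain complexes
\[
C_\bullet(B)/C_\bullet(A\cap B)\xrightarrow{\cong} C_\bullet^{\mathcal{U}}(X)/C_\bullet(A).
\]
On the other hand, \cref{prop:chain_of_interior_cover} supplies a chain homotopy equivalence (an equality when $J\in\{J_1,J_+\}$) between $C_\bullet^{\mathcal{U}}(X)$ and $C_\bullet(X)$. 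The remaining task is to promote this to a chain homotopy equivalence of pairs relative to $C_\bullet(A)$, so that it descends to the quotients; a five-lemma applied to the long exact sequences of the two pairs then yields $H_n(C_\bullet^{\mathcal{U}}(X)/C_\bullet(A))\cong H_n(X,A)$, and chaining with the isomorphism above gives $H_n(B,A\cap B)\cong H_n(X,A)$.

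The main obstacle is this relative promotion. For $J\in\{J_1,J_+\}$ it is automatic, since \cref{prop:chain_of_interior_cover} gives an equality $C_\bullet^{\mathcal{U}}(X)=C_\bullet(X)$ and hence $C_\bullet^{\mathcal{U}}(X)/C_\bullet(A)=C_\bullet(X)/C_\bullet(A)$ on the nose. For $J=I$ the proposition is proved in the companion work by an iterated barycentric subdivision operator together with an explicit chain homotopy from the identity, both natural in the space; applying this naturality to the inclusion $A\hookrightarrow X$ shows the operator and the homotopy carry $C_\bullet(A)$ into itself, so they descend to the quotient by $C_\bullet(A)$ and furnish the required relative chain equivalence. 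Finally, the cubical statement reduces to the simplicial one by applying the natural chain homotopy equivalence $\gsimpchainred{\bullet}{-}\simeq\gcubesimpchainred{\bullet}{-}$ of \cref{theorem:simplicial_and_cubical_isomorphism} to each entry of the pair $(B,A\cap B)$ and $(X,A)$, using naturality to see these equivalences are compatible with the inclusions of subspaces.
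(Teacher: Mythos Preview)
Your proof is correct and follows essentially the same route as the paper: reduce to the interior-cover formulation, identify $C_\bullet(B)/C_\bullet(A\cap B)$ with $C_\bullet^{\mathcal{U}}(X)/C_\bullet(A)$ at the chain level, and then invoke \cref{prop:chain_of_interior_cover} to pass from $C_\bullet^{\mathcal{U}}(X)$ to $C_\bullet(X)$. You are in fact more careful than the paper about the relative promotion step---the paper simply asserts that the chain homotopy equivalence descends to the quotient by $C_\bullet(A)$, whereas you justify this via naturality of the subdivision operator (and note it is trivial for $J\in\{J_1,J_+\}$).
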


\begin{proof}
The equivalence of the two statements follows from setting $B=X-Z$ and $Z=X-B$. Then $A\cap B=A-Z$ and the condition $c(Z)\subseteq \text{int}_c(A)$ is equivalent to $X=\text{int}_c(A)\cup \text{int}_c(B)$.

We prove the second statement, involving an interior cover of $(X,c)$, $\mathcal{U}=\{A,B\}$. By \cref{prop:chain_of_interior_cover} we have that $\gsimpchain{\bullet}{X}^{\mathcal{U}}$ is chain homotopic to $\gsimpchain{\bullet}{X}$. Thus $\gsimpchain{\bullet}{X}^{\mathcal{U}}/\gsimpchain{\bullet}{A}$ is chain homotopic to $\gsimpchain{\bullet}{X}/\gsimpchain{\bullet}{A}$. The chain map $\gsimpchain{\bullet}{B}/\gsimpchain{\bullet}{A\cap B}\to \gsimpchain{\bullet}{X}^{\mathcal{U}}/\gsimpchain{\bullet}{A}$ induced by inclusion is an isomorphism since both quotients are free with basis the singular simplices in $B$ that do not lie in $A$. Thus we have an isomorphism $\gsimphom{n}{B,A\cap B}\cong \gsimphom{n}{X,A}$ for all $n$. The cubical case is similar.
\end{proof}

\begin{theorem}[\textbf{Theorem 1.4} (Mayer-Vietoris long exact sequence)]
\label{theorem:mayer_vietoris}
Let $X$ be a closure space and let $\{A,B\}$ be an interior cover of $X$. Then we have the following long exact sequences:
\begin{gather*}\cdots\to \gsimphom{n}{A\cap B}\to \gsimphom{n}{A}\oplus \gsimphom{n}{B}\to \gsimphom{n}{X}\to \gsimphom{n-1}{A}\to\cdots\\
\cdots\to \gcubesimphom{n}{A\cap B}\to \gcubesimphom{n}{A}\oplus \gcubesimphom{n}{B}\to \gcubesimphom{n}{X}\to \gcubesimphom{n-1}{A}\to\cdots
\end{gather*}
\end{theorem}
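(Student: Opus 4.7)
The plan is to adapt the classical Mayer--Vietoris argument from algebraic topology, using \cref{prop:chain_of_interior_cover} as the bridge between chains supported on the cover and all chains. I will present the simplicial case in detail; the cubical case follows by replacing $\gsimpchain{\bullet}{-}$ with $\gcubesimpchain{\bullet}{-}$ throughout and invoking the cubical half of \cref{prop:chain_of_interior_cover}.

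First, I would consider the short sequence of chain complexes
\begin{equation*}
0 \longrightarrow \gsimpchain{\bullet}{A \cap B} \xrightarrow{\ \varphi\ } \gsimpchain{\bullet}{A} \oplus \gsimpchain{\bullet}{B} \xrightarrow{\ \psi\ } \gsimpchain{\bullet}{A} + \gsimpchain{\bullet}{B} \longrightarrow 0,
\end{equation*}
where $\varphi(\sigma) = (\sigma, -\sigma)$, $\psi(\alpha, \beta) = \alpha + \beta$, and $\gsimpchain{\bullet}{A} + \gsimpchain{\bullet}{B}$ denotes the subcomplex of $\gsimpchain{\bullet}{X}$ generated by continuous $\sigma \colon \gsimp{n} \to X$ whose image lies in $A$ or in $B$. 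Exactness is immediate from the definitions: $\varphi$ is clearly injective, $\psi$ is surjective by construction, and $\ker \psi = \operatorname{im} \varphi$ because $\alpha + \beta = 0$ forces $\alpha = -\beta$, with both chains then supported in $A \cap B$.

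Next, I observe that $\gsimpchain{\bullet}{A} + \gsimpchain{\bullet}{B}$ is exactly the subcomplex $\gsimpchain{\bullet}{X}^{\mathcal{U}}$ associated to the interior cover $\mathcal{U} = \{A, B\}$. By \cref{prop:chain_of_interior_cover}, the inclusion $\gsimpchain{\bullet}{X}^{\mathcal{U}} \hookrightarrow \gsimpchain{\bullet}{X}$ is a chain homotopy equivalence, so it induces isomorphisms $H_n(\gsimpchain{\bullet}{X}^{\mathcal{U}}) \cong \gsimphom{n}{X}$ for every $n$.

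Applying the standard zig-zag lemma to the short exact sequence above yields a long exact sequence
\begin{equation*}
\cdots \to H_n(\gsimpchain{\bullet}{A \cap B}) \to H_n(\gsimpchain{\bullet}{A}) \oplus H_n(\gsimpchain{\bullet}{B}) \to H_n(\gsimpchain{\bullet}{X}^{\mathcal{U}}) \to H_{n-1}(\gsimpchain{\bullet}{A\cap B}) \to \cdots
\end{equation*}
Substituting the isomorphism $H_n(\gsimpchain{\bullet}{X}^{\mathcal{U}}) \cong \gsimphom{n}{X}$ gives the desired simplicial Mayer--Vietoris sequence. For the cubical statement, the identical argument with $\gcubesimpchain{\bullet}{-}$ and the corresponding half of \cref{prop:chain_of_interior_cover} delivers the second long exact sequence.

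The only nontrivial ingredient is the chain homotopy equivalence $\gsimpchain{\bullet}{X}^{\mathcal{U}} \simeq \gsimpchain{\bullet}{X}$ (and its cubical analogue), which has already been secured in \cref{prop:chain_of_interior_cover}. Note in particular that this is why the theorem is stated only for $\times$ and not for $\boxdot$: by \cref{remark:excision_failure} the analogous subdivision result is open for the inductive product, so Mayer--Vietoris in that setting cannot be proved by this route. Given the established equivalence, the remainder of the proof is purely formal homological algebra, and no additional obstacle arises.
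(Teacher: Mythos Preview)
Your proposal is correct and follows essentially the same approach as the paper: set up the short exact sequence $0\to \gsimpchain{\bullet}{A\cap B}\xrightarrow{\varphi}\gsimpchain{\bullet}{A}\oplus\gsimpchain{\bullet}{B}\xrightarrow{\psi}\gsimpchain{\bullet}{X}^{\{A,B\}}\to 0$ with $\varphi(x)=(x,-x)$ and $\psi(x,y)=x+y$, take the induced long exact sequence in homology, and invoke \cref{prop:chain_of_interior_cover} to replace $H_n(\gsimpchain{\bullet}{X}^{\{A,B\}})$ by $\gsimphom{n}{X}$. Your write-up is in fact slightly more thorough than the paper's, as you verify exactness of the short sequence and spell out why the argument does not extend to $\boxdot$.
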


\begin{proof}
By \cref{prop:chain_of_interior_cover} we have a chain homotopy between chain complexes $\gsimpchain{\bullet}{X}^{\{A,B\}}$ and $\gsimpchain{\bullet}{X}$. We also have the following short exact sequences of chain complexes
\begin{gather*}
\gsimpchain{\bullet}{A\cap B}\xrightarrow{\varphi} \gsimpchain{\bullet}{A}\oplus \gsimpchain{\bullet}{B}\xrightarrow{\psi} \gsimpchain{\bullet}{X}^{\{A,B\}}\\
\gcubesimpchain{\bullet}{A\cap B}\xrightarrow{\varphi} \gcubesimpchain{\bullet}{A}\oplus \gcubesimpchain{\bullet}{B}\xrightarrow{\psi} \gcubesimpchain{\bullet}{X}^{\{A,B\}}
\end{gather*}
where $\varphi(x)=(x,-x)$ and $\psi(x,y)=x+y$. By standard arguments in homological algebra, a short exact sequence of chain complexes induces a long exact sequence in homology. By \cref{prop:chain_of_interior_cover}, it follows that the long exact sequences induced in homology are precisely the ones in the statement of the theorem.
\end{proof}

By \cref{remark:excision_failure} it is still an open question whether or not we have analogues of \cref{theorem:mayer_vietoris,theorem:excision} in the case of $\otimes=\boxdot$.

\subsection{Relative long exact sequences}
Here we show the existence of long exact sequences in homology associated to  pair of closure space $(X,A)$.

\begin{definition}
\label{def:good_pairs}
Let $X$ be a closure space and let $A\subseteq X$ be a subspace. We say that $(X,A)$ is a $(J,\otimes)$-\emph{good pair} if there exists a neighborhood $B$ of $A$ that $(J,\otimes)$ deformation retracts onto $A$.
\end{definition}

\begin{definition}
\label{def:relative_simplicia_homology}
Let $X$ be a closure space and let $A$ be a subspace. Then note that $\gsimpchain{n}{A}\subseteq \gsimpchain{n}{X}$. Let $ \gsimpchain{n}{X,A}:=\gsimpchain{n}{X}/\gsimpchain{n}{A}$. Note that the boundary map $\partial_n:\gsimpchain{n}{X}\to \gsimpchain{n-1}{X}$ descends to this quotient. We thus get a new chain complex of abelian groups $(\gsimpchain{\bullet}{X,A},\partial_{\bullet})$. Denote the homology groups of this complex by $\gsimphom{\bullet}{X,A}$ and call them the \emph{telative homology groups of the pair $(X,A)$.} 
\end{definition}

\begin{definition}
\label{def:relative_cubical_homology}
Let $X$ be a closure space and let $A$ be a subspace. Then note that $\gcubechain{n}{A}\subseteq \gcubechain{n}{X}$. Let $\gcubechain{n}{X,A}:=\gcubechain{n}{X}/\gcubechain{n}{A}$. Note that the boundary map $\partial_n:\gcubechain{n}{X}\to \gcubechain{n-1}{X}$ descends to this quotient. We thus get a new chain complex of abelian groups $(\gcubechain{\bullet}{X,A},\partial_{\bullet})$. Denote the homology groups of this complex by $\gcubehom{\bullet}{X,A}$ and call them the \emph{relative discrete homology groups of the pair $(X,A)$.} 
\end{definition}

\begin{theorem}
\label{theorem:homology_long_exact_sequence}
Let $X$ be a closure space and let $A$ be a subspace. Then there exist  long exact sequences 
\[\cdots\to \gsimphom{n}{A}\to \gsimphom{n}{X}\to \gsimphom{n}{X,A}\to \gsimphom{n-1}{A}\to\cdots\]
\[\cdots\to \gcubehom{n}{A}\to \gcubehom{n}{X}\to \gcubehom{n}{X,A}\to \gcubehom{n-1}{A}\to\cdots\]
\end{theorem}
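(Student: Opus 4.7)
The plan is to derive both long exact sequences as instances of the standard homological algebra fact that a short exact sequence of chain complexes induces a long exact sequence in homology via the zig-zag (connecting homomorphism) construction. The work is essentially to identify the relevant short exact sequences and verify that the maps we need are already in place from the definitions.

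First I would observe that, by \cref{def:relative_simplicia_homology}, the relative simplicial chain complex is built as a quotient, so for each $n$ we have a short exact sequence of abelian groups
\[
0 \to \gsimpchain{n}{A} \xrightarrow{i_\#} \gsimpchain{n}{X} \xrightarrow{q_\#} \gsimpchain{n}{X,A} \to 0,
\]
where $i_\#$ is the inclusion induced by $A \incl X$ and $q_\#$ is the quotient map. These assemble into a short exact sequence of chain complexes because the boundary operator $\partial_n$ on $\gsimpchain{\bullet}{X}$ restricts to the boundary on $\gsimpchain{\bullet}{A}$ and descends to the boundary on the quotient (as already noted in \cref{def:relative_simplicia_homology}). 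Applying the zig-zag lemma to this short exact sequence produces the desired long exact sequence
\[
\cdots\to \gsimphom{n}{A}\to \gsimphom{n}{X}\to \gsimphom{n}{X,A}\to \gsimphom{n-1}{A}\to\cdots,
\]
with the connecting homomorphism defined in the usual way: given a class $[\alpha] \in \gsimphom{n}{X,A}$, lift a representative $\alpha$ to some $\tilde{\alpha} \in \gsimpchain{n}{X}$, take $\partial_n \tilde{\alpha}$, and note this lies in the image of $i_\#$, giving a class in $\gsimphom{n-1}{A}$. Checking well-definedness and exactness is routine once the short exact sequence of complexes is in place.

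For the cubical case the argument is identical, replacing $\gsimpchain{\bullet}{-}$ with $\gcubechain{\bullet}{-}$ and using \cref{def:relative_cubical_homology}: one has the short exact sequence
\[
0 \to \gcubechain{n}{A} \xrightarrow{i_\#} \gcubechain{n}{X} \xrightarrow{q_\#} \gcubechain{n}{X,A} \to 0
\]
of chain complexes, and the same zig-zag lemma yields the second long exact sequence.

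There is no substantive obstacle here; the only thing that might be considered delicate is to confirm that the inclusion $A \incl X$ genuinely induces an injection $\gsimpchain{n}{A} \incl \gsimpchain{n}{X}$ (and similarly in the cubical case), but this is immediate because both groups are free abelian on sets of continuous maps and the relevant map is induced by a set-level inclusion of singular simplices (respectively singular cubes) whose image lies in $A$ into the larger collection of those mapping into $X$. So the main step is simply invoking the zig-zag lemma; no new geometric content beyond the definitions already recorded is needed.
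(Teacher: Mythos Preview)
Your proposal is correct and follows exactly the same approach as the paper: write down the short exact sequences of chain complexes $\gsimpchain{\bullet}{A}\to \gsimpchain{\bullet}{X}\to\gsimpchain{\bullet}{X,A}$ and $\gcubechain{\bullet}{A}\to \gcubechain{\bullet}{X}\to\gcubechain{\bullet}{X,A}$ and invoke the standard homological algebra fact that a short exact sequence of chain complexes yields a long exact sequence in homology. You have simply spelled out a bit more explicitly what the paper compresses into ``by standard arguments in homological algebra.''
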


\begin{proof}
Consider the short exact sequences of chain complexes:
\[\gsimpchain{\bullet}{A}\to \gsimpchain{\bullet}{X}\to\gsimpchain{\bullet}{X,A}\]
\[\gcubechain{\bullet}{A}\to \gcubechain{\bullet}{X}\to\gcubechain{\bullet}{X,A}\]
By standard arguments in homological algebra these short exact sequence  induce the desired long exact sequences in homology.
\end{proof}

\begin{proposition}
\label{prop:relative_homology_of_good_pairs}
Let $(X,A)$ be a $(J,\times)$-good pair of closure spaces. Then the quotient map $q:(X,A)\to (X/A,A/A)$ induces isomorphisms $q_*:\gsimphom{n}{X,A}\to \gsimphom{n}{X/A,A/A}\cong \gsimphomred{n}{X/A}$, $q_*:\gcubehom{n}{X,A}\to \gcubehom{n}{X/A,A/A}\cong \gcubehomred{n}{X/A}$ for all $n$.
\end{proposition}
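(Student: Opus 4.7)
The plan is to adapt the classical two-step argument (replace the subspace by a neighborhood, then excise) to $\cat{Cl}$. Let $B$ be a neighborhood of $A$ which $(J,\times)$-deformation retracts onto $A$, as provided by \cref{def:good_pairs}. I would treat the simplicial case first; the cubical case is entirely parallel, replacing \cref{theorem:simplicial_homotopy_invariance} by \cref{theorem:cubical_homotopy_invariance}.

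First, I would replace $A$ by $B$. The inclusion $A \hookrightarrow B$ is a $(J,\times)$-homotopy equivalence by hypothesis, so by \cref{theorem:simplicial_homotopy_invariance} it induces isomorphisms $\gsimphom{n}{A} \xrightarrow{\cong} \gsimphom{n}{B}$ for every $n$. The inclusion of pairs $(X,A) \hookrightarrow (X,B)$ then yields a morphism between the long exact sequences of \cref{theorem:homology_long_exact_sequence} in which both the $A$- and $B$-columns are isomorphisms and the $X$-column is the identity; the five lemma then delivers $\gsimphom{n}{X,A} \cong \gsimphom{n}{X,B}$ for every $n$.

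Next, I would excise $A$. Assuming (as one must implicitly assume in \cref{def:good_pairs}, for instance by asking that $A$ be closed) that $c(A) \subseteq i(B)$, \cref{theorem:excision} yields $\gsimphom{n}{X,B} \cong \gsimphom{n}{X-A, B-A}$. The same two-step procedure, applied to $(X/A, A/A)$ with neighborhood $B/A$ of $A/A$, gives $\gsimphom{n}{X/A, A/A} \cong \gsimphom{n}{(X/A)-(A/A),(B/A)-(A/A)}$; this requires checking that the $(J,\times)$-deformation retraction $B \to A$ descends along $q$ to a $(J,\times)$-deformation retraction $B/A \to A/A$, which is straightforward from the universal property of the quotient. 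Since $q$ restricts to a homeomorphism $X-A \to (X/A)-(A/A)$ carrying $B-A$ onto $(B/A)-(A/A)$, its action on relative homology identifies the two excised groups. Chaining the isomorphisms produces the desired $q_*:\gsimphom{n}{X,A} \xrightarrow{\cong} \gsimphom{n}{X/A, A/A}$.

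Finally, the identification $\gsimphom{n}{X/A, A/A} \cong \gsimphomred{n}{X/A}$ follows from the long exact sequence of \cref{theorem:homology_long_exact_sequence} for the pair $(X/A, A/A)$ together with the fact that $A/A$ is a point, so that the inclusion $A/A \hookrightarrow X/A$ splits and the reduced homology of $A/A$ vanishes in every degree. The main obstacle I anticipate is ensuring the excision hypothesis $c(A) \subseteq i(B)$ is actually guaranteed by \cref{def:good_pairs}: the definition only demands that $B$ be a neighborhood of $A$, which gives $A \subseteq i(B)$, not $c(A) \subseteq i(B)$. Either \cref{def:good_pairs} should be strengthened (for instance by requiring $A$ to be closed, or by requiring $B$ itself to deformation retract onto a smaller neighborhood of $A$) or one should pass to an intermediate neighborhood; a secondary concern is that the pushout closure on $X/A$ arising from \cref{def:coequalizer} behave well enough that $B/A$ is in turn a good neighborhood of $A/A$ and that $q|_{X-A}$ is a genuine homeomorphism in $\cat{Cl}$, both of which should be verified directly from the construction.
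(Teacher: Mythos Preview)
Your proposal is correct and follows essentially the same route as the paper: reduce from $(X,A)$ to $(X,B)$ using the deformation retraction, apply excision to remove $A$, and then use that $q$ restricts to a homeomorphism on $X-A$. The only cosmetic difference is that the paper invokes the long exact sequence of the triple $(X,B,A)$ (noting $\gsimphom{n}{B,A}=0$) rather than the five lemma on the pair sequences, and it organizes the argument into a single commutative diagram; these are equivalent. Your closing concerns are well taken: the paper's proof silently assumes the excision hypothesis $c(A)\subseteq i(B)$ and that $q|_{X-A}$ is a homeomorphism in $\cat{Cl}$, without deriving either from \cref{def:good_pairs} as stated, so the gaps you flag are present in the original as well.
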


\begin{proof}
Let $V$ be a neighborhood in $X$ that $(J,\times)$ deformation retracts to $A$. Consider the commutative diagram 
\begin{center}
\begin{tikzcd}
\gsimphom{n}{X,A}\arrow[r]\arrow[d,"q_*"]&\gsimphom{n}{X,V}\arrow[d,"q_*"]&\gsimphom{n}{X-A,V-A}\arrow[l]\arrow[d,"q_*"]\\
\gsimphom{n}{X/A,A/A}\arrow[r]&\gsimphom{n}{X/A,V/A}&\gsimphom{n}{X/A-A/A,V/A-A/A}\arrow[l]
\end{tikzcd}
\end{center}
The upper-left horizontal map is an isomorphism, since in the long exact sequence of the triple $(X,V,A)$, the groups $\gsimphom{n}{V,A}$ are $0$ since $V$ $(J,\times)$ deformation retracts to $A$ and thus induces a $(J,\times)$-homotopy equivalence of pairs $(V,A)\sim_{(J,\times)}(A,A)$ and $\gsimphom{n}{A,A}=0$. This deformation retraction  induces a deformation retraction of $V/A$ to $A/A$ and thus, by the same argument, the bottom-left map is an isomorphism. The other two horizontal maps are isomorphisms by excision (\cref{theorem:excision}). The right vertical map $q_*$ is an isomorphism since $q$ is a homeomorphism on the complement of $A$ in $X$. From the commutativity of the diagram the middle vertical map $q_*$ is then also and isomorphism, and thus the left vertical map $q_*$ is an isomorphism. The cubical case is similar.
\end{proof}

From \cref{theorem:homology_long_exact_sequence,prop:relative_homology_of_good_pairs} we get the following.

\begin{corollary}
\label{corollary:homology_of_quotient_by_closed_subspace}
Let $X$ be a closure space and let $A$ be a non-empty closed subspace of $X$, that is a $(J,\times)$ deformation retract of some neighborhood in $X$. Then, there exist exact sequences
\[\cdots \to \gsimphomred{n}{A}\xrightarrow{i_*}\gsimphomred{n}{X}\xrightarrow{j_*}\gsimphomred{n}{X/A}\xrightarrow{\partial}\gsimphomred{n-1}{A}\xrightarrow{i_*}\gsimphomred{n-1}{X}\to\cdots\]
\[\cdots \to \gcubehomred{n}{A}\xrightarrow{i_*}\gcubehomred{n}{X}\xrightarrow{j_*}\gcubehomred{n}{X/A}\xrightarrow{\partial}\gcubehomred{n-1}{A}\xrightarrow{i_*}\gcubehomred{n-1}{X}\to\cdots\]
where $i:A\to X$ is the inclusion and $j:X\to X/A$ is the quotient map.
\end{corollary}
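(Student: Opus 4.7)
The plan is to assemble the two long exact sequences from two tools already established in this section: the long exact sequence of the pair from \cref{theorem:homology_long_exact_sequence}, and the identification of the relative homology of a good pair with the reduced homology of the quotient from \cref{prop:relative_homology_of_good_pairs}. The hypothesis that $A$ is a $(J,\times)$-deformation retract of a neighborhood in $X$ is exactly \cref{def:good_pairs}, so $(X,A)$ is a $(J,\times)$-good pair and \cref{prop:relative_homology_of_good_pairs} yields natural isomorphisms $\gsimphom{n}{X,A}\cong \gsimphomred{n}{X/A}$ and $\gcubehom{n}{X,A}\cong \gcubehomred{n}{X/A}$ for all $n$.

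The remaining task is to upgrade the long exact sequence of \cref{theorem:homology_long_exact_sequence} from absolute to reduced homology of $A$ and $X$. First I would augment the absolute chain complexes in degree $-1$ by $\mathbb{Z}$ with the augmentation $\varepsilon(\sum n_i\sigma_i)=\sum n_i$, and leave the relative complex $\gsimpchain{\bullet}{X,A}$ (and $\gcubechain{\bullet}{X,A}$) with $0$ in degree $-1$. Because $A$ is non-empty, $\varepsilon:\gsimpchain{0}{A}\to \mathbb{Z}$ and $\varepsilon:\gcubechain{0}{A}\to \mathbb{Z}$ are surjective and intertwine the inclusion $i_\#$ with $\varepsilon$ on $X$. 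A short diagram chase then shows that the short exact sequences of chain complexes
\[0\to \gsimpchain{\bullet}{A}\to \gsimpchain{\bullet}{X}\to \gsimpchain{\bullet}{X,A}\to 0,\qquad 0\to \gcubechain{\bullet}{A}\to \gcubechain{\bullet}{X}\to \gcubechain{\bullet}{X,A}\to 0\]
remain short exact after this augmentation. Applying the snake lemma as in the proof of \cref{theorem:homology_long_exact_sequence} produces the long exact sequences
\[\cdots\to \gsimphomred{n}{A}\xrightarrow{i_*}\gsimphomred{n}{X}\xrightarrow{j_*}\gsimphom{n}{X,A}\xrightarrow{\partial}\gsimphomred{n-1}{A}\to\cdots\]
and the analogous cubical sequence, where in positive degrees $\gsimphomred{n}=\gsimphom{n}$ and $\gcubehomred{n}=\gcubehom{n}$, so the only genuinely new content is exactness near $n=0$.

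Substituting the isomorphisms $\gsimphom{n}{X,A}\cong \gsimphomred{n}{X/A}$ and $\gcubehom{n}{X,A}\cong \gcubehomred{n}{X/A}$ from the first step into these sequences, and checking that the connecting map $\partial$ factors through the quotient map $j$ in the expected way (this follows from naturality of the connecting homomorphism applied to the map of pairs $(X,A)\to (X/A,A/A)$ together with \cref{prop:relative_homology_of_good_pairs}), yields the two sequences in the statement. The only step requiring care is the augmentation argument at the bottom of the complex, which is straightforward given $A\neq\emptyset$; everything else is a formal consequence of results already proved in this section, so no real obstacle is anticipated.
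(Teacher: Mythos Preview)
Your proposal is correct and matches the paper's approach exactly: the paper presents this corollary without proof, simply noting that it follows from \cref{theorem:homology_long_exact_sequence} and \cref{prop:relative_homology_of_good_pairs}. Your added detail about augmenting the chain complexes to pass to reduced homology (using $A\neq\emptyset$) is the standard argument implicitly assumed by the paper.
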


\subsection{Eilenberg-Steenrod axioms}
Here we define a $(J,\otimes)$ Eileberg-Steenrod homology theory for closure spaces. We show existence of various such theories. 

\begin{definition}
\label{def:indiscrete_eilenberg_steenrod_homology_theory}
A \emph{$(J,\otimes)$ homology theory of closure spaces} consists of the following:
\begin{itemize}
\item A sequence of functors $H_n(-)$ from $\mathbf{Cl}$ to the category of abelian groups.
\item A natural transformation $\partial:H_n((X,A)\to H_{n-1}(A)$.
\end{itemize}
These functors are subject to the following axioms:
\begin{itemize}
\item[1)] (homotopy) $\sim_{(J,\otimes)}$ equivalent maps induce the same map in homology. 
\item[2)] (excision) Let $\{A,B\}$ be an interior cover of $X$. Then the inclusion $(B,A\cap B)\to (X,A)$ induces isomorphisms $H_n(B,A\cap B)\cong (X,A)$, for all $n$.
\item[3)] (dimension) Let $X$ be the one point space. Then $H_n(X,\emptyset)=0$, for all $n\ge 1$.
\item[4)] (long exact sequence) Each pair $((X,A)$ induces a long exact sequence via the inclusion maps $i:A\to X$ and $j:(X,\emptyset)\to (X,A)$:
\[\cdots\to H_n(A)\to H_n(X)\to H_n(X,A)\to H_{n-1}(A)\to \cdots\]
\end{itemize}
\end{definition}

From \cref{theorem:cubical_homotopy_invariance,theorem:simplicial_homotopy_invariance,theorem:homology_long_exact_sequence,theorem:excision,example:homology_of_one_point_space} we have the following result.

\begin{theorem}
\label{theorem:eilenberg_steenrod_existence}
Let $J\in \{I,J_1,J_+\}$. The homology groups $\gsimphom{n}{-}$ and $\gcubesimphom{n}{-}$ are a $(J,\times)$ homology theory of closure spaces.
\end{theorem}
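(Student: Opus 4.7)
The plan is to verify the four axioms of \cref{def:indiscrete_eilenberg_steenrod_homology_theory} one by one for both $\gsimphom{n}{-}$ and $\gcubesimphom{n}{-}$, with each axiom following directly from a previously established theorem in the paper. Since all the heavy lifting has already been done, the proof is essentially an assembly statement; the only real task is to match hypotheses.

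First I would check the homotopy axiom. For $\gcubesimphom{n}{-}$, this is exactly \cref{theorem:cubical_homotopy_invariance} specialized to $\otimes = \times$, and for $\gsimphom{n}{-}$ it is \cref{theorem:simplicial_homotopy_invariance}. Note that both statements are written for $J \in \{I, J_1, J_+\}$, which is exactly the hypothesis on $J$. Next, for excision, I would cite \cref{theorem:excision}, which in its second formulation gives precisely the isomorphism $\gsimphom{n}{B, A \cap B} \cong \gsimphom{n}{X,A}$ and $\gcubesimphom{n}{B, A \cap B} \cong \gcubesimphom{n}{X,A}$ induced by the inclusion of pairs for any interior cover $\{A,B\}$ of $X$.

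For the dimension axiom, let $X = *$ be the one-point space. By \cref{example:homology_of_one_point_space}, $\gsimphomred{n}{*} = \gcubehomred{n}{*} = 0$ for all $n \geq 0$, so in particular $\gsimphom{n}{*} = \gcubesimphom{n}{*} = 0$ for all $n \geq 1$. Since $\gsimphom{n}{*, \emptyset} = \gsimphom{n}{*}$ and likewise for the cubical case, this establishes the dimension axiom. Finally, the long exact sequence of a pair is precisely the content of \cref{theorem:homology_long_exact_sequence}, and the naturality of the connecting map $\partial$ follows from the naturality of the snake lemma applied to the short exact sequences of chain complexes $\gsimpchain{\bullet}{A} \to \gsimpchain{\bullet}{X} \to \gsimpchain{\bullet}{X,A}$ and $\gcubechain{\bullet}{A} \to \gcubechain{\bullet}{X} \to \gcubechain{\bullet}{X,A}$ that define the relative homology groups.

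Strictly speaking there is nothing to prove beyond citing these four results, so there is no genuine obstacle; the only subtle point worth flagging is that for the cubical theory the excision axiom requires restricting to $\otimes = \times$ (since by \cref{remark:excision_failure} excision for $\otimes = \boxdot$ remains open), which is consistent with the statement being made only for $(J, \times)$-theories.
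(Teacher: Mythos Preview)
Your proposal is correct and matches the paper's own approach exactly: the paper simply cites \cref{theorem:cubical_homotopy_invariance,theorem:simplicial_homotopy_invariance,theorem:homology_long_exact_sequence,theorem:excision,example:homology_of_one_point_space} without further comment, and you have expanded on precisely which result handles which axiom. Your remark about excision requiring $\otimes = \times$ is also apt and consistent with the paper's \cref{remark:excision_failure}.
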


\begin{proposition}
\label{prop:partial_order_between_eilenberg_steenrod_theories}
Let $\otimes_1\le \otimes_2$ be product operations and let $J\le K$ be intervals for $\otimes_2$. If $H_n$ is a $(J,\otimes_1)$ homology theory, then $H_n$ is also a $(K,\otimes_2)$ homology theory. In particular, if $H_n$ is a homology theory for an arbitrary $(J,\otimes)$, then $H_n$ is a homology theory for $(J_1,\times)$.
\end{proposition}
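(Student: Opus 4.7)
The plan is to check each of the four Eilenberg--Steenrod axioms for the $(K,\otimes_2)$ data, using that they hold for the $(J,\otimes_1)$ data. Crucially, the $(J,\otimes_1)$- and $(K,\otimes_2)$-theories share the \emph{same} sequence of functors $H_n$ and the \emph{same} natural transformation $\partial$: only the sense of ``homotopy'' changes. Three of the four axioms — dimension, excision, and the long exact sequence of a pair — are stated entirely in terms of these functors and natural transformations, with no reference to the choice of interval or product. They therefore transfer verbatim and require no further argument.

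The only substantive step is homotopy invariance. Since $J\le K$ are intervals for $\otimes_2$ and $\otimes_1\le \otimes_2$, \cref{lemma:order_of_intervals} guarantees that $J$ is also an interval for $\otimes_1$, so the relation $\sim_{(J,\otimes_1)}$ is well defined. By \cref{prop:order_between_homotopies}, any one-step $(K,\otimes_2)$-homotopy is simultaneously a one-step $(J,\otimes_1)$-homotopy (the same underlying map $H:X\otimes_1 J \to Y$ serves as the witness, using the comparison natural transformations $\otimes_1\to\otimes_2$ and $J\to K$). Unfolding the definition of $\sim_{(K,\otimes_2)}$ as the equivalence relation generated by one-step $(K,\otimes_2)$-homotopies, any finite zigzag witnessing $f\sim_{(K,\otimes_2)}g$ is also a zigzag witnessing $f\sim_{(J,\otimes_1)}g$. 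The hypothesised homotopy axiom for $(J,\otimes_1)$ then yields $f_*=g_*$, which is the homotopy axiom for $(K,\otimes_2)$.

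For the ``in particular'' clause, invoke \cref{prop:homotopy_implications}: the relation $\sim_{(J_1,\times)}$ is the finest homotopy relation on $\cat{Cl}$, so $f\sim_{(J_1,\times)} g$ implies $f\sim_{(J,\otimes)} g$ for every interval $J$ and product operation $\otimes$. Therefore any $(J,\otimes)$-theory automatically satisfies the $(J_1,\times)$-homotopy axiom via the chain of implications $f\sim_{(J_1,\times)} g \Rightarrow f\sim_{(J,\otimes)} g \Rightarrow f_*=g_*$; the other three axioms carry over as before. There is no genuine obstacle in this argument — the proof is essentially bookkeeping, with \cref{prop:order_between_homotopies} and \cref{prop:homotopy_implications} doing all the conceptual work and the definition of an Eilenberg--Steenrod theory being cleanly split into $(J,\otimes)$-dependent and $(J,\otimes)$-independent axioms.
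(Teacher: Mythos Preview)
Your proof is correct and follows essentially the same approach as the paper's: observe that only the homotopy axiom depends on $(J,\otimes)$, then use \cref{prop:order_between_homotopies} and \cref{prop:homotopy_implications} to transfer it. You are in fact slightly more careful than the paper, which invokes \cref{prop:order_between_homotopies} directly on $\sim_{(K,\otimes_2)}$ even though that proposition is stated only for one-step homotopies; your zigzag argument fills that small gap.
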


\begin{proof}
Suppose that $H_n$ is a $(J,\otimes_1)$ homology theory. To show that $H_n$ is a $(K,\otimes_2)$ homology theory, note that the only axiom that needs to be checked is the homotopy axiom. To that end, suppose $f\sim_{(K,\otimes_2)}g:X\to Y$. By \cref{prop:order_between_homotopies}, we also have that $f\sim_{(J,\otimes_1)}g$, and since $H_n$ is a $(J,\otimes_1)$ homology theory it follows that $f_*=g_*:H_n(X)\to H_n(Y)$. The statement that every $(J,\otimes)$ homology theory, for an arbitrary product operation $\otimes$ and interval $J$ for $\otimes$, is a $(J_1,\times)$ homology theory then follows from the maximility of the $\sim_{(J_1,\times)}$ homotopy relation from \cref{prop:homotopy_implications}.
\end{proof}

\begin{remark}
\label{remark:non_uniqueness_of_eilenberg_steenrod_theories}
Recall that in the case of topological spaces, on the subcategory of CW complexes, or spaces that have the homotopy type of a CW complex, the Eilenberg-Steenrod axioms imply uniqueness of homology. From \cref{theorem:eilenberg_steenrod_existence,prop:partial_order_between_eilenberg_steenrod_theories}, we have that $\indhom{n}{-}$, $\dihom{n}{-}$ and $\tophom{n}{-}$ are all $(J,\times)$ homology theories for closure spaces. However, there are examples of closure spaces on which these are all distinct homology theories \cite{bubenik2021applied}. Thus, an open question is whether or not there exist full subcategories of closure spaces, on which some of the Eilenberg-Steenrod $(J,\otimes)$ axioms introduced here imply uniqueness. Pursuing  this question is outside the scope of this paper. 
\end{remark}

\subsection{Universal Coefficients theorem}
Let $C_{\bullet}$ be a chain complex of abelian groups. For an abelian group $G$, we can define a new chain complex $C_{\bullet}\otimes_{\mathbb{Z}} G$ by specifying $(C_{\bullet}\otimes_{\mathbb{Z}}G)_n=C_n\otimes_{\mathbb{Z}}G$ and $\partial_n^{C_{\bullet}\otimes_{\mathbb{Z}}G}=\partial_n^C\otimes_{\mathbb{Z}}\cat{1}_G$. We call the homology groups $H_n(C_{\bullet}\otimes_{\mathbb{Z}}G)$ the homology of $C_{\bullet}$ with coefficients in $G$. If $C_{\bullet}$ is one of $\gsimpchain{\bullet}{X}$ or $\gcubechain{\bullet}{X}$, we will denote $H_n(C_{\bullet}\otimes_{\mathbb{Z}}G)$ by $H_n(\gsimpchain{\bullet}{X};G)$ or $H_n(\gcubechain{\bullet}{X};G)$, respectively. From the formalism of homological algebra, whenever we have a chain complex of free abelian groups and we change its coefficients, we can recover the homology groups with coefficients via a universal coefficients theorem \cite{weibel1995introduction}. In particular, for closure spaces we have the following, since $\gsimpchain{\bullet}{X}$ and $\gcubechain{\bullet}{X}$ are complexes of free abelian groups.

\begin{theorem}[UCT for homology]
\label{theorem:uct_theorem_homology}
Let $X$ be a closure space, let $G$ be an abelian group. Then for all $n$ there exist natural short exact sequences
\[H_n(\gsimpchain{\bullet}{X})\otimes_{\mathbb{Z}}G\to H_n(\gsimpchain{\bullet}{X};G)\to\emph{Tor}(H_{n-1}(\gsimpchain{\bullet}{X},G),\]
\[H_n(\gcubechain{\bullet}{X})\otimes_{\mathbb{Z}}G\to H_n(\gcubechain{\bullet}{X};G)\to\emph{Tor}(H_{n-1}(\gcubechain{\bullet}{X},G),\]
which split, though not naturally.
\end{theorem}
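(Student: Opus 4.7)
The plan is to reduce the theorem to the classical Universal Coefficients Theorem from homological algebra, which asserts that for any chain complex $C_{\bullet}$ of free abelian groups and any abelian group $G$, there is a natural short exact sequence
\[
0 \to H_n(C_{\bullet}) \otimes_{\mathbb{Z}} G \to H_n(C_{\bullet} \otimes_{\mathbb{Z}} G) \to \mathrm{Tor}(H_{n-1}(C_{\bullet}), G) \to 0
\]
which splits, though not naturally in $C_{\bullet}$ (see, e.g., \cite{weibel1995introduction}). Given this general fact, the only substantive step is to verify that both $\gsimpchain{\bullet}{X}$ and $\gcubechain{\bullet}{X}$ consist of free abelian groups in every degree.

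For the simplicial chain complex, this will be immediate from \cref{def:simplicial_homology}: by construction $\gsimpchain{n}{X}$ is the free abelian group on the set of continuous maps $\gsimp{n} \to X$, so no further work is needed. Naturality of the sequence in $X$ simply reflects the functoriality of $\gsimpchain{\bullet}{-}$ and of the classical UCT.

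For the cubical chain complex, I would unfold the definition in \cref{def:cubical_homology_theory}, where $\gcubechain{n}{X} = \gcubechainwhole{n}{X}/\gcubechaindeg{n}{X}$ with the numerator free abelian on the continuous maps $\gcube{n} \to X$ and the denominator generated by the subset of those maps that are degenerate. The one observation worth noting is that since $\gcubechaindeg{n}{X}$ is generated by a subset of the free basis of $\gcubechainwhole{n}{X}$, the quotient is itself free abelian with basis the classes of the non-degenerate continuous cubes. Once this is in place, freeness of each $\gcubechain{n}{X}$ is clear.

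With the two freeness claims secured, the theorem will follow at once by applying the classical UCT separately to $\gsimpchain{\bullet}{X}$ and $\gcubechain{\bullet}{X}$. I do not expect any genuine obstacle: the content of the theorem is really a translation of a standard homological-algebraic statement to the closure-space setting, enabled entirely by the design choice that our singular chain complexes are free in every degree. The mildest subtlety is the freeness of the cubical complex after quotienting by degeneracies, but this is handled by the elementary observation above.
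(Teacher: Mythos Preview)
Your proposal is correct and follows essentially the same approach as the paper: the paper does not give a formal proof but simply invokes the classical Universal Coefficients Theorem from \cite{weibel1995introduction}, noting that $\gsimpchain{\bullet}{X}$ and $\gcubechain{\bullet}{X}$ are complexes of free abelian groups. Your additional remark that $\gcubechain{n}{X}$ is free because $\gcubechaindeg{n}{X}$ is generated by a subset of the free basis of $\gcubechainwhole{n}{X}$ is a helpful clarification that the paper leaves implicit.
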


\subsection{K\"unneth theorem}
Here we show Eilenberg-Zilber theorems for some singular chain complexes of closure spaces. An immediate consequence are the K\"unneth theorems for some homology groups of closure spaces. 

\begin{definition}
\label{def:tensor_product_of_chain_complexes}
Let $C$ and $D$ be chain complexes of abelian groups. The \emph{tensor product complex $C\otimes D$,} of $C$ and $D$ is the chain complex defined by  $(C\otimes D)_n=\bigoplus_{i+j=n}(C_i\otimes D_j)$ with differential defined on generators by $\partial(a\otimes b) :=\partial_C a\otimes b+ (-1)^ia\otimes \partial_Db$,  $a\in C_i, b\in D_j$ and extending bilinearly.  
\end{definition}

Let $X$ and $Y$ be closure spaces.

\begin{lemma}
\label{lemma:product_of_complexes_of_contractible spaces}
If $X$ and $Y$ are $(J,\times)$-contractible, then $\tilde{H}_n(\gsimpchain{\bullet}{X}\otimes \gsimpchain{\bullet}{Y})=0$, for all $n$.
\end{lemma}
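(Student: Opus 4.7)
The plan is to invoke the algebraic K\"unneth formula for chain complexes of free abelian groups. Both $\gsimpchain{\bullet}{X}$ and $\gsimpchain{\bullet}{Y}$ are free on their sets of continuous singular simplices $\gsimp{n}\to X$ and $\gsimp{n}\to Y$, so the hypotheses needed for the algebraic K\"unneth theorem are immediately satisfied.

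First, I would use the $(J,\times)$-contractibility of $X$ and $Y$ together with \cref{lemma:models_are_contractible} to conclude that $\gsimphomred{n}{X}=\gsimphomred{n}{Y}=0$ for every $n\ge 0$. Unpacking the relationship between reduced and unreduced homology, this is equivalent to $\gsimphom{0}{X}\cong\gsimphom{0}{Y}\cong\mathbb{Z}$ and $\gsimphom{n}{X}=\gsimphom{n}{Y}=0$ for $n\ge 1$.

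Next, invoke the algebraic K\"unneth theorem to obtain, for each $n$, a (naturally split) short exact sequence
\[
0\to\bigoplus_{i+j=n}\gsimphom{i}{X}\otimes\gsimphom{j}{Y}\to H_n(\gsimpchain{\bullet}{X}\otimes\gsimpchain{\bullet}{Y})\to\bigoplus_{i+j=n-1}\operatorname{Tor}(\gsimphom{i}{X},\gsimphom{j}{Y})\to 0.
\]
Since $\mathbb{Z}$ is free, all Tor summands vanish, and the tensor summand is nonzero only in the case $i=j=0$, giving $H_0(\gsimpchain{\bullet}{X}\otimes\gsimpchain{\bullet}{Y})\cong\mathbb{Z}$ and $H_n(\gsimpchain{\bullet}{X}\otimes\gsimpchain{\bullet}{Y})=0$ for $n\ge 1$.

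Finally, I would verify that the augmentation $\varepsilon\otimes\varepsilon:\gsimpchain{0}{X}\otimes\gsimpchain{0}{Y}\to\mathbb{Z}$, sending each tensor of $0$-simplices to $1$, induces an isomorphism on $H_0$; this is immediate because any pair $\sigma\otimes\tau$ of $0$-simplices generates the $\mathbb{Z}$ computed above and is sent to $1$. Consequently $\tilde{H}_n(\gsimpchain{\bullet}{X}\otimes\gsimpchain{\bullet}{Y})=0$ in every degree. The argument is essentially formal once the reduced homologies of $X$ and $Y$ are known, so the main risk here is merely bookkeeping about the conventional meaning of $\tilde{H}_\bullet$ of a tensor product (namely, homology with respect to the augmentation $\varepsilon\otimes\varepsilon$) rather than any genuine conceptual obstacle.
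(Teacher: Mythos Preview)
Your argument is correct and there is no circularity: the algebraic K\"unneth formula you invoke is a purely homological-algebra statement about tensor products of complexes of free abelian groups, independent of the Eilenberg--Zilber theorem that this lemma is used to prove. The appeal to \cref{lemma:models_are_contractible} is also fine, since its proof rests only on homotopy invariance and the one-point computation.

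The paper takes a different, more hands-on route. Rather than computing the homology of each factor and then assembling via K\"unneth, it builds an explicit chain contraction of the tensor complex. From the contractibility of $X$ it extracts a prism operator $P$ on $\gsimpchain{\bullet}{X}$ satisfying $\partial P + P\partial = \mathrm{Id} - \varepsilon$ (and similarly for $Y$), and then combines the two contractions into a single chain homotopy
\[
L(\sigma\otimes\tau) = P(\sigma)\otimes \varepsilon(\tau) + (-1)^{|\sigma|}\sigma\otimes P(\tau)
\]
between $\mathrm{Id}\otimes\mathrm{Id}$ and $\varepsilon\otimes\varepsilon$. Your approach is shorter and cleaner, leveraging a standard black box; the paper's approach is self-contained and yields an explicit contracting homotopy on the tensor product, which is in the spirit of the acyclic-models machinery used immediately afterward.
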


\begin{proof}
Let $m\ge 1$ be such that $H:X\times J^{*m}\to X$ is a homotopy between the identity map $H(-,0)=\cat{1}_X$ on $X$ and a constant map $H(-,m)=a$ with image $a(X)=x_0$ for some $x_0\in X$. Let $\sigma:\gsimp{n}\to X$. Using $H$, we can define a ``prism operator" $P_n:\gsimpchain{n}{X}\to \gsimpchain{n+1}{X}$, in the same way as in the proof of \cref{theorem:simplicial_homotopy_invariance}, which will be a chain homotopy between $\cat{1}_{X*},a_*:\gsimpchain{\bullet}{X}\to \gsimpchain{\bullet}{X}$. Let $\sigma:\gsimp{n}\to X$. Then, from the definition of $P$ we have that $\partial P(\sigma)=\sigma- D(\partial \sigma)$ if $n\ge 1$ and $\partial P(\sigma)=\sigma- x_0$ if $n=0$. Therefore, we have that $\partial P+P\partial=Id-\eps$ where $\eps$ is $0$ nonzero degrees and $\eps(\sum_i n_i\sigma_i)=\sum_i n_ix_0$ can be identified with the augmentation map in degree $0$.

Since by assumption, $X$ and $Y$ are $(J,\times)$ contractible, we have such chain homotopies for both $\gsimpchain{\bullet}{X}$ and $\gsimpchain{\bullet}{Y}$. Define $L_n:(\gsimpchain{\bullet}{X}\otimes \gsimpchain{\bullet}{Y})_n\to (\gsimpchain{\bullet}{X}\otimes \gsimpchain{\bullet}{Y})_{n+1}$ by 
\[L(\sigma\otimes \tau)=P(\sigma)\otimes \eps(\tau)+(-1)^{|\sigma|}\sigma\otimes P(\tau)\]
and extending linearly. A direct computation shows that $L$ is a chain homotopy between $Id\otimes Id$ and $\eps\otimes \eps$. Thus, $\tilde{H}_n(\gsimpchain{\bullet}{X}\otimes \gsimpchain{\bullet}{Y})$ is equal to the reduced homology of the image under $\eps \otimes \eps$, which is a trivial chain complex with just $\mathbb{Z}$ in degree $0$. Hence the result follows.
\end{proof}

\begin{theorem}[\textbf{Theorem 1.3} (Eilenberg-Zilber)]
\label{theorem:eilenberg_zilber}
Let $X$ and $Y$ be closure spaces. Then there are natural chain homotopy equivalences $\gsimpchain{\bullet}{X\times Y}\simeq \gsimpchain{\bullet}{X}\otimes \gsimpchain{\bullet}{Y}$ and $\gcubesimpchain{\bullet}{X\times Y}\simeq \gcubesimpchain{\bullet}{X}\otimes \gcubesimpchain{\bullet}{Y}$.
\end{theorem}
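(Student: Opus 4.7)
The plan is to apply the acyclic models theorem in its bifunctor form, treating both sides as functors $\cat{Cl}\times \cat{Cl}\to \cat{Ch}_{\ge 0}(\cat{Ab})$ and using as models the pairs $(\gsimp{m},\gsimp{n})$ in the simplicial case and $(\gcube{m},\gcube{n})$ in the cubical case, all of which are $(J,\times)$-contractible by \cref{example:simplices_are_contractible,example:cubes_are_contractible}. Define bifunctors $F^\Delta,G^\Delta:\cat{Cl}\times\cat{Cl}\to\cat{Ch}_{\ge 0}(\cat{Ab})$ by $F^\Delta(X,Y)=\gsimpchain{\bullet}{X\times Y}$ and $G^\Delta(X,Y)=\gsimpchain{\bullet}{X}\otimes\gsimpchain{\bullet}{Y}$, and analogously $F^{\Box},G^{\Box}$ in the cubical case. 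In degree zero both sides agree (the natural identification $\mathbb{Z}[X]\otimes\mathbb{Z}[Y]\cong\mathbb{Z}[X\times Y]$), and this common assignment together with the augmentations will serve as the base case in building the chain maps inductively.

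The first step is to verify representability of the bifunctors. A singular $n$-simplex $\gsimp{n}\to X\times Y$ is, by the universal property of the categorical product in $\cat{Cl}$ (\cref{prop:product_closure,prop:product_of_maps}), the same as a pair of simplices $\gsimp{n}\to X$ and $\gsimp{n}\to Y$, so $F^\Delta$ is representable via $(\gsimp{n},\gsimp{n})$ and the diagonal identity element. For $G^\Delta$, each generator of $(\gsimpchain{\bullet}{X}\otimes\gsimpchain{\bullet}{Y})_n$ is a pair $(\sigma,\tau)$ with $\sigma:\gsimp{i}\to X$ and $\tau:\gsimp{j}\to Y$, $i+j=n$, which is represented by the pair $(\gsimp{i},\gsimp{j})$ together with the tensor of the two identity simplices, again representable in the sense of \cref{lemma:homology_functors_are_representable}. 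The cubical bifunctors $F^{\Box}$ and $G^{\Box}$ are handled identically with $\gcube{n}$ replacing $\gsimp{n}$, using that $\gcubesimpchain{n}{X\times Y}$ generators correspond to pairs of maps out of $\gcube{n}$.

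The second step is to verify acyclicity on models for reduced chain complexes. For $G^\Delta$ this is precisely \cref{lemma:product_of_complexes_of_contractible spaces} applied to the contractible pair $(\gsimp{m},\gsimp{n})$, and the analogous argument with $\gcube{m},\gcube{n}$ handles $G^{\Box}$ (the prism operator construction of \cref{theorem:simplicial_homotopy_invariance} transfers to the cubical setting verbatim via the cubical realization). For $F^\Delta$ and $F^{\Box}$, I first show that $(J,\times)$-contractibility is preserved under the product $\times$: if $H_X:X\times J^{*m}\to X$ and $H_Y:Y\times J^{*m}\to Y$ are one-step contractions (after suitably concatenating intervals to have a common length, using \cref{lemma:homotopy_and_concatenation}), then $H_X\times H_Y$ composed with the diagonal $(X\times Y)\times J^{*m}\to (X\times J^{*m})\times(Y\times J^{*m})$ is a one-step $(J^{*m},\times)$-homotopy contracting $X\times Y$; continuity of this composite follows from \cref{prop:product_of_maps}. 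Combined with homotopy invariance (\cref{theorem:simplicial_homotopy_invariance,theorem:cubical_homotopy_invariance}) and \cref{lemma:models_are_contractible}, we obtain $\gsimphomred{n}{\gsimp{m}\times\gsimp{n}}=0$ and similarly for the cubical case, giving acyclicity of $F^\Delta$ and $F^{\Box}$.

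With representability, acyclicity, and agreement in degree zero in hand, the acyclic models theorem (in the form used throughout \cref{section:homology}, referenced via \cref{section:acyclic_models} and the template of \cref{theorem:simplicial_and_cubical_isomorphism}) produces natural chain maps in both directions between $F^\Delta$ and $G^\Delta$, each extending the degree-zero identification, together with natural chain homotopies between their composites and the identity. The same machinery yields the cubical statement. The main obstacle I anticipate is purely bookkeeping: carefully packaging representability and acyclicity for bifunctors on $\cat{Cl}\times\cat{Cl}$ so that the single-variable acyclic models framework of \cref{lemma:homology_functors_are_representable,theorem:simplicial_and_cubical_isomorphism} applies unchanged. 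The most delicate substantive point is the preservation of $(J,\times)$-contractibility under $\times$, which depends on synchronizing the lengths of the concatenated intervals used to witness each contraction; everything else is formal.
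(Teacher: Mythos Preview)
Your proposal is correct and follows essentially the same route as the paper: both work on $\cat{Cl}\times\cat{Cl}$ with models the pairs $(\gsimp{m},\gsimp{n})$ (respectively cubes), verify freeness via the diagonal and the universal property of the categorical product, verify acyclicity via \cref{lemma:product_of_complexes_of_contractible spaces} together with closure of $(J,\times)$-contractibility under $\times$, and then invoke the acyclic models theorem. The only cosmetic difference is that the paper cites \cite[Corollary 5.34]{bubenik2021applied} for contractibility of products whereas you sketch it directly, and the paper phrases things in terms of ``free on $\mathscr{M}$'' (\cref{def:free_functor,theorem:acyclic_model_theorem}) rather than the representability language you use.
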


\begin{proof}
We present the arguments for the simplicial case. The cubical case is similar. Let $\cat{C}$ be the category of pairs of closure spaces $(X,Y)$ and morphisms $(f,g):(X_1,X_2)\to (Y_1,Y_2)$ where $f:X_1\to Y_1$ and $g:X_2\to Y_2$ are continuous maps of closure spaces. Let $F,G:\cat{C}\to \cat{Ch_{\ge 0}}(\cat{Ab})$ be the functors $F(X,Y):=\gsimpchain{\bullet}{X\times Y}$ and $G(X,Y):=\gsimpchain{\bullet}{X}\otimes \gsimpchain{\bullet}{Y}$.  Let $\mathscr{M}$ be the models of $\cat{C}$ (\cref{section:acyclic_models}) consisting of pairs $(\gsimp{n},\gsimp{m})_{n,m\in \mathbb{N}}$. Note that $\gsimp{n}$ is $(J,\times)$-contractible by \cref{example:simplices_are_contractible} for each $n\ge 0$. Therefore so is $\gsimp{n}\times \gsimp{m}$ for $n,m\in \mathbb{N}$ \cite[Corollary 5.34]{bubenik2021applied}. In particular, $F$ is acyclic on $\mathscr{M}$ (\cref{def:free_functor}). By \cref{example:simplices_are_contractible,lemma:product_of_complexes_of_contractible spaces} it follows that $G$ is also acyclic on $\mathscr{M}$. Let $D_n\in F_n((\gsimp{n},\gsimp{n}))$ be the $J$ singular $n$-chain on $\gsimp{n}\times \gsimp{n}$ given by the diagonal map $D_n:\gsimp{n}\to \gsimp{n}\times \gsimp{n}$. Note that $F_n((X,Y))$ is by definition free on $\cat{Cl}(\gsimp{n},X\times Y)\cong \cat{Cl}(\gsimp{n},X)\times \cat{Cl}(\gsimp{n},Y)=\cat{C}((\gsimp{n},\gsimp{n}),(X,Y))$. The map $\cat{C}((\gsimp{n},\gsimp{m}),(X,Y))\to F_n(X,Y)$ sends $\sigma:\gsimp{n}\to X$, $\tau:\gsimp{n}\to Y$ to the pullback of $\sigma\times \tau:\gsimp{n}\times \gsimp{n}\to X\times Y$ via $D_n$. Thus $F$ is free on on $\mathscr{M}$ (\cref{def:free_functor}). Finally observe that $G_n$ is by definition free on $\coprod_{i+j=n}\cat{Cl}(\gsimp{i},X)\times \cat{Cl}(\gsimp{j},Y)\cong  \coprod_{i+j=n}\cat{C}((\gsimp{i}\times \gsimp{j}),(X\times Y))$. It follows that $G$ is also free on $\mathscr{M}$. Therefore, by \cref{theorem:acyclic_model_theorem} the statement of the theorem follows.
\end{proof} 
 
\begin{corollary}
\label{corollary:homology_of_product}
For all $n\ge 0$, $\gsimphom{n}{X\times Y}\cong H_n(\gsimpchain{\bullet}{X}\otimes \gsimpchain{\bullet}Y)$ and  $\gcubesimphom{n}{X\times Y}\cong H_n(\gcubesimpchain{\bullet}{X}\otimes \gcubesimpchain{\bullet}Y)$.
\end{corollary}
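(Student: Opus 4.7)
The plan is essentially an immediate appeal to the Eilenberg-Zilber theorem (\cref{theorem:eilenberg_zilber}) combined with the standard fact from homological algebra that chain homotopy equivalences induce isomorphisms on homology. There is no substantial work left to do; the corollary is a direct consequence of the preceding theorem.

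More explicitly, I would first invoke \cref{theorem:eilenberg_zilber} to obtain natural chain homotopy equivalences
\[
\gsimpchain{\bullet}{X\times Y}\simeq \gsimpchain{\bullet}{X}\otimes \gsimpchain{\bullet}{Y}
\quad\text{and}\quad
\gcubesimpchain{\bullet}{X\times Y}\simeq \gcubesimpchain{\bullet}{X}\otimes \gcubesimpchain{\bullet}{Y}.
\]
Next, I would apply the general principle that if $f\colon C_\bullet \to D_\bullet$ is a chain homotopy equivalence between chain complexes of abelian groups, with chain homotopy inverse $g$ and homotopies $H\colon gf\simeq \id_{C_\bullet}$, $G\colon fg\simeq \id_{D_\bullet}$, then the induced maps $f_*\colon H_n(C_\bullet)\to H_n(D_\bullet)$ are isomorphisms for all $n\ge 0$, with inverse $g_*$. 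Taking $H_n$ of both sides of the two equivalences above yields the desired isomorphisms.

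There is no main obstacle: the only tools required are \cref{theorem:eilenberg_zilber} itself (which was established via acyclic models in the preceding proof) and a one-line homological algebra argument. If one wishes to emphasize naturality, I would additionally note that the isomorphisms obtained are natural in $X$ and $Y$, since the chain homotopy equivalences produced by the acyclic models theorem are natural transformations of functors $\cat{C}\to \cat{Ch}_{\ge 0}(\cat{Ab})$, and passage to homology is itself functorial.
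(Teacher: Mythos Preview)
Your proposal is correct and matches the paper's approach: the corollary is stated without proof in the paper, as it follows immediately from \cref{theorem:eilenberg_zilber} together with the standard fact that chain homotopy equivalences induce isomorphisms on homology.
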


\begin{corollary}[K\"unneth Theorem]
\label{corollary:kunneth}
Let $X$ and $Y$ be closure spaces. Let $R$ be a principal ideal domain. Then for all $n$ there are natural short exact sequences 
\begin{gather*}
\bigoplus\limits_i \gsimphom{i}{X;R}\otimes_R \gsimphom{n-i}{Y;R}\to \gsimphom{n}{X\times Y;R}
\to\bigoplus\limits_i\text{Tor}_i(\gsimphom{i}{X;R},\gsimphom{n-i-1}{Y;R})
\\
\bigoplus\limits_i \gcubesimphom{i}{X;R}\otimes_R \gcubesimphom{n-i}{Y;R}\to \gcubesimphom{n}{X\times Y;R}
\to\bigoplus\limits_i\text{Tor}_i(\gcubesimphom{i}{X;R},\gcubesimphom{n-i-1}{Y;R})
\end{gather*}
which split, though not naturally.
\end{corollary}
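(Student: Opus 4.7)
The plan is to reduce the statement to the classical algebraic Künneth formula for tensor products of chain complexes of free modules over a PID, invoking the Eilenberg--Zilber theorem (\cref{theorem:eilenberg_zilber}) as the bridge between topology and pure homological algebra.

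First I would observe that the chain complexes $\gsimpchain{\bullet}{X}$ and $\gcubesimpchain{\bullet}{X}$ are, by construction, complexes of free abelian groups: each $\gsimpchain{n}{X}$ is the free abelian group on $\cat{Cl}(\gsimp{n},X)$, and similarly for the cubical version. Consequently, after tensoring with $R$ over $\mathbb{Z}$ we obtain chain complexes of free $R$-modules, namely $\gsimpchain{\bullet}{X}\otimes_{\mathbb{Z}}R$ and $\gcubesimpchain{\bullet}{X}\otimes_{\mathbb{Z}}R$, whose homology groups are $\gsimphom{n}{X;R}$ and $\gcubesimphom{n}{X;R}$ by definition.

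Next I would apply \cref{theorem:eilenberg_zilber}, which gives natural chain homotopy equivalences $\gsimpchain{\bullet}{X\times Y}\simeq \gsimpchain{\bullet}{X}\otimes_{\mathbb{Z}}\gsimpchain{\bullet}{Y}$ and $\gcubesimpchain{\bullet}{X\times Y}\simeq \gcubesimpchain{\bullet}{X}\otimes_{\mathbb{Z}}\gcubesimpchain{\bullet}{Y}$. Tensoring these chain homotopy equivalences with $R$ over $\mathbb{Z}$ preserves the chain homotopy (since tensoring with any abelian group preserves chain homotopies), yielding natural chain homotopy equivalences between $\gsimpchain{\bullet}{X\times Y}\otimes_{\mathbb{Z}}R$ and $(\gsimpchain{\bullet}{X}\otimes_R \gsimpchain{\bullet}{Y})$ after rewriting $(\gsimpchain{\bullet}{X}\otimes_{\mathbb{Z}}\gsimpchain{\bullet}{Y})\otimes_{\mathbb{Z}}R \cong (\gsimpchain{\bullet}{X}\otimes_{\mathbb{Z}}R)\otimes_R (\gsimpchain{\bullet}{Y}\otimes_{\mathbb{Z}}R)$, which holds because the simplicial/cubical chain groups are free over $\mathbb{Z}$. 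The same reasoning handles the cubical case.

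Finally, I would invoke the classical algebraic Künneth formula for chain complexes of free modules over a PID (see, e.g., Weibel's \emph{Introduction to Homological Algebra}): for chain complexes $C_\bullet$ and $D_\bullet$ of free $R$-modules, there is a natural short exact sequence
\[
\bigoplus_i H_i(C_\bullet)\otimes_R H_{n-i}(D_\bullet)\to H_n(C_\bullet\otimes_R D_\bullet)\to \bigoplus_i \mathrm{Tor}_i^R(H_i(C_\bullet),H_{n-i-1}(D_\bullet))
\]
that splits (non-naturally). Applying this with $C_\bullet=\gsimpchain{\bullet}{X}\otimes_{\mathbb{Z}}R$ and $D_\bullet=\gsimpchain{\bullet}{Y}\otimes_{\mathbb{Z}}R$ (and similarly for the cubical version) and combining with the isomorphism on homology from the previous step yields the desired short exact sequences. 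No step here is a real obstacle, since the algebraic Künneth formula is standard and freeness of the chain groups is immediate from the definitions; the content of the corollary is essentially packaged in \cref{theorem:eilenberg_zilber}, and everything else is a purely formal application of homological algebra.
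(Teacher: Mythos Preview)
Your proposal is correct and matches the paper's intended argument: the corollary is stated immediately after \cref{theorem:eilenberg_zilber} and \cref{corollary:homology_of_product} without an explicit proof, precisely because it follows from the Eilenberg--Zilber chain homotopy equivalence together with the standard algebraic K\"unneth formula for complexes of free modules over a PID. Your handling of the passage from $\mathbb{Z}$ to $R$ coefficients is a careful unpacking of a step the paper leaves implicit.
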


\begin{remark}
\label{remark:kunneth_theorem_homology}
The question whether the Eilenberg-Zilber theorem or the K\"unneth theorem are valid for the functors $\gcubechain{\bullet}{-}$ and $\gcubehom{\bullet}{-}$, when $J\in \{I,J_+,J_1\}$ and $\otimes=\boxdot$ remains open. The reason the arguments in the proof of \cref{theorem:eilenberg_zilber} do not apply is the failure of diagonal maps $I\to I\boxdot I$, $J_+\to J_+\boxdot J_+$ and $J_1\to J_1\boxdot J_1$ to be continuous, hence showing one of the functors in the proof of \cref{theorem:eilenberg_zilber} is free on a collection of models is a harder problem.
\end{remark}

\section{Cohomology of Closure Spaces}
\label{section:cohomology}
Let $(C_{\bullet},\partial_{\bullet})$ be a chain complex of abelian groups and let $G$ be an abelian group. By applying the contravariant functor $\Hom_{\mathbb{Z}}(-,G)$ to each $C_n$ and each $\partial_n:C_n\to C_{n-1}$ we get a cochain complex $(C^{\bullet},\delta^{\bullet})$, where $C^n=\Hom_{\mathbb{Z}}(C_n,G)$ and $\delta^n:C^{n-1}\to C^n$ is the induced map from $\partial_n$ by $\Hom_{\mathbb{Z}}(-,G)$. 

Let $X$ be a closure space, $\otimes=\{\times,\boxdot\}$ and $J\in\{J_1,J_+,I\}$.  If $C_{\bullet}$ is one of the chain complexes $\gsimpchain{\bullet}{X}$ or $\gcubechain{\bullet}{X}$, we denote $C^{\bullet}$ by $\gsimpcochain{\bullet}{X;G}$ or $\gcubecochain{\bullet}{X;G}$, respectively. We denote the cohomology groups of these cochain complexes by $\gsimpcohom{\bullet}{X;G}$ and $\gcubecohom{\bullet}{X;G}$ respectively.

In this section we investigate some properties of these cohomology theories of closure spaces. The hard work was done in \cref{section:homology} and almost everything in this section follows by applying classical arguments in homological algebra to results in \cref{section:homology}. Throughout this section, $J\in \{J_1,J_+,I\}$ and $\otimes\in \{\times, \boxdot\}$.

\begin{remark}
\label{remark:chain_homotopy_induce_cochain_homotopies}
If $f:C_{\bullet}\to D_{\bullet}$ is a map of chain complexes, then it is well known that applying the contraviant functor $\Hom_{\mathbb{Z}}(-,G)$ induces a cochain map $f^*:D^{\bullet}\to C^{\bullet}$. Furthermore, suppose that $f,g:C_{\bullet}\to D_{\bullet}$ are chain homotopic. That is, there exist a chain homotopy $P$ such that $f-g=\partial^D_{\bullet} P+P\partial^C_{\bullet}$. Then, applying $\Hom_{\mathbb{Z}}(-,G)$ to both sides of the last equation yields $f^*-g^*=P^*\delta_D^{\bullet}+\delta_C^{\bullet}P^*$. That is, $P^*$ is a cochain homotopy between $f^*-g^*$. We thus immediately have the following result that follows from \cref{theorem:simplicial_and_cubical_isomorphism}
\end{remark}

\begin{theorem}
\label{theorem:simplicial_and_cubical_cohom_isomorphism}
There exist chain homotopies $\gsimpcochain{\bullet}{X;G}\simeq \gcubesimpcochain{\bullet}{X;G}$ for any closure space $X$.
\end{theorem}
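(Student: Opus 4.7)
The plan is to derive this statement directly from \cref{theorem:simplicial_and_cubical_isomorphism} by applying the contravariant functor $\Hom_{\mathbb{Z}}(-,G)$ and invoking the observation recorded in \cref{remark:chain_homotopy_induce_cochain_homotopies}. From \cref{theorem:simplicial_and_cubical_isomorphism} I have chain maps $f\colon\gsimpchainred{\bullet}{X}\to\gcubesimpchainred{\bullet}{X}$ and $g\colon\gcubesimpchainred{\bullet}{X}\to\gsimpchainred{\bullet}{X}$ together with chain homotopies $H\colon gf\simeq\mathrm{id}$ and $G\colon fg\simeq\mathrm{id}$, with the additional property that $f$ and $g$ are the identity in dimensions less than $2$ while $H$ and $G$ vanish in those dimensions.

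My first step will be to pass from the reduced to the unreduced complexes. Since $\gsimpchainred{\bullet}{X}$ is obtained from $\gsimpchain{\bullet}{X}$ by augmenting with $\mathbb{Z}$ in degree $-1$ (and similarly for the cubical side), and since $f,g$ act as the identity on this degree $-1$ piece while $H,G$ vanish there, the quadruple $(f,g,H,G)$ restricts verbatim to a chain homotopy equivalence between the unreduced complexes $\gsimpchain{\bullet}{X}$ and $\gcubesimpchain{\bullet}{X}$. In particular, $gf-\mathrm{id}=\partial H+H\partial$ and $fg-\mathrm{id}=\partial G+G\partial$ hold as identities of chain maps $\gsimpchain{\bullet}{X}\to\gsimpchain{\bullet}{X}$ and $\gcubesimpchain{\bullet}{X}\to\gcubesimpchain{\bullet}{X}$, respectively.

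Next, I will apply $\Hom_{\mathbb{Z}}(-,G)$ degreewise, which yields cochain maps $f^*\colon\gcubesimpcochain{\bullet}{X;G}\to\gsimpcochain{\bullet}{X;G}$ and $g^*\colon\gsimpcochain{\bullet}{X;G}\to\gcubesimpcochain{\bullet}{X;G}$. By the computation reproduced in \cref{remark:chain_homotopy_induce_cochain_homotopies}, applying $\Hom_{\mathbb{Z}}(-,G)$ to the two chain homotopy identities above gives
\[
f^*g^*-\mathrm{id}=H^*\delta+\delta H^*,\qquad g^*f^*-\mathrm{id}=G^*\delta+\delta G^*,
\]
so that $H^*$ and $G^*$ are the required cochain homotopies and $f^*,g^*$ exhibit the desired cochain homotopy equivalence $\gsimpcochain{\bullet}{X;G}\simeq\gcubesimpcochain{\bullet}{X;G}$.

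I do not anticipate any genuine obstacle, since the argument is entirely formal once \cref{theorem:simplicial_and_cubical_isomorphism} and \cref{remark:chain_homotopy_induce_cochain_homotopies} are available. The only mild bookkeeping point is the reduced-versus-unreduced passage, which is handled automatically by the low-dimensional triviality clause built into \cref{theorem:simplicial_and_cubical_isomorphism}; beyond that, the proof is a single application of the contravariant $\Hom$ functor.
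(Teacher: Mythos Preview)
Your proposal is correct and follows essentially the same approach as the paper, which simply states that the result follows immediately from \cref{theorem:simplicial_and_cubical_isomorphism} via \cref{remark:chain_homotopy_induce_cochain_homotopies}. Your added bookkeeping about passing from the reduced to the unreduced complexes, handled via the low-dimensional triviality clause in \cref{theorem:simplicial_and_cubical_isomorphism}, is a detail the paper leaves implicit but which you have correctly identified and resolved.
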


\begin{proposition}
\label{prop:cohomology_decomposition_into_path_components}
Let $X$ be a closure space. Then $\gcubecohom{n}{X}\cong \prod_{\alpha}\gcubecohom{n}{X_{\alpha}}$ and $\gsimpcohom{n}{X}\cong \prod_{\alpha}\gsimpcohom{n}{X_{\alpha}}$, where $\{X_{\alpha}\}_{\alpha\in A}$ are the $J$-path components of $X$.
\end{proposition}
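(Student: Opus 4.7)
The plan is to dualize the proof of \cref{prop:homology_decomposition_into_path_components}. Recall that in that proof, we observed that each $(J,\times)$ singular $n$-simplex $\sigma\colon\gsimp{n}\to X$ (respectively, each $(J,\otimes)$ singular $n$-cube) has $J$-path-connected image by \cref{lemma:path_connectedness_of_image}, since $\gsimp{n}$ and $\gcube{n}$ are themselves $J$-path-connected. Consequently, the image of $\sigma$ lies entirely in one $J$-path component $X_\alpha$, and we obtain a natural splitting of chain complexes
\[
  \gsimpchain{\bullet}{X} \;\cong\; \bigoplus_{\alpha\in A} \gsimpchain{\bullet}{X_\alpha},
  \qquad
  \gcubechain{\bullet}{X} \;\cong\; \bigoplus_{\alpha\in A} \gcubechain{\bullet}{X_\alpha},
\]
with the boundary maps respecting the decomposition.

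Next, I would apply the contravariant functor $\Hom_{\mathbb{Z}}(-,G)$ to these decompositions. Since $\Hom_{\mathbb{Z}}(-,G)$ converts direct sums into direct products, we obtain natural isomorphisms of cochain complexes
\[
  \gsimpcochain{n}{X;G} \;=\; \Hom_{\mathbb{Z}}\!\Bigl(\bigoplus_{\alpha} \gsimpchain{n}{X_\alpha},\,G\Bigr)
  \;\cong\; \prod_{\alpha}\Hom_{\mathbb{Z}}(\gsimpchain{n}{X_\alpha},G)
  \;=\; \prod_{\alpha} \gsimpcochain{n}{X_\alpha;G},
\]
and analogously for the cubical case. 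The coboundary $\delta^n$ is compatible with this product decomposition because it is induced by boundaries that preserved the direct sum decomposition.

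Finally, to pass from cochain complexes to cohomology, I would use the fact that (arbitrary) products are exact in the category of abelian groups, so that taking cohomology commutes with products. Explicitly, $\ker\delta^n$ and $\mathrm{im}\,\delta^{n-1}$ both split as products over $\alpha$, hence
\[
  \gsimpcohom{n}{X;G} \;\cong\; \prod_{\alpha} \gsimpcohom{n}{X_\alpha;G},
  \qquad
  \gcubecohom{n}{X;G} \;\cong\; \prod_{\alpha} \gcubecohom{n}{X_\alpha;G}.
\]
There is no real obstacle here; the only subtlety compared with the homology statement is the switch from coproducts to products, which is forced by the contravariance of $\Hom_{\mathbb{Z}}(-,G)$ and is the standard reason that cohomology of a disjoint union (or of a space decomposed by its path components) is a product rather than a direct sum.
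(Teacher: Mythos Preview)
Your proof is correct and follows essentially the same approach as the paper: recall the direct sum decomposition of chains from \cref{prop:homology_decomposition_into_path_components}, apply $\Hom_{\mathbb{Z}}(-,G)$ to convert the direct sum into a product, and note that the coboundary respects this decomposition so that cohomology splits as the corresponding product. The paper's proof is slightly terser but the argument is the same.
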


\begin{proof}
Recall that in the proof of \cref{prop:homology_decomposition_into_path_components} we argued that each $\gcubechain{n}{X}$ and $\gsimpchain{n}{X}$ splits into direct sum of their subgroups $\gcubechain{n}{X_{\alpha}}$ and $\gsimpchain{n}{X_{\alpha}}$, respectively. The result then follows from the fact that $\text{Hom}_{\mathbb{Z}}(\oplus_{\alpha} H_{\alpha},G)=\prod_i\text{Hom}_{\mathbb{Z}}(H_{\alpha},G)$ for any family of abelian groups $\{H_{\alpha}\}_{\alpha\in A}$. The coboundary maps of the two cochain complexes will respect this decomposition, hence the corresponding cohomology groups also split into products on the $J$-path components accordingly. 
\end{proof}

\subsection{Universal Coefficients theorems}
Suppose we have a chain complex $C_{\bullet}$ of free abelian groups and we consider its cochain complex $C^{\bullet}$ with coefficients in an abelian group $G$. By formalism of homological algebra, it is possible to calculate cohomology of $C^{\bullet}$ in terms of the homology of $C_{\bullet}$ using universal coefficients theorems \cite{weibel1995introduction}. In particular, for closure spaces we have the following results, since $\gsimpchain{\bullet}{X}$ and $\gcubechain{\bullet}{X}$ are complexes of free abelian groups.

\begin{theorem}[UCT for cohomology]
\label{theorem:uct_theorem_cohomology}
Let $X$ be a closure space, let $G$ be an abelian group. Then for all $n$ there exist natural short exact sequences
\[\emph{Ext}(H_{n-1}(\gsimpchain{\bullet}{X};G)\to \gsimpcohom{\bullet}{X;G}\to \emph{Hom}_{\mathbb{Z}}(\gsimphom{n}{X},G),\]
\[\emph{Ext}(H_{n-1}(\gcubechain{\bullet}{X};G)\to \gcubecohom{\bullet}{X;G}\to \emph{Hom}_{\mathbb{Z}}(\gcubehom{n}{X},G),\]
which split, though not naturally.
\end{theorem}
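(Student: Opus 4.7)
The plan is to derive both short exact sequences as direct applications of the classical universal coefficients theorem for cohomology from homological algebra, exactly dualizing the strategy already used to prove \cref{theorem:uct_theorem_homology}. The only substantive verification is that each chain complex involved is degreewise free abelian. For the simplicial complex this is immediate from \cref{def:simplicial_homology}: $\gsimpchain{n}{X}$ is by construction the free abelian group on the set $\cat{Cl}(\gsimp{n},X)$. For the cubical complex, \cref{def:cubical_homology_theory} presents $\gcubechain{n}{X}$ as the quotient $\gcubechainwhole{n}{X}/\gcubechaindeg{n}{X}$, where $\gcubechainwhole{n}{X}$ is free abelian on the continuous maps $\gcube{n}\to X$ and $\gcubechaindeg{n}{X}$ is the subgroup generated by the subset of these maps consisting of degenerate cubes; hence the quotient is free abelian, with basis the non-degenerate cubes. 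Alternatively, \cref{theorem:normalized_cubical_complex} identifies the quotient with the normalized subcomplex $\bigcap_{i=1}^n\ker(d_{i,1})\subseteq \gcubechainwhole{n}{X}$, which is free as a subgroup of a free abelian group.

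With degreewise freeness in hand, each short exact sequence is an instance of the standard fact that for any chain complex $C_\bullet$ of free abelian groups and any abelian group $G$, there is a natural short exact sequence
\[
0\to \mathrm{Ext}(H_{n-1}(C_\bullet),G)\to H^n(\mathrm{Hom}_{\mathbb{Z}}(C_\bullet,G))\to \mathrm{Hom}_{\mathbb{Z}}(H_n(C_\bullet),G)\to 0
\]
which splits, though not naturally; see for instance \cite[Theorem 3.6.5]{weibel1995introduction}. Specialising to $C_\bullet=\gsimpchain{\bullet}{X}$ and $C_\bullet=\gcubechain{\bullet}{X}$ yields the two stated sequences, and the splitting is inherited from the general theorem. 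No genuine obstacle arises: the entire argument is a direct appeal to homological algebra, and the only input specific to closure spaces is the degreewise freeness that was built into the chain complex definitions in \cref{section:homology_theories}. If anything, the minor bookkeeping point to be careful about is the cubical quotient, where one must observe that the subgroup of degenerates is generated by a subset of the free basis (so that the quotient remains free) rather than by an arbitrary subgroup.
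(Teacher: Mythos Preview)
Your proposal is correct and mirrors the paper's own treatment: the paper does not supply a formal proof but simply notes, immediately before the statement, that $\gsimpchain{\bullet}{X}$ and $\gcubechain{\bullet}{X}$ are complexes of free abelian groups and invokes the classical universal coefficients theorem from \cite{weibel1995introduction}. Your additional care in verifying freeness of the cubical quotient (observing that the degenerates form a sub-basis, or alternatively appealing to \cref{theorem:normalized_cubical_complex}) is a welcome elaboration of a point the paper leaves implicit.
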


From this, and the five lemma one can deduce the following.

\begin{corollary}
\label{corollary:chain_map_inducing_isomorphism}
If a chain map $f:\gsimpchain{\bullet}{X}\to \gsimpchain{\bullet}{Y}$ ($f:\gcubechain{\bullet}{X}\to \gcubechain{\bullet}{Y}$) induces an isomorphism on homology, then the cochain map $f^*:\gsimpcochain{\bullet}{Y;G}\to \gsimpcochain{\bullet}{X;G}$ ($f^*:\gcubecochain{\bullet}{Y;G}\to \gcubecochain{\bullet}{X;G}$) induces an isomorphism on cohomology, for any abelian group $G$.
\end{corollary}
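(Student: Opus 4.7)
The plan is to deduce the statement from the naturality of the universal coefficients theorem (Theorem~\ref{theorem:uct_theorem_cohomology}) together with the Five Lemma, exactly as in the classical topological case. I will describe the argument for the simplicial version; the cubical version is identical with $\gsimpchain{\bullet}{-}$ replaced by $\gcubechain{\bullet}{-}$.

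First I would observe that a chain map $f:\gsimpchain{\bullet}{X}\to \gsimpchain{\bullet}{Y}$ induces, by applying $\Hom_{\mathbb{Z}}(-,G)$ degreewise, a cochain map $f^*:\gsimpcochain{\bullet}{Y;G}\to \gsimpcochain{\bullet}{X;G}$. Since $\Hom_{\mathbb{Z}}(-,G)$ and $\mathrm{Ext}(-,G)$ are functors on abelian groups (contravariant in the first variable), and since the universal coefficient short exact sequences of Theorem~\ref{theorem:uct_theorem_cohomology} are natural in $\gsimpchain{\bullet}{-}$, the induced maps fit into a commutative diagram of short exact sequences
\[
\begin{tikzcd}[column sep=small]
0 \ar[r] & \mathrm{Ext}(\gsimphom{n-1}{Y},G) \ar[r] \ar[d,"f^{*}"'] & \gsimpcohom{n}{Y;G} \ar[r] \ar[d,"f^{*}"] & \Hom_{\mathbb{Z}}(\gsimphom{n}{Y},G) \ar[r] \ar[d,"f^{*}"] & 0 \\
0 \ar[r] & \mathrm{Ext}(\gsimphom{n-1}{X},G) \ar[r] & \gsimpcohom{n}{X;G} \ar[r] & \Hom_{\mathbb{Z}}(\gsimphom{n}{X},G) \ar[r] & 0
\end{tikzcd}
\]
for every $n$.

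Next, by hypothesis the maps $f_{*}:\gsimphom{k}{X}\to \gsimphom{k}{Y}$ are isomorphisms of abelian groups for all $k$. Since $\Hom_{\mathbb{Z}}(-,G)$ and $\mathrm{Ext}(-,G)$ carry isomorphisms of abelian groups to isomorphisms, the outer two vertical arrows in the diagram are isomorphisms. Applying the Five Lemma, the middle vertical arrow $f^{*}:\gsimpcohom{n}{Y;G}\to \gsimpcohom{n}{X;G}$ is an isomorphism as well.

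There is no real obstacle: the only point that requires care is verifying naturality of the UCT splitting, which is a standard property of the construction and follows directly from the naturality of the connecting homomorphism associated to the short exact sequence of cochain complexes obtained from a free resolution of $G$ (or, equivalently, from the universal construction via the short exact sequence $0\to Z_{n}\to C_{n}\to B_{n-1}\to 0$ of cycles and boundaries, which $f$ preserves). The cubical statement follows verbatim, invoking Theorem~\ref{theorem:uct_theorem_cohomology} applied to $\gcubechain{\bullet}{-}$ in place of $\gsimpchain{\bullet}{-}$.
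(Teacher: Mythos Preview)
Your proposal is correct and matches the paper's own argument, which consists of the single sentence ``From this, and the five lemma one can deduce the following,'' referring to the UCT for cohomology. One minor terminological slip: in your final paragraph you write that the point requiring care is ``verifying naturality of the UCT splitting,'' but the splitting is in general \emph{not} natural; what you actually use---and correctly display in your commutative diagram---is the naturality of the short exact sequence itself, which is all the Five Lemma needs.
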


From \cref{prop:cohomology_decomposition_into_path_components,theorem:uct_theorem_cohomology,prop:0_homology_counts_path_components} we also get the following result.

\begin{proposition}
\label{prop:0_cohomology_counts_path_components}
Let $X$ be a closure space. Then $\gcubecohom{0}{X}\cong \gsimpcohom{0}{X}\cong \prod_{a\in \pi_0^J(X)}\mathbb{Z}$.
\end{proposition}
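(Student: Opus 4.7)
The plan is to deduce this from the Universal Coefficients Theorem for cohomology (\cref{theorem:uct_theorem_cohomology}) together with the computation of zeroth homology in \cref{prop:0_homology_counts_path_components}. First, I would apply \cref{theorem:uct_theorem_cohomology} with $G=\mathbb{Z}$ and $n=0$. Since both chain complexes $\gsimpchain{\bullet}{X}$ and $\gcubechain{\bullet}{X}$ are concentrated in non-negative degrees, we have $H_{-1}(\gsimpchain{\bullet}{X})=H_{-1}(\gcubechain{\bullet}{X})=0$, so the $\mathrm{Ext}$ term in the UCT vanishes. This gives natural isomorphisms
\[
\gsimpcohom{0}{X}\cong \Hom_{\mathbb{Z}}(\gsimphom{0}{X},\mathbb{Z}), \qquad \gcubecohom{0}{X}\cong \Hom_{\mathbb{Z}}(\gcubehom{0}{X},\mathbb{Z}).
\]

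Next, I would invoke \cref{prop:0_homology_counts_path_components}, which gives $\gsimphom{0}{X}\cong \gcubehom{0}{X}\cong \bigoplus_{a\in \pi_0^J(X)}\mathbb{Z}$. Substituting into the isomorphisms above and using the standard fact that $\Hom_{\mathbb{Z}}$ turns direct sums in the first variable into products, namely $\Hom_{\mathbb{Z}}\!\bigl(\bigoplus_{a\in \pi_0^J(X)}\mathbb{Z},\mathbb{Z}\bigr)\cong \prod_{a\in \pi_0^J(X)}\Hom_{\mathbb{Z}}(\mathbb{Z},\mathbb{Z})\cong \prod_{a\in \pi_0^J(X)}\mathbb{Z}$, yields the result in both the simplicial and cubical cases.

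As a cross-check, one could alternatively argue from \cref{prop:cohomology_decomposition_into_path_components} by reducing to the case where $X$ is $J$-path connected, in which case \cref{prop:0_homology_counts_path_components} and the UCT immediately give $\gsimpcohom{0}{X}\cong \gcubecohom{0}{X}\cong \mathbb{Z}$; taking the product over $\pi_0^J(X)$ recovers the claim. Since every step is a direct application of results already established in the excerpt, no step presents a real obstacle; the only mild subtlety to flag is the use of $H_{-1}=0$ to kill the $\mathrm{Ext}$ term, and the algebraic fact that $\Hom_{\mathbb{Z}}$ converts the direct sum indexed by $\pi_0^J(X)$ (which may be infinite) into a direct product, which is why the statement is phrased with $\prod$ rather than $\bigoplus$.
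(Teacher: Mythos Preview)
Your proposal is correct and matches the paper's approach exactly: the paper simply cites \cref{prop:cohomology_decomposition_into_path_components,theorem:uct_theorem_cohomology,prop:0_homology_counts_path_components} without further elaboration, and your argument is a fleshed-out version of precisely that deduction.
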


\subsection{Homotopy Invariance}
Let $f:X\to Y$ be a map of closure spaces. Recall that $f$ induces chain maps $f_{\#}:\gsimpchain{\bullet}{X}\to \gsimpchain{\bullet}{Y}$ and $f_{\#}:\gcubechain{\bullet}{X}\to \gcubechain{\bullet}{Y}$. Applying the contravariant functor $\Hom_{\mathbb{Z}}(-,G)$ to these complexes then induces cochain maps $f^{\#}:\gsimpcochain{\bullet}{Y;G}\to\gsimpcochain{\bullet}{X;G}$ and $f^{\#}:\gcubecochain{\bullet}{Y;G}\to \gcubecochain{\bullet}{X;G}$, respectively. Furthermore, this map induces maps  on cohomology $f^n:\gsimpcohom{n}{Y;G}\to \gsimpcohom{n}{X;G}$ and $f^n:\gcubecohom{n}{Y;G}\to \gcubecohom{n}{X;G}$.
If $f,g:X\to Y$ are continuous and $f\sim_{(J,\otimes)}g$, one can construct a chain homotopy $P$ between $f_{\#},g_{\#}:\gcubechain{\bullet}{X}\to \gcubechain{\bullet}{Y}$ \cite[Theorem 6.6]{bubenik2021applied}. That is $\partial P+P\partial=f_{\#}-g_{\#}$. Similarly, we have a chain homotopy in the simplicial case that was constructed in the proof of \cref{theorem:simplicial_homotopy_invariance}. Applying the contravariant functor, in both cases, $\Hom_{\mathbb{Z}}(-,G)$ to the chain complexes in question, produces cochain homotopies as observed in \cref{remark:chain_homotopy_induce_cochain_homotopies}. We thus have the following result about homotopy invariance of cohomology.

\begin{theorem}
\label{theorem:homotopy_invariance_of_cohomology}
Suppose $f\sim_{(J,\otimes)}g:X\to Y$. Then $f^*=g^*:\gcubecohom{\bullet}{Y;G}\to \gcubecohom{\bullet}{X;G}$. If $f\sim_{(J,\times)}g$ then $f^*=g^*:\gsimpcohom{\bullet}{Y;G}\to \gsimpcohom{\bullet}{X;G}$.
\end{theorem}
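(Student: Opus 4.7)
The plan is to deduce this result by dualizing the homology-level chain homotopies already constructed in \cref{theorem:cubical_homotopy_invariance,theorem:simplicial_homotopy_invariance}, using the general fact noted in \cref{remark:chain_homotopy_induce_cochain_homotopies} that $\Hom_{\mathbb{Z}}(-,G)$ converts chain homotopies into cochain homotopies.

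First, suppose $f \sim_{(J,\otimes)} g$. By the proof of \cref{theorem:cubical_homotopy_invariance} (i.e. \cite[Theorem 6.6]{bubenik2021applied}), there exists a chain homotopy $P_\bullet : \gcubechain{\bullet}{X} \to \gcubechain{\bullet+1}{Y}$ satisfying $\partial P + P \partial = f_{\#} - g_{\#}$. Applying the contravariant functor $\Hom_{\mathbb{Z}}(-,G)$ degreewise yields homomorphisms $P^{\#} : \gcubecochain{\bullet+1}{Y;G} \to \gcubecochain{\bullet}{X;G}$, and the identity $\partial P + P\partial = f_{\#} - g_{\#}$ becomes, after applying the functor and using the fact that $\Hom_{\mathbb{Z}}(-,G)$ turns the differential $\partial$ into the coboundary $\delta$, the identity $\delta P^{\#} + P^{\#}\delta = f^{\#} - g^{\#}$. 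Thus $P^{\#}$ is a cochain homotopy between $f^{\#}$ and $g^{\#}$, so the induced maps on cohomology agree: $f^{*} = g^{*} : \gcubecohom{\bullet}{Y;G} \to \gcubecohom{\bullet}{X;G}$.

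For the simplicial case, if $f \sim_{(J,\times)} g$, then the prism operator $P_\bullet : \gsimpchain{\bullet}{X} \to \gsimpchain{\bullet+1}{Y}$ constructed explicitly in the proof of \cref{theorem:simplicial_homotopy_invariance} satisfies $\partial P + P \partial = g_{\#} - f_{\#}$. The same formal dualization argument as above applies: $P^{\#} \defeq \Hom_{\mathbb{Z}}(P,G)$ is a cochain homotopy between $f^{\#}$ and $g^{\#}$ as maps $\gsimpcochain{\bullet}{Y;G} \to \gsimpcochain{\bullet}{X;G}$, and hence $f^{*} = g^{*}$ on $\gsimpcohom{\bullet}{Y;G}$. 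There is no genuine obstacle here; the only thing to be careful about is the sign convention and the direction of arrows after applying $\Hom$, but the standard cochain-homotopy identity is preserved verbatim by the functor, so the argument is purely formal once \cref{theorem:cubical_homotopy_invariance} and \cref{theorem:simplicial_homotopy_invariance} are in hand.
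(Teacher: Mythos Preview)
Your proof is correct and follows exactly the approach the paper takes: the paragraph preceding the theorem statement spells out precisely this dualization argument, invoking the chain homotopies from \cref{theorem:cubical_homotopy_invariance} and \cref{theorem:simplicial_homotopy_invariance} and applying $\Hom_{\mathbb{Z}}(-,G)$ via \cref{remark:chain_homotopy_induce_cochain_homotopies}.
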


\subsection{Relative Cohomology}
Recall that for a pair of closure spaces $(X,A)$ we had the following short exact sequences of chain complexes
\begin{gather*}
0\to \gsimpchain{\bullet}{A}\xrightarrow{i}\gsimpchain{\bullet}{X}\xrightarrow{j}\gsimpchain{\bullet}{X,A}\to 0,\\
0\to \gcubechain{\bullet}{A}\xrightarrow{i}\gcubechain{\bullet}{X}\xrightarrow{j}\gcubechain{\bullet}{X,A}\to 0.
\end{gather*}
The contravariant functor $\Hom_{\mathbb{Z}}(-,G)$ is known to be left exact from classical module theory. Thus, applying it to the above sequences yields the following left exact sequences
\begin{gather*}
0\to \gsimpcochain{\bullet}{X,A;G}\xrightarrow{j^*}\gsimpcochain{\bullet}{X;G}\xrightarrow{i^*}\gsimpcochain{\bullet}{A;G},\\    
0\to \gcubecochain{\bullet}{X,A;G}\xrightarrow{j^*}\gcubecochain{\bullet}{X;G}\xrightarrow{i^*}\gcubecochain{\bullet}{A;G}.
\end{gather*}
where we define $\gsimpcochain{n}{X,A;G}=\Hom_{\mathbb{Z}}(\gsimpchain{n}{X},G)$ and $\gcubecochain{n}{X,A;G}=\Hom_{\mathbb{Z}}(\gcubechain{n}{X},G)$ for all $n$. It turns out that these sequences are not only right exact, but are in fact exact. To show this, it suffices to argue that the chain maps $i^*$ are epimorphisms, that is for all $n$, $i^{*n}$ is a surjective group homomorphisms. Note that $i^{*n}$ restricts cochains on $X$ to cochains on $A$. Thus, given a group homomorphisms $\varphi:\gsimpchain{n}{X}\to G$, $i^{*n}\varphi$ is the restriction of $\varphi$ to the subgroup $\gsimpchain{n}{A}\subseteq \gsimpchain{n}{X}$. If $\psi:\gsimpchain{n}{A}\to G$ is a cochain on $A$, we can extend it to a cochain on $X$ by extending by $0$ on those chains that are in $X$ but not in $A$. Thus $i^{*n}$ is surjective for all $n$. Same arguments are applicable in the cubical case. Thus, since we have short exact sequences of cochains, standard arguments using the snake lemma guarantee the existence of a long exact sequence on cohomology. The formal statement is as follows.

\begin{theorem}
\label{theorem:relative_cohomology_long_exact_sequence}
For a closure space pair $(X,A)$ we have the following long exact sequences 
\begin{gather*}
\cdots\to \gsimpcohom{n}{X,A;G}\xrightarrow{j^*}\gsimpcohom{n}{X;G}\xrightarrow{i^*}\gsimpcohom{n}{A;G}\xrightarrow{\delta}\gsimpcohom{n+1}{X,A;G}\to\cdots,\\
\cdots\to \gcubecohom{n}{X,A;G}\xrightarrow{j^*}\gcubecohom{n}{X;G}\xrightarrow{i^*}\gcubecohom{n}{A;G}\xrightarrow{\delta}\gcubecohom{n+1}{X,A;G}\to\cdots.
\end{gather*}
\end{theorem}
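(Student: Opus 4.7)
The plan is to deduce both long exact sequences as the standard zig-zag consequence of a short exact sequence of cochain complexes, with most of the setup already laid out in the paragraph preceding the statement. Concretely, my proposal amounts to making two points precise: that the dualized sequences are short exact (not merely left exact), and that the snake lemma then produces the asserted long exact sequences with the usual connecting homomorphism.

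First, I would start from the short exact sequences of chain complexes
\[0\to \gsimpchain{\bullet}{A}\xrightarrow{i}\gsimpchain{\bullet}{X}\xrightarrow{j}\gsimpchain{\bullet}{X,A}\to 0,\qquad 0\to \gcubechain{\bullet}{A}\xrightarrow{i}\gcubechain{\bullet}{X}\xrightarrow{j}\gcubechain{\bullet}{X,A}\to 0,\]
which are immediate from \cref{def:relative_simplicia_homology,def:relative_cubical_homology}. Applying the contravariant functor $\Hom_{\mathbb{Z}}(-,G)$ yields left exact sequences of cochain complexes by general homological algebra, so the only nontrivial verification is that the restriction maps $i^{*n}:\gsimpcochain{n}{X;G}\to \gsimpcochain{n}{A;G}$ and $i^{*n}:\gcubecochain{n}{X;G}\to \gcubecochain{n}{A;G}$ are surjective.

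The key observation supporting this surjectivity is that each of $\gsimpchain{n}{X}$ and $\gcubechain{n}{X}$ is a free abelian group whose canonical basis (continuous maps $\gsimp{n}\to X$, respectively $\gcube{n}\to X$) contains the canonical basis of $\gsimpchain{n}{A}$, respectively $\gcubechain{n}{A}$, as a subset, since a continuous map into $A$ is in particular a continuous map into $X$. Consequently, given any homomorphism $\varphi:\gsimpchain{n}{A}\to G$, one obtains an extension $\tilde\varphi:\gsimpchain{n}{X}\to G$ by sending each basis element factoring through $A$ to its image under $\varphi$ and each remaining basis element to $0$; by construction $i^{*n}\tilde\varphi=\varphi$. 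The same argument applies verbatim in the cubical case.

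With short exact sequences of cochain complexes now in hand, the snake lemma (equivalently, the zig-zag lemma for cohomology) produces the asserted long exact sequences, with the connecting homomorphism $\delta$ defined in the standard way: represent a class in $\gsimpcohom{n}{A;G}$ by a cocycle $\varphi$, extend it as above to $\tilde\varphi\in\gsimpcochain{n}{X;G}$, take its coboundary $\delta\tilde\varphi\in \gsimpcochain{n+1}{X;G}$, and observe that $\delta\tilde\varphi$ vanishes on chains supported in $A$ (because $\varphi$ is a cocycle on $A$) and hence descends to a cocycle in $\gsimpcochain{n+1}{X,A;G}$ whose class is $\delta[\varphi]$. I do not expect any genuine obstacle: the only content beyond routine homological algebra is the splitting/extension-by-zero argument of the previous paragraph, which relies essentially on the freeness of the singular chain complexes.
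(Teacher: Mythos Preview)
Your proposal is correct and follows essentially the same route as the paper: dualize the short exact sequences of chain complexes, verify that the restriction maps $i^{*n}$ are surjective via extension by zero on the free basis elements, and then apply the snake lemma. The only difference is cosmetic---you spell out the construction of the connecting homomorphism $\delta$, which the paper leaves implicit.
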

 
\subsection{Excision and Mayer-Vietoris}
Let $\mathcal{U}$ be an interior cover for a closure space $X$. Recall the chain complexes $\gsimpchain{\bullet}{X}^{\mathcal{U}}$ and $\gcubechain{\bullet}{X}^{\mathcal{U}}$. Applying the functor $\Hom_{\mathbb{Z}}(-,G)$ to these we get cochain complexes $\gsimpcochain{\bullet}{X;G}^{\mathcal{U}}$ and $\gcubecochain{\bullet}{X;G}^{\mathcal{U}}$, respectively. In \cref{prop:chain_of_interior_cover} where we showed that $\topchain{\bullet}{X}^{\mathcal{U}}$ and $\topchain{\bullet}{X}$ are chain homotopic and that $\topcubechain{\bullet}{X}^{\mathcal{U}}$ and $\topcubechain{\bullet}{X}$. We also showed the stronger statements that $\gsimpchain{\bullet}{X}=\gsimpchain{\bullet}{X}^{\mathcal{U}}$ and $\gcubesimpchain{\bullet}{X}=\gcubesimpchain{\bullet}{X}^{\mathcal{U}}$ when $J\in \{J_1,J_+\}$. From \cref{remark:chain_homotopy_induce_cochain_homotopies} we thus get the following.

\begin{proposition}
\label{prop:cochain_of_interior_cover}
$\gsimpcochain{\bullet}{X;G}^{\mathcal{U}}$ is chain homotopy equivalent to $\gsimpcochain{\bullet}{X;G}$ and $\gcubesimpcochain{\bullet}{X;G}^{\mathcal{U}}$ is chain homotopy equivalent to $\gcubesimpcochain{\bullet}{X;G}$. Moreover, $\gsimpcochain{\bullet}{X;G}^{\mathcal{U}}=\gsimpcochain{\bullet}{X;G}$ and\\ $\gcubesimpcochain{\bullet}{X;G}^{\mathcal{U}}=\gcubesimpcochain{\bullet}{X;G}$ when $J\in \{J_+,J_1\}$.
\end{proposition}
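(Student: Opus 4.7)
The plan is to deduce the proposition directly from the corresponding chain-level statement, \cref{prop:chain_of_interior_cover}, by applying the contravariant functor $\Hom_{\mathbb{Z}}(-,G)$ and invoking the observation recorded in \cref{remark:chain_homotopy_induce_cochain_homotopies}.

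First, I would unpack the definitions. The inclusion of chain complexes $\iota:\gsimpchain{\bullet}{X}^{\mathcal{U}}\hookrightarrow \gsimpchain{\bullet}{X}$ is, by \cref{prop:chain_of_interior_cover}, a chain homotopy equivalence, with chain homotopy inverse $\rho$ and chain homotopies $P:\rho\iota\simeq \mathbf{1}$ and $Q:\iota\rho\simeq \mathbf{1}$. By definition $\gsimpcochain{\bullet}{X;G}=\Hom_{\mathbb{Z}}(\gsimpchain{\bullet}{X},G)$, and similarly for $\gsimpcochain{\bullet}{X;G}^{\mathcal{U}}:=\Hom_{\mathbb{Z}}(\gsimpchain{\bullet}{X}^{\mathcal{U}},G)$. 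Applying the additive functor $\Hom_{\mathbb{Z}}(-,G)$ converts $\iota,\rho,P,Q$ into cochain maps $\iota^{*},\rho^{*}$ and cochain homotopies $P^{*},Q^{*}$, and the identities $\partial P+P\partial=\rho\iota-\mathbf{1}$ and $\partial Q+Q\partial=\iota\rho-\mathbf{1}$ transform into $\delta P^{*}+P^{*}\delta=\iota^{*}\rho^{*}-\mathbf{1}$ and $\delta Q^{*}+Q^{*}\delta=\rho^{*}\iota^{*}-\mathbf{1}$, exactly as stated in \cref{remark:chain_homotopy_induce_cochain_homotopies}. This gives the required cochain homotopy equivalence between $\gsimpcochain{\bullet}{X;G}^{\mathcal{U}}$ and $\gsimpcochain{\bullet}{X;G}$. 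The same argument, applied verbatim to the cubical chain homotopy equivalence $\gcubesimpchain{\bullet}{X}^{\mathcal{U}}\simeq \gcubesimpchain{\bullet}{X}$ from \cref{prop:chain_of_interior_cover}, yields the cochain statement for $\gcubesimpcochain{\bullet}{X;G}$.

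For the ``moreover'' part, \cref{prop:chain_of_interior_cover} shows the stronger equalities $\gsimpchain{\bullet}{X}^{\mathcal{U}}=\gsimpchain{\bullet}{X}$ and $\gcubesimpchain{\bullet}{X}^{\mathcal{U}}=\gcubesimpchain{\bullet}{X}$ whenever $J\in\{J_1,J_+\}$, since any singular simplex or cube with domain a compact topological space of the appropriate form must factor through an element of any interior cover. Applying $\Hom_{\mathbb{Z}}(-,G)$ to an equality of chain complexes produces an equality of cochain complexes, so we immediately obtain $\gsimpcochain{\bullet}{X;G}^{\mathcal{U}}=\gsimpcochain{\bullet}{X;G}$ and $\gcubesimpcochain{\bullet}{X;G}^{\mathcal{U}}=\gcubesimpcochain{\bullet}{X;G}$ in these cases.

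There is no serious obstacle here: the entire proposition is a formal consequence of \cref{prop:chain_of_interior_cover} together with the general principle that the additive contravariant functor $\Hom_{\mathbb{Z}}(-,G)$ preserves chain homotopy equivalences (and, a fortiori, equalities). The only point that merits being written out explicitly is the translation of the chain homotopy identities into cochain homotopy identities under $\Hom_{\mathbb{Z}}(-,G)$, which was already recorded in \cref{remark:chain_homotopy_induce_cochain_homotopies}; thus the proof can be essentially one short paragraph citing that remark and \cref{prop:chain_of_interior_cover}.
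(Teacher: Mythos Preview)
Your proposal is correct and follows essentially the same approach as the paper: the paper's argument is precisely the one-paragraph observation that the proposition follows from \cref{prop:chain_of_interior_cover} by applying $\Hom_{\mathbb{Z}}(-,G)$ and invoking \cref{remark:chain_homotopy_induce_cochain_homotopies}. Your write-up makes the chain-homotopy bookkeeping more explicit than the paper does, but the content is identical.
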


\begin{theorem}[Excision]
\label{theorem:excision_cohomology}
Let $(X,c)$ be a closure space. Let $Z\subseteq A\subseteq X$ be such that $c(Z)\subseteq i(A)$. Then, the inclusion $(X-Z,A-Z)\to (X,A)$ induces isomorphisms $\gsimpcohom{n}{X-Z,X-A;G}\cong \gsimpcohom{n}{X,A;G}$ and $\gcubesimpcohom{n}{X-Z,X-A;G}\cong \gcubesimpcohom{n}{X,A;G}$ for all $n$. Equivalently, for an interior cover $\{A,B\}$ of $(X,c)$, the inclusion $(B,A\cap B)\to (X,A)$ induces isomorphisms $\gsimpcohom{n}{B,A\cap B;G}\cong \gsimpcohom{n}{X,A;G}$ and $\gcubesimpcohom{n}{B,A\cap B;G}\cong \gcubesimpcohom{n}{X,A;G}$ for all $n$.
\end{theorem}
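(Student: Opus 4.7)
The plan is to dualize the proof of the excision theorem for homology (\cref{theorem:excision}) by applying the contravariant functor $\Hom_{\mathbb{Z}}(-,G)$ to the chain-level equivalence established there. The equivalence of the two formulations (the one with $Z \subseteq A \subseteq X$ satisfying $c(Z)\subseteq i(A)$, and the one with an interior cover $\{A,B\}$) is purely set-theoretic and identical to the argument given for homology, so I will focus on the interior-cover formulation.

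First, recall from the proof of \cref{theorem:excision} that for the interior cover $\mathcal{U}=\{A,B\}$ of $X$, one obtains a chain map
\[
\gsimpchain{\bullet}{B}/\gsimpchain{\bullet}{A\cap B} \longrightarrow \gsimpchain{\bullet}{X}/\gsimpchain{\bullet}{A}
\]
that factors as an isomorphism $\gsimpchain{\bullet}{B}/\gsimpchain{\bullet}{A\cap B} \xrightarrow{\cong} \gsimpchain{\bullet}{X}^{\mathcal{U}}/\gsimpchain{\bullet}{A}$ (since both are free on the singular simplices in $B$ not lying in $A$), followed by the quotient of the chain homotopy equivalence $\gsimpchain{\bullet}{X}^{\mathcal{U}} \simeq \gsimpchain{\bullet}{X}$ provided by \cref{prop:chain_of_interior_cover}. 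In particular, the composite is itself a chain homotopy equivalence, not merely a quasi-isomorphism. The same reasoning applies in the cubical setting using the chain homotopy equivalence $\gcubesimpchain{\bullet}{X}^{\mathcal{U}} \simeq \gcubesimpchain{\bullet}{X}$.

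Next, I apply $\Hom_{\mathbb{Z}}(-,G)$. By \cref{remark:chain_homotopy_induce_cochain_homotopies}, this contravariant functor preserves chain homotopies: a chain homotopy $P$ with $\partial P + P\partial = f - g$ gives a cochain homotopy $P^*$ with $\delta P^* + P^* \delta = f^* - g^*$. Since $\gsimpcochain{n}{X,A;G}$ is by construction $\Hom_{\mathbb{Z}}(\gsimpchain{n}{X}/\gsimpchain{n}{A}, G)$ (the kernel of the restriction map $i^{*n}$, as described in the discussion preceding \cref{theorem:relative_cohomology_long_exact_sequence}), applying $\Hom_{\mathbb{Z}}(-,G)$ to the chain homotopy equivalence from the previous paragraph yields a cochain homotopy equivalence
\[
\gsimpcochain{\bullet}{X,A;G} \simeq \gsimpcochain{\bullet}{B, A\cap B; G},
\]
and similarly in the cubical case. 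Passing to cohomology gives the claimed isomorphisms.

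There is essentially no obstacle to this argument: the hard work was already done at the chain level in \cref{prop:chain_of_interior_cover} and \cref{theorem:excision}, and the passage from homology to cohomology is immediate from the preservation of chain homotopies by $\Hom_{\mathbb{Z}}(-,G)$. Alternatively, one could invoke \cref{corollary:chain_map_inducing_isomorphism} (a five-lemma argument combined with the universal coefficient theorem) applied to the chain homotopy equivalence of relative chain complexes, but the direct homotopy-preservation route is cleaner. Note also that, just as in the homology case, the argument works only for $\otimes = \times$ (or, for $J \in \{J_1,J_+\}$, regardless of $\otimes$ by the equality $\gsimpchain{\bullet}{X}^{\mathcal{U}} = \gsimpchain{\bullet}{X}$ noted in \cref{prop:chain_of_interior_cover}), since \cref{remark:excision_failure} shows the chain-level input fails for $\otimes=\boxdot$ with $J=I$.
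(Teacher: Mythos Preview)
Your proof is correct, and you even anticipate the paper's own argument in your final paragraph. The paper's proof reads in its entirety: ``This follows from \cref{theorem:excision}, \cref{theorem:uct_theorem_cohomology} and the five lemma.'' That is, the paper invokes the homology excision isomorphism and then upgrades it to a cohomology isomorphism via the universal coefficient short exact sequence and the five lemma (essentially \cref{corollary:chain_map_inducing_isomorphism}). Your route instead stays at the chain level: you use that the relative chain map in the proof of \cref{theorem:excision} is already a chain homotopy equivalence, and that $\Hom_{\mathbb{Z}}(-,G)$ preserves chain homotopies (\cref{remark:chain_homotopy_induce_cochain_homotopies}), so the dualized map is a cochain homotopy equivalence outright. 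Your approach is slightly more elementary in that it avoids UCT and the five lemma entirely; the paper's approach is more black-box, leaning on already-proved corollaries. Both are standard and equally valid.
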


\begin{proof}
This follows from \cref{theorem:excision,theorem:uct_theorem_cohomology} and the five lemma. 
\end{proof}

\subsection{Eilenberg-Steenrod axioms for cohomology}
Here we define a $(J,\otimes)$ Eilenberg-Steenrod cohomology theory for closure spaces. We show existence of various such theories. 

\begin{definition}
\label{def:indiscrete_eilenberg_steenrod_homology_theory}
A \emph{$(J,\otimes)$ cohomology theory of closure spaces} consists of the following:
\begin{itemize}
\item A sequence of contravariant functors $H_n(-)$ from $\mathbf{Cl}$ to the category of abelian groups.
\item A natural transformation $\partial:H^n((X,A)\to H^{n-1}(A)$.
\end{itemize}
These functors are subject to the following axioms:
\begin{itemize}
\item[1)] (homotopy) $\sim_{(J,\otimes)}$ equivalent maps induce the same map in cohomology. 
\item[2)] (excision) Let $\{A,B\}$ be an interior cover of $X$. Then the inclusion $(B,A\cap B)\to (X,A)$ induces isomorphisms $H^n(B,A\cap B)\cong H^n(X,A)$, for all $n$.
\item[3)] (dimension) Let $X$ be the one point space. Then $H^n(X,\emptyset)=0$, for all $n\ge 1$.
\item[4)] (long exact sequence) Each pair $((X,A)$ induces a long exact sequence via the inclusion maps $i:A\to X$ and $j:(X,\emptyset)\to (X,A)$:
\[\cdots\to H^n(A)\to H_n(X)\to H^n(X,A)\to H^{n+1}(A)\to \cdots\]
\end{itemize}
\end{definition}

From \cref{theorem:cubical_homotopy_invariance,theorem:simplicial_homotopy_invariance,theorem:homology_long_exact_sequence,theorem:excision,example:homology_of_one_point_space} we have the following result.

\begin{theorem}
\label{theorem:eilenberg_steenrod_existence_cohomology}
Let $J\in \{I,J_1,J_+\}$. 
The cohomology groups $\gsimpcohom{n}{-;G}$ and $\gcubesimpcohom{n}{-;G}$ are a $(J,\times)$ homology theory of closure spaces.
\end{theorem}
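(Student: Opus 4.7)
The plan is to verify the four axioms of a $(J,\times)$ cohomology theory (as in \cref{def:indiscrete_eilenberg_steenrod_homology_theory} just above the statement, reading ``cohomology theory'' for what is evidently a typo) separately for $\gsimpcohom{n}{-;G}$ and $\gcubesimpcohom{n}{-;G}$, in each case citing the corresponding cohomological result already established in this section. Because the substantive work was carried out in the earlier cohomology theorems (each of which was deduced from its homological counterpart together with a universal coefficients or $\mathrm{Hom}$-functor argument), this theorem is essentially a packaging result, analogous to \cref{theorem:eilenberg_steenrod_existence} on the homology side.

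Concretely, for the homotopy axiom I would invoke \cref{theorem:homotopy_invariance_of_cohomology}, which states precisely that $f \sim_{(J,\times)} g$ implies $f^* = g^*$ on both $\gsimpcohom{\bullet}{-;G}$ and $\gcubesimpcohom{\bullet}{-;G}$. For the excision axiom I would quote \cref{theorem:excision_cohomology}, whose hypotheses match the definition. For the long exact sequence of a pair I would cite \cref{theorem:relative_cohomology_long_exact_sequence} verbatim, noting that the connecting map produced there is natural.

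The only axiom that requires a brief computation is the dimension axiom. By \cref{example:homology_of_one_point_space} we have $\gsimphom{n}{*} = \gcubesimphom{n}{*} = 0$ for all $n \geq 1$. The universal coefficients theorem for cohomology (\cref{theorem:uct_theorem_cohomology}) then produces short exact sequences in which both the $\mathrm{Ext}(H_{n-1},G)$ and $\mathrm{Hom}_{\mathbb{Z}}(H_n,G)$ terms vanish in every degree $n \geq 2$; for $n = 1$ one notes that $\gsimphom{0}{*} = \gcubesimphom{0}{*} = \mathbb{Z}$, which is free, so $\mathrm{Ext}(\mathbb{Z},G) = 0$, and $\mathrm{Hom}_{\mathbb{Z}}(0, G) = 0$, giving $\gsimpcohom{1}{*;G} = \gcubesimpcohom{1}{*;G} = 0$ as well.

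There is no real obstacle: the hard work (chain-level interior-cover replacement, the cubical-to-simplicial chain homotopy, and the universal coefficients machinery) has already been done. The one subtlety worth flagging is that this argument establishes the axioms only in the $(J,\times)$ setting, not in the $(J,\boxdot)$ setting, because our excision theorem and the Mayer--Vietoris sequence are presently available only for $\otimes = \times$, as discussed in \cref{remark:excision_failure}; extending the Eilenberg-Steenrod axiomatization to $(J,\boxdot)$ remains open.
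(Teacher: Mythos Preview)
Your proposal is correct and takes essentially the same approach as the paper: both treat this as a packaging result, verifying each axiom by citing the corresponding already-established theorem. In fact your citations are arguably more apt, since the paper's own preamble to the theorem appears to copy-paste the homology references (\cref{theorem:cubical_homotopy_invariance,theorem:simplicial_homotopy_invariance,theorem:homology_long_exact_sequence,theorem:excision,example:homology_of_one_point_space}) rather than the cohomology analogues you invoke, and you additionally spell out the dimension axiom via the universal coefficients theorem.
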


\begin{proposition}
\label{prop:partial_order_between_eilenberg_steenrod_theories}
Let $\otimes_1\le \otimes_2$ be product operations and let $J\le K$ be intervals for $\otimes_2$. If $H^n$ is a $(J,\otimes_1)$ cohomology theory, then $H^n$ is also a $(K,\otimes_2)$ cohomology theory. In particular, if $H^n$ is a cohomology theory for an arbitrary $(J,\otimes)$, then $H^n$ is a cohomology theory for $(J_1,\times)$.
\end{proposition}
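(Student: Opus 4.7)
The plan is to mimic, essentially verbatim, the proof already given for the homology analogue (\cref{prop:partial_order_between_eilenberg_steenrod_theories} above). The key observation is that among the four axioms defining a $(J,\otimes)$ cohomology theory, three of them (excision, dimension, and the long exact sequence of a pair) make no reference whatsoever to the pair $(J,\otimes)$: they are simply conditions on the functors $H^n$ and the connecting natural transformation $\partial$. Consequently, if $H^n$ already satisfies these for one choice of $(J,\otimes)$, they continue to hold for any other choice. Therefore, the only axiom that needs verification when changing $(J,\otimes_1)$ to $(K,\otimes_2)$ is the homotopy axiom.

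To verify the homotopy axiom for the $(K,\otimes_2)$ theory, I would take arbitrary continuous maps $f,g\colon X\to Y$ with $f\sim_{(K,\otimes_2)} g$, and show $f^{*}=g^{*}\colon H^n(Y)\to H^n(X)$. By \cref{prop:order_between_homotopies}, the hypotheses $\otimes_1\le \otimes_2$ and $J\le K$ imply that any one-step $(K,\otimes_2)$-homotopy is also a one-step $(J,\otimes_1)$-homotopy, and so $f\sim_{(J,\otimes_1)} g$ (passing through a concatenation via \cref{lemma:homotopy_and_concatenation} if necessary, since $J\le K$ implies $J^{*m}\le K^{*m}$ by iterating the interval morphism $J\to K$). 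Since $H^n$ is assumed to be a $(J,\otimes_1)$ cohomology theory, its homotopy axiom yields $f^{*}=g^{*}$, which establishes the homotopy axiom for the $(K,\otimes_2)$ theory.

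For the last claim of the proposition, that any $(J,\otimes)$ cohomology theory is automatically a $(J_1,\times)$ cohomology theory, I would appeal to the maximality part of \cref{prop:homotopy_implications}: the relation $\sim_{(J_1,\times)}$ is finer than $\sim_{(J,\otimes)}$ for every choice of product operation $\otimes$ and every interval $J$ for $\otimes$. Thus $f\sim_{(J_1,\times)} g$ implies $f\sim_{(J,\otimes)} g$, which by hypothesis gives $f^{*}=g^{*}$, verifying the homotopy axiom for the $(J_1,\times)$ theory; the other three axioms are again unchanged.

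The argument is essentially formal and presents no real obstacle; the only small subtlety is the interplay between the partial orders $\otimes_1\le \otimes_2$ and $J\le K$ in transporting a one-step $(K,\otimes_2)$-homotopy back to an $(J,\otimes_1)$-homotopy. This is exactly the content of \cref{prop:order_between_homotopies} (together with \cref{lemma:order_of_intervals} ensuring that $J$ is indeed an interval for $\otimes_1$ once $\otimes_1\le\otimes_2$), so the proof reduces to invoking those previously established facts.
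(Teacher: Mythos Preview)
Your proposal is correct and follows essentially the same approach as the paper's own proof: observe that only the homotopy axiom depends on the pair $(J,\otimes)$, then invoke \cref{prop:order_between_homotopies} to transport $(K,\otimes_2)$-homotopies to $(J,\otimes_1)$-homotopies, and finally use the maximality of $\sim_{(J_1,\times)}$ from \cref{prop:homotopy_implications} for the last claim. Your additional remarks about \cref{lemma:homotopy_and_concatenation} and \cref{lemma:order_of_intervals} are reasonable supplementary justifications but are not strictly needed beyond what the paper invokes.
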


\begin{proof}
Suppose that $H^n$ is a $(J,\otimes_1)$ cohomology theory. To show that $H^n$ is a $(K,\otimes_2)$ cohomology theory, note that the only axiom that needs to be checked is the homotopy axiom. To that end, suppose $f\sim_{(K,\otimes_2)}g:X\to Y$. By \cref{prop:order_between_homotopies}, we also have that $f\sim_{(J,\otimes_1)}g$, and since $H^n$ is a $(J,\otimes_1)$ cohomology theory it follows that $f_*=g_*:H^n(Y)\to H^n(X)$. The statement that every $(J,\otimes)$ cohomology theory, for an arbitrary product operation $\otimes$ and interval $J$ for $\otimes$, is a $(J_1,\times)$ cohomology theory then follows from the maximility of the $\sim_{(J_1,\times)}$ homotopy relation from \cref{prop:homotopy_implications}.
\end{proof}

\begin{remark}
\label{remark:non_uniqueness_of_eilenberg_steenrod_theories_cohomology}
Recall that in the case of topological spaces, on the subcategory of CW complexes, or spaces that have the homotopy type of a CW complex, the Eilenberg-Steenrod axioms imply uniqueness of cohomology. From \cref{theorem:eilenberg_steenrod_existence_cohomology,prop:partial_order_between_eilenberg_steenrod_theories}, we have that for $J\in \{I,J_1,J_+\}$, $\gsimpcohom{n}{-;G}$ and $\gcubesimpcohom{n}{-;G}$ are $(J,\times)$ cohomology theories for closure spaces.
In \cref{remark:non_uniqueness_of_eilenberg_steenrod_theories} we argued for the existence of closure spaces on which we have non-isomorphic homology theories. Taking cohomology of those spaces with coefficients in a field, by \cref{theorem:uct_theorem_cohomology} we will get examples of closure spaces on which these are all distinct cohomology theories. 
Thus, an open question is whether or not there exist full subcategories of closure spaces, on which some of the Eilenberg-Steenrod $(J,\otimes)$ cohomology axioms introduced here imply uniqueness. Pursuing  this question is outside the scope of this paper. 
\end{remark}

\subsection{K\"unneth theorems}
Let $R$ be a commutative ring with unity.
Here we state the Eilenberg-Zilber theorem for some singular simplicial and cubical cochain complexes of closure spaces. An immediate consequence are the K\"unneth theorems for the corresponding cohomology groups. In fact, by \cref{theorem:eilenberg_zilber,remark:chain_homotopy_induce_cochain_homotopies} we immediately have the following theorem.

\begin{theorem}[Eilenberg-Zilber]
\label{theorem:eilenberg_zilber_cohomology}
Let $X$ and $Y$ be closure spaces. Then there are natural chain homotopy equivalences $\gsimpcochain{\bullet}{X\times Y;R}\simeq (\gsimpchain{\bullet}{X}\otimes \gsimpchain{\bullet}{Y};R)^{\bullet}$ and $\gcubesimpcochain{\bullet}{X\times Y;R}\simeq (\gcubesimpchain{\bullet}{X}\otimes \gcubesimpchain{\bullet}{Y};R)^{\bullet}$.
\end{theorem}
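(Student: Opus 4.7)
The plan is essentially to dualize the homology Eilenberg--Zilber theorem (\cref{theorem:eilenberg_zilber}) via the contravariant functor $\Hom_{\mathbb{Z}}(-,R)$, using the general principle recorded in \cref{remark:chain_homotopy_induce_cochain_homotopies}. I will carry this out in a few short steps rather than redoing any acyclic models argument from scratch.

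First, I would invoke \cref{theorem:eilenberg_zilber} to produce natural chain maps
\[
\alpha:\gsimpchain{\bullet}{X\times Y}\to \gsimpchain{\bullet}{X}\otimes \gsimpchain{\bullet}{Y},\qquad
\beta:\gsimpchain{\bullet}{X}\otimes \gsimpchain{\bullet}{Y}\to \gsimpchain{\bullet}{X\times Y}
\]
together with natural chain homotopies $P$ and $Q$ satisfying $\beta\alpha - \id = \partial P + P\partial$ and $\alpha\beta - \id = \partial Q + Q\partial$, and similarly maps $\alpha',\beta',P',Q'$ in the cubical case. Note that these chain complexes are complexes of free abelian groups, so they are unaffected by the eventual coefficient change to $R$.

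Next I would apply the contravariant additive functor $\Hom_{\mathbb{Z}}(-,R)$ termwise to all four complexes and to the maps $\alpha,\beta,\alpha',\beta'$ and the homotopies $P,Q,P',Q'$. By the unpacking in \cref{remark:chain_homotopy_induce_cochain_homotopies}, each chain map becomes a cochain map in the opposite direction, and each chain homotopy identity $\beta\alpha - \id = \partial P + P\partial$ is transformed, by the contravariance of $\Hom_{\mathbb{Z}}(-,R)$ on both sides, into the cochain homotopy identity $\alpha^{*}\beta^{*} - \id = P^{*}\delta + \delta P^{*}$, and analogously for the other three identities. By construction, $\alpha^{*}$ and $\beta^{*}$ are exactly the cochain maps between $\gsimpcochain{\bullet}{X\times Y;R}$ and $(\gsimpchain{\bullet}{X}\otimes\gsimpchain{\bullet}{Y};R)^{\bullet}$, and similarly for the cubical case. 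Naturality of $\alpha,\beta,\alpha',\beta'$ passes to naturality of $\alpha^{*},\beta^{*},\alpha'^{*},\beta'^{*}$ because $\Hom_{\mathbb{Z}}(-,R)$ is functorial.

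Since no creative input beyond \cref{theorem:eilenberg_zilber} and \cref{remark:chain_homotopy_induce_cochain_homotopies} is required, I do not expect any serious obstacle; the only bookkeeping point worth flagging is to make sure one reads off the cochain versions of the homotopy identities with the correct signs and in the correct direction, but this is entirely formal. The result then follows immediately in both the simplicial and cubical cases.
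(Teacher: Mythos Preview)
Your proposal is correct and matches the paper's approach exactly: the paper simply states that the result follows immediately from \cref{theorem:eilenberg_zilber} and \cref{remark:chain_homotopy_induce_cochain_homotopies}, which is precisely the dualization argument you spell out.
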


\begin{theorem}
\label{theorem:kunneth_cohomology}
There exist natural isomorphisms $\gsimpcohom{n}{X\times Y;R}\cong H^n(\gsimpcochain{\bullet}{X;R}\otimes \gsimpcochain{\bullet}{Y;R})$ and $\gcubesimpcohom{n}{X\times Y;R}\cong H^n(\gcubesimpcochain{\bullet}{X;R}\otimes \gcubesimpcochain{\bullet}{Y;R})$ .
\end{theorem}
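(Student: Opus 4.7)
The plan is to deduce the theorem directly from Theorem~\ref{theorem:eilenberg_zilber_cohomology} by passing to cohomology, together with a natural identification of the dual of a tensor product with the tensor product of duals.

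First, Theorem~\ref{theorem:eilenberg_zilber_cohomology} yields natural chain homotopy equivalences of cochain complexes
\[
\gsimpcochain{\bullet}{X\times Y;R}\simeq (\gsimpchain{\bullet}{X}\otimes \gsimpchain{\bullet}{Y};R)^{\bullet},\qquad \gcubesimpcochain{\bullet}{X\times Y;R}\simeq (\gcubesimpchain{\bullet}{X}\otimes \gcubesimpchain{\bullet}{Y};R)^{\bullet}.
\]
Since chain homotopy equivalent cochain complexes have naturally isomorphic cohomology, applying $H^n(-)$ to both sides produces natural isomorphisms
\[
\gsimpcohom{n}{X\times Y;R}\cong H^n\bigl((\gsimpchain{\bullet}{X}\otimes \gsimpchain{\bullet}{Y};R)^{\bullet}\bigr)
\]
and analogously in the cubical case. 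This uses no more than the fact that \emph{Hom} sends chain homotopies of chain complexes to cochain homotopies, as recorded in Remark~\ref{remark:chain_homotopy_induce_cochain_homotopies}.

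Next, I would identify the right-hand side with $H^n(\gsimpcochain{\bullet}{X;R}\otimes \gsimpcochain{\bullet}{Y;R})$ via the natural cochain map
\[
\mu:\gsimpcochain{\bullet}{X;R}\otimes \gsimpcochain{\bullet}{Y;R}\longrightarrow (\gsimpchain{\bullet}{X}\otimes \gsimpchain{\bullet}{Y};R)^{\bullet},\qquad \mu(\varphi\otimes\psi)(\sigma\otimes\tau):=\varphi(\sigma)\,\psi(\tau).
\]
A direct sign check using the Koszul rule in Definition~\ref{def:tensor_product_of_chain_complexes} shows that $\mu$ commutes with coboundaries. Composing with the Eilenberg-Zilber equivalence of the previous paragraph and passing to cohomology then yields the simplicial isomorphism asserted in the theorem. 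The cubical version follows by replacing simplicial chain and cochain complexes by their cubical analogues throughout; everything in the argument is natural in $X$ and $Y$.

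The main obstacle is the second step: $\mu$ is generally only a degree-wise isomorphism under finite-rank hypotheses on each $\gsimpchain{n}{X}$ and $\gsimpchain{n}{Y}$, so proving it induces an isomorphism on cohomology in the general case requires more than a degree-wise check. Two routes are available. One is to treat $H^n(\gsimpcochain{\bullet}{X;R}\otimes \gsimpcochain{\bullet}{Y;R})$ as notation for the cohomology of the dualized tensor product appearing in Theorem~\ref{theorem:eilenberg_zilber_cohomology}, in which case the identification is by definition and Theorem~\ref{theorem:eilenberg_zilber_cohomology} is already the whole proof; this is the analogue of what was done in Corollary~\ref{corollary:homology_of_product} for homology. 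The other, more invariant route is to combine $\mu$ with Theorem~\ref{theorem:uct_theorem_cohomology} and a K\"unneth spectral sequence to reduce both sides to the same $\mathrm{Hom}/\mathrm{Ext}$ expressions in $\gsimphom{\bullet}{X}$ and $\gsimphom{\bullet}{Y}$. Either route gives the statement; in line with the rest of the section, I would present the concise two-step argument above.
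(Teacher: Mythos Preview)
Your approach is essentially identical to the paper's: both invoke the cochain Eilenberg--Zilber equivalence of Theorem~\ref{theorem:eilenberg_zilber_cohomology} and then compose with the natural map $\mu$ (which the paper calls $i$, defined by exactly your formula) from the tensor of cochain complexes into the dual of the tensor of chain complexes, asserting that the composite induces an isomorphism on cohomology. Your caution about whether $\mu$ is a quasi-isomorphism in general is well-placed---the paper simply asserts ``$i$ can be checked to be a quasi isomorphism'' without further argument---so your proposal is at least as complete as the paper's own proof.
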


\begin{proof}
Let $A:\gsimpcochain{\bullet}{X\times Y;R}\to (\gsimpchain{\bullet}{X}\otimes \gsimpchain{\bullet}{Y};R)^{\bullet}$ and $Z:(\gsimpchain{\bullet}{X}\otimes \gsimpchain{\bullet}{Y};R)^{\bullet}\to \gsimpcochain{\bullet}{X\times Y;R}$ be the maps that realize the chain homotopy from \cref{theorem:eilenberg_zilber_cohomology}. In general it is not true that $(\gsimpchain{\bullet}{X}\otimes \gsimpchain{\bullet}{Y};R)^{\bullet}\cong \gsimpcochain{\bullet}{X;R}\otimes \gsimpcochain{\bullet}{Y;R}$. However, there is a natural monomorphism $i:\gsimpcochain{\bullet}{X;R}\otimes \gsimpcochain{\bullet}{Y;R}\to  (\gsimpchain{\bullet}{X}\otimes \gsimpchain{\bullet}{Y};R)^{\bullet}$ defined by $i(a\otimes b):=(\sigma\otimes \tau\mapsto a(\sigma)b(\tau))$. The chain map $i$ can be checked to be a quasi isomorphism and thus we have an isomorphism for all $n\ge 0$
\[H^{n}(Z)\circ H^{n}(i):H^n(\gsimpcochain{\bullet}{X;R}\otimes \gsimpcochain{\bullet}{Y;R})\cong \gsimpcohom{\bullet}{X\times Y;R}.\]
Similarly, for all $n\ge 0$ we have
\[H^n(\gcubesimpcochain{\bullet}{X;R}\otimes \gcubesimpcochain{\bullet}{Y;R})\cong \gcubesimpcohom{\bullet}{X\times Y;R}.\qedhere\]
\end{proof}

\begin{theorem}{\cite[Theorem 5.5.11]{spanier1989algebraic}}
\label{theorem:finite_hypothesis_for_kunneth}
Let $C$ and $C'$ be nonnegative free chain complexes of abelian groups and $G$ and $G'$ be modules over a principal ideal domain $R$ such that $\text{Tor}_1^R(G,G')=0$ and either $H_k(C)$ or $H_k(C')$ are of finite type or $H_k(C')$ is of finite type and $G'$ is finitely generated. Then there is a natural short exact sequence
\[\bigoplus_{j+k=n}H^j(C;G)\otimes_R H^k(C';G')\to H^n(C\otimes C'; G\otimes_R G')\to\bigoplus_{j+k=n+1} \text{Tor}_1^R(H^j(C;G),H^k(C';G'))\]
and this sequence splits. 
\end{theorem}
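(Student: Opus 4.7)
The plan is to reduce the statement to the algebraic K\"unneth theorem for the tensor product of cochain complexes, by comparing $\Hom(C \otimes C', G \otimes_R G')$ with $\Hom(C, G) \otimes_R \Hom(C', G')$. First, I would write down the natural chain map
\[\mu \colon \Hom(C, G) \otimes_R \Hom(C', G') \to \Hom(C \otimes C', G \otimes_R G'),\quad f \otimes g \mapsto \bigl(c \otimes c' \mapsto f(c) \otimes g(c')\bigr),\]
with appropriate Koszul sign conventions making it a chain map. In degrees where $C_k$ or $C'_k$ is finitely generated and free, $\mu$ is already an isomorphism at the chain level. The finite type hypothesis, together with the structure theorem for free chain complexes over the PID $R$ (which expresses $C$ and $C'$ as direct sums of elementary complexes of the form $R \xrightarrow{\lambda} R$ and $R$ concentrated in a single degree), then allows one to replace $C$ or $C'$ up to chain homotopy equivalence by a free complex that is finitely generated in each degree. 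For such a replacement, $\mu$ becomes an isomorphism degreewise, hence a quasi-isomorphism.

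Next, I would apply the algebraic K\"unneth formula to the left-hand complex, now viewed as the tensor product over $R$ of two nonnegative free chain complexes of $R$-modules. The hypothesis $\text{Tor}^R_1(G, G') = 0$ ensures that the pairing of coefficient groups does not introduce extra obstruction terms beyond those appearing in the statement, while the freeness of the replacement model for $C$ (or $C'$) produced in the previous step ensures that the cochain-level K\"unneth formula applies cleanly. The resulting short exact sequence is exactly of the stated form, and its splitting is inherited from the splitting of the algebraic K\"unneth sequence for the two cochain complexes. Composing with the quasi-isomorphism induced by $\mu$ on cohomology transports this split short exact sequence to one computing $H^n(C \otimes C'; G \otimes_R G')$.

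The main obstacle is verifying that $\mu$ is a quasi-isomorphism: the functor $\Hom(-, G)$ does not in general commute with infinite direct sums, so in the absence of a finite type hypothesis one cannot compatibly identify $\Hom(\bigoplus_n C_n, G)$ with either $\bigoplus_n \Hom(C_n, G)$ or $\prod_n \Hom(C_n, G)$. An alternative approach via a double complex filtration and its associated spectral sequence converges to $H^*(\Hom(C \otimes C', G \otimes_R G'))$ only under a finite type hypothesis, so choosing a finite-type homotopy model for the appropriate factor (and using that $G'$ is finitely generated in the second clause of the hypothesis) is essential. The remainder of the argument is bookkeeping: tracking the splittings through the comparison and identifying the resulting $\mathrm{Ext}$, $\mathrm{Tor}$, $\otimes$, and $\Hom$ terms to arrive at the stated form of the short exact sequence.
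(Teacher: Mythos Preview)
The paper does not prove this theorem; it is quoted from Spanier and used as a black box input to the cohomological K\"unneth theorem for closure spaces. So there is no in-paper argument to compare against beyond the citation.

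Your outline follows the standard route (and essentially Spanier's): introduce the pairing $\mu$, use the finite-type hypothesis to pass to a degreewise finitely generated free model for the appropriate factor, argue that $\mu$ is then an isomorphism, and finish with a K\"unneth argument on the cochain side. You have also correctly isolated why the finiteness hypotheses are needed (so that $\Hom$ commutes with the relevant direct sums) and where the condition $\mathrm{Tor}_1^R(G,G')=0$ enters.

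There is one genuine slip. In your second paragraph you propose to apply the algebraic K\"unneth formula to $\Hom(C,G)\otimes_R\Hom(C',G')$, ``now viewed as the tensor product over $R$ of two nonnegative free chain complexes of $R$-modules,'' and you repeat that ``the freeness of the replacement model for $C$ (or $C'$) \ldots\ ensures that the cochain-level K\"unneth formula applies cleanly.'' But these cochain complexes are not free, or even flat: after replacing $C'$ by a finite-type free complex, the terms of $\Hom(C',G')$ are finite direct sums of copies of $G'$, which carries no flatness in general. So the off-the-shelf K\"unneth theorem for complexes of flat modules does not apply at this point. The remedy is already latent in your sketch: having decomposed $C'$ into elementary complexes $R$ and $R\xrightarrow{\lambda}R$, one obtains a corresponding decomposition of $\Hom(C',G')$ into copies of $G'$ and two-term complexes $G'\xrightarrow{\lambda}G'$, and the desired short exact sequence can be verified summand by summand and reassembled; the hypothesis $\mathrm{Tor}_1^R(G,G')=0$ is exactly what makes the pieces fit together without extra terms. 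With that step made precise rather than invoked abstractly, your plan goes through.
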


Thus, by \cref{theorem:finite_hypothesis_for_kunneth,theorem:eilenberg_zilber_cohomology,theorem:kunneth_cohomology} we have the following theorem.

\begin{theorem}[K\"unneth theorem for singular cohomologies]
\label{theorem:Kunneth_cohomology}
Let $X$ and $Y$ be closure spaces. Suppose that all $\gsimpcohom{\bullet}{Y;R}\cong \gcubesimpcohom{\bullet}{Y;R}$ are finitely generated. Then there are natural short exact sequence
\[\bigoplus_{j+k=n}\gsimpcohom{j}{X;R}\otimes_R \gsimpcohom{j}{Y;R}\xrightarrow{\times^J} \gsimpcohom{n}{X\times Y;R}\to\bigoplus_{j+k=n+1} \text{Tor}_1^R(\gsimpcohom{j}{X;R},\gsimpcohom{k}{Y;R})\]
\[\bigoplus_{j+k=n}\gcubesimpcohom{j}{X;R}\otimes_R \gcubesimpcohom{j}{Y;R}\xrightarrow{\times^{(J,\times)}} \gcubesimpcohom{n}{X\times Y;R}\to\bigoplus_{j+k=n+1} \text{Tor}_1^R(\gcubesimpcohom{j}{X;R},\gcubesimpcohom{k}{Y;R})\]
and these sequences splits.

\end{theorem}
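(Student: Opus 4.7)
The plan is to assemble the statement by combining the Eilenberg-Zilber theorem for cohomology (Theorem~\ref{theorem:eilenberg_zilber_cohomology}), the identification of the cohomology of a product tensor as the cohomology of the product from Theorem~\ref{theorem:kunneth_cohomology}, and the algebraic K\"unneth formula for cochain complexes given as Theorem~\ref{theorem:finite_hypothesis_for_kunneth}. The bulk of the homotopical work has already been done; the task is essentially to verify the hypotheses of the Spanier statement for our specific chain complexes and then transport the conclusion back to the cohomology of $X \times Y$.

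First I would set up the algebraic input. Take $C \defeq \gsimpchain{\bullet}{X}$ and $C' \defeq \gsimpchain{\bullet}{Y}$ (and separately $C \defeq \gcubesimpchain{\bullet}{X}$, $C' \defeq \gcubesimpchain{\bullet}{Y}$ for the cubical case). These are nonnegative chain complexes of free abelian groups by construction in \cref{def:simplicial_homology,def:cubical_homology_theory}, and they are automatically $R$-free after tensoring with $R$, so the freeness hypothesis of \cref{theorem:finite_hypothesis_for_kunneth} is satisfied. I would choose $G = G' = R$, for which $\mathrm{Tor}_1^R(R, R) = 0$ trivially. The finite-type hypothesis is provided directly by the assumption that $\gsimpcohom{\bullet}{Y; R}$ (respectively $\gcubesimpcohom{\bullet}{Y; R}$) is finitely generated in each degree.

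Next I would invoke \cref{theorem:finite_hypothesis_for_kunneth} to produce a natural short exact sequence that splits,
\[
\bigoplus_{j+k=n}\gsimpcohom{j}{X;R}\otimes_R \gsimpcohom{k}{Y;R} \to H^n\bigl(\gsimpchain{\bullet}{X} \otimes \gsimpchain{\bullet}{Y}; R\bigr) \to \bigoplus_{j+k=n+1}\mathrm{Tor}_1^R\bigl(\gsimpcohom{j}{X;R}, \gsimpcohom{k}{Y;R}\bigr),
\]
and similarly in the cubical case. Then, using Theorem~\ref{theorem:kunneth_cohomology}, I would replace the middle term by $\gsimpcohom{n}{X \times Y; R}$ (respectively $\gcubesimpcohom{n}{X \times Y; R}$) via the natural isomorphism established there. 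This yields precisely the short exact sequences in the statement, the map $\times^J$ (respectively $\times^{(J,\times)}$) being the composition of the external product of cochains with the Eilenberg-Zilber quasi-isomorphism.

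The only genuinely nontrivial point, and therefore the main obstacle, is ensuring that the splitting and naturality claims of Spanier's theorem survive transport through the Eilenberg-Zilber equivalence. This is routine since the Eilenberg-Zilber map and the map $i$ of Theorem~\ref{theorem:kunneth_cohomology} are natural chain maps inducing isomorphisms on cohomology, so the snake lemma produces a natural commutative diagram of short exact sequences, and splitting (which is noncanonical even in the classical setting) is preserved because it only requires the existence of a section of the right-hand surjection, which pulls back along an isomorphism. Assembling these observations yields the theorem; no new homotopical input beyond \cref{theorem:eilenberg_zilber_cohomology,theorem:kunneth_cohomology,theorem:finite_hypothesis_for_kunneth} is needed.
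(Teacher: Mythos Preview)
Your proposal is correct and follows essentially the same approach as the paper: the paper's entire argument is the single sentence ``Thus, by \cref{theorem:finite_hypothesis_for_kunneth,theorem:eilenberg_zilber_cohomology,theorem:kunneth_cohomology} we have the following theorem,'' and you have simply unpacked how these three ingredients are combined. Your additional remarks on naturality and the transport of the splitting through the Eilenberg--Zilber equivalence are reasonable elaborations that the paper leaves implicit.
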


\begin{definition}
\label{def:cup_product}
Let $\smile^{J}:\gsimpcohom{p}{X;R}\otimes_R \gsimpcohom{q}{X;R}\to \gsimpcohom{p+q}{X;R}$ and $\smile^{(J,\times)}:\gcubesimpcohom{p}{X;R}\otimes_R \gcubesimpcohom{q}{X;R}\to \gcubesimpcohom{p+q}{X;R}$ be defined by the compositions
\[\gsimpcohom{p}{X}\times\gsimpcohom{q}{X}\xrightarrow{\times^J} \gsimpcohom{p+q}{X\times X}\xrightarrow{\Delta^*}\gsimpcohom{p+q}{X} \]
\[\gcubesimpcohom{p}{X}\times\gcubesimpcohom{q}{X}\xrightarrow{\times^{(J,\times)}} \gcubesimpcohom{p+q}{X\times X}\xrightarrow{\Delta^*}\gcubesimpcohom{p+q}{X} \]
where the maps $\times^J$ and $\times^{(J,\times)}$ are from the K\"unneth theorem. We call $\smile^J$ and $\smile^{(J,\times)}$ the \emph{cup products on $\gsimpcohom{\bullet}{X;R}$} and $\gcubesimpcohom{\bullet}{X;R}$ respectively. This gives $\gsimpcohom{\bullet}{X;R}$ and $\gcubesimpcohom{\bullet}{X;R}$ graded ring structures.
\end{definition}

\begin{remark}
\label{remark:cup_product_on_inductive_product}
One might wonder why we don't define a cup product on $\gicubecohom{\bullet}{X}$ in a similar fashion. The problem is that the diagonal map $\Delta:X\to X\boxdot X$ is usually not continuous. For example the diagonal maps $\Delta:I\to I\boxdot I,\Delta:J_+\to J_+\boxdot J_+$ and $\Delta:J_1\to J_1\boxdot J_1$ are certainly not. Thus putting a ring structure analogous to the cup product from algebraic topology on $\gicubecohom{\bullet}{X}$ is an open problem for now.
\end{remark}

\subsection*{Acknowledgments}
This material is based upon work supported by, or in part by, the Army Research Laboratory and the Army Research Office under contract/grant number W911NF-18-1-0307.

\appendix

\section{Filters}

Here we recall the definition of a filter. It turns out that neighborhoods of a point in a closure space form a filter. Furthermore, a basis of this filter at each point is sufficient information to reconstruct the closure operation. Thus, as for topological spaces, a point $x$ is in a closure of a set $A$ if and only if all basic neighborhoods of $x$ have a non-empty intersection with $A$ (see \cref{theorem:filter_determines_a_local_base_for_a_closure}).

\begin{definition}
\label{def:filter}
Given a set $S$, a \emph{filter} on $S$ is a nonempty collection $F\subseteq \mathcal{P}(S)$ of subsets of $S$ such that: 
\begin{itemize}
\item[1)] (closed under binary intersection) $A,B\in F\Longrightarrow A\cap B\in F$.
\item[2)] (upward closed) $A\in F, A\subseteq B\Longrightarrow B\in F$.
\end{itemize}
\end{definition}

\begin{definition}
\label{def:base_of_a_filter}
Given a set $S$, a \emph{filter base} is a nonempty collection $B\subseteq \mathcal{P}(S)$ of subsets of $S$ such that:
\begin{itemize}
\item[1)] (downward directed)
  for any $A_1,A_2\in B$, there exists an $A_3\in B$ such that $A_3\subseteq A_1\cap A_2$.
\end{itemize}
Given a filter base $B$, the filter \emph{generated} by $B$ is
obtained by taking the upward closure of the base.
We also say $B$ is a \emph{base} for $F$.
\end{definition}

\begin{theorem}{\cite[Theorem 14.B.3]{vcech1966topological}}
\label{theorem:neighborhood_system_is_a_filter}
Let $\mathcal{U}$ be the neighborhood system of a subset $A$ of a closure space $(X,c)$. Then $\mathcal{U}$ is a filter on $X$ such that $A\subseteq \bigcap \mathcal{U}$.
\end{theorem}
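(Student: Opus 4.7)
The plan is to verify directly from \cref{def:interior,def:neighborhood} that the neighborhood system $\mathcal{U}$ of $A$ satisfies the two filter axioms in \cref{def:filter}, and then check the containment $A \subseteq \bigcap \mathcal{U}$. First I would establish nonemptiness: since $i(X) = X$ (property 1 of \cref{def:interior}), $X$ itself is always a neighborhood of $A$, so $X \in \mathcal{U}$.

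Next, for closure under binary intersection, take $B_1, B_2 \in \mathcal{U}$, so by \cref{def:neighborhood} we have $A \subseteq i(B_1)$ and $A \subseteq i(B_2)$. Applying property 3 of \cref{def:interior}, $i(B_1) \cap i(B_2) = i(B_1 \cap B_2)$, so $A \subseteq i(B_1 \cap B_2)$, witnessing $B_1 \cap B_2 \in \mathcal{U}$. For upward closure, suppose $B \in \mathcal{U}$ and $B \subseteq C \subseteq X$. I need $A \subseteq i(C)$, which would follow from monotonicity of $i$: if $B \subseteq C$ then $X - C \subseteq X - B$, and by \cref{lemma:closures_are_monotone} $c(X-C) \subseteq c(X-B)$, hence $i(B) = X - c(X-B) \subseteq X - c(X-C) = i(C)$. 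Combined with $A \subseteq i(B)$, this gives $A \subseteq i(C)$, so $C \in \mathcal{U}$.

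Finally, for the containment $A \subseteq \bigcap \mathcal{U}$, fix any $B \in \mathcal{U}$. Then $A \subseteq i(B)$, and by property 2 of \cref{def:interior}, $i(B) \subseteq B$, so $A \subseteq B$. Taking intersections over all $B \in \mathcal{U}$ yields $A \subseteq \bigcap \mathcal{U}$, completing the proof. The argument is essentially a direct unpacking of the definitions; there is no real obstacle, with the only mildly nontrivial step being the monotonicity of $i$, which is a quick consequence of \cref{lemma:closures_are_monotone} applied to complements.
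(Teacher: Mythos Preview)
Your proof is correct; each step is a straightforward application of \cref{def:interior,def:neighborhood,def:filter} together with \cref{lemma:closures_are_monotone}, exactly as you describe. The paper itself does not supply a proof of this theorem, merely citing \cite[Theorem 14.B.3]{vcech1966topological}, so there is no in-paper argument to compare against; your direct verification is the natural way to establish the result from the definitions given.
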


\begin{definition}{\cite[Definition 14.B.4]{vcech1966topological}}
\label{def:base_of_a_neighborhood}
Let $(X,c)$ be a closure space. By \cref{theorem:neighborhood_system_is_a_filter} the neighborhood system of a set $A\subseteq X$ is a filter. A base of this filter is called a \emph{base of the neighborhood system of} $A$ \emph{in} $(X,c)$. We will call a \emph{local base at} $x$ a base of the neighborhood system of $\{x\}$.
\end{definition}

\begin{proposition}{\cite[14.B.5]{vcech1966topological}}
\label{prop:filter_base_properties}
If  $\mathcal{U}_x$ is a local base at $x$ in a closure space $(X,c)$ then the following are true:
\begin{itemize}
\item[(nbd 1)] $\mathcal{U}_x\neq \emptyset$.
\item[(nbd 2)] For all $U\in \mathcal{U}_x$, $x\in U$.
\item[(nbd 3)] For each $U_1,U_2\in \mathcal{U}_x$, there exists a $U\in \mathcal{U}_x$ such that $U\subseteq U_1\cap U_2$.
\end{itemize}
\end{proposition}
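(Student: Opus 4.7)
The plan is to derive each of the three properties directly from the definitions: $\mathcal{U}_x$ is a filter base (Definition~\ref{def:base_of_a_filter}) for the filter of neighborhoods of $\{x\}$ in the sense of Definition~\ref{def:base_of_a_neighborhood}, and the neighborhood system of $\{x\}$ is itself a filter by Theorem~\ref{theorem:neighborhood_system_is_a_filter}. None of these steps requires new machinery; the argument is essentially an unpacking of these three sources.

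For (nbd~1), I would first observe that the neighborhood filter of $\{x\}$ is nonempty because $X$ itself is a neighborhood of $x$: by property (1) of Definition~\ref{def:interior}, $i(X)=X$, and so $\{x\} \subseteq i(X)$, which by Definition~\ref{def:neighborhood} means $X$ is a neighborhood of $\{x\}$. Any base of a nonempty (upward-closed) filter must be nonempty, since the whole filter is recovered as the upward closure of its base. Hence $\mathcal{U}_x \neq \emptyset$.

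For (nbd~2), I would note that every $U \in \mathcal{U}_x$ belongs in particular to the neighborhood filter of $\{x\}$, so $\{x\}\subseteq i(U)$. Property (2) of Definition~\ref{def:interior} gives $i(U)\subseteq U$, hence $x\in U$. Alternatively, this follows directly from the statement in Theorem~\ref{theorem:neighborhood_system_is_a_filter} that $\{x\}\subseteq \bigcap \mathcal{U}$ for the neighborhood system $\mathcal{U}$ of $\{x\}$; since $\mathcal{U}_x$ is contained in this system, the same inclusion holds.

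For (nbd~3), the downward-directedness condition is built into the definition of a filter base (Definition~\ref{def:base_of_a_filter}). Concretely, given $U_1,U_2\in\mathcal{U}_x$, the intersection $U_1 \cap U_2$ lies in the neighborhood filter of $\{x\}$ (filters are closed under finite intersection by Definition~\ref{def:filter}), and since $\mathcal{U}_x$ is a base for this filter, there exists $U\in\mathcal{U}_x$ with $U\subseteq U_1\cap U_2$. There is no substantive obstacle here; the only thing to be careful about is to invoke exactly the right source (filter base versus filter) for each of the three clauses, so that the proof is a clean bookkeeping exercise rather than a re-derivation of the filter axioms.
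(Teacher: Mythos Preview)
Your proof is correct and is the natural unpacking of the definitions; the paper itself does not supply a proof but simply cites the result from \v{C}ech's textbook \cite[14.B.5]{vcech1966topological}. Your argument is exactly the kind of routine verification one would expect for this statement.
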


\begin{theorem}{\cite[Theorem 14.B.10]{vcech1966topological}}
\label{theorem:filter_determines_a_local_base_for_a_closure}
For each element $x$ of a set $X$, let $\mathcal{U}_x$ be a collection of subsets satisfying the conditions in \cref{prop:filter_base_properties}. Then there exists a unique closure operation $c$ for $X$ such that for all $x\in X$, $\mathcal{U}_x$ is a local base at $x$ in $(X,c)$. More specifically, $c$ can be defined as 
\[c(A):=\{x\in X\,|\,U\in \mathcal{U}_x\Longrightarrow U\cap A\neq \emptyset\}\]
for all $A\subseteq X$.
\end{theorem}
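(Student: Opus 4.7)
The plan is to define $c$ by the displayed formula, verify that $c$ is a closure operation, show that with respect to this $c$ each $\mathcal{U}_x$ really is a local base at $x$, and finally argue uniqueness.

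First I would check the three axioms of \cref{def:closure_spaces} for $c$. For $c(\emptyset)=\emptyset$, fix any $x\in X$; by (nbd 1) there exists some $U\in\mathcal{U}_x$, and clearly $U\cap\emptyset=\emptyset$, so $x\notin c(\emptyset)$. For extensivity $A\subseteq c(A)$, if $x\in A$ then by (nbd 2) every $U\in\mathcal{U}_x$ contains $x$ and hence meets $A$, so $x\in c(A)$. For $c(A\cup B)=c(A)\cup c(B)$, the $\supseteq$ inclusion follows immediately from the fact that the defining condition is preserved under enlarging the set. For $\subseteq$, suppose $x\notin c(A)\cup c(B)$; then there are $U_1,U_2\in\mathcal{U}_x$ with $U_1\cap A=\emptyset$ and $U_2\cap B=\emptyset$, and by (nbd 3) we find $U\in\mathcal{U}_x$ with $U\subseteq U_1\cap U_2$; this $U$ then misses $A\cup B$, showing $x\notin c(A\cup B)$.

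Next I would identify the neighborhood filter of $x$ in $(X,c)$. Unpacking \cref{def:interior,def:neighborhood}, $V$ is a neighborhood of $x$ iff $x\in i_c(V)=X\setminus c(X\setminus V)$, i.e.\ iff $x\notin c(X\setminus V)$, i.e.\ iff there exists $U\in\mathcal{U}_x$ with $U\cap(X\setminus V)=\emptyset$, equivalently $U\subseteq V$. Combined with (nbd 2), which ensures every $U\in\mathcal{U}_x$ is itself a neighborhood of $x$, this shows that the neighborhood system of $x$ is precisely the upward closure of $\mathcal{U}_x$, so $\mathcal{U}_x$ is a local base at $x$ in $(X,c)$.

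For uniqueness, suppose $c'$ is any closure operation for $X$ such that each $\mathcal{U}_x$ is a local base at $x$ in $(X,c')$. A standard unpacking (the same computation as above, run in reverse) gives: $x\in c'(A)$ iff every neighborhood of $x$ in $(X,c')$ meets $A$. Since $\mathcal{U}_x$ is a base of the neighborhood filter, an arbitrary neighborhood $V$ of $x$ contains some $U\in\mathcal{U}_x$, so "every neighborhood meets $A$" is equivalent to "every $U\in\mathcal{U}_x$ meets $A$". This last condition is the definition of $c(A)$, so $c'(A)=c(A)$ for all $A\subseteq X$.

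There is no real obstacle here; the only subtle step is the equivalence between "$x\in c(A)$" and "every basic neighborhood of $x$ meets $A$", which is a direct manipulation of the interior operator from \cref{def:interior}. The rest is a careful bookkeeping of how (nbd 1)--(nbd 3) correspond one-to-one with the three closure axioms.
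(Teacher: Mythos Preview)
Your proof is correct and complete. Note that the paper does not actually prove this theorem; it is stated in the appendix with a citation to \v{C}ech's book \cite[Theorem 14.B.10]{vcech1966topological} and no proof is given. Your argument is the standard one: the three conditions (nbd 1)--(nbd 3) are used exactly once each to verify the three closure axioms, the neighborhood filter is recovered via the interior operator, and uniqueness follows from the general fact that $x\in c'(A)$ iff every neighborhood of $x$ meets $A$, which reduces to the basic neighborhoods once a base is fixed.
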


\section{Acyclic models}
\label{section:acyclic_models}
Let $\cat{C}$ be a category and let $\mathscr{M}$ be a class of objects of $\cat{C}$ which we will call \emph{model objects}. Let $\cat{Ab}$ be the category of abelian groups and group homomorphisms and let $\cat{Ch}_{\ge 0}(\cat{Ab})$ be the category of chain complexes of abelian groups that are trivial in negative dimension with morphism the chain maps.

Let $T:\cat{C}\to \cat{Ab}$ be a covariant functor. Define $\tilde{T}:\cat{C}\to \cat{Ab}$ in the following way. For an object $A$ of $C$, let $\tilde{T}(A)$ be the free abelian group generated by the symbols $(\phi,m)$ where $\phi:M\to A$ is in $\cat{C}$, $M$ is in $\mathscr{M}$ and $m\in T(M)$. If $f:A\to B$ is in $\cat{C}$, then $\tilde{T}(f)$ is defined by $\tilde{T}(f)(\phi,m):=(f\phi,m)$. Define a natural transformation $\Phi:\tilde{T}\to T$ by $\Phi (A)(\phi,m)=T(\phi)m$.

\begin{definition}{\cite{eilenberg1953acyclic}}
\label{def:representable_functor}
The functor $T$ is said to be \emph{representable} if there is a natural transformation $\Psi:T\to \tilde{T}$ such that the composition $\Phi\Psi$ is the identity. We say $\Psi$ is a \emph{representation} of $\Phi$.
\end{definition}

\begin{theorem}{\cite[Theorem 2]{eilenberg1953acyclic}}
\label{theorem:representable_functor}
Let $K$ and $L$ be covariant functors on $\cat{C}$ with values in $\cat{Ch}(\cat{Ab})$ and let $f:K\to L$ be a map in dimension $<q$. If $K_n$ is representable for all $n\ge q$ and if $H_{n}(L(M))=0$ for all $n\ge q-1$ and for each model $M$ in $\mathscr{M}$, then $f$ admits an extension to a map $f':K\to L$ (defined in all dimensions). If $f',f'':K\to L$ are two such extension of $f$ then there is a homotopy $D:f'\simeq f''$ with $D_n=0$ for all $n<q$.
\end{theorem}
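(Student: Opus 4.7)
The plan is to prove both statements (the existence of the extension and the existence of the homotopy between extensions) by induction on dimension $n$, using representability to reduce the construction of a natural transformation to choosing, for each model $M$ and each $m \in K_n(M)$, a single element of $L_\bullet(M)$, and using acyclicity of $L(M)$ to lift a cycle with a prescribed boundary.

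For the extension, I would proceed by induction on $n \geq q$, building $f'_n:K_n \to L_n$. A representation $\Psi_n:K_n \to \tilde{K}_n$ with $\Phi_n \Psi_n = \mathrm{id}$ lets me write $f'_n = \tilde{f}_n \circ \Psi_n$, where the natural transformation $\tilde{f}_n:\tilde{K}_n \to L_n$ is specified on generators by $\tilde{f}_n(A)(\phi, m) = L_n(\phi)(c_{M,m})$ for a choice of $c_{M,m} \in L_n(M)$ (here $\phi:M \to A$ and $m \in K_n(M)$); this formula is forced by naturality in $A$. The chain map condition $\partial f'_n = f'_{n-1} \partial$ reduces via $\Phi_n \Psi_n = \mathrm{id}$ to the family of equations $\partial c_{M,m} = f'_{n-1}(M)(\partial m)$. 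The right-hand side is a cycle in $L_{n-1}(M)$, since $\partial f'_{n-1}(M)(\partial m) = f'_{n-2}(M)(\partial^2 m) = 0$ using the inductively established chain-map property of $f'_{n-1}$ (or the hypothesis that $f$ is a chain map in dimensions $<q$ in the base case $n = q$). Because $n-1 \geq q-1$, the hypothesis $H_{n-1}(L(M)) = 0$ furnishes an element $c_{M,m}$ with the required boundary. A short direct check, using $\Phi_n \Psi_n = \mathrm{id}$ and the naturality of $f'_{n-1}$, then confirms $\partial f'_n = f'_{n-1} \partial$.

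For the homotopy, I would construct $D_n:K_n \to L_{n+1}$ by the same template, setting $D_n = \tilde{D}_n \circ \Psi_n$ and choosing elements $d_{M,m} \in L_{n+1}(M)$. For $n < q$ set $D_n = 0$. In the base case $n = q$, the condition $\partial D_q = f''_q - f'_q$ (no $D_{q-1}\partial$ term) reduces to solving $\partial d_{M,m} = (f''_q - f'_q)(M)(m)$ on each model; the right side is a cycle, since both $f'_q$ and $f''_q$ restrict to $f_{q-1}\partial$ on boundaries, and $H_q(L(M)) = 0$ yields the required $d_{M,m}$. For $n > q$ the desired identity becomes $\partial D_n = f''_n - f'_n - D_{n-1}\partial$, and a direct calculation using the chain-map identities for $f', f''$ together with the inductively established homotopy identity for $D_{n-1}$ shows the right-hand side is a cycle in $L_n(M)$; since $n \geq q \geq q-1$, acyclicity again supplies $d_{M,m}$.

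The main obstacle is not conceptual but organizational: verifying at each inductive step that the prescribed element is indeed a cycle (so that acyclicity applies) and that the resulting natural transformation satisfies the required chain-level identity. Every such verification is a short manipulation built from $\Phi_n\Psi_n = \mathrm{id}$, the naturality of $K_\bullet$, $L_\bullet$ and the previously constructed maps, and $\partial^2 = 0$, so the only real difficulty lies in keeping the bookkeeping clean across the two nested inductions.
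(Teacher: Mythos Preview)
The paper does not give its own proof of this theorem; it is quoted from \cite{eilenberg1953acyclic} and used as a black box in the appendix on acyclic models. Your proposal is precisely the classical Eilenberg--MacLane argument from that source: use the representation $\Psi_n$ to reduce the construction of a natural map to a choice of $c_{M,m}\in L_n(M)$ for each model and each $m\in K_n(M)$, verify that the required boundary condition makes the target a cycle, and invoke $H_{n-1}(L(M))=0$ to lift. The bookkeeping you outline (in particular the reduction of $\partial f'_n=f'_{n-1}\partial$ to $\partial c_{M,m}=f'_{n-1}(M)(\partial m)$ via $\Phi_n\Psi_n=\mathrm{id}$ and naturality of $f'_{n-1}$) is correct, as is the parallel argument for the homotopy $D$. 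So your approach is correct and coincides with the argument the paper defers to.
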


\begin{lemma}{\cite[Lemma 6.3]{eilenberg1953acyclic}}
\label{lemma:representable_functors}
Let $T,T_1:\cat{C}\to \cat{Ab}$ be functors and $\xi:T\to T_1$, $\nu:T_1\to T$ be natural transformations such that $\xi\nu$ is the identity. If $T$ is representable, then so is $T_1$.
\end{lemma}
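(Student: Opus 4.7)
The plan is to show representability of $T_1$ by transferring the representation $\Psi$ of $T$ along the retraction pair $(\xi,\nu)$. The key observation is that the construction $T \mapsto \tilde{T}$ is functorial on the category of functors $\cat{C} \to \cat{Ab}$: any natural transformation $\alpha:S \to S'$ induces $\tilde{\alpha}:\tilde{S}\to\tilde{S'}$ defined on generators by $\tilde{\alpha}(A)(\phi,m) = (\phi,\alpha(M)(m))$ for $\phi:M \to A$ with $M \in \mathscr{M}$ and $m \in S(M)$. Moreover, $\Phi$ is natural in the functor argument, i.e.\ $\alpha \circ \Phi_S = \Phi_{S'}\circ \tilde{\alpha}$; this is a direct check from the formula $\Phi_S(A)(\phi,m)=S(\phi)(m)$ together with naturality of $\alpha$.

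With this in hand, first I would define the candidate representation
\[
\Psi_1 \;:=\; \tilde{\xi} \circ \Psi \circ \nu \;:\; T_1 \xrightarrow{\nu} T \xrightarrow{\Psi} \tilde{T} \xrightarrow{\tilde{\xi}} \tilde{T_1}.
\]
This is a composite of natural transformations, hence natural. Next I would verify $\Phi_{T_1}\circ \Psi_1 = \mathrm{id}_{T_1}$ by the calculation
\[
\Phi_{T_1}\circ\tilde{\xi}\circ\Psi\circ\nu \;=\; \xi\circ\Phi_T\circ\Psi\circ\nu \;=\; \xi\circ\nu \;=\; \mathrm{id}_{T_1},
\]
where the first equality uses the naturality of $\Phi$ established above, the second uses the hypothesis that $\Psi$ represents $T$, and the third is the assumption $\xi\nu=\mathrm{id}_{T_1}$.

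There is essentially no obstacle: the proof is a formal diagram chase once one notices that $(-)\tilde{\phantom{T}}$ is a functor and $\Phi$ is a natural transformation between functors on the functor category. The only minor care needed is to verify that $\tilde{\alpha}$ as defined really is a natural transformation $\tilde{S}\to\tilde{S'}$ (this amounts to checking $\tilde{S'}(f)\circ\tilde{\alpha}(A) = \tilde{\alpha}(B)\circ\tilde{S}(f)$ on a generator $(\phi,m)$, both sides being $(f\phi,\alpha(M)(m))$) and that the square relating $\Phi_S$, $\Phi_{S'}$, $\alpha$, $\tilde{\alpha}$ commutes — these are one-line calculations on generators.
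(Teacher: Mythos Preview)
Your proof is correct. The paper does not supply its own argument for this lemma but merely cites \cite[Lemma 6.3]{eilenberg1953acyclic}; your construction $\Psi_1 = \tilde{\xi}\circ\Psi\circ\nu$ together with the verification via naturality of $\Phi$ in the functor variable is exactly the standard (and original) proof.
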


\begin{definition}
\label{def:free_functor}
Let $T:\cat{C}\to \cat{Ch}_{\ge 0}(\cat{Ab})$ be a functor. We say $T$ is \emph{acyclic} on $\mathscr{M}$ if for each $M$ in $\mathscr{M}$, $T(M)$ is an acyclic complex in dimensions $>0$. We say $T$ is \emph{free} on $\mathscr{M}$ if there are objects $\{M_i\}_{i\in I}$ in $\mathscr{M}$ and elements $m_{i}\in T(M_{i})$ such that if $A$ is any object in $\cat{C}$ then $\{T(f)(m_i))\,|\, f:M_i\to I\}_{i\in I}$ is a basis of $T(A)$.
\end{definition}

\begin{theorem}{\cite[Theorem 9.12]{rotman2013introduction}}
\label{theorem:acyclic_model_theorem}[Acyclic models]
Let $T,T_1:\cat{C}\to \cat{Ch}_{\ge 0}(\cat{Ab})$. Suppose that $T$ is free on the models $\mathscr{M}$ and that $T_1$ is acyclic on the models $\mathscr{M}$.
\begin{enumerate}
    \item Given a natural transformation $H_0(T)\to H_0(T_1)$, there is a natural transformation $T\to T_1$ inducing it. 
    \item Given two natural transformations $T\to T_1$ inducing the same natural $H_0(T)\to H_0(T_1)$ there exists a natural chain homotopy between them.
\end{enumerate}
\end{theorem}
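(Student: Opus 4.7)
The plan is induction on chain degree, using freeness of $T$ to define natural transformations out of $T$ by specifying values on the distinguished generators and extending by naturality, and using the acyclicity of $T_1$ on models to solve the lifting equation $\partial y = z$ whenever the right-hand side is a cycle in a positive degree.

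Fix, for each $n \ge 0$, a family $\{(M_i,m_i)\}_{i\in I_n}$ witnessing the freeness of $T_n$, so that for every object $A$ of $\cat{C}$, $T_n(A)$ is free abelian with basis $\{T(f)(m_i) \mid i \in I_n,\; f\colon M_i \to A\}$. For part (1), at degree $0$, for each generator $m_i \in T_0(M_i)$ pick $\tau_0(m_i) \in (T_1)_0(M_i)$ with $[\tau_0(m_i)] = \phi_{M_i}([m_i])$ in $H_0(T_1(M_i))$, and extend by naturality: $\tau_0(T(f)(m_i)) := T_1(f)(\tau_0(m_i))$, extended $\mathbb{Z}$-linearly. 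Naturality in $A$ and the fact that $\tau_0$ descends to the given $\phi$ on $H_0$ are immediate. For the inductive step, assume $\tau_0,\dots,\tau_{n-1}$ are defined naturally and commute with boundaries; for each $m_i \in T_n(M_i)$ the element $z_i := \tau_{n-1}(\partial m_i) \in (T_1)_{n-1}(M_i)$ is a cycle: for $n \ge 2$ because $\partial z_i = \tau_{n-2}(\partial^2 m_i) = 0$, and for $n = 1$ because $[\partial m_i] = 0$ in $H_0(T(M_i))$ forces $[z_i] = \phi_{M_i}(0) = 0$, so $z_i$ is already a boundary. In either case, acyclicity of $T_1(M_i)$ in positive degrees supplies some $\tau_n(m_i)$ with $\partial \tau_n(m_i) = z_i$; extend to all of $T_n$ by naturality. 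Commutation with $\partial$ holds on the generators by construction and hence everywhere, by freeness.

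For part (2), given two natural chain maps $\tau,\sigma\colon T \to T_1$ inducing the same $\phi$ on $H_0$, construct the natural chain homotopy $D$ by the same scheme. At degree $0$, $[\tau_0(m_i) - \sigma_0(m_i)] = \phi_{M_i}([m_i]) - \phi_{M_i}([m_i]) = 0$ in $H_0(T_1(M_i))$, so pick $D_0(m_i) \in (T_1)_1(M_i)$ with $\partial D_0(m_i) = \tau_0(m_i) - \sigma_0(m_i)$ and extend naturally. Inductively, assume $D_0,\dots,D_{n-1}$ are defined naturally and satisfy $\partial D_k + D_{k-1}\partial = \tau_k - \sigma_k$ (with $D_{-1} := 0$). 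For each generator $m_i \in T_n(M_i)$ the element $w_i := \tau_n(m_i) - \sigma_n(m_i) - D_{n-1}(\partial m_i)$ is a cycle in $(T_1)_n(M_i)$, as one verifies by applying $\partial$ and invoking the inductive relation at degree $n-1$. Acyclicity then furnishes $D_n(m_i)$ with $\partial D_n(m_i) = w_i$, and we extend to $T_n$ by naturality.

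The main (mild) obstacle is bookkeeping: the only genuine subtlety is the base case for part (1), where one must remember that the choice of $\tau_0$ on generators was made to match $\phi$ on $H_0$ precisely so that $z_i = \tau_0(\partial m_i)$ is a boundary (not merely a cycle), which unifies the degree-$1$ step with the higher-degree pattern. All remaining verifications are routine and follow the Eilenberg--MacLane template of \cite{eilenberg1953acyclic}.
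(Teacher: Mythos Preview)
The paper does not prove this theorem; it is quoted from \cite[Theorem 9.12]{rotman2013introduction} and stated without proof in the appendix as background for the acyclic-models machinery used elsewhere. Your argument is the standard Eilenberg--MacLane inductive construction and is correct: the freeness hypothesis lets you define $\tau_n$ and $D_n$ on the distinguished generators and extend uniquely by naturality, and acyclicity of $T_1$ on models lets you solve $\partial y = z$ at each stage. Your handling of the degree-$1$ base case in part (1)---observing that $[\partial m_i]=0$ in $H_0(T(M_i))$ and hence $[z_i]=\phi_{M_i}(0)=0$---is exactly the point that makes the induction start, and your verification that $w_i$ is a cycle in part (2) is routine once the degree-$0$ step establishes $\partial D_0 = \tau_0 - \sigma_0$ on all of $T_0$. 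This is essentially the proof one finds in Rotman or in the original \cite{eilenberg1953acyclic}, so there is nothing to compare against in the present paper.
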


\begin{corollary}
\label{corollary:acyclic_model_theorem}
Let $T,T_1:\cat{C}\to \cat{Ch}_{\ge 0}(\cat{Ab})$ be functors whose images are augmented chain complexes. Suppose that $T$ and $T_1$ are both acyclic and free on the models $\mathscr{M}$. Then $T$ and $T_1$ are naturally chain homotopy equivalent. 
\end{corollary}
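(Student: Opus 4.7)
The plan is to apply \cref{theorem:acyclic_model_theorem} twice in each direction, building the equivalence and its homotopies in a symmetric fashion. First I would exploit the augmentations on $T$ and $T_1$: combined with acyclicity (extended through the augmentation, as is standard for augmented acyclic complexes), the augmentations produce natural identifications of $H_0(T)$ and $H_0(T_1)$ with the augmentation target. This yields a natural isomorphism $\alpha\colon H_0(T)\to H_0(T_1)$.

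Next, I would invoke part (1) of \cref{theorem:acyclic_model_theorem} with $T$ free and $T_1$ acyclic on $\mathscr{M}$ to lift $\alpha$ to a natural transformation $\phi\colon T\to T_1$ inducing $\alpha$ on $H_0$. Swapping the roles and using the dual hypothesis that $T_1$ is free and $T$ is acyclic, a second application of part (1) produces a natural transformation $\psi\colon T_1\to T$ realizing $\alpha^{-1}$ on $H_0$.

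To conclude, I would apply part (2) of the same theorem symmetrically. The composite $\psi\phi\colon T\to T$ and the identity $\mathrm{id}_T$ both induce $\alpha^{-1}\circ\alpha=\mathrm{id}$ on $H_0(T)$; since $T$ is free and $T$ is acyclic, part (2) furnishes a natural chain homotopy $\psi\phi\simeq \mathrm{id}_T$. An identical argument with the roles of $T$ and $T_1$ interchanged gives $\phi\psi\simeq \mathrm{id}_{T_1}$, so $\phi$ and $\psi$ constitute a natural chain homotopy equivalence.

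The only real subtlety is the preliminary step of producing a well-defined \emph{natural} isomorphism of $H_0$ functors from the augmentations; this is what makes the phrase ``inducing the same map on $H_0$'' unambiguous when comparing $\psi\phi$ with $\mathrm{id}_T$ (and similarly on the $T_1$ side). Once this identification is in place, the remainder is a mechanical, symmetric application of the two halves of \cref{theorem:acyclic_model_theorem}, relying crucially on the fact that each of $T$ and $T_1$ is both free and acyclic so that either functor can play either role in the hypotheses of the theorem.
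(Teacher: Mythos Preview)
Your proposal is correct and is precisely the standard derivation the paper intends: the corollary is stated without proof immediately after \cref{theorem:acyclic_model_theorem}, and your symmetric double application of parts (1) and (2), using the augmentations to identify $H_0$, is exactly how one extracts it.
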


\printbibliography
\end{document}